\newcommand{\UPTO}{\textbf{ up to }}
\newcommand{\DOWNTO}{\textbf{ down to }}
\newtheorem{theorem}{Theorem}[section]
\newtheorem{lemma}[theorem]{Lemma}
\newtheorem{prop}[theorem]{Proposition}
\newtheorem{thm}[theorem]{Theorem}
\newtheorem{cor}[theorem]{Corollary}
\newtheorem{conj}[theorem]{Conjecture}
\theoremstyle{definition}
\newtheorem{definition}[theorem]{Definition}
\newtheorem{example}[theorem]{Example}
\newtheorem{notation}[theorem]{Notation}
\newtheorem{question}[theorem]{Question}
\newtheorem{algorithm}[theorem]{Algorithm}
\newenvironment{myalgorithm}{
    \begin{algorithm}
    \begin{algorithmic}[1]
}{
    \end{algorithmic}
    \end{algorithm}
}
\theoremstyle{remark}
\newtheorem{remark}[theorem]{Remark}
\numberwithin{equation}{section}
\newcommand{\N}{\mathbb{N}}
\newcommand{\Z}{\mathbb{Z}}
\newcommand{\C}{\mathbb{C}}
\newcommand{\R}{\mathbb{R}}
\newcommand{\tV}{\widetilde{V}}
\newcommand{\cmax}{c_{\mathrm{max}}}
\newcommand{\Dmax}{D_{\mathrm{max}}}
\DeclareMathOperator{\wt}{wt}
\DeclareMathOperator{\outdeg}{outdeg}
\DeclareMathOperator{\indeg}{indeg}
\DeclareMathOperator{\Hom}{Hom}
\DeclareMathOperator{\supp}{supp}
\DeclareMathOperator{\divisor}{div}
\DeclareMathOperator{\im}{im}
\DeclareMathOperator{\myspan}{Span}
\newcommand{\tD}{\widetilde{\Delta}}
\newcommand{\sand}{\mathcal{S}}
\newcommand{\Lap}{\mathcal{L}}
\newcommand{\tLap}{\widetilde{\mathcal{L}}}
\newcommand{\lt}{\mathrm{LT}}
\newcommand{\ba}{\begin{eqnarray*}}
\newcommand{\ea}{\end{eqnarray*}}
\newcommand{\be}{\begin{enumerate}}
\newcommand{\ee}{\end{enumerate}}
\begin{document}

\title[Algebraic Geometry of Sandpiles]{Primer for the Algebraic Geometry of Sandpiles}


\author{David Perkinson}
\address{Reed College, Portland OR 97202}
\email{davidp@reed.edu}

\author{Jacob Perlman}
\address{Department of Mathematics, University of Chicago, Chicago, Illinois 60637}
\email{perlman@math.uchicago.edu}

\author{John Wilmes}
\address{Department of Mathematics, University of Chicago, Chicago, Illinois 60637}
\email{wilmesj@math.uchicago.edu}


\date{\today}

\begin{abstract}
  The Abelian Sandpile Model (ASM) is a game played on a graph realizing the
  dynamics implicit in the discrete Laplacian matrix of the graph.  The purpose
  of this primer is to apply the theory of lattice ideals from algebraic
  geometry to the Laplacian matrix, drawing out connections with the ASM.  An
  extended summary of the ASM and of the required algebraic geometry is
  provided.  New results include a characterization of graphs whose Laplacian
  lattice ideals are complete intersection ideals; a new construction of
  arithmetically Gorenstein ideals; a generalization to directed multigraphs of
  a duality theorem between elements of the sandpile group of a graph and the
  graph's superstable configurations (parking functions); and a characterization
  of the top Betti number of the minimal free resolution of the Laplacian
  lattice ideal as the number of elements of the sandpile group of least degree.
  A characterization of all the Betti numbers is conjectured.
\end{abstract}

\maketitle


\section{Introduction} This is a primer on the algebraic geometry of sandpiles
based on lectures given by the first author in an undergraduate Topics in
Algebra course at Reed College in the fall of 2008 and on subsequent summer and
undergraduate thesis projects by the second and third authors.  It is assumed
that the reader has no background in algebraic geometry or the theory of
sandpiles but is willing to consult introductory outside sources such as~\cite{CLO}
and~\cite{Holroyd}.

The Abelian Sandpile Model (ASM) is a game in which one is allowed to stack
grains of sand on the vertices of a graph $G$.  If a vertex acquires too much
sand, a grain of sand will be fired to each neighboring vertex.  These vertices,
in turn, may become unstable, and an avalanche of vertex firings may ensue.  One
vertex is usually specified as a {\em sink}.  Its purpose is to absorb
sand fired into it, allowing avalanches caused by the addition of sand to
eventually come to a halt.  The ASM associates a group, the {\em sandpile
group}, to this sand-firing process.  The firing rule and the sandpile group are
intimately connected to the Laplacian of $G$.

In algebraic geometry, there is a way of associating a collection of polynomial
equations to an integer matrix.  These polynomials span the {\em lattice
ideal} corresponding to the matrix.  Our purpose is to apply the theory of
lattice ideals in the special case where the matrix in question is the Laplacian
matrix of a graph $G$, expressing the results in terms of sand on a graph.

There is another, more widely-known, connection between algebraic geometry and
sandpiles.  It comes from viewing a graph as a discrete version of a Riemann
surface (i.e., of an algebraic curve over $\C$).  As part of this connection,
there is a rich theory of divisors on graphs, including a version of the
Riemann-Roch theorem~\cite{Baker}.  In Sections 7 and 8, we see that this theory
is also relevant for our purposes.

We now give a summary of the paper by section.  Section~2 is an extended
outline of algebraic results associated with the Abelian Sandpile Model on a
graph.  What might be new here is a novel treatment of burning configurations
(Speer's script algorithm), an extension of the result expressing the
independence of the sandpile group from the choice of sink vertex, and the exposition
of the fact that an undirected planar graph and its dual have isomorphic
sandpile groups.

After a brief summary of the theory of lattice ideals in Section~3, our main
object of study---the toppling ideal of a graph---is introduced in Section~4.
The first paper on the algebraic geometry of sandpiles of which we are aware is
{\em Polynomial ideals for sandpiles and their {G}r\"obner bases}, by Cori,
Rossin, and Salvy \cite{CRS}. That paper defines the toppling ideal of an
undirected graph and computes a Gr\"obner basis for the ideal with respect to a
certain natural monomial ordering. Sections~4 and~5---building on results in the
undergraduate thesis of the second author~\cite{Perlman}---extend their work,
putting it in the context of lattice ideals and, in Theorem~\ref{thm: groebner
basis}, generalizing the Gr\"obner basis result to the case of directed
multigraphs.  The proof of Proposition~\ref{prop:generators}, giving generators
for the toppling ideal, is representative of the interplay between algebraic
geometry and sandpile theory. 

By Theorem~\ref{thm:sandpile lattice ideals}, any lattice ideal whose zero set
is finite is the lattice ideal corresponding to some directed multigraph.  In
that sense, the potential application of sandpile methods to lattice ideals is
quite broad.  As an application of algebraic geometry to the ASM,
Corollary~\ref{cor:duality} uses Gr\"obner bases to establish a duality between
elements of the sandpile group and superstable configurations ($G$-parking
functions).  The result is well-known for undirected graphs.  The proof given
here is the only one of which we know that works in the more general setting of a
directed multigraph.

Section 6 gives an explicit description of the zero set of the toppling ideal.
It is a generic orbit of a faithful representation of the sandpile group of the
graph.  The affine Hilbert function of the toppling ideal is defined in terms of
the sandpile group.  It is related to the Tutte polynomial of the graph by a
theorem of Merino~\cite{Merino}.  Proposition~\ref{prop:CB-yes} shows that the set of
zeros of the toppling ideal satisfies the Cayley-Bacharach property.

Section~7 summarizes the Riemann-Roch theory for graphs and includes results
obtained in the undergraduate thesis of the third author concerning the minimal
free resolution of the homogeneous toppling ideal of an undirected graph.  The
resolution is graded by the {\em class group} of the graph, closely related to
the sandpile group.  By a theorem of Hochster, the Betti numbers are determined
by the simplicial homology of complexes forming the supports of complete linear
systems on the graph.  By Theorem~\ref{thm:last betti number}, the top Betti
number counts the following structures on a graph: the elements of the sandpile
group of minimal degree, the maximal degree superstable configurations, the
maximal $G$-parking functions, the acyclic orientations with a unique fixed
source, and the non-special divisors.  Conjecture~\ref{conj:wilmes} suggests a
characterization all of the Betti numbers in terms of sandpile groups of graphs
associated with connected partitions (bonds) of the original graph.  For more on
resolutions of toppling ideals and a generalization of the Riemann-Roch theory
for graphs to certain monomial ideals, see the paper by Manjunath and
Sturmfels~\cite{Madhu}.

Finally, in Section 8, we characterize directed multigraphs whose homogeneous
toppling ideals are complete intersection ideals.  Further, we give a new method
of constructing directed multigraphs whose homogeneous toppling ideals are
arithmetically Gorenstein.
\medskip

\noindent{\sc Acknowledgments.}  We would like to express our thanks to students
who participated in the Topics in Algebra course at Reed College in fall 2008.
We would also like to acknowledge the contribution of Luis David Garcia.  He
suggested that we look at~\cite{MT} in the context of sandpile ideals, leading
us to Theorems~\ref{ci implies wired} and~\ref{wired implies ci}. We thank Bernd
Sturmfels and Madhusudan Manjunath for their encouragement and comments, and we
thank Collin Perkinson for comments on the exposition.  The first author would
like to thank Tony Geramita and Lorenzo Robbiano for introducing him to the
geometry of finite sets of points.

This work could not have been done without the help of the mathematical software
system Sage~\cite{sage}.  Interested readers may want to consult the Thematic
Tutorial in the Help/Documentation section of the Sage homepage,
\url{sagemath.org}.  It contains an introduction to the ASM with computational
examples.  For visualization of the ASM, the reader is referred to Bryan
Head's Google Summer of Code project, available at
\url{www.reed.edu/~davidp/sand/program}. 

\section{Sandpiles}\label{sandpiles}
In~this section we summarize the basic theory of sandpile groups.  Many results
are stated without proof.  The reader is referred to~\cite{Holroyd}
and~\cite{Perkinson} for a thorough introduction to the subject.

\subsection{Graph theory} Let $G=(V,E)$ be a directed multigraph with a finite
set of vertices $V$ and of directed edges $E$.  For
$e=(u,v)\in E\subseteq V\times V$, we write $e^{-}:=u$ and $e^{+}:=v$ for the {\em tail} and {\em
head} of $e$, respectively. If $e^{-}=e^{+}$, the edge is a {\em loop}.  These
are allowed but do not add much to the theory.  By
``multigraph'' we will mean that there is a {\em weight function},
\[
\wt\colon V\times V\to\N,
\]
such that $\wt(u,v)>0$ if and only if $(u,v)\in E$. One may think of an
edge $e=(u,v)$ of $\wt(e)$ as $\wt(e)$ edges connecting $u$ to $v$.
For $v\in V$, 
\begin{eqnarray*}
  \outdeg(v)&:=& \sum_{e\in E:e^{-}=v}\wt(e)\\
  \indeg(v)&:=& \sum_{e\in E:e^{+}=v}\wt(e).
\end{eqnarray*}
The graph $G$ is {\em undirected} if $\mbox{wt}(u,v)=\mbox{wt}(v,u)$ for all
$u,v\in V$, and it is {\em unweighted} if the weights of all of its edges are
$1$.  If $G$ is undirected, we use the notation $\deg(v):=\outdeg(v)=\indeg(v)$.

A vertex $u$ is {\em accessible} from a vertex $v$ if there is a directed path
beginning at $u$ and ending at $v$.  A vertex $s$ is {\em globally accessible}
if it is accessible from all vertices of $G$.   Throughout this primer, {\bf we
will only consider graphs having at least one globally accessible vertex}.  In
particular, undirected graphs are assumed to be connected.

\begin{definition}
  A {\em sandpile graph} is a triple $(V,E,s)$ consisting of a finite, directed
  multigraph $(V,E)$ with a globally accessible vertex $s$.  The vertex $s$ is
  called the {\em sink} of the sandpile graph.  If, in addition, $s$ has
  outdegree $0$, it is called an {\em absolute sink}. The nonsink vertices are
  denoted $\tV:=V\setminus\{s\}$.  
\end{definition}
If $G=(V,E,s)$ is a sandpile graph, we will also refer to the graph $(V,E)$
as~$G$.  Note that the sink of a sandpile graph need not be absolute;
however, for much of what we say, one could safely remove outgoing edges from
the sink without changing the theory. 
\begin{example}
  Figure~\ref{fig:sandpile graph} depicts a sandpile graph $G$.  Edges
  $(v_1,v_2)$, $(v_2,v_1)$, $(v_2,s)$, $(v_3,s)$, and $(s,v_3)$ are directed
  edges with weights $1,2,1,1,5$, respectively; $\{v_1,v_3\}$ is an undirected
  edge of weight $3$; and $\{v_2,v_3\}$ is an undirected, unweighted edge.
  Although $s$ is the sink of the sandpile graph,~$\outdeg(s)=5$.
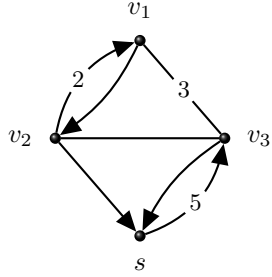
\begin{figure}[ht] 
\begin{tikzpicture}[scale=1.3]
\SetVertexMath
\GraphInit[vstyle=Art]
\SetUpVertex[MinSize=3pt]
\SetVertexLabel
\tikzset{VertexStyle/.style = {%
shape = circle,
shading = ball,
ball color = black,
inner sep = 1.5pt
}}
\SetUpEdge[color=black]
\Vertex[LabelOut,Lpos=90, Ldist=.1cm,x=0,y=1]{v_1}
\Vertex[LabelOut,Lpos=180, Ldist=.1cm,x=-0.866,y=0]{v_2}
\Vertex[LabelOut,Lpos=0, Ldist=.1cm,x=+0.866,y=0]{v_3}
\Vertex[LabelOut,Lpos=270, Ldist=.1cm,x=0,y=-1]{s}
\Edge[](v_1)(v_3)
\fill[color=white] (0.45,0.5) circle (0.15cm);
\draw (0.45,0.5) node{{\small 3}};
\Edge[](v_2)(v_3)
\Edge[style={-triangle 45,bend left=15}](v_1)(v_2)
\Edge[style={-triangle 45,bend left=30}](v_2)(v_1)
\fill[color=white] (-0.63,0.6) circle (0.15cm);
\draw (-0.63,0.6) node{{\small 2}};
\Edge[style={-triangle 45,bend right=15}](v_3)(s)
\Edge[style={-triangle 45}](v_2)(s)
\Edge[style={-triangle 45,bend right=30}](s)(v_3)
\fill[color=white] (0.57,-0.66) circle (0.15cm);
\draw (0.57,-0.66) node{{\small 5}};
\end{tikzpicture}
\caption{Sandpile graph $G$ with sink $s$.}\label{fig:sandpile graph}
\end{figure}
\end{example}

For any finite set $X$, let 
\[
\Z X=\{\textstyle\sum_{x\in X}a_x\,x:a_x\in\Z\mbox{ for all $x\in X$}\}
\]
be the free Abelian group on $X$.  Restricting to nonnegative coefficients gives
$\N X$. 
\begin{notation}
For $a,b\in\Z X$, we define $\deg(a)=\sum_{x\in X}a_x$ and $a\geq b$ if
$a_x\geq 
b_x$ for all $x\in X$.  We say $a$ is {\em nonnegative} if $a\geq0$. The {\em support} of $a$ is 
\[
\supp(a)=\{x\in X:a_x\neq0\}.
\]
Similar notation is used for integer vectors.
\end{notation}

Let $G=(V,E,s)$ be a sandpile graph.
\begin{definition}
  The {\em (full) Laplacian} of $G$ is the mapping of groups $\Delta\colon\Z
  V\to\Z V$ given on vertices $v$ by
  \[
  \Delta(v):=\outdeg(v)\,v-\sum_{u\in V}\wt(v,u)\,u.
  \]
  The {\em reduced Laplacian} of $G$ is the mapping of groups
  $\tD\colon\Z\tV\to\Z\tV$ given on nonsink vertices $v$ by
  \[
  \tD(v):=\outdeg(v)\,v-\sum_{u\in\tV}\wt(v,u)\,u,
  \]
  summing this time only over $\tV$.
\end{definition}
The Laplacian just defined is dual to the Laplacian one often sees in the
literature.  Define $L\colon \Z^{V}\to\Z^{V}$ by
\[
L\phi(v):=\sum_{u\in V}\wt(v,u)(\phi(v)-\phi(u))
\]
for a function $\phi\in\Z^{V}$ and vertex $v$. Say $V=\{v_1,\dots,v_{n+1}\}$,
and define the diagonal matrix
$D=\mbox{diag}(\outdeg(v_1),\dots,\outdeg(v_{n+1}))$.   Let $A$ be the {\em
adjacency matrix}, $A$, given by $A_{ij}=\wt(v_i,v_j)$.  Fixing an ordering
$v_1,\dots,v_{n+1}$ of the vertices identifies~ $\Z^{V}$ with $\Z^{n+1}$ and
identifies $L$ with the $(n+1)\times(n+1)$ matrix
\[
L=D-A.
\]
The matrix for our Laplacian $\Delta$ of $G$ is the transpose of $L$.

A {\em spanning tree directed into $s$} is a subgraph $T$ of $G$ with the
properties: (1)~$T$ contains all of the vertices of $G$, (2) the weight
of each edge in $T$ is the same as its weight as an edge of $G$, (3) for each
vertex, there is a directed path in $T$ to $s$, (4) for each vertex $v\neq s$,
there is exactly one edge of $T$ whose tail is $v$, and (5) the outdegree of $s$
is $0$.  If $T$ is a spanning tree directed into $s$, then its weight, denoted
$\wt(T)$, is the product of the weights of its edges.  The following is a basic
theorem in graph theory.

\begin{thm}[Matrix-Tree]
  The determinant of the reduced Laplacian of $G$ is the sum of the weights of
  all its directed spanning trees into the sink.
\end{thm}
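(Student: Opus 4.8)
The plan is to factor the reduced Laplacian through incidence-type matrices and apply the Cauchy--Binet formula. Put $n=|\tV|$, fix orderings of $\tV$ and of the edge set $E$, and introduce: the $\tV\times E$ matrix $P$ with $P_{v,e}=1$ if $e^{-}=v$ and $0$ otherwise (the unsigned ``tail incidence'' matrix, sink row removed); the $E\times\tV$ matrix $Q$ with $Q_{e,v}=[e^{-}=v]-[e^{+}=v]$ (a signed incidence matrix, sink column removed); and the diagonal matrix $W=\mathrm{diag}(\wt(e))_{e\in E}$. A direct computation, $(PWQ)_{u,v}=\sum_{e}[e^{-}=u]\,\wt(e)\,([e^{-}=v]-[e^{+}=v])=\outdeg(u)\,\delta_{u,v}-\wt(u,v)$, identifies $PWQ$ with $D-A$ restricted to $\tV$, which is the transpose of $\tD$; as transposition preserves determinants it suffices to evaluate $\det(PWQ)$. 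Loops and parallel edges cause no trouble: the $Q$-row of a loop is identically zero, and an edge of weight $w$ merely contributes a factor $w$ below.

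Since $P$ is $n\times|E|$ and $WQ$ is $|E|\times n$, Cauchy--Binet gives
\[
\det(PWQ)=\sum_{S}\det(P_{S})\,\det\bigl((WQ)_{S}\bigr)=\sum_{S}\Bigl(\prod_{e\in S}\wt(e)\Bigr)\det(P_{S})\,\det(Q_{S}),
\]
the sum over $n$-element subsets $S\subseteq E$, where $P_{S}$ is the square submatrix of $P$ on the columns $S$, $Q_{S}$ that of $Q$ on the rows $S$, and the last equality uses that $W$ is diagonal. Now $P_{S}$ is nonsingular exactly when $e\mapsto e^{-}$ restricts to a bijection $S\to\tV$ --- i.e.\ when $S$ contains exactly one edge out of each non-sink vertex and no edge out of $s$ --- in which case $P_{S}$ is a permutation matrix and $\det(P_{S})=\pm1$. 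Only such $S$ contribute, and each determines a map $\phi\colon\tV\to V$ sending $v$ to the head of its unique $S$-edge. Reindexing the rows of $Q_{S}$ by $e\mapsto e^{-}$ turns $Q_{S}$ into $I-N$, where $N$ is the $\tV\times\tV$ matrix whose $v$-th row is the indicator of $\phi(v)$ when $\phi(v)\in\tV$ and is zero when $\phi(v)=s$ (the adjacency matrix of the functional digraph of $\phi$ restricted to $\tV$); this reindexing is governed by the same bijection as $\det(P_{S})$, so the two signs cancel and $\det(P_{S})\det(Q_{S})=\det(I-N)$.

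The crux is then the value of $\det(I-N)$. If the functional digraph of $\phi$ contains a cycle, the rows of $I-N$ indexed by the cycle's vertices sum to the zero vector --- each such row $v$ has $+1$ in position $v$ and $-1$ in position $\phi(v)$, and these cancel around the cycle --- so $\det(I-N)=0$. If it is acyclic, then since $\tV$ is finite, iterating $\phi$ from any vertex reaches $s$ after finitely many steps, so $\tV$ can be ordered to make $N$ strictly upper triangular; then $I-N$ is unitriangular and $\det(I-N)=1$. Finally, a subset $S$ with $P_{S}$ nonsingular and $\phi$ acyclic is precisely a directed spanning tree into $s$: conditions (1) and (2) in the definition hold automatically, (4) and (5) say exactly that $S$ selects one edge at each non-sink vertex and none at $s$, and (3) --- every vertex has a directed $T$-path to $s$, necessarily $v,\phi(v),\phi^{2}(v),\dots$ --- is precisely acyclicity of $\phi$; for such $S$ the summand equals $\prod_{e\in S}\wt(e)=\wt(T)$. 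Summing over all $n$-subsets $S$ yields $\det\tD=\sum_{T}\wt(T)$. (The identity needs no hypothesis on $G$; the standing assumption, which ensures every vertex has a directed path to $s$, is what makes the right-hand side nonempty, so $\det\tD\neq0$.) An alternative is a deletion--contraction induction on $|E|$; I expect its main nuisance to be tracking which edge sets remain directed spanning trees into $s$ after a contraction, whereas Cauchy--Binet concentrates all the combinatorics into the short computation of $\det(I-N)$ above.
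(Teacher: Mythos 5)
Your argument is correct and complete. Note that the paper states the Matrix--Tree theorem as a known background fact and gives no proof of its own, so there is nothing internal to compare against; what you have written is the standard Cauchy--Binet proof, carried out carefully in the exact generality the paper needs (directed, weighted, with loops allowed and with the sink possibly having outgoing edges). The factorization $PWQ=\tD^{t}$ checks out entry by entry, including the diagonal term $\outdeg(u)-\wt(u,u)$, which matches the paper's convention that the coefficient of $v$ in $\tD(v)$ is $\outdeg(v)-\wt(v,v)$; loops are correctly killed by the zero rows of $Q$; the identification $\det(P_S)\det(Q_S)=\det(P_SQ_S)=\det(I-N)$ disposes of the sign bookkeeping cleanly; and the dichotomy for $\det(I-N)$ (rows over a cycle sum to zero versus unitriangularity in a topological order when $\phi$ is acyclic) is exactly right, with acyclicity of $\phi$ matching condition (3) of the paper's definition of a spanning tree directed into $s$. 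The only cosmetic quibble is that your closing parenthetical conflates two statements: the determinant identity itself indeed needs no hypothesis, while the global accessibility of $s$ guarantees the tree sum is nonempty and hence $\det\tD\neq0$ -- which is how the paper later uses the theorem to conclude invertibility of $\tD$.
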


It will occasionally be useful to consider a more restricted class of graphs.
\begin{definition}
  A directed multigraph $G=(V,E)$ is {\em Eulerian} if each of its vertices is globally
  accessible and $\indeg(v)=\outdeg(v)$ for all $v\in V$.  
\end{definition}

Every undirected graph is Eulerian.  The condition that $\indeg(v)=\outdeg(v)$
for all vertices $v$ is equivalent to having $\vec{1}\in\ker\Delta$.

\subsection{The Sandpile Group}

Let $G=(V,E,s)$ be a sandpile graph with nonsink vertices $\tV$.
\begin{definition}
  A {\em (sandpile) configuration} on $G$ is an element of
  $\Z\tV$.  A configuration $c=\sum_{v\in\tV}c_v v$ is {\em stable} at a vertex
  $v\in\tV$ if $c_v<\outdeg(v)$.  Otherwise, it is {\em unstable}.  A
  configuration is {\em stable} if it is stable at each $v\in\tV$.
\end{definition}
As the name suggests, we think of a configuration $c$ as a pile of sand on the
nonsink vertices of $G$ having $c_v$ grains of sand at vertex $v$.  Sand can be
redistributed on the graph by vertex {\em firings (or topplings)}.  Firing
$v\in\tV$ in configuration $c$ gives the new configuration,
\begin{align*}
  \tilde{c}&=c-\outdeg(v)\,v+\sum_{u\in\tV}\wt(v,u)\,u\\
  &=c-\tD\,v.
\end{align*}
When $v$ fires, we imagine $\wt(e)$ grains of sand traveling along each edge $e$
emanating from $v$ and being deposited at $e^{+}$.  If $e^{+}=s$, then sand sent
along $e$ disappears down the sink.  If~$c$ is unstable at $v$, we say that
firing $v$ is {\em legal}.  The sequence of nonsink vertices $u_1,\dots,u_k$ is
a {\em legal firing sequence} for a configuration $c$ if it is legal to fire
$u_1$ and then it is legal to fire each subsequent $u_i$ from the configuration
obtained by firing $u_1,\dots,u_{i-1}$.  The configuration resulting from
applying a legal firing sequence to $c$ is the configuration
$\tilde{c}=c-\tD\,\sigma$ where $\sigma\in\Z\tV$ is such that $\sigma_v$ is the
number of times vertex $v$ appears in the sequence.  We write
\[
c\stackrel{\sigma}{\longrightarrow}c-\tD\,\sigma.
\]
In general, we write $c\to\tilde{c}$ if $\tilde{c}$ is the result of applying a
legal firing sequence to $c$.  In this case, since the reduced Laplacian is
invertible (by the Matrix-Tree theorem, for instance), there exists a unique
$\sigma\in\Z\tV$ such that
$\tilde{c}=c-\tD\,\sigma$.  This $\sigma$ is called the {\em firing script} or
{\em firing vector} for $c\to\tilde{c}$.

We have the following existence and uniqueness theorem.
\begin{thm} Let $c$ be a sandpile configuration.
  \begin{enumerate}
    \item There exists a stable configuration $\tilde{c}$ such that
      $c\to\tilde{c}$.
    \item Suppose $c\to\tilde{c}$ with script $\sigma$ and $c\to\tilde{c}'$ with
      script $\sigma'$. Then if $\tilde{c}$ is stable, $\sigma'\geq\sigma$.  If
   $\tilde{c}$ and $\tilde{c}'$ are both stable, then $\tilde{c}=\tilde{c}'$.
  \end{enumerate}
\end{thm}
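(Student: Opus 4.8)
The plan is to prove the two assertions by the standard "diamond lemma / local confluence" strategy adapted to sandpiles, exploiting the fact that the firing rule is a commuting/confluent rewriting system. For part~(1), existence of a stabilization, I would first argue that legal firing sequences cannot go on forever. Fix a globally accessible sink $s$; since every nonsink vertex has a directed path to $s$, sand leaks out, so the total amount of sand is nonincreasing and some vertex adjacent (along a directed path) to $s$ must eventually fire only finitely often, and then by induction along shortest-path distance to $s$ every vertex fires only finitely often. More carefully, I would bound the length of any legal firing sequence: if a vertex $v$ with an edge to $s$ fires $N$ times, then $N\cdot\wt(v,s)$ grains are lost down the sink, so $v$ fires boundedly often; propagating this bound inward along directed paths to $s$ (each vertex can only fire a bounded number of times before its inward neighbours are "starved") shows every legal firing sequence has bounded length. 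Hence, starting from $c$ and firing legally as long as possible, we must reach a configuration where no legal firing is possible, i.e.\ a stable $\tilde c$ with $c\to\tilde c$.

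For part~(2), the heart is the following local (one-step) confluence claim: if $c$ is unstable at distinct vertices $u$ and $v$, then firing $u$ keeps $c$ unstable at $v$ (because firing $u$ only adds sand at $v$), and the two configurations $c-\tD u$ and $c-\tD v$ reached by the two single firings can both be driven to the common configuration $c-\tD u-\tD v$ by one further legal firing. Thus the rewriting system is locally confluent. Combined with the termination established in part~(1), Newman's lemma (diamond lemma) would give global confluence: any two legal firing sequences out of $c$ can be extended to meet. In particular all maximal (hence stable-terminating) legal firing sequences from $c$ end at the same configuration, proving $\tilde c=\tilde c'$ when both are stable; and since the reduced Laplacian $\tD$ is invertible (Matrix-Tree), the firing script is then forced to be equal too, $\sigma=\sigma'$.

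It remains to prove the one-sided inequality $\sigma'\ge\sigma$ when only $\tilde c$ is stable. Here I would use a counting/exchange argument rather than pure confluence. Suppose, for contradiction, that $\sigma_w>\sigma'_w$ for some $w$. Consider the first time in the sequence $c\to\tilde c$ at which some vertex $w$ is fired beyond its count in $\sigma'$; call the configuration just before that firing $c''$, obtained from $c$ by a firing vector $\tau\le\sigma'$ with $\tau_w=\sigma'_w$. On one hand $c''$ is unstable at $w$ (we are about to fire it legally), so $c''_w\ge\outdeg(w)$. On the other hand, $c''=c-\tD\tau$ and $\tilde c'=c-\tD\sigma'$ with $\sigma'\ge\tau$ and $\sigma'_w=\tau_w$, so comparing the $w$-coordinates, $\tilde c'_w = c''_w + \bigl(\tD(\sigma'-\tau)\bigr)_w$; since $(\sigma'-\tau)_w=0$ and off-diagonal entries of $\tD$ are $\le 0$, firing the other vertices in $\sigma'-\tau$ only \emph{adds} sand at $w$, giving $\tilde c'_w\ge c''_w\ge\outdeg(w)$, contradicting stability of $\tilde c'$... wait, $\tilde c'$ need not be stable. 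So instead I compare against $\tilde c$: run $\tilde c'\to$ (stabilize) to reach $\tilde c$ by confluence from part~(2)'s confluence, and apply the same coordinatewise monotonicity along that continuation to conclude $\tilde c_w\ge\outdeg(w)$, contradicting stability of $\tilde c$.

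The main obstacle I anticipate is making the termination argument in part~(1) fully rigorous in the directed multigraph setting: the naive "total sand decreases" is not strictly monotone (internal firings conserve sand), so one genuinely needs the global-accessibility hypothesis and an inductive bound on firing counts along directed paths to $s$. Once termination is in hand, local confluence is an easy one-line computation with $\tD$, and Newman's lemma does the rest; the inequality $\sigma'\ge\sigma$ then follows from the same coordinatewise monotonicity of $-\tD$ (off-diagonal entries $\le 0$) that underlies local confluence, so that part should be routine.
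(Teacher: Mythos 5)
The paper states this theorem without proof (it is one of the results in \S 2 deferred to the references), so your proposal has to stand on its own. Its architecture --- termination, then local confluence plus Newman's lemma for uniqueness of the stable result and its script --- is the standard and correct route. The genuine gap is in your argument for the inequality, and it sits exactly where it has to: the inequality as printed, $\sigma'\geq\sigma$ when $\tilde{c}$ (the endpoint of the $\sigma$-sequence) is stable, is \emph{false}. On the graph with one nonsink vertex $v$ joined to the sink by a single edge, take $c=2v$: then $\tilde{c}=0$ is stable with script $\sigma=2$, while firing $v$ once is legal with script $\sigma'=1\not\geq\sigma$. The intended statement is the least action principle with the roles of $\sigma$ and $\sigma'$ exchanged: the script of a legal sequence reaching a \emph{stable} configuration dominates the script of \emph{every} legal sequence. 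Your attempted patch --- ``run $\tilde{c}'\to\tilde{c}$ and apply the same coordinatewise monotonicity along that continuation'' --- cannot work, because the monotonicity $(c-\tD\rho)_w\geq c_w$ requires $\rho_w=0$, whereas the continuation from $\tilde{c}'$ to $\tilde{c}$ has script $\sigma-\sigma'$, whose $w$-component is positive under your contradiction hypothesis $\sigma_w>\sigma'_w$; each of those extra firings of $w$ lowers the $w$-coordinate. The good news is that your exchange argument, run in the correct direction, closes with no continuation at all: if $\sigma'_w>\sigma_w$ for some $w$, stop the $\sigma'$-sequence just before $w$ fires for the $(\sigma_w+1)$-th time, so the partial script $\tau$ satisfies $\tau\leq\sigma$, $\tau_w=\sigma_w$, and $(c-\tD\tau)_w\geq\outdeg(w)$; since $(\sigma-\tau)_w=0$ and the off-diagonal entries of $\tD$ are nonpositive, $\tilde{c}_w\geq(c-\tD\tau)_w\geq\outdeg(w)$, contradicting stability of $\tilde{c}$.

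The second point needing repair is the one you flagged yourself: in part (1), ``each vertex can only fire a bounded number of times before its inward neighbours are starved'' does not propagate as stated, because a legal sequence is never obliged to fire an unstable vertex --- a vertex receiving unboundedly much sand may simply accumulate it. The standard fix: suppose some legal sequence is infinite and let $A$ be the nonempty set of vertices firing infinitely often. Once a vertex has fired, its sand is nonnegative immediately after each of its firings and only grows in between, so all coordinates are eventually bounded below; meanwhile $\deg$ of the configuration is nonincreasing, hence bounded above by $\deg(c)$. Following a directed path from a vertex of $A$ to $s$ until it first leaves $A$, either the exit edge lands on $s$, so the total drops by at least $1$ at infinitely many firings, contradicting the lower bound; or it lands on a vertex outside $A$, whose sand then tends to $+\infty$ while every other coordinate stays bounded below, contradicting the upper bound. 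With termination secured, your Newman's-lemma step and the deduction $\tilde{c}=\tilde{c}'$, $\sigma=\sigma'$ (via invertibility of $\tD$) are fine.
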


\begin{definition}  Let $c$ be a configuration on $G$. 
  The {\em stabilization} of a configuration~$c$, denoted $c^{\circ}$, is the
  unique stable configuration $\tilde{c}$ such that $c\to\tilde{c}$.  
\end{definition}

Let $\mathcal{M}$ denote the set of nonnegative stable configurations on $G$.  Then
$\mathcal{M}$ is a commutative monoid under {\em stable addition}
\[
a\circledast b :=(a+b)^{\circ}.
\]
Thus, stable addition is vector addition in $\N\tV$ followed by
stabilization.  The identity is the zero configuration.

\begin{definition} A configuration $c$ is {\em accessible} if for each
  configuration $a$, there exists a configuration $b$ such that $a+b\to c$.  A
  configuration $c$ is {\em recurrent} if it is nonnegative, accessible, and
  stable.
\end{definition}

\begin{definition}
  The {\em maximal stable configuration} on $G$ is the configuration
\[
\cmax = \sum_{v\in\tV}(\outdeg(v) - 1)v.
\]
\end{definition}

\begin{prop}\label{prop:max-stable}
A configuration $c$ is recurrent if and only if there exists a 
configuration $a\geq0$ such that
\[
c = a\circledast\cmax.
\]
\end{prop}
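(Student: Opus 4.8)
The plan is to prove the two implications separately; each reduces to formal manipulation with the existence/uniqueness theorem plus one ``abelian'' observation that I would record first: if $x\to y$ by a legal firing sequence and $g\geq 0$, then $x+g\to y+g$ by the \emph{same} sequence, because adjoining sand to a configuration never turns an unstable vertex into a stable one. Two consequences I would then use freely are that the stabilization of a nonnegative configuration is nonnegative (firing a vertex of a nonnegative configuration leaves every coordinate nonnegative), and that if $x\to\tilde x$ with $\tilde x$ stable then $\tilde x=x^{\circ}$.

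For the forward implication, suppose $c$ is recurrent. I would apply the definition of accessibility with the test configuration $\cmax$: there is a nonnegative configuration $b$ with $\cmax+b\to c$, and since $c$ is stable this forces $c=(\cmax+b)^{\circ}$. Because $b\geq 0$ we have $b\to b^{\circ}$, and adjoining $\cmax$ throughout this firing sequence gives $\cmax+b\to\cmax+b^{\circ}$, so $c=(\cmax+b)^{\circ}=(\cmax+b^{\circ})^{\circ}$. As $b^{\circ}$ is nonnegative and stable, it lies in $\mathcal M$, and hence $c=(\cmax+b^{\circ})^{\circ}=b^{\circ}\circledast\cmax$; take $a=b^{\circ}\geq 0$.

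For the converse, write $c=a\circledast\cmax=(a+\cmax)^{\circ}$ with $a\geq 0$ (since $(a+\cmax)^{\circ}=(a^{\circ}+\cmax)^{\circ}$ I may assume $a\in\mathcal M$). That $c$ is nonnegative and stable is immediate, so the task is to show $c$ is accessible. The crucial sub-step is that $\cmax$ is itself accessible, and this is where the one real idea enters: every stable configuration $d'$ satisfies $d'\leq\cmax$, since $d'_{v}<\outdeg(v)$ forces $d'_{v}\leq\outdeg(v)-1$. So, given an arbitrary configuration $d$, I would choose $b_{0}\geq 0$ with $d+b_{0}\geq 0$, put $d'=(d+b_{0})^{\circ}$ (so $d+b_{0}\to d'$ legally), and adjoin the nonnegative configuration $\cmax-d'$ throughout that firing sequence to obtain $d+b_{0}+(\cmax-d')\to d'+(\cmax-d')=\cmax$; thus $d+b\to\cmax$ with $b=b_{0}+(\cmax-d')\geq 0$. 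Granting this, for an arbitrary $d$ I pick $b_{1}\geq 0$ with $d+b_{1}\to\cmax$, adjoin $a$ throughout to get $d+b_{1}+a\to\cmax+a$, and finish with $\cmax+a\to(\cmax+a)^{\circ}=c$; concatenating, $d+(b_{1}+a)\to c$ with $b_{1}+a\geq 0$. Hence $c$ is accessible, and therefore recurrent.

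I do not anticipate a genuine obstacle: the proposition is largely an exercise in assembling the basic theory already set up. The two points I would take care with are (i) the repeated appeal to ``adjoining a nonnegative configuration to a legal firing sequence leaves it legal,'' which I would isolate once as a lemma (it may already be available as the abelian property), and (ii) the reading of ``accessible'': with the auxiliary configuration $b$ unrestricted the notion would be vacuous, so I take $b\geq 0$, which is surely what is intended and is the only place where the precise wording matters.
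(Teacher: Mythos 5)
Your proof is correct and complete: the ``adjoining a nonnegative configuration preserves legality of a firing sequence'' lemma, the observation that every nonnegative stable configuration is $\leq\cmax$, and the reading of accessibility with $b\geq 0$ are exactly the right ingredients, and both implications go through as you wrote them. The paper itself states this proposition without proof (deferring to its references \cite{Holroyd} and \cite{Perkinson}), and your argument is essentially the standard one given there, so there is nothing to reconcile.
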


It is not hard to see that the recurrent elements form a semigroup.  In fact,
they form a group.
\begin{thm}
  The collection of recurrent configurations of $G$ forms a group under stable
  addition.
\end{thm}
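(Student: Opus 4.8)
The plan is to establish that $R$, the set of recurrent configurations, is a finite commutative semigroup under $\circledast$ that possesses an idempotent acting as a two-sided identity and in which every element is invertible; any such semigroup is a group. Closure of $R$ under $\circledast$ (the statement that $R$ is a subsemigroup of the finite commutative monoid $\mathcal{M}$) I would obtain quickly from Proposition~\ref{prop:max-stable}: if $c=a\circledast\cmax$ and $c'$ are recurrent, then $c\circledast c'=(a\circledast c')\circledast\cmax$ has the form required by that proposition and so is recurrent. (Alternatively, closure follows directly from the accessibility clause in the definition of ``recurrent'' together with the abelian property $(x+y^{\circ})^{\circ}=(x+y)^{\circ}$, which is itself a consequence of the existence-and-uniqueness theorem, since a legal firing sequence for $y$ stays legal once the nonnegative configuration $x$ is added.) Finiteness is automatic: a stable $c$ satisfies $0\le c_v<\outdeg(v)$ for each nonsink $v$, and $\outdeg(v)\ge 1$ because $v$ reaches the globally accessible sink.

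Write $x^{(k)}$ for the $k$-fold stable sum $x\circledast\cdots\circledast x$. Since $R$ is a finite commutative semigroup, for any $x\in R$ the sequence $x^{(1)},x^{(2)},\dots$ is eventually periodic, so $x^{(m)}=x^{(m+\ell)}$ for some $m,\ell\ge 1$; taking $k$ to be a multiple of $\ell$ with $k\ge m$ makes $x^{(k)}$ idempotent. The crucial step is to show that \emph{every} idempotent $e\in R$ is a two-sided identity for $R$, and here the accessibility of recurrent configurations is essential. Given $z\in R$, accessibility applied to the configuration $e\in\N\tV$ produces $b\ge 0$ with $(e+b)^{\circ}=z$; then the abelian property yields
\[
e\circledast z=(e+z)^{\circ}=(e+(e+b))^{\circ}=((e+e)^{\circ}+b)^{\circ}=(e+b)^{\circ}=z.
\]
Hence $e$ is an identity for $R$, and it is the unique idempotent, since a second idempotent $e'$ would satisfy $e'=e\circledast e'=e$. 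Thus $R$ is a commutative monoid with identity $e$.

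For inverses, fix $x\in R$ and, as above, choose $k\ge 2$ with $x^{(k)}$ idempotent; by uniqueness $x^{(k)}=e$, so $x^{(k-1)}\in R$ satisfies $x\circledast x^{(k-1)}=e$ and is an inverse of $x$. Therefore $R$ is a group. I expect the main obstacle to be the identification of the abstractly obtained idempotent with the monoid identity: merely knowing that $R$ is a finite commutative semigroup does not single out a neutral element, and it is precisely the accessibility hypothesis---the part of the definition of ``recurrent'' beyond stability and nonnegativity---that forces the idempotent to act trivially, by way of the abelian property of stabilization.
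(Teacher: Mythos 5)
Your proof is correct. Note that the paper itself states this theorem without proof (Section~\ref{sandpiles} explicitly defers such results to its references), so there is no internal argument to compare against; your route is the standard semigroup-theoretic one, and every step checks out: closure via Proposition~\ref{prop:max-stable} (or via accessibility plus the abelian property $(x+y^{\circ})^{\circ}=(x+y)^{\circ}$ for $x\geq 0$), existence of an idempotent in a finite commutative semigroup, the accessibility argument forcing any idempotent to act as the identity, and inverses from $x^{(k-1)}$. You correctly identify the crux --- that accessibility, not mere stability, is what makes the idempotent neutral.

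One small point worth making explicit: as literally written, the paper's definition of \emph{accessible} does not require $b\geq 0$, and without that requirement the condition is vacuous (take $b=c-a$ and the empty firing sequence). Your argument silently supplies the intended hypothesis $b\geq 0$, which is genuinely needed in the chain $(e+(e+b))^{\circ}=((e+e)^{\circ}+b)^{\circ}$, since the abelian property only lets you pre-stabilize a nonnegative summand. It would also be worth one sentence confirming that the set of recurrents is nonempty (e.g., $\cmax=0\circledast\cmax$ is recurrent), so that there is an element whose powers produce the idempotent in the first place.
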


\begin{definition}
  The group of recurrent configurations of a sandpile graph $G$ is called the
  {\em sandpile group} of $G$ and denoted by $\sand(G)$.
\end{definition}

By Proposition~\ref{prop:max-stable}, the sandpile group can be found by a
systematically adding sand to $\cmax$ and stabilizing.  Considering a graph
consisting of otherwise unconnected vertices connected into a common sink by
edges of various weights, one sees that every finite Abelian group is the
sandpile group for some graph.
\begin{example}\label{example:sandpile graph}
  The elements of the sandpile group for the sandpile graph in
  Figure~\ref{fig:sandpile graph} are listed below using the notation
  $(c_1,c_2,c_3):=c_1v_1+c_2v_2+c_3v_3$:
 \[
 \begin{array}{ccccccc} 
   (3, 3, 4)& (3, 3, 3)& (3, 2, 4)& (2, 3, 4)& (3, 3, 2)& (3, 2, 3)& (2, 3, 3)\\
   (3, 1, 4)& (2, 2, 4)& (1, 3, 4)& (3, 2, 2)& (2, 2, 3)& (1, 3, 3)& (3, 0, 4)\\
   (2, 1, 4)& (1, 2, 4)& (0, 3, 4)& (1, 2, 3)& (0, 3, 3)& (2, 0, 4)& (1, 1, 4)
 \end{array}
 \]
\end{example}
Although the zero configuration is the identity for $\mathcal{M}$, it is seldom
the identity for $\mathcal{S}(G)$.  The following is an easy exercise.
\begin{prop}
The following are equivalent:
  \begin{enumerate}
    \item the zero-configuration $\vec{0}$ is recurrent;
    \item every stable configuration is recurrent;
    \item every directed cycle of $G$ passes through the sink vertex.
  \end{enumerate}
\end{prop}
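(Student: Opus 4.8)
The plan is to prove the equivalences by a cycle of implications, say $(1)\Rightarrow(2)\Rightarrow(3)\Rightarrow(1)$, using the characterization of recurrent configurations in Proposition~\ref{prop:max-stable} together with the basic fact that the sandpile group is closed under stable addition.

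First I would show $(1)\Rightarrow(2)$. Suppose $\vec{0}$ is recurrent. The recurrent configurations form a group under $\circledast$, so $\vec 0$ lies in that group; since $\vec 0$ is the identity for the monoid $\mathcal M$ of nonnegative stable configurations, I would argue that $\vec 0$ must in fact be the identity of $\sand(G)$, and hence every recurrent configuration $c$ satisfies $c = c \circledast \vec 0$. Conversely, given \emph{any} stable configuration $c \geq 0$, by Proposition~\ref{prop:max-stable} we have $\cmax = b \circledast \cmax$ for some $b \geq 0$ (taking $b = \vec 0$ shows $\cmax$ is recurrent), and more to the point, from $\vec 0$ recurrent we get $c = c \circledast \vec 0$ with $\vec 0 = a' \circledast \cmax$ for suitable $a' \ge 0$; then $c = c \circledast a' \circledast \cmax = (c + a') \circledast \cmax$, which exhibits $c$ in the form required by Proposition~\ref{prop:max-stable}, so $c$ is recurrent. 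Thus every stable (nonnegative) configuration is recurrent.

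Next, $(2)\Rightarrow(3)$. I would prove the contrapositive: if some directed cycle $\gamma$ of $G$ avoids the sink, I produce a stable configuration that is not recurrent. Let $\sigma \in \N\tV$ be the indicator vector of the vertices on $\gamma$ (with appropriate multiplicities if needed). Firing all vertices of $\gamma$ once transports sand around the cycle, and because $\gamma$ avoids the sink, this firing is ``conservative'': $-\tD\,\sigma$ has entries that are $\le 0$ on $\tV$ — more precisely, firing a directed cycle cannot increase the total amount of sand and strictly decreases it only if some edge of the cycle points to the sink, which does not happen here. Hence $c := c^{\circ} - \tD\,\sigma$ for a generic stable $c$ gives a configuration with $c \to c^{\circ}$ via a nontrivial loop in the ``firing graph,'' which contradicts the fact (from the existence/uniqueness theorem and the invertibility of $\tD$) that legal stabilization scripts are uniquely determined; equivalently, one checks directly that a configuration reachable only by first firing around a sink-avoiding cycle cannot be accessible in the required sense. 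The cleanest route is: a configuration $c$ is recurrent iff there is a firing vector $\sigma \ge 0$ with $\supp(\sigma) = \tV$ and $c - \tD\,\sigma = c$ legally (the ``burning'' criterion); a sink-avoiding cycle obstructs the existence of such a $\sigma$ reaching all of $\tV$.

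Finally, $(3)\Rightarrow(1)$. Assume every directed cycle passes through the sink. I want to show $\vec 0$ is recurrent, i.e., nonnegative (clear), stable (clear, since $0 < \outdeg(v)$ as every nonsink vertex has a path to the globally accessible sink, forcing $\outdeg(v) \ge 1$), and accessible. For accessibility, given any configuration $a$, I must find $b$ with $a + b \to \vec 0$. The hypothesis that all cycles hit the sink means the graph restricted to $\tV$ is acyclic, so one can order $\tV$ so that all edges among nonsink vertices go ``forward''; then by adding enough sand (take $b$ large) and firing vertices in reverse topological order, all sand can be flushed into the sink, landing at $\vec 0$. Making this precise is the step I expect to be the main obstacle: I need to verify that ``acyclic on $\tV$'' genuinely lets me drive an arbitrary starting pile down to $\vec 0$, which amounts to solving $a + b = \tD\,\sigma$ in nonnegative integers with $\sigma$ realizable as a legal firing sequence — here the acyclicity gives a triangular structure to $\tD$ that makes the back-substitution work, and I'd lean on part~(2) of the existence/uniqueness theorem to convert the existence of \emph{some} such $\sigma$ into the existence of a \emph{legal} firing sequence. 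Once $\vec 0$ is shown accessible, it is recurrent, closing the cycle.
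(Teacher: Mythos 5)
The paper states this proposition without proof (``an easy exercise''), so I am judging your argument on its own merits. Your cycle of implications is a reasonable plan and your $(1)\Rightarrow(2)$ is correct: from $\vec{0}=a'\circledast\cmax$ and the commutative monoid structure of stable addition you get $c=(c+\vec{0})^{\circ}=(c+a'+\cmax)^{\circ}=(c+a')\circledast\cmax$ for every nonnegative stable $c$, and Proposition~\ref{prop:max-stable} finishes it. The problems are in the other two implications. For $(2)\Rightarrow(3)$ you never actually produce a stable non-recurrent witness, and the two justifications you offer both fail. Firing the indicator $\sigma$ of a sink-avoiding cycle is \emph{not} conservative: a cycle vertex may have additional out-edges (to the sink or elsewhere), so $-\tD\sigma$ is in general strictly negative on the cycle and strictly positive off it. Your fallback ``burning criterion'' is misstated ($c-\tD\sigma=c$ forces $\sigma=0$ since $\tD$ is invertible), and the assertion that a sink-avoiding cycle ``obstructs the existence of such a $\sigma$'' cannot be right as stated: burning configurations exist for \emph{every} sandpile graph (Theorem~\ref{thm:speer existence}), cycles or not; the obstruction must be an argument about a specific configuration. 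A correct witness is $\vec{0}$: if $\vec{0}$ were recurrent, put at least $\outdeg(v)$ grains on each vertex $v$ of the cycle and take a legal sequence ending at $\vec{0}$; every cycle vertex must fire, and the cycle vertex whose \emph{last} firing occurs earliest afterwards receives at least one grain from its cycle-predecessor and never fires again, so it cannot end at $0$. (Alternatively, count: the recurrents number $\det\tD$, which by Matrix--Tree is strictly smaller than $\prod_v\outdeg(v)$, the number of nonnegative stables, once some choice of one out-edge per nonsink vertex contains a sink-avoiding cycle.)

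In $(3)\Rightarrow(1)$ your idea (acyclicity on $\tV$, triangular $\tD$, flush to the sink) is the right one, but the step you yourself flag as the main obstacle is not closed by the tool you cite. Part~(2) of the existence/uniqueness theorem compares scripts of \emph{legal} firing sequences; it does not convert a nonnegative solution of $a+b=\tD\sigma$ into a legal sequence. The paper's own triangle (Figure~\ref{fig:not superstable}) is a counterexample to that conversion: $c=u+v=\tD\vec{1}$ yet there is no legal firing sequence from $c$ to $\vec{0}$. The fix must use acyclicity directly: if $c=\tD\sigma\geq 0$ with $\sigma\gneq 0$, choose $v\in\supp(\sigma)$ with no in-edges from $\supp(\sigma)$ (possible because the induced digraph on $\tV$ is acyclic and, under (3), loop-free); then $c_v=\outdeg(v)\sigma_v\geq\outdeg(v)$, so $v$ fires legally, the configuration stays of the form $\tD\sigma'$ with $\sigma'=\sigma-v$, and induction on $\deg\sigma$ drives everything to $\vec{0}$. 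Triangularity also lets you solve $\tD\sigma\geq a$ for $\sigma\geq 0$ by forward substitution, giving the required $b=\tD\sigma-a\geq 0$. With those two repairs the proof goes through.
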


We now give another description of the sandpile group.

\begin{definition}
  The {\em Laplacian lattice}, $\Lap\subset\Z V$, is the image of $\Delta$.
  The reduced Laplacian lattice, $\tLap\subset\Z \tV$, is the image of~$\tD$.  The {\em critical group} for $G$ is
  \[
  \mathcal{C}(G) = \Z\tV/\tLap.
  \]
\end{definition}

\begin{thm}\label{thm:main isomorphism}
  There is an isomorphism of Abelian groups
  \begin{eqnarray*}
  \mathcal{S}(G)&\to&\mathcal{C}(G)\\
  c&\mapsto&c+\tLap.
\end{eqnarray*}
\end{thm}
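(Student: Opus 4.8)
The plan is to exhibit the map $c \mapsto c + \tLap$ as the composite of two natural identifications and to check directly that it is a well-defined bijective group homomorphism. First I would recall that by Proposition~\ref{prop:max-stable} every recurrent configuration $c$ can be written $c = a \circledast \cmax$ for some $a \ge 0$, and more usefully that stabilization changes a configuration only by a vector in $\tLap$: if $c \to \tilde c$ with script $\sigma$, then $\tilde c = c - \tD\sigma$, so $\tilde c \equiv c \pmod{\tLap}$. In particular $a \circledast b = (a+b)^\circ \equiv a + b \pmod{\tLap}$, which immediately shows that $c \mapsto c + \tLap$ carries stable addition $\circledast$ on $\sand(G)$ to ordinary addition in $\mathcal{C}(G) = \Z\tV/\tLap$, i.e.\ it is a homomorphism.

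Next I would verify surjectivity. Given any coset $x + \tLap$ with $x \in \Z\tV$, I want a recurrent representative. Choose $a \in \N\tV$ large enough that $a \ge \cmax$ and $a \equiv x \pmod{\tLap}$; this is possible because $\tLap$ has finite index (the reduced Laplacian is invertible by Matrix-Tree, so $\Z\tV/\tLap$ is finite, and one can adjust $x$ by adding a suitable large multiple of a lattice vector with all-positive entries, e.g.\ coming from firing every nonsink vertex enough times — concretely, $\tD \cdot \vec{\mathbf 1}$ has nonnegative... one must be a little careful here, but adding large multiples of the columns of $\tD$ associated with a firing sequence that eventually empties into the sink works). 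Then $c := a \circledast \cmax = a^\circ$ (using that $a \ge \cmax$ forces stabilization to proceed through firings only) is recurrent by Proposition~\ref{prop:max-stable} and $c \equiv a \equiv x \pmod{\tLap}$. So the map is onto.

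For injectivity it suffices, since the map is a homomorphism, to show the kernel is trivial: if $c$ is recurrent and $c \in \tLap$, then $c = \tD\sigma$ for some $\sigma \in \Z\tV$, and I must show $c$ is the identity of $\sand(G)$. The cleanest route is a counting argument: surjectivity gives $|\sand(G)| \ge |\mathcal{C}(G)| = \det\tD$, while it is standard (and provable from the theorem on existence/uniqueness of stabilization, or by the burning-algorithm characterization of recurrents) that the number of recurrent configurations equals $\det\tD$ as well — indeed each coset of $\tLap$ contains exactly one recurrent configuration, which is the real content. If I want to avoid quoting the count, I argue directly: if $\tD\sigma = c \ge 0$ is stable and accessible, then firing according to $-\sigma$ (or noting that $c$ and $\vec 0$ differ by a legal-firing-reachable amount in both directions because both are accessible) forces $c = c \circledast c^{\mathrm{(id)}}$; more precisely, since $c$ is recurrent it has an inverse $c'$ in $\sand(G)$, and $c + c' \equiv 0 \pmod{\tLap}$ combined with $c \equiv 0$ gives $c' \equiv 0$, and then stabilizing $c + c' + \cmax$ two ways pins down $c$ as the group identity. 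The main obstacle is precisely this injectivity/kernel step: showing that distinct recurrent configurations lie in distinct cosets of $\tLap$. The slick proof is the counting coincidence $|\{\text{recurrents}\}| = \det\tD = |\mathcal{C}(G)|$ together with surjectivity, so the heart of the matter is establishing that the number of recurrent configurations equals $\det \tD$ — which for directed multigraphs one gets from the burning algorithm / Speer's script algorithm together with Matrix-Tree, both available in the preceding material.
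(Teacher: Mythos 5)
The paper states Theorem~\ref{thm:main isomorphism} without proof (Section~2 is an expository outline deferring to~\cite{Holroyd} and~\cite{Perkinson}), so there is no in-paper argument to compare against; I am judging your proposal on its own. Your architecture --- well-defined homomorphism, then surjectivity, then injectivity --- is the standard and correct one, and the first two steps are essentially sound. The homomorphism claim follows exactly as you say from $\tilde{c}=c-\tD\sigma$. For surjectivity, the all-positive lattice vector you are reaching for exists: $\det(\tD)\,\vec{1}\in\tLap$, either because the order of every element of $\Z\tV/\tLap$ divides $\det\tD$, or because $\tD\bigl(\mathrm{adj}(\tD)\,\vec{1}\bigr)=\det(\tD)\,\vec{1}$. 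Hence every coset contains some $a\geq\cmax$, and then $a^{\circ}=(a-\cmax)\circledast\cmax$ is recurrent by Proposition~\ref{prop:max-stable}. (Note the small slip: $a\circledast\cmax=(a+\cmax)^{\circ}$, which is not $a^{\circ}$; the identity you want is $(a-\cmax)\circledast\cmax=a^{\circ}$.)

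The genuine gap is injectivity, which you flag but do not close. The counting route needs $\#\{\text{recurrents}\}=\det\tD$, and that equality is not available independently of the theorem being proved: Matrix-Tree gives $\det\tD$ as a weighted count of spanning trees, but identifying that count with the recurrent configurations (Dhar's tree bijection) is itself a substantive theorem, no easier than the statement at hand, especially for directed multigraphs. Your ``direct'' fallback is circular: from $c$ recurrent with $c\in\tLap$ you deduce that its group inverse $c'$ also lies in $\tLap$, which is no new information, and ``stabilizing $c+c'+\cmax$ two ways'' cannot pin down $c=e$ without already knowing that each coset contains at most one recurrent --- which is precisely injectivity. What must be supplied is a proof that two equivalent recurrents coincide. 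The tools stated earlier in the paper do suffice in principle: with $b$ a burning configuration and $c=c'+\tD\sigma$ both recurrent, Theorem~\ref{thm:speer}~(2),(3),(5) together with the uniqueness/least-action clause of the stabilization theorem let one compare the stabilizations of $c+kb$ and $c'+kb$ for $k$ large enough that $k\sigma_b\pm\sigma\geq 0$, forcing $\sigma=0$. But some such argument (or the tree bijection) has to be carried out explicitly; as written, the injectivity step is an appeal to the conclusion rather than a proof of it.
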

Thus, each element of $\Z\tV$ is equivalent to a unique recurrent element modulo
the reduced Laplacian lattice.  The identity of the sandpile group is the
recurrent configuration in $\tLap$.  It can be calculated as
\[
\eta=((\cmax-(2\cmax)^{\circ})+\cmax)^{\circ}.
\]
Note that $\eta=0\bmod\tLap$, and since $\cmax-(2\cmax)^{\circ}\geq0$,
Proposition~\ref{prop:max-stable} guarantees that $\eta$ is recurrent.
\begin{example}\label{example:smith normal form}
  The reduced Laplacian of the sandpile graph in Figure~\ref{fig:sandpile graph}
  is
  \[
   \tD=
   \left(
   \begin{array}{rrr}
     4&-2&-3\\
     -1&4&-1\\
     -3&-1&5
   \end{array}
   \right).
  \]
  The Smith normal form of $\tD$ is $\mbox{diag}(1,1,21)$.  Hence,
  $\sand(G)\approx\Z/21\Z$. The identity is $(3,1,4)$, computed as follows:
  \begin{align*}
    (\cmax-(2\cmax)^{\circ})+\cmax&=((3,3,4)-(6,6,8)^{\circ})+(3,3,4)\\
    &=((3,3,4)-(2,0,4))+(3,3,4)\\
    &=(4,6,4)\rightsquigarrow (3,1,4).
  \end{align*}
\end{example}
As a consequence of the Matrix-Tree theorem, we have the following.
\begin{cor}
  The order of $\sand(G)$ is the sum of the weights of $G$'s directed spanning
  trees into $s$.
\end{cor}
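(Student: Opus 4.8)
The plan is to combine Theorem~\ref{thm:main isomorphism} with the Matrix-Tree theorem and the standard relationship between the determinant of an integer matrix and the order of its cokernel. First I would invoke Theorem~\ref{thm:main isomorphism} to replace $\sand(G)$ by the critical group $\mathcal{C}(G)=\Z\tV/\tLap$, reducing the problem to computing $\lvert\Z\tV/\im\tD\rvert$.

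Next I would recall that, by the Matrix-Tree theorem, $\det\tD$ equals the sum of the weights of the directed spanning trees of $G$ into $s$. Since $s$ is globally accessible there is at least one such tree, and since all edge weights are positive this sum is a \emph{positive} integer; in particular $\tD$ is injective, consistent with the earlier remark that the reduced Laplacian is invertible. For an injective endomorphism of a finitely generated free abelian group represented by an integer matrix $M$, the cokernel is finite of order $\lvert\det M\rvert$: this is immediate from the Smith normal form, since unimodular row and column operations over $\Z$ preserve both $\lvert\det\rvert$ and the isomorphism type of the cokernel, and for a diagonal matrix $\mathrm{diag}(d_1,\dots,d_n)$ the cokernel is $\bigoplus_i\Z/d_i\Z$, of order $\prod_i\lvert d_i\rvert$. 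Applying this with $M=\tD$ gives $\lvert\mathcal{C}(G)\rvert=\lvert\det\tD\rvert=\det\tD$, which is the asserted sum of spanning-tree weights.

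This argument is essentially a bookkeeping exercise built on results already in hand; the only point requiring care is ensuring that $\det\tD$ is genuinely positive, not merely nonzero, so that no absolute value appears in the final statement—and this follows from the existence of a spanning tree into the globally accessible sink together with the positivity of the weight function. As an alternative to the Smith normal form remark, one could simply point to Example~\ref{example:smith normal form}, where $\lvert\sand(G)\rvert$ is read off as the product of the elementary divisors of $\tD$.
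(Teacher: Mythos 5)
Your proposal is correct and follows exactly the route the paper intends: the paper presents this corollary as an immediate consequence of the Matrix-Tree theorem via the isomorphism $\sand(G)\approx\Z\tV/\tLap$ and the standard fact that the cokernel of an injective integer matrix has order equal to the absolute value of its determinant. Your added care about the positivity of $\det\tD$ (so that no absolute value is needed) is a reasonable and correct refinement of what the paper leaves implicit.
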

\begin{remark}
Babai \cite{Babai} has noted another characterization of the sandpile group: it
is the principal semi-ideal in $\mathcal{M}$ generated by $\cmax$, which turns
out to be the intersection of all the semi-ideals of $\mathcal{M}$.
\end{remark}

\begin{remark} 
In the literature, a sandpile configuration is often taken to be an element of
$\Z^{\tV}$.  We prefer to work in the dual group $\Z\tV=\Hom(\Z^{\tV},\Z)$ so
that the functor that takes a sandpile graph to its sandpile group is covariant.
Suppose that $G=(V,E,s)$ and $G'=(V',E',s')$ are sandpile graphs with reduced
Laplacian lattices $\tLap$ and $\tLap'$, respectively.  Let $\Psi:G'\to G$ be a
mapping of graphs that maps~$s'$ to~$s$.  Applying $\mathrm{hom_{\Z}}(\ \cdot\
,\Z)$ to the natural induced map $\Z^{V}\to\Z^{V'}$ yields $\Psi_*\colon\Z
V'\to\Z V$.  If $\Psi(\tLap')\subseteq\Lap$, there is an induced mapping of
sandpile groups.  This condition would seem to define a reasonable set of
morphisms, then, for a category of sandpile groups.  For work on the category
theory of sandpile groups, see~\cite{Reiner} and ~\cite{Treumann}.  For the
notion of a {\em harmonic morphism} of graphs, see~\cite{Baker2}.
\end{remark}

\subsection{Superstables}  Let $c=u+v$ be a configuration on the (unweighted,
undirected) sandpile graph in
Figure~\ref{fig:not superstable} with sink $s$.
\begin{figure}[ht] 
\begin{tikzpicture}
\SetVertexMath
\GraphInit[vstyle=Art]
\SetUpVertex[MinSize=3pt]
\SetVertexLabel
\tikzset{VertexStyle/.style = {%
shape = circle,
shading = ball,
ball color = black,
inner sep = 1.3pt
}}
\SetUpEdge[color=black]
\Vertex[LabelOut,Lpos=180,
Ldist=.1cm,x=0,y=0]{u}
\Vertex[LabelOut,Lpos=0,
Ldist=.1cm,x=2,y=0]{v}
\Vertex[LabelOut,Lpos=270,
Ldist=.2cm,x=1,y=-1]{s}
\Edges(u,v,s,u)
\end{tikzpicture}
\caption{Graph $G$.}\label{fig:not superstable}
\end{figure}
The vertices $u$ and $v$ are both stable in $c$, so there are no legal vertex
firings: firing either vertex would result in a negative amount of sand on a
vertex.  However, firing both vertices simultaneously results in a nonnegative
configuration, the zero configuration.  Each nonsink vertex loses two
grains of sand, but each also gains a grain from the other.
\begin{definition} Let $c$ be a configuration on the sandpile graph $G=(V,E,s)$.
  A {\em script-firing}, also called a {\em cluster-} or {\em multiset-firing},
  with {\em (firing) script} $\sigma\in\N\tV$ is the operation that replaces $c$
  with $c-\tD\,\sigma$.  The script-firing is {\em legal} if
  $\sigma\gneq0$ and $(c-\tD\,\sigma)_v\geq0$ for each $v\in\supp(\sigma)$.
  Thus, if $c\geq0$, the script-firing with script $\sigma\gneq0$ is legal if
  and only if $c-\tD\,\sigma\geq0$.
 
  A configuration $c$ is {\em superstable} if $c$ is nonnegative and has no
  legal script-firings.
\end{definition}
The idea of a {\em $G$-parking function} is essentially the same as that of a
superstable configuration:
\begin{definition}  Let $G=(V,E,s)$ be a sandpile graph.
  A {\em $G$-parking function}~\cite{Postnikov} (with respect to $s$) is a
  function $f\colon V\to\Z$ such that there exists a superstable configuration $c$ on
  $G$ with the property that $f(v)=c_v$ for $v\in\tV$ and $f(s)=-1$.
\end{definition}

An {\em acyclic orientation} of an undirected graph $G$ is a choice of
orientation for each edge of $G$ such that the resulting directed graph has no
directed cycles.  A vertex $v$ is a {\em source} for an acyclic orientation if
all the edges incident on $v$ are directed away from $v$.  If $\mathcal{O}$ is
an acyclic orientation and $v\in V$, then $\indeg_{\mathcal{O}}(v)$ denotes the
indegree of $v$ for the directed graph corresponding to $\mathcal{O}$.
\begin{thm}[\cite{Benson}]\label{thm:acyclic orientations}
  Let $G=(V,E,s)$ be an undirected sandpile graph. Then there is a bijection
  between the set of acyclic orientations of $G$ with unique source $s$ and the
  set of superstable configurations of $G$ of highest degree.  If $\mathcal{O}$
  is an acyclic orientation, the corresponding maximal superstable configuration
  is given by
  \[
  \sum_{v\in\tV}(\indeg_{\mathcal{O}}(v)-1)\,v.
  \]
\end{thm}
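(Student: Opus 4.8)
The plan is to exhibit the bijection explicitly and to verify it directly from the combinatorics of script-firings, without appealing to the sandpile group or to Riemann--Roch. Define $\Phi(\mathcal{O})=c_{\mathcal{O}}:=\sum_{v\in\tV}(\indeg_{\mathcal{O}}(v)-1)\,v$ for each acyclic orientation $\mathcal{O}$ of $G$ with unique source $s$. Since $s$ is the only source, $\indeg_{\mathcal{O}}(v)\ge 1$ for every $v\in\tV$, so $c_{\mathcal{O}}\ge 0$; and writing $m$ for the number of edges of $G$ counted with weight, $\deg(c_{\mathcal{O}})=\sum_{v\in\tV}\indeg_{\mathcal{O}}(v)-|\tV|=(m-\indeg_{\mathcal{O}}(s))-|\tV|=m-|\tV|=:g$, the genus of $G$, which is independent of $\mathcal{O}$. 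So it suffices to prove that $\Phi$ is injective, that its image is exactly the set of superstables of degree $g$, and that $g$ is the largest degree of a superstable.

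The workhorse for identifying superstables is a reduction lemma: \emph{if $c\ge 0$ admits any legal script-firing, then it admits one with a $0$--$1$ script.} I would prove this by choosing a legal $\sigma\gneq 0$ minimizing $\sum_v\sigma_v$; if some $\sigma_v\ge 2$, then $\sigma':=\sigma-\vec 1_{\supp\sigma}$ is still nonzero and, checking coordinates in $\supp\sigma$ and outside it separately, still legal, contradicting minimality. It follows that $c\ge 0$ is superstable if and only if for every nonempty $S\subseteq\tV$ there is a $v\in S$ with $c_v<\deg_S(v)$, where $\deg_S(v):=\sum_{u\in V\setminus S}\wt(v,u)$ --- because firing $\vec 1_S$ sends $c_v$ to $c_v-\deg_S(v)$ for $v\in S$ and only increases coordinates off $S$. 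Now given an acyclic $\mathcal{O}$ with unique source $s$ and a nonempty $S\subseteq\tV$: the restriction of $\mathcal{O}$ to the subgraph induced by $S$ is still acyclic and so has a source $v\in S$; every edge into $v$ then comes from $V\setminus S$, giving $\indeg_{\mathcal{O}}(v)\le\deg_S(v)$ and hence $c_v=\indeg_{\mathcal{O}}(v)-1<\deg_S(v)$. Thus $c_{\mathcal{O}}$ is a superstable of degree $g$; since such an $\mathcal{O}$ exists ($G$ is connected), the value $g$ is attained.

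For injectivity, if $c_{\mathcal{O}}=c_{\mathcal{O}'}$ then $\indeg_{\mathcal{O}}=\indeg_{\mathcal{O}'}$ at every vertex (the value at $s$ is forced, since all indegrees sum to $m$); letting $D$ be the set of edges the two orientations direct oppositely, equality of indegrees forces $\mathcal{O}$ to direct equally many $D$-edges into as out of each vertex, so $\mathcal{O}|_D$ is balanced and hence contains a directed cycle unless $D=\emptyset$; acyclicity of $\mathcal{O}$ gives $D=\emptyset$, so $\mathcal{O}=\mathcal{O}'$. For surjectivity onto the top-degree superstables, given a superstable $c$ I would greedily ``burn'' $\tV$: with $S_0=\tV$, having removed $v_1,\dots,v_{i-1}$ so that $S_i$ remains, the criterion above furnishes $v_i\in S_i$ with $c_{v_i}<\deg_{S_i}(v_i)$; remove it and continue. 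Prepending $s$ yields a linear order $s,v_1,\dots,v_n$ on $V$; orienting every edge from its earlier endpoint to its later one produces an acyclic orientation $\mathcal{O}_c$ in which $\indeg_{\mathcal{O}_c}(v_i)$ equals $\deg_{S_i}(v_i)$ (the weight of edges from $v_i$ to earlier vertices), so $c_{v_i}\le\indeg_{\mathcal{O}_c}(v_i)-1$, i.e.\ $0\le c\le c_{\mathcal{O}_c}$; and $c_{v_i}\ge 0$ forces $\indeg_{\mathcal{O}_c}(v_i)\ge 1$, so $s$ is the unique source of $\mathcal{O}_c$. Since $c_{\mathcal{O}_c}$ is a superstable of degree $g$, every superstable has degree at most $g$; and if $\deg(c)=g$, then $0\le c\le c_{\mathcal{O}_c}$ together with equality of degrees forces $c=c_{\mathcal{O}_c}=\Phi(\mathcal{O}_c)$. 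Hence $\Phi$ restricts to a bijection from acyclic orientations with unique source $s$ onto superstables of highest degree, and $\Phi(\mathcal{O})$ is by construction the claimed formula.

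The step that sounds routine but genuinely does the work is the reduction of superstability to $0$--$1$ scripts, equivalently the subset criterion, since the definition of ``superstable'' quantifies over \emph{all} scripts in $\N\tV$; once that is available, the degree bookkeeping, the ``source of the induced subgraph'' observation, and the balanced-subdigraph argument for injectivity are all short. A secondary point to watch is that each step invokes undirectedness only where it is legitimate --- in particular, that orienting an undirected edge of weight $w$ adds $w$ to exactly one endpoint's indegree.
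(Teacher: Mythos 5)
Your proof is correct and complete. Note that the paper itself offers no proof of this theorem---it is stated as a known result with a citation to Benson et al.---so there is no internal argument to compare against; what you have written is a self-contained verification along the standard lines. The one step that genuinely carries the weight, as you say, is the reduction of superstability to $0$--$1$ scripts: your minimality argument works because for $u\in\supp(\sigma)$ one has $(\tD\,\vec{1}_{\supp\sigma})_u=\outdeg(u)-\sum_{v\in\supp\sigma}\wt(v,u)\geq 0$, which uses undirectedness exactly where you flag it (for a general directed graph this quantity can be negative, which is why the paper needs burning scripts other than $\vec{1}$). The remaining pieces all check: the subset criterion $c_v<\deg_S(v)$ is the correct translation of ``no legal $\vec{1}_S$-firing''; the source of the induced acyclic subdigraph on $S$ witnesses the criterion for $c_{\mathcal{O}}$; the balanced-subdigraph argument gives injectivity; and the greedy burning order simultaneously produces the preimage $\mathcal{O}_c$ and the bound $\deg(c)\leq g$, so surjectivity and maximality come for free from $0\leq c\leq c_{\mathcal{O}_c}$. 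Your surjectivity construction is Dhar's burning algorithm in disguise (the order $s,v_1,\dots,v_n$ is a burning order for the dual recurrent $\cmax-c$), which ties your argument to \S\ref{subsect:burning} and to the construction of $c_{\prec}$ in the proof of Theorem~\ref{thm:loopy}. The only caveat, inherited from the theorem statement itself, is that loops must be excluded (or ignored) when speaking of acyclic orientations, since loops do not affect superstability but preclude acyclicity.
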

For an extension of the previous theorem from maximal superstable configurations
to all superstable configurations (and a connection with hyperplane arragements), see~\cite{Hopkins}.

\subsection{Burning configurations}\label{subsect:burning}
Speer's script algorithm \cite{Speer} generalizes the burning algorithm of Dhar,
testing whether a configuration is recurrent.  We present a variation on Speer's
algorithm using burning configurations.     
\begin{definition}
A configuration $b$ is a \emph{burning configuration} if it has the
following three properties:
\begin{enumerate} 
  \item $b\in \tLap$,
  \item $b\geq 0$,
  \item for all $v \in \tV$, there exists a path to $v$
    from some element of $\mathrm{supp}(b)$.
\end{enumerate}
If $b$ is a burning configuration, we call $\sigma_{b} =
(\tD)^{-1}b$
the {\em script} or the {\em firing vector} for $b$.
\end{definition}

\begin{thm}[\cite{Perkinson}]\label{thm:speer}
Let $b$ be the burning configuration with script $\sigma_b$.
\begin{enumerate}
  \item $(kb)^{\circ}$ is the identity configuration for $k\gg0$.
  \item A configuration $c$ is recurrent if and only if the stabilization of
    $c+b$ is $c$.
  \item A configuration $c$ is recurrent if and only if the firing vector
    for the stabilization of $b+c$ is~$\sigma_b$.
  \item $\sigma_b\geq\vec{1}$.
  \item If $c$ is a configuration and $\tau$ is the firing vector for
    the stabilization of $c+b$, then $\tau\leq\sigma_b$.
\end{enumerate}
\end{thm}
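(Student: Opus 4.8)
\emph{The plan is to prove the five parts in the order (4); (5) and the forward implications of (2)--(3); (1); and finally the reverse implications of (2)--(3).}

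\textbf{Part (4).} Write $b=\tD\,\sigma_b$. Since $\tD^{\mathsf T}=D-A$ equals $D(I-P)$ with $P$ the substochastic transition matrix of the random walk on $\tV$ and since the sink is reachable from every vertex, $(I-P)^{-1}=\sum_{k\ge 0}P^k$ converges with nonnegative entries, so $\tD^{-1}=\big((I-P)^{-1}D^{-1}\big)^{\mathsf T}$ is entrywise nonnegative and $\sigma_b=\tD^{-1}b\ge 0$. Let $Z=\{v\in\tV:(\sigma_b)_v=0\}$. Reading off the $v$-coordinate of $b=\tD\,\sigma_b$ for $v\in Z$ gives $b_v=-\sum_{u\in\tV}\wt(u,v)(\sigma_b)_u\le 0$, so $b_v=0$ and every in-neighbour of $v$ again lies in $Z$; thus $Z$ is closed under taking in-neighbours and is disjoint from $\supp(b)$. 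But property~(3) of a burning configuration says every nonsink vertex is reached from $\supp(b)$ by a directed path, which does not pass through $s$ since $s$ never fires; hence $Z=\emptyset$, and integrality of $\sigma_b$ upgrades this to $\sigma_b\ge\vec 1$.

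\textbf{Part (5) (for stable $c$) and the forward directions of (2)--(3).} For~(5), fix a legal firing sequence stabilizing $c+b$, and let $\tau^{(i)}$ be the firing counts after $i$ steps. An induction shows $\tau^{(i)}\le\sigma_b$: if the $i$-th firing is at a vertex $u$ with $\tau^{(i-1)}_u=(\sigma_b)_u$, then using $\tau^{(i-1)}\le\sigma_b$ and $b=\tD\sigma_b$ one finds that the level of $u$ just before that firing is $\le c_u<\outdeg(u)$, contradicting legality; hence $\tau=\tau^{(T)}\le\sigma_b$. Now let $c$ be recurrent. By Proposition~\ref{prop:max-stable}, $c=(a+\cmax)^{\circ}$ with $a\ge 0$, so $(c+b)^{\circ}=(a+b+\cmax)^{\circ}$ is again recurrent by Proposition~\ref{prop:max-stable}, while $(c+b)^{\circ}-c=b-\tD\tau\in\tLap$ (where $\tau$ is the firing vector of $c+b\to(c+b)^{\circ}$). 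Two recurrent configurations in the same coset of $\tLap$ coincide by Theorem~\ref{thm:main isomorphism}, so $(c+b)^{\circ}=c$; and then $c+b-\tD\tau=c$ with $\tD$ injective forces $\tau=\sigma_b$. This proves $(\Rightarrow)$ in~(2) and in~(3).

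\textbf{Part (1).} Since $kb\in\tLap$, the stable configuration $(kb)^{\circ}$ lies in the coset $\tLap$, which contains the sandpile identity $\eta$ and no other recurrent configuration, so it suffices to show $(kb)^{\circ}$ is recurrent for $k\gg 0$. Here property~(3) is the crux: for $k$ large each vertex of $\supp(b)$ is so overloaded in $kb$ that it may be fired many times while its level stays above $\cmax$, which loads each of its out-neighbours arbitrarily high; iterating this along directed paths out of $\supp(b)$ — which by~(3) exhaust $\tV$ — produces, for $k\gg 0$, a legal firing sequence from $kb$ to a configuration $d$ with $d\ge\cmax$. By the uniqueness half of the stabilization theorem $(kb)^{\circ}=d^{\circ}$, and $d^{\circ}=(d-\cmax)\circledast\cmax$ is recurrent by Proposition~\ref{prop:max-stable}. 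Hence $(kb)^{\circ}=\eta$ for $k\gg 0$.

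\textbf{Reverse directions of (2)--(3), and the main difficulty.} If $(c+b)^{\circ}=c$ with $c\ge 0$, then $(c+kb)^{\circ}=c$ for all $k\ge 1$ by iteration, so for large $k$, using Part~(1), $c=\big(c+(kb)^{\circ}\big)^{\circ}=(c+\eta)^{\circ}=c\circledast\eta$; writing $\eta=(a_0+\cmax)^{\circ}$ with $a_0\ge 0$, $c\circledast\eta=(c+a_0)\circledast\cmax$ is recurrent, so $c$ is recurrent. Part~(3) $(\Leftarrow)$ follows at once: if the firing vector of $b+c\to(b+c)^{\circ}$ is $\sigma_b$, then $(b+c)^{\circ}=b+c-\tD\sigma_b=c$, and $c$ is recurrent by the case just proved. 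The one genuinely delicate step is the flooding argument in Part~(1): making precise that for $k\gg 0$ the stabilization of $kb$ can be routed through a configuration dominating $\cmax$. This is exactly where property~(3) is indispensable — and where one must keep the sink out of the relevant directed paths (equivalently, treat the sink as absolute, which the paper notes is harmless), since otherwise even Part~(4) can fail. Everything else is bookkeeping with the abelian property, Proposition~\ref{prop:max-stable}, and the uniqueness of recurrent coset representatives from Theorem~\ref{thm:main isomorphism}.
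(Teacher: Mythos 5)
The paper states Theorem~\ref{thm:speer} without proof, citing~\cite{Perkinson}, so there is no in-text argument to compare against. Judged on its own, your proof is correct and follows the standard route: nonnegativity of $\tD^{-1}$ (via the killed random walk) together with the reachability condition for~(4); the least-action induction for~(5); uniqueness of the recurrent representative in each coset of $\tLap$ (Theorem~\ref{thm:main isomorphism}) plus injectivity of $\tD$ for the forward directions of (2)--(3); and the abelian property together with~(1) for the reverse directions. You have also correctly isolated the two conventions without which the statement is simply false: part~(5) requires $c$ to be stable (take one vertex of outdegree $1$ and $c=5$ to see that it fails otherwise), and the paths in condition~(3) of the definition of a burning configuration must avoid the sink (with $V=\{v_1,v_2,s\}$ and edges $v_1\to s$, $v_2\to s$, $s\to v_2$, the configuration $b=v_1$ would otherwise qualify yet has $\sigma_b=(1,0)\not\geq\vec 1$). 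In the reverse direction of~(2) your added hypothesis $c\geq0$ is harmless: the firing vector equals $\sigma_b\geq\vec 1$, so every vertex fires during the stabilization and hence ends nonnegative.

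The one step you leave as a sketch is the flooding argument in~(1), and you are right that it is the only delicate point. To close it: order $\tV$ as $v_1,\dots,v_n$ so that each $v_i$ either lies in $\supp(b)$ or has an in-neighbour among $v_1,\dots,v_{i-1}$ (possible by condition~(3) read inside $\tV$), and fire the vertices in blocks, all firings of $v_1$ first, then of $v_2$, and so on, choosing the block sizes $m_i$ recursively so that $v_i$ retains at least $\outdeg(v_i)$ grains after its own block; each $m_i\to\infty$ as $k\to\infty$ because some earlier $m_j$ with $\wt(v_j,v_i)>0$ does, legality within a block is automatic, and since every firing after $v_i$'s block only adds sand to $v_i$, the terminal configuration dominates $\cmax$. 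With that inserted, the proof is complete.
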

Thus, a configuration $c$ is in the sandpile group if and only if adding a
burning configuration to $c$ and stabilizing returns $c$, or if, equivalently,
the firing script for the stabilization is equal to the burning script.  For the
case of an undirected graph, as we see in the following theorem, one may take
$\vec{1}$ as the firing script.  Adding the burning configuration to a
configuration $c$ in that case can be thought of as placing~$c$ on the graph,
then firing the sink vertex.  Checking whether each vertex fires
exactly once in the subsequent stabilization is known as {\em Dhar's algorithm}.

\begin{thm}[\cite{Speer},\cite{Perkinson}]\label{thm:speer existence}
  There exists a unique burning configuration $b$ with script
  $\sigma_b=\tD^{-1}b$ having the following property:
   if $\sigma_{b'}$ is the script for a burning configuration
    $b'$, then $\sigma_{b'}\geq\sigma_{b}$.
  For this $b$, we have:
  \begin{enumerate}
    \item For all $v\in\tV$, $b_v<\outdeg(v)$ unless $v$ is a source, i.e.,
      unless $\mathrm{indeg}(v)=0$, in which case $b_v=\outdeg(v)$.  Thus, $b$ is
      stable unless $G$ has a source, and in any case, $b_v\leq \outdeg(v)$ for
      all~$v$.
    \item $\sigma_b\geq\vec{1}$ with equality if and only if $G$ has no ``selfish''
      vertices, i.e., no vertex $v\in\tV$ with
      $\mathrm{indeg}(v)>\outdeg(v)$.
  \end{enumerate}
  We call this $b$ the {\em minimal burning configuration} and its script,
  $\sigma_b$, the {\em minimal burning script}.
\end{thm}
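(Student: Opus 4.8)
\smallskip

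The plan is to describe the burning scripts explicitly and then single out the smallest one.  As the remark following the definition of a sandpile graph permits, I assume throughout that $s$ is an absolute sink; then $\indeg(v)=\sum_{u\in\tV}\wt(u,v)$ for every $v\in\tV$, and a path as in condition~(3) of a burning configuration may be taken to run through nonsink vertices only.  I also use two standard facts about the reduced Laplacian of a graph with globally accessible sink: $d:=\det\tD\ge1$ by the Matrix-Tree theorem, so that $d\,\Z\tV\subseteq\tLap$; and $\tD^{-1}\ge0$ entrywise, which follows from the all-minors form of the Matrix-Tree theorem (the adjugate of $\tD$ counts weighted rooted spanning forests).

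The heart of the argument is the claim that a configuration $b$ is a burning configuration exactly when $\sigma:=\tD^{-1}b$ belongs to $\mathcal B:=\{\sigma\in\Z\tV:\sigma\ge\vec1,\ \tD\sigma\ge0\}$.  For the ``if'' direction, assume $\sigma\ge\vec1$ and $b:=\tD\sigma\ge0$; only condition~(3) needs checking.  Let $R\subseteq\tV$ be the set of vertices reachable inside $\tV$ from $\supp(b)$.  If $Z:=\tV\setminus R$ is nonempty then $b_v=0$ for $v\in Z$, and since $R$ is closed under out-edges lying in $\tV$, every in-neighbor in $\tV$ of a vertex of $Z$ also lies in $Z$; hence $0=b_v=\outdeg(v)\sigma_v-\sum_{u\in Z}\wt(u,v)\sigma_u$ for $v\in Z$, and summing over $v\in Z$ gives $\sum_{v\in Z}\outdeg(v)\sigma_v=\sum_{u\in Z}\sigma_u\sum_{v\in Z}\wt(u,v)$, whose right side is at most $\sum_{u\in Z}\sigma_u\outdeg(u)$.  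Since the two extremes coincide and $\sigma_u\ge1$, we get $\sum_{v\in Z}\wt(u,v)=\outdeg(u)$ for every $u\in Z$, so every out-edge of every vertex of $Z$ stays in $Z$ and no vertex of $Z$ reaches $s$, contradicting global accessibility.  For the ``only if'' direction, let $\sigma=\tD^{-1}b$ be the script of a burning configuration; then $\sigma\ge0$ since $\tD^{-1}\ge0$, and if $\sigma_v=0$ for some $v$ then $b_v=-\sum_{u\in\tV}\wt(u,v)\sigma_u\ge0$ forces $b_v=0$ and $\sigma_u=0$ for every in-neighbor $u$ of $v$ in $\tV$.  Thus the zero set of $\sigma$ receives no $\tV$-edges from its complement, which contains $\supp(b)$, contradicting~(3); hence $\sigma\ge\vec1$, which already gives the inequality $\sigma_b\ge\vec1$ of part~(2).

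Next I extract the minimal element of $\mathcal B$.  It is nonempty: the vector $\xi:=\tD^{-1}\vec1$ is strictly positive (each row of $\tD^{-1}$ is nonnegative and nonzero), so for $N$ large enough $\sigma:=Nd\,\xi=\tD^{-1}(Nd\,\vec1)$ lies in $\Z\tV$, satisfies $\sigma\ge\vec1$, and has $\tD\sigma=Nd\,\vec1\ge0$.  It is closed under componentwise minimum: if $\sigma,\sigma'\in\mathcal B$ and $\mu:=\min(\sigma,\sigma')$, then for each $v$, choosing (say) $\mu_v=\sigma_v$ gives $(\tD\mu)_v=\outdeg(v)\sigma_v-\sum_{u\in\tV}\wt(u,v)\mu_u\ge\outdeg(v)\sigma_v-\sum_{u\in\tV}\wt(u,v)\sigma_u=(\tD\sigma)_v\ge0$, while $\mu\ge\vec1$ is clear.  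A nonempty set of integer vectors that is bounded below and closed under componentwise minimum has a least element $\sigma_b$ (take the componentwise minimum of the finitely many members below any fixed one).  Then $b:=\tD\sigma_b$ is a burning configuration whose script is dominated by the script of every burning configuration, and it is the unique such configuration because $\tD$ is injective.

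Finally, parts~(1) and~(2) come from the minimality of $\sigma_b$.  For~(2), $\sigma_b=\vec1$ iff $\vec1\in\mathcal B$ iff $\tD\vec1\ge0$ iff $\outdeg(v)\ge\indeg(v)$ for every $v$, i.e.\ iff $G$ has no selfish vertex.  For~(1), if $\indeg(v)=0$ then $v$ has no in-edges and $b_v=\outdeg(v)(\sigma_b)_v$; lowering $(\sigma_b)_v$ to $1$ keeps the vector in $\mathcal B$ (its $v$-coordinate becomes $\outdeg(v)\ge0$ and every other coordinate can only increase), so minimality forces $(\sigma_b)_v=1$ and $b_v=\outdeg(v)$.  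If instead $\indeg(v)>0$ and $b_v\ge\outdeg(v)$, then $b_v=\outdeg(v)(\sigma_b)_v-\sum_{u\in\tV}\wt(u,v)(\sigma_b)_u\ge\outdeg(v)$ together with $\sum_{u\in\tV}\wt(u,v)(\sigma_b)_u\ge\indeg(v)\ge1$ forces $(\sigma_b)_v\ge2$, so $\sigma_b-e_v$ still lies in $\mathcal B$ (its $v$-coordinate maps to $b_v-\outdeg(v)\ge0$ and the others only increase), contradicting minimality; hence $b_v<\outdeg(v)$.  The bound $b_v\le\outdeg(v)$ and the stability of $b$ away from sources follow at once.  The one delicate step is the identification of $\mathcal B$ with the set of burning scripts: it is the only place global accessibility of $s$ and the nonnegativity of $\tD^{-1}$ enter, and the only place where the precise reading of ``path'' in~(3) matters; everything after it is formal manipulation of $\tD$.
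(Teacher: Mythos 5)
The paper itself states Theorem~\ref{thm:speer existence} without proof, citing \cite{Speer} and \cite{Perkinson}, so there is no internal argument to compare against; judged on its own, your proof is correct and complete. Your key move---identifying the burning scripts with the set $\{\sigma\in\Z\tV:\sigma\geq\vec{1},\ \tD\sigma\geq0\}$, observing that this set is nonempty, bounded below, and closed under componentwise minimum, and taking its least element---cleanly delivers existence, uniqueness, and both numbered properties in one stroke. It is also the natural theoretical counterpart of the paper's computational remark following the theorem (start from $\sigma=\vec{1}$, i.e.\ $b=$ the column sum of $\tD$, and increment $\sigma_v$ wherever $(\tD\sigma)_v<0$): your meet-closedness argument is exactly what guarantees that greedy procedure terminates at the minimum. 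The two nontrivial inputs, namely $\tD^{-1}\geq0$ and the connectivity argument showing that $\sigma\geq\vec{1}$, $\tD\sigma\geq0$ forces condition~(3), are both handled correctly, including the equality-forcing summation over the unreachable set $Z$.

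One point deserves emphasis, and you were right to flag it: the reduction to an absolute sink is not merely a convenience but is needed for the theorem to be true as stated. If $s$ retains out-edges, then with the literal reading of condition~(3) a configuration such as $b$ supported only on vertices from which every other nonsink vertex is reachable \emph{via} $s$ can be a burning configuration with some $\sigma_v=0$, and likewise a vertex can be ``selfish'' in $G$ only on account of edges from $s$, making part~(2) fail (e.g.\ two vertices joined by a single edge each way into and out of the sink with unequal weights). So paths in condition~(3) and $\indeg$ in part~(2) must be read in the graph with the sink's out-edges deleted, which is the paper's own sanctioned normalization. With that understanding, your argument has no gaps.
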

\begin{remark}
  To compute the minimal burning configuration, start with $b$ equal to the sum
  of the columns of $\tD$.  If $b\geq0$, stop.  Otherwise, if $b_v<0$ for some
  $v\in\tV$, replace $b$ by $b+\tD(v)$. Repeat until $b\geq0$. 
\end{remark}
\begin{example}  We would like to compute the minimal burning configuration and
  corresponding script for the sandpile graph $G$ in Figure~\ref{fig:sandpile
  graph}.  Continuing Example~\ref{example:smith normal form}, the sum of
  the columns of $\tD$ is $(-1,2,1)^t$.  Since the first entry of the sum is
  negative, add in the first column of $\tD$ to get $(3,1,-2)^t$.  Since
  the third entry is now negative, add in the third column of $\tD$ to get
  $(0,0,3)$.  Thus, the minimal burning configuration is $b=(0,0,3)$, and the
  burning script is $\sigma_b=(2,1,2)$, recording the columns of $\tD$ used to
  obtain~$b$.
\end{example}
\subsection{Some isomorphisms.}  
\subsubsection{Choice of sink vertex.} Lemma~4.12 of~\cite{Holroyd} states that
for Eulerian graphs, the sandpile group is, up to isomorphism, independent of the
choice of sink.  Here, we present a generalization of that result.

Let $G=(V,E,s)$ be a sandpile graph. Recall that
$\mathcal{C}(G):=\Z\tV/\tLap$ is the critical group of $G$, isomorphic to
the sandpile group, $\sand(G)$, by Theorem~\ref{thm:main isomorphism}.  Let
\[
\Z V_{0}:=\{c\in\Z V: \deg(c)=0\}.
\]
Since the image of the Laplacian $\Delta$ is contained in $\Z v_0$, we may
define the mapping $\Delta_0:\Z V\to\Z V_0$ by $\Delta_0(c):=\Delta(c)$ for all $c\in\Z V$.

\begin{prop}[\cite{Perkinson}]\label{prop:sink}\ 
  \begin{enumerate}
    \item\label{sink1} There is a commutative diagram with exact rows
\[
\xymatrix{
0\ar[r]&\Z\tV\ar[r]^{\tD}\ar[d]_{\iota}&\Z\tV\ar[r]\ar[d]_{\varepsilon}
&\sand(G)\ar[r]\ar[d]&0\\
0\ar[r]&\Z V/\ker\Delta\ar[r]^{\Delta_0}&\Z V_0\ar[r]&\mathcal{C}(G)\ar[r]&0.
}
\]
where $\iota(v):=v+\ker\Delta$ and $\varepsilon=v-s$ for all $v\in\tV$.
\item\label{sink2} For each $v\in V$, let $\tau_v$ be the sum of the weights of all spanning trees
  directed into $v$, let $d=\gcd\{\tau_u:u\in V\}$, and let
  $\tilde{\tau}_v:=\tau_v/d$.  Define $\tilde{\tau}:=\sum_v\tilde{\tau}_vv\in\Z
  V$.  Then
  \[
  \ker{\Delta}=\myspan_{\Z}\{\tilde{\tau}\}.
  \]
  
\item\label{sink3} There is a short exact sequence
  \[
0\longrightarrow\Z/\tilde{\tau}_s\Z\longrightarrow\mathcal{C}(G)
\longrightarrow\Z V_0/\Lap\longrightarrow0.
  \]
  \end{enumerate}
\end{prop}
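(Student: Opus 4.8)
The plan is to establish the three parts in order, using the commutative diagram in part~\eqref{sink1} as the organizing device and deducing parts~\eqref{sink2} and~\eqref{sink3} from it together with the Matrix-Tree theorem. For part~\eqref{sink1}, the two rows are built to be the defining presentations of $\sand(G)\cong\mathcal C(G)$ and of $\Z V_0/\Lap$; exactness of the top row is Theorem~\ref{thm:main isomorphism} (together with invertibility of $\tD$), and exactness of the bottom row is essentially the definition of $\Delta_0$ once I check that $\Delta_0$ is injective on $\Z V/\ker\Delta$, which is immediate, and that its image is $\Lap=\im\Delta$. The only real content is commutativity of the left-hand square: I must verify $\Delta_0(\iota(v)) = \varepsilon(\tD v)$ for each nonsink vertex $v$, i.e.\ that $\Delta(v)$ and $\varepsilon(\tD v)=\tD v - (\deg \tD v)\,s$ agree modulo $\ker\Delta$. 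In fact they are \emph{equal}: writing out $\Delta(v)=\outdeg(v)\,v-\sum_{u\in V}\wt(v,u)u$ and separating off the $u=s$ term shows $\Delta(v)=\tD v-\wt(v,s)\,s$, and since $\tD v$ has degree $\wt(v,s)$ (the full Laplacian column sums to zero), this is exactly $\varepsilon(\tD v)$. Commutativity of the right square then follows formally by the universal property of cokernels, and the induced vertical map $\sand(G)\to \Z V_0/\Lap$ is surjective because $\varepsilon$ is (every $v-s$ is hit, and these generate $\Z V_0$).

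For part~\eqref{sink2}, I would argue that $\ker\Delta$ is a rank-one subgroup of $\Z V$ (the full Laplacian has rank $n$ since its reduced Laplacian $\tD$ is invertible), so it is generated by a unique primitive vector up to sign; it remains to identify that generator. The vector $\tau=\sum_v\tau_v v$ of weighted in-directed-spanning-tree counts lies in $\ker\Delta$: this is the "all-minors" form of the Matrix-Tree theorem, namely that each $\tau_v$ is (up to a uniform sign) the cofactor of the $(v,v)$ entry of $\Delta$, and the vector of cofactors along a row of a square matrix with zero column-sums is annihilated by that matrix. Since each $\tau_v>0$ (globally accessible vertices guarantee at least one in-directed spanning tree to every $v$ — this uses our standing hypothesis, and one should note the nonsink case reduces to Matrix-Tree applied after re-rooting), $\tau$ is a genuine nonzero element, and dividing by $d=\gcd_v\tau_v$ makes it primitive; hence $\ker\Delta=\myspan_\Z\{\tilde\tau\}$.

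For part~\eqref{sink3}, I would apply the snake lemma to the diagram of part~\eqref{sink1}. The snake connecting homomorphism and the fact that the middle and right vertical maps are surjective with the left map $\iota$ injective give an exact sequence $0\to\ker(\sand(G)\to\Z V_0/\Lap)\to\cok(\iota)\to\cok(\varepsilon)\to 0$ after identifying kernels and cokernels of the outer columns; since $\varepsilon$ is surjective, $\cok(\varepsilon)=0$, and one computes $\cok(\iota)=\Z V/(\ker\Delta+\Z\tV)$. Now $\Z\tV$ together with $\ker\Delta$ spans a finite-index subgroup of $\Z V$, and quotienting out $\Z\tV$ collapses everything to the $s$-coordinate, sending $\tilde\tau$ to $\tilde\tau_s$; thus $\cok(\iota)\cong\Z/\tilde\tau_s\Z$. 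Feeding this back in yields the short exact sequence $0\to\Z/\tilde\tau_s\Z\to\mathcal C(G)\to\Z V_0/\Lap\to 0$, using $\mathcal C(G)\cong\sand(G)$. The step I expect to require the most care is the precise bookkeeping in the snake lemma — making sure the connecting map's target is exactly $\Z/\tilde\tau_s\Z$ and that the image $\Z V_0/\Lap$ term is correctly identified as the cokernel of the bottom row rather than something off by the degree-zero condition — together with the clean justification, in part~\eqref{sink2}, that the Matrix-Tree cofactor vector really is the primitive generator and that $\tau_v>0$ for every vertex under our standing global-accessibility assumption.
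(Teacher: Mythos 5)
The paper does not actually prove this proposition --- it is stated with a citation to \cite{Perkinson} --- so there is no internal argument to compare against; your proof is the natural one and is essentially sound. In particular you handle the one genuinely delicate point correctly: for parts~(1) and~(3) to be mutually consistent, the cokernel of the bottom row must be read as $\Z V_0/\Lap$ (so the right-hand vertical arrow is the induced \emph{surjection} $\sand(G)\to\Z V_0/\Lap$, not the isomorphism onto $\mathcal{C}(G)=\Z\tV/\tLap$; as printed, an exact bottom row ending in $\mathcal{C}(G)$ would force $\tilde{\tau}_s=1$). Your verification $\Delta(v)=\tD v-\wt(v,s)\,s=\varepsilon(\tD v)$ for the left square, the observation that $\varepsilon$ is an isomorphism of free groups, and the snake-lemma identification $\ker\bigl(\sand(G)\to\Z V_0/\Lap\bigr)\cong\cok(\iota)\cong\Z V/(\Z\tV+\ker\Delta)\cong\Z/\tilde{\tau}_s\Z$ are all correct.

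Two points in part~(2) need repair, though neither is fatal. First, the claim that $\tau_v>0$ for \emph{every} vertex is false under the paper's standing hypothesis, which guarantees only one globally accessible vertex: a spanning tree directed into $v$ exists only if $v$ is reachable from every other vertex, so for the two-vertex graph with the single edge $v\to s$ one has $\tau_v=0$. This is harmless --- you only need $\tau\neq 0$, and $\tau_s=\det\tD>0$ by Matrix--Tree and global accessibility of $s$ --- but the nonvanishing should be routed through $s$ alone. Second, the cofactor argument has a gap as written: the vector of cofactors along row $i$ has $j$-th entry the $(i,j)$-cofactor $C_{ij}$, which for $j\neq i$ is an off-diagonal minor, so knowing $\tau_{v_j}=C_{jj}$ does not by itself place $\tau=(C_{11},\dots,C_{n+1,n+1})$ in $\ker\Delta$. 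You need the standard supplementary fact that $C_{ij}=C_{jj}$ for all $i$; this follows because $\mathrm{adj}(\Delta)\,\Delta=0$ puts every row of $\mathrm{adj}(\Delta)$ in the left kernel of $\Delta$, which is spanned over $\Q$ by $\vec{1}$ (zero column sums plus rank $n$), so the rows of $\mathrm{adj}(\Delta)$ are constant and $\mathrm{adj}(\Delta)=\tau\,\vec{1}^{\,t}$. With that line inserted, $\Delta\tau=0$, and the rest of your primitivity argument (kernels of maps into torsion-free groups are saturated, and dividing by $d=\gcd_v\tau_v$ yields the primitive generator) goes through.
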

  \begin{cor}
    If $G$ is an Eulerian graph (in particular, if $G$ is undirected), then the
    sandpile group for $G$ is independent of the choice of sink vertex.
  \end{cor}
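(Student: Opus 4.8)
The plan is to derive the corollary from part~(\ref{sink3}) of Proposition~\ref{prop:sink}, namely the short exact sequence
\[
0\longrightarrow\Z/\tilde{\tau}_s\Z\longrightarrow\mathcal{C}(G)\longrightarrow\Z V_0/\Lap\longrightarrow0,
\]
combined with the isomorphism $\sand(G)\cong\mathcal{C}(G)$ of Theorem~\ref{thm:main isomorphism}. The key observation is that the right-hand term $\Z V_0/\Lap$ is assembled only from the full Laplacian lattice $\Lap=\im\Delta$ and the degree-zero subgroup $\Z V_0$, neither of which involves the sink; so once we know the left-hand term is trivial, $\sand(G)$ is pinned down as $\Z V_0/\Lap$, manifestly independent of the choice of sink.

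First I would show that for an Eulerian graph one has $\tilde{\tau}=\vec{1}$, and in particular $\tilde{\tau}_s=1$ for every vertex $s$. By part~(\ref{sink2}) of Proposition~\ref{prop:sink}, $\ker\Delta=\myspan_{\Z}\{\tilde{\tau}\}$, and $\tilde{\tau}$ is primitive by construction (the gcd of its entries is $1$). On the other hand, as noted in the text the Eulerian hypothesis is precisely the statement that $\vec{1}\in\ker\Delta$. Hence $\vec{1}=k\,\tilde{\tau}$ for some $k\in\Z$, and primitivity of $\vec{1}$ forces $k=\pm1$. Since every vertex of an Eulerian graph is globally accessible, there is a spanning tree directed into each $v$, so $\tau_v>0$ and therefore $\tilde{\tau}_v>0$ for all $v$; this rules out $k=-1$, giving $\tilde{\tau}=\vec{1}$.

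With $\tilde{\tau}_s=1$, the group $\Z/\tilde{\tau}_s\Z$ is trivial, so the short exact sequence collapses to an isomorphism $\mathcal{C}(G)\cong\Z V_0/\Lap$. Composing with Theorem~\ref{thm:main isomorphism} yields $\sand(G)\cong\Z V_0/\Lap$, whose right-hand side does not depend on $s$, proving the corollary. The parenthetical case is immediate since, as recorded earlier, every undirected graph is Eulerian.

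I expect the only step with real content is the identification $\tilde{\tau}=\vec{1}$: it combines the rank-one description of $\ker\Delta$ from Proposition~\ref{prop:sink}(\ref{sink2}), the primitivity and strict positivity of $\tilde{\tau}$, and the stated equivalence between the Eulerian condition and $\vec{1}\in\ker\Delta$. Everything afterward is a formal consequence of the exact sequence.
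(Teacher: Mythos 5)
Your proposal is correct and follows essentially the same route as the paper: use Proposition~\ref{prop:sink}(\ref{sink2}) together with $\vec{1}\in\ker\Delta$ to get $\tilde{\tau}_s=1$, then conclude from the exact sequence in Proposition~\ref{prop:sink}(\ref{sink3}) that $\sand(G)\cong\Z V_0/\Lap$, which is sink-independent. The only difference is that you spell out the primitivity/positivity argument for $\tilde{\tau}=\vec{1}$, which the paper asserts without detail.
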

  \begin{proof}
    Suppose $G$ is Eulerian.  Then each vertex is globally accessible.  So it
    makes sense to talk about the sandpile group of $G$ with respect to any of
    its vertices.  Since $\indeg(v)=\outdeg(v)$ for all $v\in V$, we have
    that $\vec{1}\in\ker\Delta$.  It follows from Proposition~\ref{prop:sink}
    (\ref{sink2}) that $\tilde{\tau}_v=1$ for all $v$.  Fix a vertex $s$ and
    consider the sandpile group of $G$ with respect to $s$.  It is isomorphic to
    the critical group (with respect to $s$), and hence isomorphic to $\Z
    V_0/\Lap$ by Proposition~\ref{prop:sink} (\ref{sink3}).  However, $\Z V_0/\Lap$
    does not depend on the choice of a sink.
  \end{proof}

\subsubsection{Planar duality.}  Let $G=(V,E)$ be an undirected graph.  Fix an
orientation~$\mathcal{O}$ of the edges of $G$.  Thus, for each $\{u,v\}\in E$
we have that either $(u,v)$ or $(v,u)$ is in $\mathcal{O}$, but not both.  Let
$e=\{u,v\}\in E$, and suppose that $(u,v)\in\mathcal{O}$.  In the free abelian
group $\Z E$, we identify $(u,v)$ with $e$ and $(v,u)$ with $-e$.  We also
define~$e^{-}:=u$ and $e^{+}:=v$. 

The {\em (integral) cycle space}, $\mathcal{C}=\mathcal{C}_G\subseteq\Z E$, is
the $\Z$-span of the cycles of $G$.
\begin{example}
  Let $G$ be the (undirected) triangle with edges oriented as in
  Figure~\ref{fig:oriented triangle}.
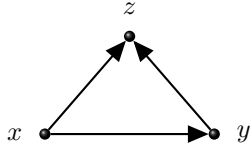
\begin{figure}[ht] 
\begin{tikzpicture}[scale=1.3]
\SetVertexMath
\GraphInit[vstyle=Art]
\SetUpVertex[MinSize=3pt]
\SetVertexLabel
\tikzset{VertexStyle/.style = {%
shape = circle,
shading = ball,
ball color = black,
inner sep = 1.5pt
}}
\SetUpEdge[color=black]
\Vertex[LabelOut,Lpos=90, Ldist=.1cm,x=0,y=1]{z}
\Vertex[LabelOut,Lpos=180, Ldist=.1cm,x=-0.866,y=0]{x}
\Vertex[LabelOut,Lpos=0, Ldist=.1cm,x=+0.866,y=0]{y}
\Edge[style={-triangle 45}](x)(y)
\Edge[style={-triangle 45}](x)(z)
\Edge[style={-triangle 45}](y)(z)
\end{tikzpicture}
\caption{A triangle with oriented edges.}\label{fig:oriented triangle}
\end{figure}
 The cycle space for $G$ is the $\Z$-span of the cycle $(x,y)+(y,z)-(x,z)$.
\end{example}

For each $U\subseteq V$, define the corresponding {\em cut-set}, $c^*_U$, to be the
collection of edges of $G$ having one endpoint in $U$ and the other in the
complement $U^c$.  For each~$e\in E$, define the {\em sign} of $e$ in a cut-set
$c^*_U$ by
\begin{align*}
  \sigma(e,c^*_U):=
  \begin{cases}
    \hfill -1&\text{if $e^{-}\in U$ and $e^{+}\in U^c$,}\\
    \hfill 1&\text{if $e^{-}\in U^c$ and $e^{+}\in U$,}\\
    \hfill 0&\text{otherwise}.
  \end{cases}
\end{align*}
We then write $c^*_U=\sum_{e\in E}\sigma(e,c^*_U)\,e\in\Z E$.  The $\Z$-span of
the cut-sets of $G$ is the {\em (integral) cut space} for $G$, denoted
$\mathcal{C}^*$.  If $U=\{v\}$ for some $v\in V$, then $c^*_v:=c^*_U$ is called
a {\em vertex cut}.  It is well-known that the vertex cuts form a $\Z$-basis for
$\mathcal{C}^*$.

Define the boundary mapping by
\begin{align*}
  \partial:\Z E &\to \Z V_0\\
  e&\mapsto e^{+}-e^{-}.
\end{align*}
for $e\in E$.  We have the following well-known exact sequence (recalling that
we are assuming $G$ is connected):
\[
\xymatrix{
0\to\mathcal{C}\ar[r]&\Z E\ar[r]^{\partial}
&\Z V\ar[r]^(0.4){\mathrm{deg}}&\Z\to0.  
}
\]
A straightforward calculation shows that for each $v\in V$,
\[
\partial(c^*_v)=\Delta(v).
\]
We have the following theorem.
\begin{theorem}[\cite{Bacher}]\label{thm:matroid}
  Let $G$ be an undirected sandpile graph.  Then
  \[
  \sand(G)\approx \Z E/(\mathcal{C}+\mathcal{C^*}).
  \]
\end{theorem}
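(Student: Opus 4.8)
The plan is to exploit the short exact sequence
\[
0\to\mathcal{C}\to\Z E\xrightarrow{\partial}\Z V_0\to 0
\]
(which follows from the displayed exact sequence once we restrict the codomain of $\partial$ to $\Z V_0$, since $\partial$ lands in degree zero and its image is exactly $\Z V_0$ by connectedness of $G$). Applying this map I would first show that $\partial$ induces an isomorphism
\[
\Z E/\mathcal{C}\;\xrightarrow{\ \sim\ }\;\Z V_0,
\]
and then track where the cut space $\mathcal{C}^*$ goes under this isomorphism. The key computation is already recorded in the excerpt: $\partial(c^*_v)=\Delta(v)$ for every vertex $v$. Since the vertex cuts $c^*_v$ span $\mathcal{C}^*$ (stated as well-known just above the theorem), it follows that $\partial(\mathcal{C}^*)=\myspan_\Z\{\Delta(v):v\in V\}=\Lap\cap\Z V_0=\Lap$ (the image of the full Laplacian already lies in $\Z V_0$). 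Therefore $\partial$ descends to an isomorphism
\[
\Z E/(\mathcal{C}+\mathcal{C}^*)\;\xrightarrow{\ \sim\ }\;\Z V_0/\Lap .
\]

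The remaining step is to identify $\Z V_0/\Lap$ with $\sand(G)$. For this I would invoke the machinery of Section~2: by Theorem~\ref{thm:main isomorphism}, $\sand(G)\approx\mathcal{C}(G)=\Z\tV/\tLap$, and Proposition~\ref{prop:sink}(\ref{sink3}) gives a short exact sequence
\[
0\to\Z/\tilde\tau_s\Z\to\mathcal{C}(G)\to\Z V_0/\Lap\to 0 .
\]
Because $G$ is undirected it is Eulerian, so $\vec 1\in\ker\Delta$, and Proposition~\ref{prop:sink}(\ref{sink2}) forces $\tilde\tau_v=1$ for all $v$; in particular $\tilde\tau_s=1$ and the kernel term $\Z/\tilde\tau_s\Z$ vanishes. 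Hence $\mathcal{C}(G)\xrightarrow{\sim}\Z V_0/\Lap$, and combining with the isomorphism from the previous paragraph and Theorem~\ref{thm:main isomorphism} yields
\[
\sand(G)\approx\mathcal{C}(G)\approx\Z V_0/\Lap\approx\Z E/(\mathcal{C}+\mathcal{C}^*),
\]
as desired.

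Most of the ingredients are already assembled in the excerpt, so the argument is largely a matter of chaining known maps correctly. The step I expect to require the most care is verifying that $\partial$ genuinely induces an isomorphism $\Z E/\mathcal{C}\to\Z V_0$ \emph{over $\Z$}, i.e.\ that the cokernel is trivial and there is no torsion introduced — this is exactly the content of exactness of $0\to\mathcal{C}\to\Z E\xrightarrow{\partial}\Z V\xrightarrow{\deg}\Z\to 0$ at $\Z E$ and $\Z V$, but one must be slightly attentive that the image of $\partial$ is all of $\Z V_0$ and not merely a finite-index sublattice; this uses that $G$ is connected. The secondary subtlety is the bookkeeping in Proposition~\ref{prop:sink}(\ref{sink3}): one must be sure the Eulerian hypothesis is being used in the right place to kill $\Z/\tilde\tau_s\Z$, rather than, say, merely assuming $s$ is a sink. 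Everything else — that vertex cuts span $\mathcal{C}^*$, that $\partial(c^*_v)=\Delta(v)$ — is cited or immediate.
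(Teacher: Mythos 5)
The paper does not actually prove Theorem~\ref{thm:matroid}; it is quoted from \cite{Bacher} and used as a black box, so there is no internal proof to compare against. Your argument is correct and is assembled entirely from facts the paper does supply: exactness of $0\to\mathcal{C}\to\Z E\xrightarrow{\partial}\Z V\xrightarrow{\deg}\Z\to 0$ gives $\Z E/\mathcal{C}\cong\Z V_0$; the identity $\partial(c^*_v)=\Delta(v)$ together with the fact that the vertex cuts generate $\mathcal{C}^*$ gives $\partial(\mathcal{C}^*)=\Lap$, and (as you implicitly use) $\partial^{-1}(\Lap)=\mathcal{C}+\mathcal{C}^*$ since any $x$ with $\partial(x)=\sum_v a_v\Delta(v)$ differs from $\sum_v a_v c^*_v$ by an element of $\ker\partial=\mathcal{C}$; hence $\Z E/(\mathcal{C}+\mathcal{C}^*)\cong\Z V_0/\Lap$. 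The final identification $\sand(G)\cong\mathcal{C}(G)\cong\Z V_0/\Lap$ via Proposition~\ref{prop:sink} and $\tilde\tau_s=1$ is exactly the computation the paper itself carries out in the proof of the corollary on sink-independence for Eulerian graphs, so the Eulerian hypothesis is indeed being used in the right place. The one point worth flagging is cosmetic: the paper's claim that the vertex cuts form a $\Z$-\emph{basis} of $\mathcal{C}^*$ cannot be literally true (they satisfy $\sum_v c^*_v=0$), but your argument only needs that they generate $\mathcal{C}^*$ over $\Z$, which is the correct well-known statement.
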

The following result appears in~\cite{Cori}.
\begin{cor}
  Let $G$ be an undirected planar graph, and let $G^*$ be its dual.  Choosing
  any vertices to serve as sinks, there is an isomorphism of sandpile groups
  \[
  \sand(G)\approx\sand(G^*).
  \]
\end{cor}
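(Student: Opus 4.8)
The plan is to deduce the corollary directly from Theorem~\ref{thm:matroid} by recognizing that the quantity $\Z E/(\mathcal{C}+\mathcal{C}^*)$ depends only on the \emph{graphic matroid} of $G$ together with the lattice structure coming from orienting the edges, and that planar duality interchanges cycles and cut-sets. More precisely, fix a planar embedding of $G$; this identifies the edge set $E$ of $G$ with the edge set $E^*$ of $G^*$, an edge $e$ corresponding to the unique edge $e^*$ of $G^*$ crossing it. First I would fix compatible orientations: orient each $e^*$ by rotating the orientation of $e$ (say) ninety degrees counterclockwise, so that the identification $e\leftrightarrow e^*$ extends to a group isomorphism $\Z E\xrightarrow{\sim}\Z E^*$. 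The heart of the argument is the classical fact that under this identification the cycle space of $G$ is carried \emph{onto} the cut space of $G^*$ and the cut space of $G$ onto the cycle space of $G^*$: a cycle in $G$ separates the plane, and the faces it encloses form exactly the vertex set of $G^*$ defining the corresponding cut-set, and vice versa. One must check the signs agree with the orientation convention chosen above, which is the one place a small computation is needed.

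Granting that, we get $\mathcal{C}_G + \mathcal{C}^*_G$ mapped isomorphically onto $\mathcal{C}^*_{G^*} + \mathcal{C}_{G^*}$ inside $\Z E \cong \Z E^*$, hence an isomorphism of quotients
\[
\Z E/(\mathcal{C}_G+\mathcal{C}^*_G)\;\xrightarrow{\ \sim\ }\;\Z E^*/(\mathcal{C}_{G^*}+\mathcal{C}^*_{G^*}).
\]
By Theorem~\ref{thm:matroid} the left side is $\sand(G)$ for any choice of sink and the right side is $\sand(G^*)$ for any choice of sink, and since that theorem's description of the sandpile group is manifestly independent of the sink (the group $\Z E/(\mathcal{C}+\mathcal{C}^*)$ makes no reference to a distinguished vertex), we conclude $\sand(G)\approx\sand(G^*)$ regardless of which vertices are chosen as sinks. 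This is exactly the statement of the corollary.

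A couple of technical points deserve care in the writeup. If $G$ is not $2$-connected the planar dual may have loops or multiple edges, and if $G$ has a bridge then $G^*$ has a loop; one should note that $\sand$ is unaffected by loops (a loop contributes nothing to $\Delta$) and that Theorem~\ref{thm:matroid} still applies, so these cases cause no trouble—indeed the cycle/cut interchange is cleanest stated at the level of the $\Z E$ lattices and needs no connectivity hypothesis beyond connectedness of $G$, which we have standing. One should also recall that a planar graph may have several non-isomorphic duals depending on the embedding (Whitney's theorem says they agree when $G$ is $3$-connected); the statement as phrased only claims \emph{an} isomorphism for \emph{some} dual, so fixing one embedding at the outset suffices.

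I expect the main obstacle to be purely bookkeeping: verifying that the cycle-space/cut-space swap respects \emph{signs}, i.e.\ that with the ninety-degree rotation convention a generating cycle of $\mathcal{C}_G$ maps to $\pm$ a vertex cut of $G^*$ with a globally consistent sign, so that the $\Z$-spans (not merely the $\Z/2$-spans) are interchanged. This is the classical duality between the cycle and bond lattices of a planar graph; the cleanest route is probably to verify it on a single cycle and a single vertex cut and then invoke linearity, using the exact sequence $0\to\mathcal{C}\to\Z E\xrightarrow{\partial}\Z V\xrightarrow{\deg}\Z\to0$ together with $\partial(c^*_v)=\Delta(v)$ to pin down the spans intrinsically. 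Everything else—the appeal to Theorem~\ref{thm:matroid}, the passage to quotients, the sink-independence—is immediate.
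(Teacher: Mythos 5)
Your proposal is correct and follows essentially the same route as the paper: fix a compatible dual orientation, use the classical interchange of cycle and cut spaces under the induced isomorphism $\Z E\to\Z E^*$, and conclude via Theorem~\ref{thm:matroid}, with sink-independence read off from the fact that $\Z E/(\mathcal{C}+\mathcal{C}^*)$ mentions no distinguished vertex (the paper cites Proposition~\ref{prop:sink} for this but notes the same observation in a remark). Your added care about signs, loops, and bridges is reasonable but not needed beyond what the paper's appeal to the well-known cycle/cut duality already covers.
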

\begin{proof}
An orientation
of $G$ induces a dual orientation on $G^*$: if $F$ and $F'$ are adjacent faces
in $G$ (vertices of $G^*$) intersecting along edge $e$, we orient the edge
$e*:=\{F,F'\}$ of $G^*$ as $(F,F')$ if $F$ is to the right of $e$ as one travels
from $e^{-}$ to~$e^{+}$.  Sending $e$ to $e^{*}$ then defines an isomorphism $\Z
E\to\Z E^*$ where $E^*$ denotes the edges of $G^*$.  It is well-known that under
this isomorphism the cycle space (resp., cut space) of $G$ is sent to the
cut space (resp., cycle space) of $G^*$.  The result then follows from
Theorem~\ref{thm:matroid}.  The choice of sink vertices is irrelevant by
Proposition~\ref{prop:sink}.
\end{proof}
\pagebreak 
\begin{remark}\ 
  \begin{enumerate}
    \item The independence of the sandpile group of $G$, up to
      isomorphism, of the choice of sink is also a consequence of
      Theorem~\ref{thm:matroid}.  
    \item Theorem~\ref{thm:matroid} suggests a definition of the sandpile
      group for an arbitrary matroid (\cite{Krueger}).
    \item As noted in \cite{Bacher}, if two undirected (connected) graphs are
      2-isomorphic, then their corresponding matroids are isomorphic.
      (See~\cite{Oxley} for the definition of {\em 2-isomorphism} and a proof of
      the Whitney's 2-isomorphism theorem.)  It then follows from
      Theorem~\ref{thm:matroid} that the sandpile groups for the two graphs
      (having chosen sinks) are isomorphic.
  \end{enumerate}
\end{remark}
\section{Lattice ideals}
Our reference for this section is \cite{Ezra-Bernd}.  Let $A$ be a finitely
generated Abelian group, and let $a_1,\dots,a_n$ be a collection of elements
generating $A$.  Let $Q$ be the subsemigroup of $A$ generated by
$a_1,\dots,a_n$.  In the case where $A$ is finite---the case of special
interest to us---we have that $Q=A$.  Define $\phi\colon\Z^n\to Q$ by $\phi(e_i)=a_i$, and denote its
kernel by $\Lambda$.  Let $\{t_a: a\in Q\}$ be indeterminates, and let
\[
\C[Q]=\myspan_{\C}\{t_a: a\in Q\}
\] 
be the group algebra of $Q$; hence, $t_at_b=t_{a+b}$ for elements
$a,b\in Q$. Letting $R:=\C[x_1,\dots,x_n]$, define a surjection of rings
\ba
\psi\colon R&\to&\C[Q]\\
x_i&\mapsto&t_{a_i}.
\ea
For $c\in\N^n$, we define $x^c=\prod_ix_i^{c_i}$.  Then $\psi(x^c)$ is the group
algebra element $t_b$, where $b=\sum_{i=1}^nc_ia_i$.

For $u\in\Z^n$, we write $u=u^+-u^-$ with $u^+,u^-\in\N^n$
having disjoint support.
\begin{thm}\label{thm:basics}\ 
\be
\item\label{basics:one} The kernel of $\psi$ is the {\em lattice ideal}
\[
I(\Lambda):=\myspan_{\C}\{x^u-x^v: u,v\in\N^n, u-v\in \Lambda\}.
\]
(The vector space span, above, forms an ideal.)  Hence, $\psi$ induces an
isomorphism of $\C$-algebras, $R/I(\Lambda)\approx \C[Q]$. 
\item\label{basics:two} If $\ell_1,\dots,\ell_k$ are generators for the
  $\Z$-module, $\Lambda$,
  then $I(\Lambda)$ is the saturation of 
  \[
  J=\langle
  x^{\ell^+_i}-x^{\ell^-_i}\colon i=1,\dots,k\rangle
  \]
  with respect to
  the ideal generated by the product of the indeterminates, $\prod_{i=1}^nx_i$. Thus,
\[
I(\Lambda)=\{f\in R: (\textstyle\prod_{i=1}^nx_i)^mf\in J \mbox{ for some $m\in\N$}\}.
\]
\item\label{thm:basics3} The Krull dimension of $R/I(\Lambda)$ is
  $n-\dim_{\Z}\Lambda$. 
\ee
\end{thm}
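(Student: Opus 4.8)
The plan is to treat the three parts in turn; they are largely independent. For part~\ref{basics:one}, I would first verify that $I(\Lambda)$ is an ideal: multiplying a generator $x^u-x^v$ (with $u,v\in\N^n$, $u-v\in\Lambda$) by a monomial $x^w$ gives $x^{u+w}-x^{v+w}$, and $(u+w)-(v+w)=u-v\in\Lambda$, so the $\C$-span of these binomials is stable under multiplication by monomials and hence is an ideal. The inclusion $I(\Lambda)\subseteq\ker\psi$ is immediate, since $u-v\in\Lambda=\ker\phi$ gives $\phi(u)=\phi(v)$ and therefore $\psi(x^u-x^v)=t_{\phi(u)}-t_{\phi(v)}=0$. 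For the reverse inclusion I would take $f=\sum_c\lambda_c x^c\in\ker\psi$ and group its monomials by the value of $\phi$: for $b\in Q$ put $F_b=\{c:\phi(c)=b\}$. Since the $t_b$ form a $\C$-basis of $\C[Q]$, the relation $0=\psi(f)=\sum_b\bigl(\sum_{c\in F_b}\lambda_c\bigr)t_b$ forces $\sum_{c\in F_b}\lambda_c=0$ for each $b$; choosing a representative $c_b$ in each nonempty $F_b\cap\supp(f)$ and using these vanishing partial sums, one rewrites $f=\sum_b\sum_{c\in F_b}\lambda_c(x^c-x^{c_b})$, a $\C$-combination of binomials of $I(\Lambda)$. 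As $\psi$ is surjective (the $a_i$ generate $Q$), the induced map $R/I(\Lambda)\to\C[Q]$ is an isomorphism.

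For part~\ref{basics:two}, the inclusion $J\subseteq I(\Lambda)$ is immediate from $\ell_i=\ell_i^+-\ell_i^-\in\Lambda$, so the content is the equality of $I(\Lambda)$ with the saturation of $J$. The key observation is that $I(\Lambda)$ is itself already saturated with respect to $x_1\cdots x_n$: under the isomorphism $R/I(\Lambda)\cong\C[Q]$ from part~\ref{basics:one}, $x_i$ acts by multiplication by $t_{a_i}$, which carries the basis vector $t_b$ to $t_{b+a_i}$; because $A$ is a group, this is injective on the basis $\{t_b\}$, so multiplication by $t_{a_i}$ --- hence by $t_{a_1}\cdots t_{a_n}$ --- is injective, i.e., $x_1\cdots x_n$ is a nonzerodivisor modulo $I(\Lambda)$. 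Hence $(I(\Lambda):(x_1\cdots x_n)^\infty)=I(\Lambda)$, and with $J\subseteq I(\Lambda)$ this yields $(J:(x_1\cdots x_n)^\infty)\subseteq I(\Lambda)$. For the reverse containment I would pass to the Laurent ring $R'=R[x_1^{-1},\dots,x_n^{-1}]$: there $x^{\ell_i}-1=x^{-\ell_i^-}(x^{\ell_i^+}-x^{\ell_i^-})\in JR'$ and likewise $x^{-\ell_i}-1\in JR'$; since any binomial $x^u-x^v\in I(\Lambda)$ has $u-v=\sum_i n_i\ell_i$ with $n_i\in\Z$, a telescoping of the products $x^{n_i\ell_i}$ exhibits $x^u-x^v=x^v(x^{u-v}-1)$ as an element of $JR'$. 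Intersecting with $R$ and using $JR'\cap R=(J:(x_1\cdots x_n)^\infty)$ shows all such binomials, hence all of $I(\Lambda)$, lie in the saturation; the displayed formula is then just the definition of saturation.

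For part~\ref{thm:basics3}, write $r=\dim_\Z\Lambda$. Since $\phi$ maps onto $A$ we have $\Z^n/\Lambda\cong A$, so additivity of rank gives $\operatorname{rank}A=n-r$ and $A\cong\Z^{n-r}\oplus T$ with $T$ the finite torsion subgroup. Because $\C$ has characteristic $0$, $\C[T]\cong\C^{|T|}$, hence $\C[A]\cong\C[\Z^{n-r}]\otimes_\C\C[T]\cong\prod_{j=1}^{|T|}\C[\Z^{n-r}]$. The inclusion of the semigroup algebra into the group algebra gives an injection $\C[Q]\hookrightarrow\C[A]$; composing with the $|T|$ coordinate projections yields ring maps $\psi_j\colon\C[Q]\to\C[\Z^{n-r}]$ with $\bigcap_j\ker\psi_j=0$, and each $\mathfrak p_j:=\ker\psi_j$ is prime since $\C[\Z^{n-r}]$ is a domain. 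A minimal prime of $\C[Q]$ contains $\prod_j\mathfrak p_j=0$, hence contains and therefore equals some $\mathfrak p_j$. Finally $\C[Q]/\mathfrak p_j$ is a finitely generated $\C$-domain inside $\C[\Z^{n-r}]$, and inverting the nonzero images of the $t_{a_i}$ (whose images in $A/T$ generate $\Z^{n-r}$) recovers all of $\C[\Z^{n-r}]$, so the two share a fraction field of transcendence degree $n-r$ over $\C$ and thus have Krull dimension $n-r$; therefore $\dim R/I(\Lambda)=\dim\C[Q]=\max_j\dim\C[Q]/\mathfrak p_j=n-r$. The one genuinely delicate point is the upper bound $\dim\C[Q]\le n-r$ here: when $A$ has torsion $\C[Q]$ is reduced but not a domain, so its dimension is not merely a transcendence degree and one must bound the dimensions of all its minimal primes at once --- which is exactly what the splitting $\C[A]\cong\prod\C[\Z^{n-r}]$ accomplishes. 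Parts~\ref{basics:one} and~\ref{basics:two} are by comparison routine, the one needed idea being that the variables act as nonzerodivisors on $\C[Q]$.
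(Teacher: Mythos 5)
Your argument is correct. Note that the paper itself gives no proof of this theorem: it is quoted as background, with \cite{Ezra-Bernd} cited as the reference for the whole section, so there is no in-paper argument to compare against. What you supply is essentially the standard lattice-ideal proof from that reference: the fiber-by-fiber decomposition of an element of $\ker\psi$ over the basis $\{t_b\}$ for part (1); the observation that each $x_i$ acts on $\C[Q]$ by the injective translation $t_b\mapsto t_{b+a_i}$ (so $x_1\cdots x_n$ is a nonzerodivisor mod $I(\Lambda)$) together with the Laurent-localization identity $JR[x^{-1}]\cap R=(J:(x_1\cdots x_n)^{\infty})$ and the telescoping of $x^{\sum n_i\ell_i}-1$ for part (2); and the splitting $\C[A]\cong\prod_{j}\C[\Z^{n-r}]$ induced by $\C[T]\cong\C^{|T|}$ for part (3). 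Your remark that the only delicate point in (3) is bounding the dimensions of all minimal primes simultaneously when $A$ has torsion is exactly right, and the product decomposition handles it cleanly. No gaps.
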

Let $U\subset\N^n$ such that $X:=\{x^u:u\in U\}$ is a $\C$-vector
space basis for $R/I(\Lambda)$. Letting $g:=(a_1,\dots,a_n)\in A^n$,
\[
\psi(X)=\{t_{u\cdot g}: u\in U\}=\{t_a:a\in Q\},
\]
the last equality holding since $R/I(\Lambda)$ and $\C[Q]$ are isomorphic as
vector spaces via~$\psi$.  Now assume that $A$ is a finite group, so that $Q=A$.
Then,~$\psi$ induces a bijection of $X$ with $A$, which endows $X$ with the
structure of a group isomorphic to $A$.  For $u,v\in U$, we define $x^ux^v=x^w$
where $w$ is the unique element of~$U$ for which $w\cdot g=(u+v)\cdot g$.

A choice of a monomial ordering on $R$ gives a natural choice for $U$,
namely, those $u\in\N^n$ such that $x^u$ is not divisible by the
initial term of any element of~$I(\Lambda)$, e.g., not divisible by the initial term
of any element of a Gr\"obner basis for~$I(\Lambda)$.  This will be discussed in
\S\ref{bases}.
\begin{example}
  Let $A=\Z/2\Z\times\Z/3\Z$ with generators $a_1=(1,0)$, $a_2=(0,1)$, and
  $a_3=(1,1)$.  The kernel $\Lambda$ of $\phi\colon\Z^3\to A$ is spanned by
  $(2,0,0)$, $(0,3,0)$, and $(1,1,-1)$.  Hence, the saturation of the ideal
  $(x_1^2-1,x_2^3-1,x_1x_2-x_3)$ gives the lattice ideal $I(\Lambda)$.  Using a
  computer algebra system, one computes
  \[
 I(\Lambda)=(x_1^2-1,x_1x_2-x_3,x_1x_3-x_2,x_2^2-x_3^2,x_2x_3^2-1,x_3^3-x_1).
  \]
 
  By Theorem~\ref{thm:basics}~(\ref{thm:basics3}), one expects a finite set of
  solutions over $\C$ to the equations formed by setting the generators of
  $I(\Lambda)$ equal to zero---there are six.  One vector-space basis for
  $R/I(\Lambda)$ is
  \[
  1,x_1,x_2,x_3, x_2x_3,x_3^2.
  \]
\end{example}
\section{Toppling ideals}\label{toppling ideals}
Let $G$ be a sandpile graph. Identify its vertices
with $\{1,\dots,n+1\}$, where~$n+1$ represents the sink.  To avoid ambiguity, we
will sometimes denote vertex $i$ by $v_i$.  By ordering the
vertices, we thus have the exact sequence for the sandpile group of~$G$,
\[
0\to\Z^n\stackrel{\tD}{\longrightarrow}\Z^n\to\mathcal{S}(G)\to0.
\]
Recall our notation for the reduced Laplacian lattice:
\[
\tLap=\mbox{im}(\tD)=\ker(\Z^n\to\mathcal{S}(G)).
\]
\begin{definition}
  The {\em toppling ideal} for $G$ is the lattice ideal for
  $\tLap$,
\[
I(G):=\myspan_{\C}\{x^u-x^v: u=v\bmod\tLap\}\subset R=\C[x_1,\dots,x_n].
\]
The {\em coordinate ring} for $G$ is $R/I(G)$.
\end{definition}
Thus, by Theorem~\ref{thm:basics}~(\ref{basics:one}), we have the isomorphism of
$\C$-algebras:
\[
R/I(G)\approx\C[\mathcal{S}(G)].
\]

For each nonsink vertex $i$, define the {\em toppling polynomial}
\[
t_i=x_i^{\outdeg(i)-\wt(i,i)}-\textstyle\prod_{j\neq i}x_j^{\wt(i,j)}.
\]
\begin{prop}\label{prop:generators}
  The ideal $I(G)$ is generated by
  the toppling polynomials, $\{t_i\}_{i=1}^n$, and the polynomial
  $x^b-1$ where $b$ is any burning configuration.
\end{prop}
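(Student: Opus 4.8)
The plan is to use Theorem~\ref{thm:basics}~(\ref{basics:two}): since $I(G)$ is the saturation of $J := \langle x^{\ell_i^+} - x^{\ell_i^-}\rangle$ with respect to $\prod_j x_j$ for any $\Z$-basis $\ell_1,\dots,\ell_n$ of $\tLap$, and the columns of $\tD$ form such a basis, the toppling polynomials $t_i$ (which are exactly $x^{(\tD v_i)^+} - x^{(\tD v_i)^-}$, up to the loop correction, since $\tD(v_i) = \outdeg(i)v_i - \sum_{j}\wt(i,j)v_j$) generate an ideal $J$ whose saturation is $I(G)$. So the first step is to record that $\langle t_1,\dots,t_n\rangle \subseteq I(G)$ and that $I(G)$ is obtained from this by saturating at the product of the variables. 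It then suffices to show that adjoining the single binomial $x^b - 1$ already accomplishes the saturation, i.e., that $\langle t_1,\dots,t_n, x^b-1\rangle = I(G)$.

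For the inclusion $\subseteq$: each $t_i \in I(G)$ since $\tD v_i \in \tLap$, and $x^b - 1 \in I(G)$ since $b \in \tLap$ by property~(1) of a burning configuration (so $x^b - x^{\vec 0} = x^b - 1$ is a defining generator of the lattice ideal). The substance is the reverse inclusion. Here I would argue that once we invert $b$ — equivalently, once $x^b$ becomes a unit — every variable $x_j$ becomes invertible in the quotient ring, so no further saturation is needed. Concretely, by property~(3) of the burning configuration, for each $v \in \tV$ there is a directed path from $\supp(b)$ to $v$; the point of this condition is that, modulo $\langle t_1,\dots,t_n\rangle$, one can "push" the monomial $x^b$ along these paths. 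More precisely, each toppling relation $t_i = 0$ rewrites $x_i^{\outdeg(i)-\wt(i,i)}$ as a monomial supported on the out-neighbors of $i$; chaining these along a path from $\supp(b)$ to $v_j$, starting from $x^b$ which is a unit (as $x^b - 1$ is in the ideal), shows that some power $x_j^N$ equals a unit in $R/\langle t_1,\dots,t_n,x^b-1\rangle$, hence $x_j$ itself is a unit there. Therefore $\langle t_1,\dots,t_n, x^b-1\rangle$ is already saturated with respect to $\prod_j x_j$, and since it is contained in $I(G)$ and contains the pre-saturation ideal $J' = \langle t_i\rangle + \langle x^b - 1\rangle \supseteq J$, its saturation is $I(G)$; being already saturated, it equals $I(G)$.

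The main obstacle is the bookkeeping in the "pushing $x^b$ along paths" argument: one must be careful that the toppling relations only let you rewrite a variable in terms of its \emph{out}-neighbors, so the relevant paths are directed paths \emph{out of} $\supp(b)$, which is precisely what burning-configuration property~(3) supplies (note the path in the definition goes \emph{to} $v$ \emph{from} an element of $\supp(b)$). A secondary subtlety is handling loops (the $\wt(i,i)$ term) and the possibility that a toppling polynomial $t_i$ has a constant term (when $i$ has no out-neighbors in $\tV$, i.e., all of $i$'s out-edges go to the sink), in which case $t_i = x_i^{\outdeg(i)} - 1$ directly exhibits $x_i$ as a unit; these are routine to absorb. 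I would also double-check the edge case where $\supp(b)$ meets every vertex trivially versus the general case, but property~(3) guarantees reachability of \emph{all} of $\tV$, so the argument is uniform.
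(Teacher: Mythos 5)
Your proof is correct and takes essentially the same route as the paper's: both reduce, via Theorem~\ref{thm:basics}~(\ref{basics:two}), to showing that $(t_1,\dots,t_n,x^b-1)$ is already saturated with respect to $x_1\cdots x_n$, and both establish this by propagating outward from $\supp(b)$ along directed paths using the toppling relations --- the paper phrases this as firing the configuration $mb$ until every vertex holds at least $k$ grains, which is the sandpile incarnation of your observation that every $x_j$ becomes a unit in the quotient. The one caveat, shared equally by the paper's own argument, is that this propagation needs the paths in condition (3) of the definition of a burning configuration to avoid the sink (sand sent to the sink is lost, and there is no toppling relation for the sink variable), which is the intended reading of that condition.
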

\begin{proof}
  Let $J=(t_i:i=1,\dots,n)+(x^{b}-1)$.  It is clear that $J\subseteq
  I(G)$, and
  by Theorem~\ref{thm:basics}~(\ref{basics:two}), $I(G)$ is the saturation of
  $J$ with respect to the ideal $(x_1\cdots x_n)$.  So it suffices to show that
  $J$ is already saturated with respect to that ideal. Suppose that
  $(x_1\cdots x_n)^kf\in J$ for some $f\in R$ and for some $k$.  For each
  positive integer $m$, consider the monomial $x^{mb}$.  We think of this
  monomial as a configuration of sand with $mb_i$ grains of sand on vertex
  $i$.  If vertex $i$ of this configuration is unstable, we think of firing the vertex
  as replacing $x_i^{mb_i}$ by $x_i^{mb_i-d_i}\textstyle\prod_{j\neq
  i}x_j^{\wt(i,j)}$.  Performing this replacement in $x^{mb}$ gives an
  equivalent monomial modulo $J$.  Recall that every vertex of $G$ is
  connected by a directed path from a vertex in the support of~$b$.  Thus,
  by taking $m$ large enough and firing appropriate vertices, we arrive at a monomial
  $x^{\gamma}$, equivalent to $x^{mb}$ modulo $J$ and corresponding to a
  configuration with at least~$k$ grains of sand at each vertex.  Write
  $x^{\gamma}=x^{\delta}(x_1\cdots x_n)^k$ for some monomial $x^{\delta}$.
  Modulo $J$, we have
  \begin{eqnarray*}
    0 &=& (x_1\cdots x_n)^kf  \\
      &=& x^{\delta}(x_1\cdots x_n)^kf\\
      &=& x^{\gamma}f\\
      &=& x^{mb}f\\
      &=& f.
  \end{eqnarray*}
  Thus, $f\in J$, as required.
\end{proof}
\begin{remark}
  As in the proof of the above theorem, we can identify a monomial $x^{a}$
  with the configuration $a$ on $G$.  If $a\to b$ as
  sandpile configurations, then $x^{a}=x^{b}$ in $R/I(G)$.
\end{remark}
\begin{remark}
  The toppling ideal was introduced by Cori, Rossin, and Salvy \cite{CRS}.  They
  considered only undirected graphs and defined the ideal via generators.  For
  an undirected graph, the all-$1$s vector is a burning script, so
  Proposition~\ref{prop:generators} shows that our definition coincides with theirs
  in the case of an undirected graph.
\end{remark}
\begin{figure}[ht] 
\begin{tikzpicture}[scale=1.3]
\SetVertexMath
\GraphInit[vstyle=Art]
\SetUpVertex[MinSize=3pt]
\SetVertexLabel
\tikzset{VertexStyle/.style = {%
shape = circle,
shading = ball,
ball color = black,
inner sep = 1.5pt
}}
\SetUpEdge[color=black]
\Vertex[LabelOut,Lpos=180, Ldist=.1cm,x=0,y=1]{v_1}
\Vertex[LabelOut,Lpos=0, Ldist=.1cm,x=1,y=1]{v_2}
\Vertex[LabelOut,Lpos=180, Ldist=.2cm,x=0,y=0]{v_3}
\Vertex[LabelOut,Lpos=0, Ldist=.2cm,x=1,y=0]{v_4}
\Edge[](v_1)(v_2)
\Edge[style={-triangle 45}](v_1)(v_3)
\Edge[style={-triangle 45}](v_2)(v_4)
\Edge[style={-triangle 45}](v_3)(v_4)
\Edge[style={-triangle 45}](v_3)(v_2)
\fill[color=white] (0.5,0.5) circle (0.1cm);
\draw (0.5,0.5) node{{\small 2}};
\end{tikzpicture}
\caption{Sandpile graph $G$ with sink $v_4$.}\label{fig:ex1}
\end{figure}
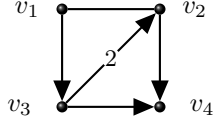
\begin{example}\label{ex:ideals}
The sandpile graph $G$ in Figure \ref{fig:ex1} has a burning script
$\sigma=(1,2,1)$ and corresponding burning configuration $b=(0,1,2)$. Thus,
\[
I(G) = (x_1^2 - x_2 x_3, x_2^2 - x_1, x_3^3 - x_2^2, x_2 x_3^2 - 1).
\]
\end{example}
\begin{definition}
  Let $f\in R=\C[x_1,\dots,n]$, and let $x_{n+1}$ be another
  indeterminate.  The {\em homogenization} of $f$ with respect to $x_{n+1}$ is the
   homogeneous polynomial
   \[
   f^h:=x_{n+1}^{\deg f}f\left(\frac{x_1}{x_{n+1}},\dots,\frac{x_n}{x_{n+1}}\right).
   \]
   If $I\subseteq R$ is an ideal, the {\em homogenization} of $I$ with respect
   to $x_{n+1}$ is the ideal
   \[
   I^h:=(f^h:f\in I).
   \]
\end{definition}

Now consider the exact sequence corresponding to the full Laplacian,
\[
\Z^{n+1}\stackrel{\Delta}{\longrightarrow}\Z^{n+1}\to\Z^{n+1}/\Lap\to0
\]
recalling the notation for the Laplacian lattice, $\Lap:=\im(\Delta)$.
Let $S=\C[x_1,\dots,x_{n+1}]$ and consider the lattice ideal for
$\Lap$.  We here introduce the homogeneous version of the toppling ideal.
\begin{definition}  The {\em homogeneous toppling ideal} for $G$ is
\[
I_h(G):=\myspan_{\C}\{x^u-x^v:u=v\bmod\Lap\}\subset
S=\C[x_1,\dots,x_{n+1}].
\]
The {\em homogeneous coordinate ring} for $G$ is $S/I_h(G)$.
\end{definition}
The following proposition is straightforward.  Its hypothesis is satisfied for
any Eulerian graph and, in particular, for any undirected graph.  Moreover, given
any sandpile graph with sink $s$, removing all out-edges from $s$ creates a new
sandpile graph with the same sandpile group and for which the hypothesis of the
proposition holds.
\begin{prop}\label{prop:homog}
  If $\Delta(v_{n+1})\in\myspan_{\Z}\{\Delta(v_1),\dots,\Delta(v_n)\}$,
  then $I_h(G)=I(G)^h$.
\end{prop}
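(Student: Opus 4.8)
The plan is to exploit the fact that $I_h(G) = I(G)^h$ is equivalent to the statement that dehomogenizing $I_h(G)$ at $x_{n+1}$ recovers $I(G)$ together with the statement that the homogenization does not introduce any extraneous $x_{n+1}$-saturation. I would first record the easy inclusion. Dehomogenization (setting $x_{n+1} = 1$) sends $I_h(G)$ into $I(G)$: a binomial $x^u - x^v$ with $u \equiv v \bmod \Lap$ becomes, after setting the last coordinate to $1$, a binomial $x^{\bar u} - x^{\bar v}$ in $R$ whose exponent difference is the projection to $\Z^n$ of an element of $\Lap$; but projecting $\Delta(v_i)$ for $i \le n$ gives exactly $\tD(v_i)$, so the image lies in $\tLap$ and hence in $I(G)$. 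Conversely, the generators of $I(G)$ from Proposition~\ref{prop:generators} — the toppling polynomials $t_i$ and $x^b - 1$ for a burning configuration $b$ — all lift under homogenization to elements of $I_h(G)$: the homogenization of $t_i$ is $x_i^{\outdeg(i)-\wt(i,i)} - x_{n+1}^{\,\wt(i,n+1)}\prod_{j\ne i}x_j^{\wt(i,j)}$, which is precisely the binomial recording the column $\Delta(v_i)$ of the full Laplacian, hence lies in $I_h(G)$; and the homogenization of $x^b - 1$ lies in $I_h(G)$ because $b \in \tLap$ and the hypothesis $\Delta(v_{n+1}) \in \myspan_\Z\{\Delta(v_1),\dots,\Delta(v_n)\}$ guarantees that the obvious lift of $b$ to $\Z^{n+1}$ differs from an element of $\Lap$ only in ways that homogenization absorbs. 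This already gives $I(G)^h \subseteq I_h(G)$.

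For the reverse inclusion $I_h(G) \subseteq I(G)^h$, the key point is that both ideals are homogeneous and have the same dehomogenization, so it suffices to check that $I_h(G)$ is saturated with respect to $x_{n+1}$ — equivalently, that $S/I_h(G)$ has no $x_{n+1}$-torsion — because a homogeneous ideal that is $x_{n+1}$-saturated is recovered as the homogenization of its dehomogenization. Since $I_h(G)$ is a lattice ideal for the lattice $\Lap \subseteq \Z^{n+1}$, Theorem~\ref{thm:basics}~(\ref{basics:one}) identifies $S/I_h(G)$ with the group algebra $\C[\Z^{n+1}/\Lap]$. The hypothesis that $\Delta(v_{n+1})$ lies in the span of the other columns means exactly that the image of $e_{n+1}$ in $\Z^{n+1}/\Lap$ is already in the subgroup generated by the images of $e_1,\dots,e_n$; combined with the fact that modulo $\Lap$ every column $\Delta(v_i)$ vanishes, one shows that $t_{e_{n+1}}$ is a unit in $\C[\Z^{n+1}/\Lap]$ — concretely, the relation expressing $e_{n+1}$ in terms of the $e_i$ and the toppling relations produces a monomial inverse for $x_{n+1}$ in $S/I_h(G)$. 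Hence multiplication by $x_{n+1}$ is injective on $S/I_h(G)$, so $I_h(G)$ is $x_{n+1}$-saturated.

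The main obstacle is the bookkeeping in the previous paragraph: one must be careful that "$\Delta(v_{n+1})$ is in the span of the other columns over $\Z$" genuinely forces $x_{n+1}$ to be invertible mod $I_h(G)$, rather than merely that its image has finite order in the quotient group. The cleanest route is to write $\Delta(v_{n+1}) = \sum_{i=1}^n c_i \Delta(v_i)$ with $c_i \in \Z$ and translate this into a binomial identity: since each $\Delta(v_i) \equiv 0$ in $S/I_h(G)$, the monomials $x^{\Delta(v_i)^+}$ and $x^{\Delta(v_i)^-}$ agree there, and chaining these equalities along the relation $\Delta(v_{n+1}) = \sum c_i\Delta(v_i)$ yields a monomial $x^{w}$ in $x_1,\dots,x_n$ with $x_{n+1}\cdot x^{w} = x^{w'}$ for another monomial $x^{w'}$ not involving $x_{n+1}$, whence $x_{n+1}$ divides a unit. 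Once invertibility of $x_{n+1}$ is in hand, the saturation statement is immediate and the theorem follows by the standard homogenization–dehomogenization correspondence for homogeneous ideals. I would end by remarking that the hypothesis is automatic when $\vec 1 \in \ker\Delta$, which covers the Eulerian and undirected cases quoted after the statement.
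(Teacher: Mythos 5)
Your overall architecture is sound: show that dehomogenization $\delta$ (setting $x_{n+1}=1$) carries $I_h(G)$ onto $I(G)$, show that $I_h(G)$ is $x_{n+1}$-saturated, and conclude $I_h(G)=\bigl(\delta(I_h(G))\bigr)^h=I(G)^h$. Your first paragraph's computations are also correct: the dehomogenized binomials land in $I(G)$ because the hypothesis forces the projection of $\Lap$ to $\Z^n$ to equal $\tLap$, and $t_i^h$ and $(x^b-1)^h$ are the binomials of $\Delta(v_i)$ and of $(b,-\deg b)\in\myspan_{\Z}\{\Delta(v_i):1\le i\le n\}\subseteq\Lap$. But the inference ``the generators of $I(G)$ homogenize into $I_h(G)$, so $I(G)^h\subseteq I_h(G)$'' is a non sequitur: $I(G)^h$ is generated by the homogenizations of \emph{all} elements of $I(G)$, not of a chosen generating set, and the remark following Example~\ref{example:bad sink} (four generators versus six for the genus-two graph) shows the ideal generated by homogenized generators can be strictly smaller. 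What your computation legitimately yields is $I(G)\subseteq\delta(I_h(G))$, which is exactly what your second paragraph needs, so this misstep is repairable. (If you want the inclusion $I(G)^h\subseteq I_h(G)$ directly, homogenize the binomial \emph{vector-space} spanning set: writing $f=\sum_jc_j(x^{u_j}-x^{v_j})$ with $u_j-v_j\in\tLap$, the operation $f\mapsto x_{n+1}^{\deg f}f(x/x_{n+1})$ is linear term-by-term and sends each binomial to one whose exponent difference is $(u_j-v_j,-\deg(u_j-v_j))\in\myspan_{\Z}\{\Delta(v_i):1\le i\le n\}\subseteq\Lap$.)

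The more serious error is in your third paragraph: $x_{n+1}$ is \emph{not} a unit in $S/I_h(G)$, and ``$x_{n+1}x^{w}=x^{w'}$ with $x^{w'}$ free of $x_{n+1}$, whence $x_{n+1}$ divides a unit'' is false. Every element of $\Lap$ has degree zero, so $S/I_h(G)$ is a standard graded ring whose degree-zero piece is $\C$; no homogeneous element of positive degree (in particular no monomial $x^{w'}$ with $w'\neq 0$) is invertible. The identification of Theorem~\ref{thm:basics} is with $\C[Q]$ for $Q$ the \emph{subsemigroup} of $\Z^{n+1}/\Lap$ generated by the classes $\bar e_i$, not with the group algebra of the full group, which here has rank one; since $-\bar e_{n+1}$ has degree $-1$, it does not lie in $Q$ and $t_{\bar e_{n+1}}$ has no inverse. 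What is true, and all you need, is that multiplication by $x_{n+1}$ is \emph{injective} on $\C[Q]$: it sends the basis vector $t_a$ to $t_{a+\bar e_{n+1}}$, and translation by a fixed element of the ambient group is injective on $Q$. Equivalently, every lattice ideal is already saturated with respect to each variable by Theorem~\ref{thm:basics}~(\ref{basics:two}). Note that this saturation holds for \emph{every} sandpile graph; the hypothesis on $\Delta(v_{n+1})$ is not what makes $x_{n+1}$ a nonzerodivisor---it is what makes your containment $\delta(I_h(G))\subseteq I(G)$ work, and it is exactly that containment which fails in Example~\ref{example:bad sink}. A shorter route to the whole proposition: the hypothesis says precisely that $\Lap=\tLap^h$, whence $I(G)^h=I(\tLap^h)=I(\Lap)=I_h(G)$ by the general fact recorded in Theorem~\ref{thm:projective Hilbert}.
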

\begin{example}\label{example:bad sink}
  The graph $G$ in Figure~\ref{fig:homog} does not satisfy the hypothesis of
  Proposition~\ref{prop:homog}.  Regarded as a sandpile graph with sink $v_1$,
  the toppling ideal for $G$ is $(x_1^2-1)$.  As a sandpile graph with sink
  $v_2$, its toppling ideal is $(x_2^3-1)$.  Its homogeneous toppling ideal is
  $I_h(G)=(x_1-x_2)$, equivalent to that of the undirected graph with a single
  edge connecting $v_1$ and $v_2$ (or equivalent to that of the directed graph consisting of
  a single directed edge connecting $v_1$ to $v_2$).
\end{example}
\begin{figure}[ht] 
\begin{tikzpicture}
\SetVertexMath
\GraphInit[vstyle=Art]
\SetUpVertex[MinSize=3pt]
\SetVertexLabel
\tikzset{VertexStyle/.style = {%
shape = circle,
shading = ball,
ball color = black,
inner sep = 1.3pt
}}
\SetUpEdge[color=black]
\Vertex[LabelOut,Lpos=180,
Ldist=.1cm,x=0,y=0]{v_1}
\Vertex[LabelOut,Lpos=0,
Ldist=.1cm,x=2,y=0]{v_2}
\Edge[style={-triangle 45, bend right=30}](v_1)(v_2)
\Edge[style={-triangle 45, bend right=30}](v_2)(v_1)
\fill[color=white] (1,0.30) circle (0.12cm);
\draw (1,0.30) node{{\small 3}};
\fill[color=white] (1,-0.30) circle (0.12cm);
\draw (1,-0.30) node{{\small 2}};
\end{tikzpicture}
\caption{Graph $G$.}\label{fig:homog}
\end{figure}
\begin{remark}
  In general, homogenizing the generators of an ideal does not produce a
  complete set of generators for the homogenized ideal.  For instance, the graph
  in Example~\ref{example:genus2} has toppling ideal generated by $4$
  polynomials, whereas its homogeneous toppling ideal is minimally generated by
  $6$ polynomials.
\end{remark}

\begin{thm}\label{thm:sandpile lattice ideals}
  Let $\tLap$ be any submodule of $\Z^n$ having rank $n$.  Then
  there exists a sandpile graph whose reduced Laplacian
  lattice is $\tLap$.  Every lattice ideal defining a finite set of points is
  the lattice ideal associated with the reduced Laplacian of some sandpile
  graph.
  \end{thm}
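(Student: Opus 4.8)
\noindent The second assertion follows from the first. A lattice ideal in $R=\C[x_1,\dots,x_n]$ is $I(\Lambda)$ for some sublattice $\Lambda\subseteq\Z^n$, and by Theorem~\ref{thm:basics}~(\ref{thm:basics3}) it defines a finite set of points exactly when $\dim_{\Z}\Lambda=n$; since $I(G)=I(\tLap)$ by definition, it suffices to realize an arbitrary rank-$n$ sublattice $\tLap\subseteq\Z^n$ as the reduced Laplacian lattice of a sandpile graph.

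The plan has two parts. First I would pin down which integer matrices occur as reduced Laplacians. Rewriting the definition of $\tD$ as $\tD(v_i)=\wt(v_i,s)\,v_i+\sum_{j\neq i}\wt(v_i,v_j)(v_i-v_j)$ shows that the matrix of $\tD$ (with $i$-th column $\tD(v_i)$) of any sandpile graph has nonpositive off-diagonal entries, and the sum of its $i$-th column equals $\wt(v_i,s)\ge 0$. Conversely, given an integer $n\times n$ matrix $M$ with nonpositive off-diagonal entries, nonnegative column sums, and $\det M\neq 0$, I would build a graph on $\{v_1,\dots,v_n,s\}$ by declaring $\wt(v_i,v_j):=-M_{ji}$ for $i\neq j$, $\wt(v_i,s):=$ the $i$-th column sum, and $s$ an absolute sink; then $\outdeg(v_i)=M_{ii}$ and $\tD(v_i)$ is the $i$-th column of $M$. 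The one point needing care is that $s$ must be globally accessible: if it were not, the set $W$ of nonsink vertices unable to reach $s$ would be nonempty and closed under passing to out-neighbors, so the columns of $M$ indexed by $W$ would be supported on the rows indexed by $W$ while the $W\times W$ block has all column sums zero; block triangularity then forces $\det M=0$, a contradiction.

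Second, it remains to prove that \emph{every rank-$n$ sublattice $\Lambda\subseteq\Z^n$ has a $\Z$-basis $m_1,\dots,m_n$ such that, for each $i$, every coordinate of $m_i$ other than the $i$-th is $\le 0$ and $\deg m_i\ge 0$}; the matrix with these columns then satisfies the hypotheses of the first part, since $|\det M|=|\Z^n/\Lambda|\neq 0$. I would argue by induction on $n$, the case $n=1$ being trivial. For $n\ge 2$, put $d=|\Z^n/\Lambda|$ (so $d\,\Z^n\subseteq\Lambda$), let $\Lambda_0:=\{x\in\Lambda:\deg x=0\}$, a rank-$(n-1)$ lattice, and choose $m_n^{*}\in\Lambda$ whose degree generates the subgroup $\deg(\Lambda)=f\Z$ (nonzero, as $d\,e_1\in\Lambda$), so that $\Lambda=\Lambda_0\oplus\Z m_n^{*}$. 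Forgetting the last coordinate restricts to an isomorphism from $\{x\in\Z^n:\deg x=0\}$ onto $\Z^{n-1}$; applying the inductive hypothesis to the image of $\Lambda_0$ and pulling back yields a basis $m_1,\dots,m_{n-1}$ of $\Lambda_0$ in which, for $i\le n-1$, every coordinate of $m_i$ other than the $i$-th is $\le 0$ (the new last coordinate is the negative of a degree, hence $\le 0$) and $\deg m_i=0$. Finally, because $d(e_j-e_n)\in\Lambda_0$ for every $j<n$, I would subtract a large enough nonnegative combination of these vectors from $m_n^{*}$ to produce $m_n\in m_n^{*}+\Lambda_0$ whose first $n-1$ coordinates are all $\le 0$; then $\deg m_n=f>0$, and $m_1,\dots,m_{n-1},m_n$ is a basis of $\Lambda$ of the desired shape.

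The heart of the proof is the basis lemma in the second part. The subtlety there is that the cones $\{x\in\R^n:x_j\le 0\text{ for }j\neq i\text{ and }\deg x\ge 0\}$ do not cover $\R^n$, so the columns cannot be chosen from $\Lambda$ one at a time without care; it is the passage to $\Lambda_0$, with its drop in dimension, that makes the induction run, and the one column of positive degree---the nonsink vertex joined to the sink---must be treated separately from the degree-zero columns. Everything else (the Laplacian and column-sum identities, and global accessibility of the sink) is routine.
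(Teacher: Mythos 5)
Your proof is correct, and the key second step is genuinely different from the paper's. Both arguments reduce to the same matrix-theoretic statement --- that any rank-$n$ sublattice of $\Z^n$ admits a basis whose matrix has nonpositive off-diagonal entries and nonnegative column sums --- but the paper establishes this by an explicit column-reduction algorithm (Algorithm~\ref{algo:get-lap}): Euclidean reduction of the column degrees, then of the superdiagonal entries, followed by an iterative right-to-left correction of the columns using auxiliary vectors $v^s$. You instead run an induction on $n$ by splitting $\Lambda=\Lambda_0\oplus\Z m_n^*$ along the degree map, identifying the degree-zero hyperplane lattice with $\Z^{n-1}$, and cleaning up the single positive-degree column using the vectors $d(e_j-e_n)\in\Lambda_0$. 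Your route is shorter and conceptually cleaner, and it correctly isolates the real obstruction (the sign cones do not cover $\R^n$, so one cannot choose columns greedily); the paper's algorithmic route is longer but buys Corollary~\ref{alg-cor}, the explicit structural description of the output graph as a weighted path into the sink plus a DAG, which your induction does not directly produce. A minor further difference: you prove global accessibility of the sink directly (a nonempty set $W$ of vertices that cannot reach $s$ would force a degree-zero, hence singular, block of $M$), where the paper invokes the Matrix-Tree theorem; both are fine, and your implicit omission of the condition $M_{ii}>0$ is harmless, since a zero diagonal entry together with nonpositive off-diagonal entries and a nonnegative column sum would force a zero column, contradicting full rank.
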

\begin{proof}
  In light of Theorem~\ref{thm:basics}~(\ref{thm:basics3}), it suffices to prove
  that given an $n\times n$ matrix~$M$ of rank $n$, there exists a matrix $M'$
  with the same integer column span as $M$ and which is the reduced Laplacian
  matrix of some sandpile graph.  Recall that a matrix~$M'$ is the reduced
  Laplacian of a directed multigraph if and only if (i) $\deg(c) \ge 0$ for each
  column~$c$ of $M'$, (ii) $M_{ii}' > 0$, (iii) $M_{ij}' \le 0$ for $i \ne j$.
  (If $c$ is a column vector of a matrix, then $\deg(c)$ is the sum of the
  entries of $c$.) If in addition $M'$ has full rank, then its corresponding
  graph has a globally accessible vertex by the Matrix-Tree Theorem. The desired
  matrix $M'$ is produced by Algorithm~\ref{algo:get-lap}, stated below.  It
  proceeds in three steps, modifying the columns of $M$ using only invertible
  integral column operations.

  First, since $M$ has rank $n$, not all columns have $\deg(c) = 0$. Using the
  Euclidean algorithm, by adding multiples of one column to another, we set $\deg(c)$
  to $0$ for all but one column $c$ of $M$ (line~1). By possibly moving and
  negating that column, we have that $\deg(c_i) = 0$ for all but the first
  column $c_1$, for which $\deg(c_1) > 0$.

  Next, we repeat the Euclidean algorithm another $(n-2)$ times, now on the
  super-diagonal entries of each of the first $(n-2)$ rows in turn (lines
  2--9). Again by adding multiples of one column to another, we have every
  entry more than one row above the diagonal set to $0$. Note that since this
  step only involves addition of columns whose degree is already zero, the
  column degrees are not affected. Additionally, since $M$ had rank $n$ and the
  last $(n-1)$ columns have degree zero, we have that each of these columns has
  a nonzero superdiagonal entry. Now by negating columns where necessary, we
  may assume that the nonzero superdiagonal entry of each column is negative.

  At this point, the last column satisfies (i)--(iii). Assuming the last $r$
  columns $c_{n-r+1}, \ldots, c_n$ satisfy (i)--(iii) for $r \le n-2$, we claim
  that for any $1 \le s \le r$ there is a vector $v^s \in
  \myspan_{\Z}\{c_{n-r+1},\ldots, c_n\}$ with $v^s_n < 0$ and $v^s_{n-s} > 0$
  and with all other entries zero. For $r = 1$, the vector $v^1$ is obtained by
  negating $c_n$, so we proceed by induction on $r$. With the hypotheses
  satisfied for some $r$, we already have appropriate vectors $v^1,\ldots,
  v^{r-1}$. To obtain $v^r$, note that $-c_{n-r+1}$ has a positive entry in row
  $(n-r)$, so by adding appropriate multiples of the $v^s$ for $s<r$, we produce
  the desired column vector.
  
  Given that such vectors $v^s$ exist, it is clear that we may iteratively
  correct the columns from right to left by adding multiples of the higher
  indexed columns. We now give this algorithm explicitly.  In what follows,
  $v[j]$ denotes the $j$-th entry of the column vector $v$, and the Euclidean
  algorithm terminates when run in-place on some set of integers, $S$, once a
  single element of $S$ equals the positive GCD of the elements of $S$ and
  every other element of $S$ is zero.
\medskip

\begin{myalgorithm}\ \label{algo:get-lap}
  \smallskip
\REQUIRE An $n\times n$ matrix $M$ of rank $n$ with columns $c_1,\dots, c_n$.
\ENSURE The reduced Laplacian matrix $\tD(G)$ of a directed
multigraph $G$ such that $\tD(G) = MU$ for some invertible
integral matrix $U$.

\STATE Run the Euclidean algorithm on the set $S = \{\deg(c_k)\}$ by subtracting 
one column from another at each step. \label{getlap:columnsum} Swap
columns so that $\deg(c_1) = \gcd(S)$ and $\deg(c_i) = 0$ for $i > 1$.
\label{getlap:c1}
\FOR{$k \leftarrow 2$ \UPTO $n-1$}
    \STATE Run the Euclidean algorithm on the set $S = \{c_i[k-1]: i \ge k\}$ by
    subtracting one column from another at each step.

    \STATE Swap columns so that $c_k[k-1] = \gcd(S)$ and $c_i[k-1] = 0$ for $i > k$
    \STATE $c_k \leftarrow -c_k$
\ENDFOR \label{getlap:zeros}
\IF{$c_n[n-1] > 0$}
    \STATE $c_n \leftarrow -c_n$
\ENDIF \label{getlap:negate}
\FOR{$k \leftarrow n-1$ \DOWNTO $1$} \label{getlap:loop}
    \FOR[this loop is not entered until $k \le n-2$]{$i \leftarrow k+2$ \UPTO $n$}
        \WHILE{$c_k[i-1] > 0$}
            \STATE $c_k \leftarrow c_k + c_i$
        \ENDWHILE
    \ENDFOR \label{getlap:k-n}
    \STATE $v \leftarrow -c_{k+1}$
    \FOR[this loop is not entered until $k \le n-2$]{$i \leftarrow k+2$ \UPTO $n$}
        \STATE $v \leftarrow |c_i[i-1]|\cdot v + v[i-1]\cdot c_i$
    \ENDFOR \label{getlap:v}
    \WHILE{$c_k[k] \le 0$ or $c_k[n] > 0$}
        \STATE $c_k \leftarrow c_k + v$
    \ENDWHILE
    \label{getlap:k+n}
\ENDFOR
\RETURN $[c_1 \cdots c_n]$
\end{myalgorithm}
\end{proof}

For the sake of the following corollary, a {\em weighted path graph} 
$P=u_1\dots u_k$ is a graph with vertex set $\{u_1,\dots,u_k\}$ and weighted
edges $\{(u_i,u_{i+1}): 1\leq i<k\}$.  If $F$ and $F'$ are weighted digraphs,
their {\em graph sum} is the graph $F+F'$ whose weighted adjacency matrix is the
sum of those for $F$ and $F'$.
\begin{cor}\label{alg-cor}
Let $G$ be a sandpile graph with vertex set $V = \{v_1, \ldots, v_{n+1} \}$ and
sink $v_{n+1}$.  Then there exists a weighted path graph $P = v_{n}v_{n-1}\cdots
v_1v_{n+1}$ and a directed acyclic graph $D$ on the nonsink vertices $\tV$
oriented from lower-indexed vertices to higher such that the graph sum $G' = P +
D$ has the same Laplacian lattice as $G$.
\end{cor}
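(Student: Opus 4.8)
The plan is to run Algorithm~\ref{algo:get-lap} on the reduced Laplacian matrix $M := \tD(G)$ and to read off $P$ and $D$ directly from the shape of the output. By Theorem~\ref{thm:sandpile lattice ideals} and its proof, the output $M'$ is the reduced Laplacian $\tD(G')$ of a sandpile graph $G'$ on $\{v_1,\dots,v_{n+1}\}$ with sink $v_{n+1}$, and $M' = MU$ for an invertible integral $U$; in particular $\tLap(G') = \tLap(G)$. The point is that $M'$ is highly constrained: (a) $M'_{ij} = 0$ whenever $j \ge i+2$ (``lower Hessenberg''); (b) the superdiagonal entries $M'_{i-1,i}$ are strictly negative for $2 \le i \le n$; and (c) $\deg(\mathrm{col}\,1) > 0$ while $\deg(\mathrm{col}\,j) = 0$ for $j \ge 2$.

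First I would establish (a)--(c) by inspecting the algorithm. The setup phase, lines~\ref{getlap:c1}--\ref{getlap:negate}, arranges the column degrees as in (c) (line~\ref{getlap:c1}), clears every matrix entry two or more columns to the right of the diagonal, and negates columns so that the superdiagonal entries become nonpositive; since the loop ending at line~\ref{getlap:zeros} only combines the degree-zero columns with one another, the degree pattern (c) is unaffected. That each superdiagonal entry is in fact \emph{nonzero} --- hence negative --- follows from a dimension count: were some gcd in that loop to vanish, the last several columns of $M$ would, together with their degree-zero constraint, all lie in a coordinate subspace of dimension strictly smaller than their number, contradicting $\mathrm{rank}\,M = n$. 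The correction loop beginning at line~\ref{getlap:loop} only ever adds multiples of higher-indexed columns to a lower-indexed one, and each higher-indexed column is by then already lower Hessenberg and vanishes in every row strictly above the diagonal position of the column being corrected; hence this loop disturbs neither the Hessenberg shape (a), nor the superdiagonal entries (b), nor the column degrees (c). I expect this bookkeeping --- confirming that (a)--(c) survive the correction loop intact --- to be the main obstacle; the rest is formal.

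Given (a)--(c), I would split $M' = \tD_P + \tD_D$, where $\tD_P$ is the upper-bidiagonal matrix with $(\tD_P)_{11} = \deg(\mathrm{col}\,1)$, $(\tD_P)_{ii} = -M'_{i-1,i}$ and $(\tD_P)_{i-1,i} = M'_{i-1,i}$ for $2 \le i \le n$, all other entries zero, and $\tD_D := M' - \tD_P$. Using $\deg(\mathrm{col}\,j) = 0$ for $j \ge 2$ one checks that $\tD_D$ is strictly lower triangular with nonpositive off-diagonal entries, nonnegative diagonal, and \emph{every} column of degree zero, while each column of $\tD_P$ has degree zero except the first, whose degree is $\deg(\mathrm{col}\,1) > 0$. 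Now $\tD_P$ is precisely the reduced Laplacian of the weighted path $P = v_n v_{n-1}\cdots v_1 v_{n+1}$ with $\wt_P(v_j,v_{j-1}) = -M'_{j-1,j} > 0$ for $2 \le j \le n$ and $\wt_P(v_1,v_{n+1}) = \deg(\mathrm{col}\,1) > 0$; here (b) is exactly what makes $P$ a genuine path through all $n+1$ vertices. Likewise $\tD_D$ is the reduced Laplacian of the digraph $D$ with vertex set $\tV$ and $\wt_D(v_j,v_i) = -M'_{ij} \ge 0$ for $i > j$ --- the vanishing of all of its column degrees being exactly the statement that $D$ has no edge into the sink, so that $D$ really is a graph on $\tV$ --- and since every edge of $D$ runs from a lower- to a higher-indexed vertex, $D$ is acyclic. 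Finally, the reduced Laplacian is additive under graph sums (the weighted adjacency matrix of a graph sum being the sum of the two adjacency matrices), so $\tD(P+D) = \tD_P + \tD_D = M'$; thus $G' := P + D$ is a sandpile graph (its sink $v_{n+1}$ is reached from every vertex along $P$) with $\tLap(G') = \tLap(G)$, which is what we wanted.
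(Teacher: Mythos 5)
Your proof is correct and follows the paper's route exactly: the paper's entire argument for this corollary is the one-line remark that it ``simply states the form of the graph given by the output of Algorithm~4.9,'' and you have supplied the verification it leaves implicit --- that the output matrix is lower Hessenberg with strictly negative superdiagonal (your rank/degree-zero dimension count is the same one the paper uses inside the proof of Theorem~\ref{thm:sandpile lattice ideals}), that the correction loop preserves this shape, and that the resulting matrix splits as $\tD_P+\tD_D$ for a path and a DAG. The only caveat is terminological: what you actually establish is equality of \emph{reduced} Laplacian lattices, which gives equality of the full Laplacian lattices only when $\Delta_G(v_{n+1})$ lies in the span of the other columns (e.g.\ when the sink is absolute, as $G'$'s necessarily is) --- but that imprecision sits in the paper's own statement of the corollary, not in your argument.
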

The above simply states the form of the graph given by the output of
Algorithm~\ref{algo:get-lap}. The graph $G'$ of Corollary~\ref{alg-cor} is not
uniquely determined.  For instance, by iterating line~21 of
Algorithm~\ref{algo:get-lap} more times than necessary, one may generate
infinitely many graphs $G'$ of the form described in the corollary, each with
Laplacian lattice $\Lap$.  
\begin{example}\label{example:same laplacian}
  One sandpile graph of the form given by Corollary~\ref{alg-cor} with the same
  Laplacian lattice as the sandpile graph $G$ from Example~\ref{ex:ideals}
  is $G'$ appearing in Figure~\ref{fig:algout}.
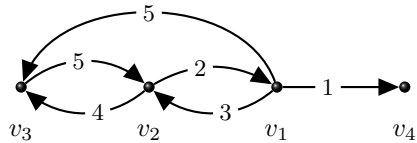
\begin{figure}[ht] 
\begin{tikzpicture}[scale=1.7]
\SetVertexMath
\GraphInit[vstyle=Art]
\SetUpVertex[MinSize=3pt]
\SetVertexLabel
\tikzset{VertexStyle/.style = {%
shape = circle,
shading = ball,
ball color = black,
inner sep = 1.5pt
}}
\SetUpEdge[color=black]
\Vertex[LabelOut,Lpos=270, Ldist=.3cm,x=0,y=0]{v_3}
\Vertex[LabelOut,Lpos=270, Ldist=.3cm,x=1,y=0]{v_2}
\Vertex[LabelOut,Lpos=270, Ldist=.3cm,x=2,y=0]{v_1}
\Vertex[LabelOut,Lpos=270, Ldist=.3cm,x=3,y=0]{v_4}
\Edge[style={-triangle 45, bend left=40}](v_3)(v_2)
\Edge[style={-triangle 45, bend left=40}](v_2)(v_3)
\fill[color=white] (0.45,0.2) circle (0.1cm);
\draw (0.45,0.2) node{{\small 5}};
\fill[color=white] (0.6,-0.2) circle (0.1cm);
\draw (0.6,-0.2) node{{\small 4}};
\Edge[style={-triangle 45, bend left=30}](v_2)(v_1)
\Edge[style={-triangle 45, bend left=40}](v_1)(v_2)
\fill[color=white] (1.4,0.15) circle (0.1cm);
\draw (1.4,0.15) node{{\small 2}};
\fill[color=white] (1.6,-0.2) circle (0.1cm);
\draw (1.6,-0.2) node{{\small 3}};
\Edge[style={-triangle 45}](v_1)(v_4)
\fill[color=white] (2.4,0.0) circle (0.1cm);
\draw (2.4,0.0) node{{\small 1}};
\Edge[style={-triangle 45, bend right=70}](v_1)(v_3)
\fill[color=white] (1,0.58) circle (0.1cm);
\draw (1,0.58) node{{\small 5}};
\end{tikzpicture}
\caption{The sandpile graph $G'$ for Example~\ref{example:same laplacian}.}
\label{fig:algout}
\end{figure}
\end{example}
\begin{question}
When is it the case that a submodule of $\Z^n$ with rank $n$ is the reduced
Laplacian lattice of an {\em undirected} graph?  It is not always the case.  For
instance, Figure~\ref{fig:gor} is a directed sandpile graph whose lattice ideal
is Gorenstein (cf.~\S\ref{section:gor}) and with sandpile group of order $5$.
By Theorem~\ref{thm:loopy}, any undirected graph with Gorenstein lattice ideal must be a tree and
would thus have sandpile group of order $1$.
\end{question}
\section{Gr\"obner bases of toppling ideals}\label{bases}
We recommend~\cite{CLO} as a general reference for the theory of Gr\"obner bases
needed in this section.
Let
$R=\C[x_1,\dots,x_n]$.   
\begin{definition}
  A {\em monomial order}, $>$, on $R$ is a total ordering on the monomials of $R$
  satisfying
  \begin{enumerate}
    \item If $x^a>x^b$, then $x^{c+a}>x^{c+b}$ for all $c\geq0$;
    \item $1=x^0$ is the smallest monomial.
  \end{enumerate}
\end{definition}
\begin{example}
  The following are the most common examples of monomial orders:
  \begin{enumerate}
    \item Lexicographic ordering, \verb+lex+, is defined by $x^a>x^b$ if the
      left-most nonzero entry of $a-b$ is positive (i.e., more of the earlier
      indeterminates).
    \item Degree lexicographic ordering, \verb+deglex+, is defined by $x^a>x>b$
      if $\deg(a)>\deg(b)$ or if $\deg(a)=\deg(b)$ and the left-most nonzero entry of $a-b$ is
      positive (i.e., order by degree and break ties with \verb+lex+).
    \item Degree reverse lexicographic ordering, \verb+grevlex+, is defined by
      $x^a>x^b$ if $\deg(a)>\deg(b)$ or if $\deg(a)=\deg(b)$ and the right-most
      nonzero entry of $a-b$ is negative (i.e., order by degree then break ties by checking which
      monomial has fewer of the later indeterminates).
  \end{enumerate}
\end{example}
A monomial multiplied by a constant is called a {\em term}.  Once a monomial
ordering is fixed, write $\alpha\,x^a>\beta\,x^b$ for two terms if $\alpha$ and
$\beta$ are nonzero and $x^a>x^b$. Each $f\in R$ is a sum of terms corresponding
to distinct monomials.  We denote the leading term---the largest term with
respect to the chosen monomial ordering---by~$\lt(f)$.  
  
  \begin{definition}
     Fix a monomial ordering on $R$ and let $f,g\in R$.  The {\em
     $S$-polynomial} for the pair $(f,g)$ is 
     \[ S(f,g) = \frac{\mathrm{lcm}(\lt(f),\lt(g))}{\lt(f)}\,f-
     \frac{\mathrm{lcm}(\lt(f),\lt(g))}{\lt(g)}\,g.  
     \]
    \end{definition}

\begin{definition}
    Fix a monomial ordering on $R$, and let $I$ be an ideal of $R$.  A finite
    subset $\Gamma$ of $I$ is a {\em Gr\"obner basis} for $I$ with respect to the
    given monomial ordering if for all $f\in I$ there is a $g\in\Gamma$ such
    that $\lt(g)$ divides $\lt(f)$.
\end{definition}
Let $\Gamma=\{g_1,\dots,g_m\}$ be the Gr\"obner basis for an ideal $I\subseteq
R$ with respect to some monomial ordering, and let $f\in\R$.  If $f$ has a term
$m$ divisible by $\lt(g_i)$ for some $i$, then replace $f$ by
$f-\frac{m}{\lt(g_i)}\,g_i$.  A standard result in the theory of Gr\"obner
bases is that by repeating this process one arrives at a remainder $r$ that is
unique with respect to the property that (i) $r=f+g$ for some $g\in I$ and (ii)
$r$ has no terms divisible by any leading term of an element of $\Gamma$.  We
call this remainder the {\em reduction} or {\em normal form} of $f$ with respect
to the Gr\"obner basis $\Gamma$.
\begin{notation}
  The reduction of $f$ with respect to $\Gamma$ is denoted by $f\,\%\,\Gamma$.  If $g\in
  R$, we write
  $f\,\%\,g$ for the special case in which $I=(g)$ and $\Gamma=\{g\}$.
\end{notation}

\begin{prop}
  Fix a monomial ordering on $R$, and let $I$ be an ideal of $R$.   The
  following are equivalent for a finite subset $\Gamma$ of $I$: 
  \begin{enumerate}
    \item $\Gamma$ is a Gr\"obner basis with respect to the given ordering;
    \item there is an equality of ideals: $(\lt(g):g\in \Gamma)=(\lt(f):f\in
      I)$;
    \item each $f\in I$ may be reduced to $0$ by $\Gamma$, i.e.,
      $f\,\%\,\Gamma=0$;
    \item for all $g,g'\in \Gamma$, the $S$-polynomial $S(g,g')$ reduces to $0$
      by $\Gamma$ and $\Gamma$ is a generating set for $I$.
  \end{enumerate}
\end{prop}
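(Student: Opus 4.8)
The plan is to establish several implications linking $(1)$ to each of the other conditions: first $(1)\Leftrightarrow(2)$, then $(1)\Leftrightarrow(3)$, then $(1)\Rightarrow(4)$, and finally the substantive implication $(4)\Rightarrow(1)$, which is the heart of the matter. The equivalence $(1)\Leftrightarrow(2)$ should follow by unwinding definitions: since $\Gamma\subseteq I$ one always has $(\lt(g):g\in\Gamma)\subseteq(\lt(f):f\in I)$, and as the latter is a monomial ideal it is generated by the monomials $\lt(f)$, $f\in I$; condition $(1)$ asserts exactly that each such $\lt(f)$ is divisible by some $\lt(g)$ with $g\in\Gamma$, which is the reverse inclusion.

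For $(1)\Rightarrow(3)$, given $f\in I$ I would run the reduction to a normal form $r=f\,\%\,\Gamma$; each step subtracts a multiple of some $g_i\in I$, so $r\in I$, and by construction no term of $r$ is divisible by any $\lt(g_i)$. If $r\neq0$, then $\lt(r)\in(\lt(f):f\in I)=(\lt(g):g\in\Gamma)$ by $(2)$, so some $\lt(g_i)$ divides $\lt(r)$ — a contradiction, hence $r=0$. For the converse $(3)\Rightarrow(1)$: were some nonzero $f\in I$ to have $\lt(f)$ divisible by no $\lt(g_i)$, then no reduction step could ever touch the leading term of $f$ (a step replaces a term divisible by some $\lt(g_i)$ by strictly smaller terms), so $f\,\%\,\Gamma$ would still have leading term $\lt(f)$ and be nonzero, contradicting $(3)$. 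Once $(3)$ is available, $(1)\Rightarrow(4)$ is immediate: every $f\in I$ reduces to $0$ and is therefore an $R$-combination of the $g_i$, so $\Gamma$ generates $I$, and each S-polynomial $S(g,g')\in I$ reduces to $0$.

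The real work is $(4)\Rightarrow(1)$. Assuming $\Gamma=\{g_1,\dots,g_m\}$ generates $I$ and $S(g_i,g_j)\,\%\,\Gamma=0$ for all $i,j$, I would take a nonzero $f\in I$, write $f=\sum_ih_ig_i$, and among all such representations select one minimizing the monomial $\delta:=\max_i\lt(h_ig_i)$ in the fixed monomial order (so $\lt(f)\le\delta$). The goal is to show $\lt(f)=\delta$: then $\lt(f)=\lt(h_jg_j)$ for some $j$, hence $\lt(g_j)\mid\lt(f)$ and $(1)$ holds. If instead $\lt(f)<\delta$, the terms equal to a scalar multiple of $\delta$ among the $h_ig_i$ must cancel; collecting those indices and invoking the standard cancellation lemma, this top part rewrites as an $R$-combination of S-polynomials $S(g_i,g_j)$, each with leading term strictly below $\delta$. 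Substituting for each such S-polynomial its normal-form expansion $\sum_iq_ig_i$ (whose terms satisfy $\lt(q_ig_i)\le\lt(S(g_i,g_j))<\delta$) yields a representation of $f$ with strictly smaller $\delta$, contradicting minimality; hence $\lt(f)=\delta$.

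The main obstacle is exactly this last implication — concretely, proving the cancellation lemma that expresses a top-monomial-cancelling sum $\sum_kc_kx^{\alpha_k}g_{i_k}$, whose leading terms are all scalar multiples of $\delta$, in terms of S-polynomials with leading terms below $\delta$, and then verifying carefully that substituting the S-polynomial reductions strictly decreases $\delta$ while preserving the equality $f=\sum_ih_ig_i$. Every other step is routine bookkeeping with the definitions of leading term, reduction, and monomial order.
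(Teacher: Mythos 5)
Your proposal is correct and is exactly the standard argument (Buchberger's criterion together with the routine equivalences via the division algorithm) that the paper omits and defers to its reference \cite{CLO}. The one point worth making explicit in the $(4)\Rightarrow(1)$ step is that a representation minimizing $\delta=\max_i\lt(h_ig_i)$ exists because a monomial order is a well-ordering; with that noted, the cancellation-lemma argument you outline goes through as stated.
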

The last criterion is essentially Buchberger's algorithm for calculating a
Gr\"obner basis: start with any generating set for $I$, and if
$f:=S(g,g')\,\%\,\Gamma\neq0$ for some pair of generators $g$ and $g'$, add $f$
to the set of generators and check the $S$-pairs again.  The process eventually
stops.
\begin{definition}
  Fix a monomial ordering on $R$ and let $I$ be an ideal of $R$.  The set of
  monomials of $R$ that are not divisible by the leading term of a Gr\"obner
  basis element for $I$ with respect to the given ordering is called the {\em
  normal basis} for $R/I$.
\end{definition}
\noindent By Macaulay's theorem (Theorem 15.3, \cite{Eisenbud}), a normal basis is a vector space basis for $R/I$.

We now introduce an appropriate monomial ordering for sandpiles, due to
Cori, Rossin, and Salvy, \cite{CRS}.
\begin{definition}\label{def:ordering} 
  Let $G$ be a sandpile graph with vertices $\{v_1,\dots,v_{n+1}\}$ and with
  sink $v_{n+1}$.  A {\em sandpile monomial ordering} on $R=\C[x_1,\dots,x_n]$
  is any \verb+grevlex+ ordering for which $x_i>x_j$ if the length of the
  shortest path from vertex~$v_j$ to the sink is no greater than that for $v_i$.
  Given a sandpile monomial ordering $>$ on $R$, the sandpile monomial ordering
  on $S=\C[x_1,\dots,x_{n+1}]$ {\em compatible} with $>$ is the \verb+grevlex+
  order extending $>$ for which $x_i>x_{n+1}$ for $i=1,\dots,n$.
\end{definition}

\begin{prop}
  With notation as in Definition~\ref{def:ordering}, let $>$ be a sandpile
  monomial ordering on $R$, extended to a compatible sandpile monomial ordering
  on $S$.  Let $I\subset R$ be the toppling ideal for $G$.
  \begin{enumerate}
    \item Let $\Gamma$ a Gr\"obner basis for $I$ with respect to $>$,
      and let $\Gamma^h$ be the subset of~$S$ formed by homogenizing each
      element of $\Gamma$.  Then $\Gamma^h$ is a Gr\"obner basis for the
      homogenization $I^h\subset S$.
    \item The normal bases for $R/I$ and for $S/(I^h+(x_{n+1}))$
      consist of the same set of monomials.  Hence, $R/I$ and $S/(I^h+(x_{n+1}))$
      are isomorphic as vector spaces.
  \end{enumerate}
\end{prop}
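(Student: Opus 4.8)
The plan is to reduce both parts to a single lemma: for every nonzero $f \in R$, the leading term $\lt(f^h)$ computed with the compatible ordering on $S$ equals $\lt(f)$, viewed as a monomial of $S$ (this is automatically a homogeneous monomial of degree $\deg f$, so it coincides with its own homogenization). Granting the lemma, part~(1) follows from the initial-ideal characterization of Gr\"obner bases (criterion~(2) of the preceding Proposition): since $\Gamma \subseteq I$ gives $\Gamma^h \subseteq I^h$, it remains to show $\langle \lt(F): F\in I^h\rangle = \langle \lt(g^h) : g\in\Gamma\rangle$. The inclusion $\supseteq$ is immediate from the lemma. For $\subseteq$, I would use that $I^h$ is homogeneous, so it suffices to treat homogeneous $F\in I^h$; writing $F = \sum_j x_{n+1}^j h_j$ with $h_j\in R$ homogeneous and putting $g := F|_{x_{n+1}=1}$, one has $F = x_{n+1}^m g^h$ with $m = \deg F - \deg g$, and $g\in I$ because the dehomogenization map $S\to R$ carries $I^h$ into $I$. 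Then $\lt(F) = x_{n+1}^m\,\lt(g^h) = x_{n+1}^m\,\lt(g)$ by the lemma, and $\lt(g)$ is divisible by some $\lt(g')$ with $g'\in\Gamma$ since $\Gamma$ is a Gr\"obner basis for $I$; hence $\lt(F)\in\langle\lt(g^h):g\in\Gamma\rangle$.

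For part~(2) the plan is first to show that $\Gamma^h \cup \{x_{n+1}\}$ is a Gr\"obner basis for $I^h + (x_{n+1})$. It clearly generates that ideal, and Buchberger's criterion applies: $S$-polynomials of pairs from $\Gamma^h$ reduce to $0$ because $\Gamma^h$ is already a Gr\"obner basis by part~(1), while an $S$-polynomial of $g^h$ with $x_{n+1}$ reduces to $0$ by the coprimality criterion, since $\lt(g^h) = \lt(g)$ is a monomial in $x_1,\dots,x_n$ alone and is therefore coprime to $x_{n+1} = \lt(x_{n+1})$. Consequently the normal basis of $S/(I^h + (x_{n+1}))$ is the set of monomials of $S$ divisible neither by $x_{n+1}$ nor by any $\lt(g)$, $g\in\Gamma$; but a monomial not divisible by $x_{n+1}$ is just a monomial of $R$, so this set is exactly the set of monomials of $R$ not divisible by any $\lt(g)$, i.e.\ the normal basis of $R/I$. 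Since normal bases are vector-space bases by Macaulay's theorem, matching up these common monomials gives the asserted isomorphism $R/I \cong S/(I^h + (x_{n+1}))$ of $\C$-vector spaces.

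The step I expect to require the most care is the lemma $\lt(f^h) = \lt(f)$, which is where the specific choice of ``compatible'' ordering gets used. The point is that $f^h$ is homogeneous of degree $d := \deg f$, with monomials exactly $x^a x_{n+1}^{d - |a|}$ for the monomials $x^a$ of $f$. Comparing two of these under the \texttt{grevlex} order on $S$ in which $x_{n+1}$ is the last variable: they have the same total degree $d$, so one looks at the rightmost nonzero coordinate of the exponent difference; when the two values of $|a|$ differ, that coordinate is the $x_{n+1}$-exponent, and the monomial with the \emph{larger} $|a|$ is bigger. Hence the \texttt{grevlex}-leading monomial of $f^h$ has $|a| = d$, that is, $x_{n+1}$-exponent $0$, so it is literally a top-degree monomial of $f$; and among those the comparison reduces to \texttt{grevlex} in $x_1,\dots,x_n$, which is precisely $>$. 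Since $>$ is itself a \texttt{grevlex} order, $\lt(f)$ is likewise a top-degree monomial of $f$ and is the $>$-largest among them, so the two monomials coincide. Apart from this lemma and the routine identity $I^h = \{F \in S : F \text{ homogeneous and } F|_{x_{n+1}=1}\in I\}$, everything else is bookkeeping with the Gr\"obner-basis criteria already recorded in this section.
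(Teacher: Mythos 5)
Your proposal is correct and follows essentially the same route as the paper: the paper's proof simply cites the standard \verb+grevlex+ homogenization result (Exercise 5, \S 8.4 of \cite{CLO}) for part (1) and notes that $\lt(f)=\lt(f^h)$, from which part (2) follows. You have supplied full details for exactly these two steps --- the identity $\lt(f^h)=\lt(f)$ for the compatible ordering and the derivation of both parts from it --- and the argument is sound.
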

\begin{proof}
  The first part of the proposition is a general result for \verb+grevlex+
  orderings (cf.~Exercise 5, \S8.4, \cite{CLO}).  It is straightforward to
  check that if $f\in R$, then $\lt(f)=\lt(f^h)$, from which the second part
  follows.
\end{proof}

\noindent
\fbox{
\parbox{\textwidth}{
{\sc assumption:} For the rest of \S5, we fix a sandpile graph $G$ as in
Definition~\ref{def:ordering}, and a sandpile monomial ordering on~$R$.  We
assume the vertices are numbered so that $x_i>x_j$ if
$i<j$.} 
}
\medskip

The utility of a sandpile monomial ordering becomes apparent when one considers
topplings of sandpiles.  

\begin{prop} 
Let $a,b\in\N\tV$ be distinct configurations on $G$ such that $a\to
b$, i.e.,~$b$ is obtained from $a$ by a sequence of vertex firings.  Then,
$x^a>x^b$ with respect to the sandpile monomial ordering we have fixed on
$R$.
\end{prop}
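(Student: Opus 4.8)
The plan is to reduce to a single vertex firing and then read the comparison off the reverse-lexicographic tie-break. Write $a=c^{(0)},c^{(1)},\dots,c^{(r)}=b$ for the intermediate configurations produced by a legal firing sequence taking $a$ to $b$; then $r\geq 1$ since $a\neq b$, and every $c^{(k)}$ is nonnegative. As a monomial order is transitive, it suffices to prove: if $c$ is a nonnegative configuration and $c'=c-\tD v_i$ is obtained from $c$ by firing a nonsink vertex $v_i$, then $x^{c}>x^{c'}$. Observe that this comparison depends only on the difference $c-c'=\tD v_i$ --- equivalently, we are claiming that the leading term of the toppling polynomial $t_i$ is its pure power $x_i^{\outdeg(v_i)-\wt(v_i,v_i)}$ --- so legality of the firing is irrelevant beyond ensuring $c'\geq 0$.

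In $\Z\tV$ we have $\tD v_i=\outdeg(v_i)\,v_i-\sum_{u\in\tV}\wt(v_i,u)\,u$. Thus the $i$-th coordinate of $\tD v_i$ is $\outdeg(v_i)-\wt(v_i,v_i)\geq 0$, every other coordinate is $-\wt(v_i,v_k)\leq 0$, and $\deg(\tD v_i)=\outdeg(v_i)-\sum_{u\in\tV}\wt(v_i,u)=\wt(v_i,s)\geq 0$. If $\wt(v_i,s)>0$, then $\deg(c')=\deg(c)-\wt(v_i,s)<\deg(c)$, so since the sandpile order is a \texttt{grevlex} order we get $x^{c}>x^{c'}$ immediately.

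Now suppose $\wt(v_i,s)=0$, so that $\deg(c')=\deg(c)$; by the \texttt{grevlex} rule it suffices to show the rightmost nonzero entry of $c-c'=\tD v_i$ is negative. Let $d(v)$ be the length of a shortest directed path from $v$ to $s$. Since $s$ is globally accessible, there is such a path from $v_i$; taking a shortest one, its first edge is some $(v_i,w)$ with $d(w)=d(v_i)-1$, and $w\neq s$ because $\wt(v_i,s)=0$. Hence $w=v_j$ for some $v_j\in\tV$ with $d(v_j)<d(v_i)$ and $\wt(v_i,v_j)>0$. By the definition of a sandpile monomial ordering, together with our convention that $x_i>x_j\iff i<j$, the strict inequality $d(v_j)<d(v_i)$ forces $j>i$. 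Therefore the $j$-th coordinate of $\tD v_i$ equals $-\wt(v_i,v_j)<0$, while every coordinate of $\tD v_i$ of index larger than $i$ is of the form $-\wt(v_i,v_k)\leq 0$; as $j>i$, the rightmost nonzero coordinate of $\tD v_i$ occurs at an index $\geq j$ and is strictly negative. This gives $x^{c}>x^{c'}$, and chaining this inequality along $c^{(0)}\to\cdots\to c^{(r)}$ yields $x^{a}>x^{b}$.

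I expect the only real obstacle to be this last case: one must convert ``$s$ is globally accessible'' into control over which variable indices can carry a negative exponent in $\tD v_i$, and it is exactly the requirement that the monomial order refine the ``distance to $s$'' preorder that makes the reverse-lexicographic comparison come out with the correct sign. Everything else is a direct computation with $\tD$.
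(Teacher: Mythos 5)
Your proof is correct and takes essentially the same route as the paper's: reduce to a single vertex firing and split into the two cases where the firing either sends sand to the sink (so the total degree strictly drops) or sends sand to a nonsink vertex strictly closer to the sink (which the sandpile ordering places at a larger index, making the rightmost nonzero entry of $\tD v_i$ negative). The only difference is that you carry out the \texttt{grevlex} tie-break rigorously, whereas the paper states it informally as the monomial acquiring ``more of the later indeterminates.''
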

\begin{proof}
Each vertex firing deceases the size of the corresponding monomial.  The
reason is that either the vertex firing shoots sand into the sink,
decreasing the total degree of the corresponding monomial, or it shoots sand
to a vertex closer to the sink, in which case the corresponding monomial
has more of the later indeterminates. 
\end{proof}

We now proceed to compute a Gr\"obner basis for the toppling ideal.  Let
\begin{eqnarray*}
  E\colon\Z\tV&\to&R\\
  \ell&\mapsto& x^{\ell^{+}}-x^{\ell^{-}}.
\end{eqnarray*}
Then define $\mathcal{T}=E\circ\tD:\Z\tV\to R$.  Thus,
$\mathcal{T}(v_i)$ is the $i$-th toppling polynomial, defined earlier, and for
any configuration $c$, we have $x^c\,\%\,\mathcal{T}(v_i)=x^{c'}$ where~$c'$ is
the configuration obtained from $c$ by firing $v_i$ until $v_i$ is stable.
Morever, if $\sigma$ is a firing-script, then $x^c\,\%\,\mathcal{T}(\sigma)$
yields the monomial corresponding to the configuration formed by firing $\sigma$
as many times as legal from $c$.  The following theorem appears
in the Bachelor's thesis of the second author,~\cite{Perlman}.

\begin{thm}\label{thm: groebner basis}
Let $b$ be a burning configuration, and let $\sigma_b$ be its script.  Then 
\[
\Gamma_b = \{\mathcal{T}(\sigma):0\leq\sigma\leq\sigma_b\}
\]
is a Gr\"obner basis for $I(G)$.
\end{thm}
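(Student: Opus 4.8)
The plan is to verify the Buchberger criterion for $\Gamma_b$ with respect to the fixed sandpile monomial ordering: I must show (i) every element of $\Gamma_b$ lies in $I(G)$, (ii) $\Gamma_b$ generates $I(G)$, and (iii) every $S$-polynomial $S(\mathcal{T}(\sigma),\mathcal{T}(\sigma'))$ for $0\le\sigma,\sigma'\le\sigma_b$ reduces to $0$ modulo $\Gamma_b$. Item (i) is immediate since $\mathcal{T}(\sigma)=x^{(\tD\sigma)^+}-x^{(\tD\sigma)^-}$ with $\tD\sigma\in\tLap$. Item (ii) follows from Proposition~\ref{prop:generators} together with Proposition~\ref{thm: groebner basis}'s own hypothesis structure: the toppling polynomials are $\mathcal{T}(v_i)$ (the case $\sigma=v_i$, and $v_i\le\sigma_b$ by Theorem~\ref{thm:speer}(4)), and $x^b-1=x^{b}-x^{0}$ equals $\pm\mathcal{T}(\sigma_b)$ since $\tD\sigma_b=b\ge0$, so its negative part is $x^0=1$; hence $\Gamma_b$ contains a generating set for $I(G)$, so it generates.

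The real content is (iii). Fix $0\le\sigma,\sigma'\le\sigma_b$ and write $\ell=\tD\sigma$, $\ell'=\tD\sigma'$. The leading term of $\mathcal{T}(\sigma)$ is $x^{\ell^+}$ — this is where I would use the key fact, established in the displayed proposition just before the theorem, that firing vertices strictly decreases the monomial in the sandpile ordering, applied to the observation that $\sigma$ fired from the configuration $\ell^+$ legally reaches $\ell^-$ (since $\ell^+ \to \ell^- $ via script $\sigma$, both being nonnegative with disjoint support and $\ell^+ - \tD\sigma = \ell^-$). So $\operatorname{lcm}(\lt(\mathcal{T}(\sigma)),\lt(\mathcal{T}(\sigma'))) = x^{m}$ where $m_i=\max(\ell^+_i,(\ell')^+_i)$, and the $S$-polynomial is $x^{m-\ell^+}(x^{\ell^+}-x^{\ell^-}) - x^{m-(\ell')^+}(x^{(\ell')^+}-x^{(\ell')^-}) = -x^{m-\ell^+}x^{\ell^-}+x^{m-(\ell')^+}x^{(\ell')^-}$, a difference of two monomials, each of degree $\deg(x^m)$ minus the same amount (since $\deg\ell=\deg\ell'=0$), hence equal degree. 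Both of these monomials represent configurations reachable from the "top" configuration $m$ by a legal script: from $x^m$, firing $\sigma$ as much as legal reaches $x^{m-\ell^+}x^{\ell^-}$ (the configuration $m-\ell$), and similarly firing $\sigma'$ reaches $m-\ell'$. The strategy is then: reduce $x^{m-\ell}$ further by $\Gamma_b$ and reduce $x^{m-\ell'}$ further by $\Gamma_b$, and argue both reductions terminate at the \emph{same} monomial. This should follow from a confluence/diamond argument for legal script-firings bounded by $\sigma_b$: starting from $m$ and firing as much as is legal subject to the total script staying $\le\sigma_b$, the result is independent of the order of firings — this is an "abelian property" for the bounded script-firing process, analogous to the abelian property of stabilization, and Theorem~\ref{thm:speer}(5) guarantees that the total firing vector from $c+b$-type configurations never exceeds $\sigma_b$, which is what keeps all the intermediate scripts within the allowed range $[0,\sigma_b]$ so that each single firing step is indeed multiplication by an element of $\Gamma_b$.

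The main obstacle I expect is precisely this confluence argument: showing that reducing $x^{m-\ell}$ and $x^{m-\ell'}$ by $\Gamma_b$ gives the same normal form, and — crucially — that throughout the reduction one never needs a script exceeding $\sigma_b$ (so that $\Gamma_b$, rather than some larger set, suffices). I would handle it by an exchange argument at the level of firing scripts: if from $m$ one legal sequence of firings (total script $\le\sigma_b$ coordinatewise, obtained as a composite of the $\sigma$'s and $\sigma'$'s appearing) yields configuration $c_1$ with no further legal firing available within the bound, and another yields $c_2$ likewise, then a standard least-action / permutation argument (exactly as in the uniqueness half of the existence-and-uniqueness theorem for stabilization, adapted to the bounded setting) shows $c_1=c_2$. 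The boundedness is the subtle point, and Theorem~\ref{thm:speer}(5) — the firing vector for the stabilization of $c+b$ is $\le\sigma_b$ — is the lemma that makes it work, since $m$ dominates a configuration of the form $c+b$ in the relevant range. Once confluence is in hand, $S(\mathcal{T}(\sigma),\mathcal{T}(\sigma'))$ reduces to (normal form of $x^{m-\ell}$) minus (normal form of $x^{m-\ell'}$) $=0$, completing Buchberger's criterion and hence the proof.
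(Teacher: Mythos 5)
Your setup matches the paper's: membership in $I(G)$ and generation are handled the same way (the $\mathcal{T}(v_i)$ together with $\mathcal{T}(\sigma_b)=x^b-1$ recover the generators of Proposition~\ref{prop:generators}, and $v_i\le\sigma_b$ by Theorem~\ref{thm:speer}), and you correctly compute the $S$-polynomial as $\pm\left(x^{m-\ell}-x^{m-\ell'}\right)$, a difference of two monomials obtained from the common configuration $m$ by the legal script-firings $\sigma$ and $\sigma'$. The gap is at exactly the point you flag as the main obstacle: you never actually produce the common $\Gamma_b$-reduct, only a plan to obtain one via a confluence argument for a ``bounded'' firing process, and the lemma you lean on, Theorem~\ref{thm:speer}(5), concerns the stabilization of $c+b$ and does not control the scripts arising in your reduction. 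The missing idea is elementary and makes the confluence apparatus unnecessary: take $\mu=\max(\sigma,\sigma')$ componentwise. Then $\mu\le\sigma_b$, and $m-\tD\mu\geq0$ follows coordinatewise from $m-\tD\sigma\geq0$ and $m-\tD\sigma'\geq0$ (at a vertex $v$ with, say, $\mu_v=\sigma_v$, replacing $\sigma_u$ by $\mu_u\geq\sigma_u$ for $u\neq v$ only increases $(m-\tD\sigma)_v$, since off-diagonal entries of $\tD$ are nonpositive). Hence $\mu-\sigma$ and $\mu-\sigma'$ are scripts in $[0,\sigma_b]$ legal from $m-\ell$ and $m-\ell'$ respectively, so $\mathcal{T}(\mu-\sigma)$ and $\mathcal{T}(\mu-\sigma')$ lie in $\Gamma_b$ and reduce the two monomials in one step each to the single monomial $x^{m-\tD\mu}$, killing the $S$-polynomial. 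This is precisely the paper's argument.

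For what it is worth, the same maximum trick is also what proves the confluence statement you wanted: if $\tau^{(1)}$ and $\tau^{(2)}$ are two maximal legal cumulative scripts bounded by $\sigma_b$ from $m$, then $m-\tD\max(\tau^{(1)},\tau^{(2)})\geq0$ by the coordinatewise argument above, so maximality forces both to equal the componentwise maximum. So your route can be completed. But as written, the crucial step is asserted rather than proved, and the proof of it is not the least-action argument for stabilization that you cite (that argument concerns single-vertex firings and unbounded stabilization); it is the coordinatewise-maximum observation, which, once noticed, lets you skip the confluence lemma entirely and reduce each monomial in a single step.
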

\begin{proof}
We have $\im(\mathcal{T})\subset I(G)$ by definition of
$I(G)$.  On the other hand, $\mathcal{T}(v_i)$ is the $i$-th toppling
polynomial and $\mathcal{T}(\sigma_b)=x^b-1$.  So
$I(G)=\myspan_{\C}\{\im(\mathcal{T})\}$ by
Proposition~\ref{prop:generators}.  
 
We need to show that all $S$-polynomials of $\Gamma_b$ reduce to $0$ by
$\Gamma_b$.  Let $\sigma_1$ and $\sigma_2$ be scripts with
$\sigma_1,\sigma_2\leq\sigma_b$.  Write
\[
\mathcal{T}(\sigma_i)=x^{c_i^{+}}-x^{c_i^{-}}
\]
for $i=1,2$ where $c_i^{-}$ is the configuration obtained from $c_i^{+}$ by
firing script $\sigma_i$.  Hence,~$x^{c_i^{+}}$ is the leading term of
$\mathcal{T}(\sigma_i)$ for each $i$.
  Define
  \[
  x^{a_i}=\frac{\mathrm{lcm}(x^{c_1^{+}},x^{c_2^{+}})}{x^{c_i^{+}}}
  \]
  for $i=1,2$ so that $a_1+c_1^{+}=a_2+c_2^{+}=c$ for some configuration $c$.
  We must show that the $S$-polynomial,
  \begin{eqnarray*}
    S(\mathcal{T}(\sigma_1),\mathcal{T}(\sigma_2))&=& x^{a_1}\mathcal{T}(\sigma_1)-
    x^{a_2}\mathcal{T}(\sigma_2)\\
    &=& x^{a_2+c_2^{-}}-x^{a_1+c_1^{-}},
  \end{eqnarray*}
  reduces to $0$.
  Since both scripts $\sigma_1$ and $\sigma_2$ are legal from $c$, so is
  the script $\sigma=\max(\sigma_1,\sigma_2)$ defined by
  $\sigma_v=\max(\sigma_{1,v},\sigma_{2,v})$.  Note that
  $\sigma\leq\sigma_b$.  Letting $c'$ be the configuration
  obtained by firing $\sigma$, we have the sequence of legal script-firings
  \[
  a_i+c_i^{+}\stackrel{\sigma_i}{\longrightarrow}a_i+c_i^{-}
  \stackrel{\sigma-\sigma_i}{\longrightarrow}c'
  \]
  for $i=1,2$, which shows that the $S$-polynomial reduces to $0$ using the
  elements $\mathcal{T}(\sigma-\sigma_i)$ for $i=1,2$.
\end{proof}
\begin{remark}
  In the case of an undirected graph, one may take the burning script to be
  $\vec{1}$, the vector whose components are all ones.  Thus, the script-firings
  that are relevant in constructing the Gr\"obner basis, described in the
  statement of the previous theorem, can be identified with firing subsets of
  vertices (none more than once).  The paper \cite{CRS} goes further, in this
  case, to describe a {\em minimal} Gr\"obner basis, i.e., one in which each
  member has the property that none of its terms is divisible by the leading
  term of any other member.  It consists of the subset of the Gr\"obner basis
  elements described in the previous theorem corresponding to $X\subseteq\tV$
  such that the subgraphs of $G$ induced by $X$ and by $\tV\setminus X$ are each
  connected.  It would be interesting to see if this result could be generalized
  to the case of directed graphs.
\end{remark}
\begin{thm}\label{thm:normal basis}
  Each nonnegative configuration is equivalent to a unique superstable sandpile
  modulo
  $\tLap$, and 
  \[
  \{x^c:c\mbox{ is a superstable configuration}\}
  \]
  is the normal basis for $R/I(G)$ with respect to the sandpile monomial
  ordering.
\end{thm}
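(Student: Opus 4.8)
The plan is to compute the normal basis by reducing monomials against the Gr\"obner basis $\Gamma_b$ of Theorem~\ref{thm: groebner basis} and to match reductions with legal script-firings. The one genuinely new ingredient is a lemma strengthening the single-vertex statement proved earlier in this section: \emph{if $c,c'\in\N\tV$ and $c'=c-\tD\sigma$ for some $\sigma\gneq\vec0$, then $x^{c'}<x^{c}$} --- that is, any legal script-firing strictly lowers the sandpile monomial order. I would prove this by tracking how sand moves toward the sink. The total degree drops by $\deg(\tD\sigma)=\sum_{v}\sigma_{v}\,\wt(v,s)\ge0$, so if some vertex in $\supp(\sigma)$ has an edge to $s$ the degree strictly decreases and we are done, since grevlex orders first by degree. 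Otherwise all degrees agree, and it suffices to show that the largest index $j$ with $(\tD\sigma)_{j}\ne0$ has $(\tD\sigma)_{j}<0$. If instead $(\tD\sigma)_{j}>0$, then $v_{j}\in\supp(\sigma)$ has no edge to $s$, so a shortest path from $v_{j}$ to $s$ begins with an edge to some $v_{k}$ with $\mathrm{dist}(v_{k},s)<\mathrm{dist}(v_{j},s)$, hence $k>j$; but $(\tD\sigma)_{k}=0$ then forces $v_{k}\in\supp(\sigma)$ (the sand $v_{j}$ sends to $v_{k}$ must be balanced by $v_{k}$ firing), and iterating yields an infinite strictly increasing chain of indices, a contradiction. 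This lemma is the step I expect to require the most care: a legal cluster-firing need not be realizable as a legal \emph{sequence} of single-vertex firings (the graph of Figure~\ref{fig:not superstable} is the standard example), so the one-vertex argument does not transfer, and one must also handle ties in distance to the sink when turning ``closer to the sink'' into an index inequality.

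Granting the lemma, the remaining steps are routine. The proof of Theorem~\ref{thm: groebner basis} shows that the leading term of $\mathcal{T}(\sigma)$, for $\vec0\lneq\sigma\le\sigma_b$, is $x^{(\tD\sigma)^{+}}$, so the normal basis for $R/I(G)$ is the set of monomials $x^{c}$ with $c\in\N\tV$ such that $c\not\ge(\tD\sigma)^{+}$ for every $\sigma$ with $\vec0\lneq\sigma\le\sigma_b$. Since $c\ge0$, the inequality $c\ge(\tD\sigma)^{+}$ is equivalent to $c-\tD\sigma\ge0$, i.e.\ to $\sigma$ being a legal script-firing from $c$. Hence the normal basis is $\{x^{c}: c\ge0 \text{ and no }\sigma\text{ with }\vec0\lneq\sigma\le\sigma_b\text{ is legal from }c\}$, whereas by definition the superstable monomials are the $x^{c}$ with $c\ge0$ admitting \emph{no} legal script-firing at all; so every superstable monomial lies in the normal basis.

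For the reverse inclusion and for existence, take any $c\ge0$ and reduce $x^{c}$ modulo $\Gamma_b$. Each step replaces a monomial $x^{d}$ (with $d\ge(\tD\sigma)^{+}\ge0$) by $x^{d-\tD\sigma}$ for some $\vec0\lneq\sigma\le\sigma_b$, and $d-\tD\sigma\ge0$, so the exponents stay nonnegative; the process stops at a normal-basis monomial $x^{c^{*}}$ with $c^{*}\ge0$ and $c^{*}\equiv c\pmod{\tLap}$. I claim $c^{*}$ is superstable: otherwise there is $\sigma\gneq\vec0$ with $c^{*}-\tD\sigma\ge0$, and the lemma gives $x^{c^{*}-\tD\sigma}<x^{c^{*}}$; but $x^{c^{*}-\tD\sigma}\equiv x^{c^{*}}\pmod{I(G)}$, so the normal form of $x^{c^{*}-\tD\sigma}$ is $x^{c^{*}}$ (the unique normal-basis monomial in that class, by Macaulay's theorem), which is impossible since reduction never raises the monomial. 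Thus every nonnegative configuration is equivalent mod $\tLap$ to a superstable, and the two sets of monomials coincide. Uniqueness follows: if $c_{1}$ and $c_{2}$ are superstable with $c_{1}\equiv c_{2}\pmod{\tLap}$, then $x^{c_{1}}$ and $x^{c_{2}}$ are normal-basis monomials in a single class of $I(G)$, hence equal by Macaulay's theorem, so $c_{1}=c_{2}$.
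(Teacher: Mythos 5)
Your argument is correct and follows the same overall strategy as the paper's proof: identify reduction modulo the Gr\"obner basis $\Gamma_b$ with legal script-firings and conclude that normal forms are superstable. The substantive difference is that you isolate and actually prove a monotonicity lemma that the paper uses only implicitly: an arbitrary legal \emph{cluster} firing $c\mapsto c-\tD\sigma$, $\sigma\gneq 0$, strictly decreases the sandpile monomial order. The proposition preceding Theorem~\ref{thm: groebner basis} establishes this only for legal \emph{sequences} of single-vertex firings, and, as you observe, a legal cluster firing need not decompose into such a sequence; without the stronger statement one cannot pass from ``$c^*$ admits no legal script $\sigma\leq\sigma_b$'' (which is all that membership in the normal basis gives directly) to ``$c^*$ is superstable.'' Your degree/largest-index argument, using distance to the sink and the \verb+grevlex+ tie-breaking rule, fills this in cleanly, and the same lemma also rigorizes the identification of the leading term of $\mathcal{T}(\sigma)$ used in the proof of Theorem~\ref{thm: groebner basis}. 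A second, smaller divergence: the paper proves the full biconditional ``$c_1=c_2\bmod\tLap$ if and only if $x^{c_1}-x^{c_2}\in I(G)$,'' invoking the group-algebra map $\psi$ for the harder direction, whereas you need only the easy direction (immediate from the definition of $I(G)$) together with Macaulay's theorem, which guarantees that two distinct normal-basis monomials cannot be congruent modulo $I(G)$. Both routes are valid; yours is self-contained given the Gr\"obner basis theorem and arguably tightens the paper's exposition.
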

\begin{proof}
  Two nonnegative configurations are equivalent modulo $\tLap$ if and only if
  their corresponding monomials are equivalent modulo the toppling ideal,
  $I(G)$.  In detail, first let $c_1,c_2\in\N^n$ and suppose 
  \[
  c_1-c_2=\ell=\ell^{+}-\ell^{-}\in\tLap.
  \]
  Then $c_1\geq\ell^{+}$ and $c_2\geq\ell^{-}$.  Define
  $e=c_1-\ell^{+}=c_2-\ell^{-}\geq0$. Then
  \[
  x^{c_1}-x^{c_2}=x^e(x^{\ell^{+}}-x^{\ell^{-}})\in
  I(G).
  \]
  Conversely, suppose $x^{c_1}-x^{c_2}\in I(G)$. Identify the sandpile
  group $\mathcal{S}(G)$ with $\Z^n/\tLap$. Let
  \begin{eqnarray*}
    \psi\colon\C[x_1,\dots,x_n]&\to&\C[\Z^n/\tLap]\\
    x_i&\mapsto&t_{e_i}
  \end{eqnarray*}
  be the mapping into the group algebra where $e_i$ is
  the image of the $i$-th standard basis vector for $\Z^n$.  Then
  $I(G)=\ker\psi$.  Hence,
  \[
  0=\psi(x^{c_1}-x^{c_2})= t_{c_1}-t_{c_2}.
  \]
  In other words, $c_1-c_2\in\tLap$.

  Now let $c$ be any nonnegative configuration.  Since
  $x^c\,\%\,\mathcal{T}(\sigma)=x^{c'}$ where $c'$ is obtained by firing the script
  $\sigma$ as many times as is legal, the normal form for $x^c$ with respect to
  the sandpile monomial ordering is superstable.  Since the normal form is
  unique, so is this superstable element.
  \end{proof}
\begin{remark}\label{super}
As noted in \S\ref{toppling ideals}, we have
$R/I(G)\approx\C[\mathcal{S}(G)]$.  Hence, by the previous theorem,
we see that the sandpile group can be thought of as the set of superstables
where the sum of superstables $c_1$ and $c_2$ is taken to be
$\log(x^{c_1}x^{c_2}\,\%\,I(G))$.
\end{remark}

The following can be found in \cite{Holroyd} for the case of Eulerian graphs.
Here we extend the result to general sandpile graphs (for which the underlying
graph is a directed multigraph).
\begin{cor}\label{cor:duality}
A configuration $c$ is superstable if and only if $\cmax-c$ is recurrent.
\end{cor}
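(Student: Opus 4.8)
The statement to prove is that a configuration $c$ is superstable if and only if $\cmax - c$ is recurrent. The natural strategy is to exploit the duality already visible in the pair of bijections we have in hand: recurrent configurations biject with $\mathcal{S}(G)$ (hence with $\mathcal{C}(G) = \Z\tV/\tLap$), and by Theorem~\ref{thm:normal basis} the superstable configurations also form a complete set of coset representatives for $\tLap$ in $\N\tV$. So the map $c \mapsto \cmax - c$ is a candidate involution-type bijection between the two transversals, and the content is that it genuinely lands superstables into recurrents and vice versa. The plan is to prove one implication carefully and then deduce the other by a counting/transversal argument rather than repeating the work.

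First I would show: if $c$ is superstable then $\cmax - c$ is recurrent. Since $c$ is superstable, $0 \le c \le \cmax$ (each $c_v < \outdeg(v)$ because no single-vertex script-firing is legal — firing $v$ alone would send $c_v \mapsto c_v - \outdeg(v) < 0$ unless $c_v < \outdeg(v)$), so $\cmax - c \ge 0$, and it is stable for the same reason. It remains to check accessibility, equivalently (by Proposition~\ref{prop:max-stable}) that $\cmax - c = a \circledast \cmax$ for some $a \ge 0$. Here I would invoke the burning configuration: by Theorem~\ref{thm:speer}, $\cmax - c$ is recurrent iff the stabilization of $(\cmax - c) + b$ is $\cmax - c$, where $b$ is a burning configuration. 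The key computational observation is that superstability of $c$ means exactly that the script $\sigma_b$ fires fully (i.e. $\tau = \sigma_b$) in the stabilization of $b + (\cmax - c)$: one relates the firing of the full burning script starting from $b+(\cmax-c)$ to the \emph{non}-fireability of any positive script on $c$, using $\cmax - c + b - \tD\sigma_b = \cmax - c$ (since $b \in \tLap$ means $\tD\sigma_b = b$), and the fact that $c$ superstable precisely obstructs the intermediate negativity that would block the script. I expect this is where the real work lies: carefully matching "$c$ has no legal script-firing" with "the burning script fires completely and returns $\cmax - c$."

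For the converse and to package everything cleanly, I would argue by cardinality. Both the set of superstables and the set of recurrents have exactly $|\mathcal{S}(G)| = \det\tD$ elements (recurrents by Theorem~\ref{thm:main isomorphism}; superstables by Theorem~\ref{thm:normal basis}, since they are a transversal of $\tLap$ in $\N\tV$ and biject with $\Z^n/\tLap$). The map $c \mapsto \cmax - c$ is injective. Having shown it carries superstables into recurrents, it is therefore a bijection from superstables onto recurrents, and in particular its inverse $d \mapsto \cmax - d$ carries recurrents into superstables, which gives the reverse implication: if $\cmax - c$ is recurrent, write $d = \cmax - c$; then $d$ recurrent forces $\cmax - d = c$ to be superstable. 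The only subtlety to double-check is that the image of the superstables under $c \mapsto \cmax - c$ actually lands in $\N\tV$ (handled above by $c \le \cmax$) so that comparing with the recurrents — which also live in $\N\tV$ — is legitimate. This two-step approach (prove one direction via the burning configuration, then flip it by counting) avoids doing the delicate script-firing bookkeeping twice, and the main obstacle is precisely that single direction.
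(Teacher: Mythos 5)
Your overall architecture is exactly the paper's: reduce to the single implication ``$c$ superstable $\Rightarrow\cmax-c$ recurrent'' by a counting argument (superstables and recurrents are both transversals of $\tLap$, hence equinumerous, and $c\mapsto\cmax-c$ is injective), and prove that one implication using a burning configuration together with Theorem~\ref{thm:speer}. The counting half and the preliminaries ($0\le c\le\cmax$, so $\cmax-c$ is a nonnegative stable configuration) are correct and match the paper.

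The gap is precisely the step you flag as ``where the real work lies'': you assert that superstability of $c$ forces the burning script $\sigma_b$ to fire completely in the stabilization of $(\cmax-c)+b$, but you never produce the legal firing sequence, and this is not automatic. A script can be legal as a cluster without any single vertex being legally fireable (that is the whole point of superstability, cf.\ Figure~\ref{fig:not superstable}), and the stabilization in Theorem~\ref{thm:speer} is carried out by single-vertex firings, so one must actually exhibit an admissible ordering. The paper does this greedily: since $c$ is superstable, $\sigma_b$ is not a legal script-firing from $c$, so some $u_1\in\supp(\sigma_b)$ has $(c-\tD\sigma_b)_{u_1}<0$; since $\sigma_b-u_1$ is again not legal from $c$, some $u_2\in\supp(\sigma_b-u_1)$ has $(c-\tD(\sigma_b-u_1))_{u_2}<0$; iterating exhausts $\sigma_b$ and yields $u_1,\dots,u_k$ with $\sum_iu_i=\sigma_b$. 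Rewriting $(c-\tD(\sigma_b-\sum_{i<j}u_i))_{u_j}<0$ using $b=\tD\sigma_b$ and $(\cmax)_{u_j}=\outdeg(u_j)-1$ shows that $u_j$ is unstable in $\cmax-c+b-\tD\sum_{i<j}u_i$, so this is a legal firing sequence taking $\cmax-c+b$ down to $\cmax-c$, and recurrence follows from Theorem~\ref{thm:speer}. (Alternatively, one can avoid the explicit sequence via Theorem~\ref{thm:speer}(5): if the stabilization script $\tau$ of $\cmax-c+b$ satisfied $\tau\lneq\sigma_b$, then stability of the result would force $(c-\tD(\sigma_b-\tau))_v\ge0$ for all $v$, exhibiting $\sigma_b-\tau$ as a legal script-firing from $c$ and contradicting superstability.) As written, your proposal states the crux as an expected equivalence rather than proving it; with either of these completions it coincides with, or is a mild variant of, the paper's proof.
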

\begin{proof}
  By Theorems~\ref{thm:normal basis} and~\ref{thm:main isomorphism}, the number
  of superstable configurations is equal to the number of recurrent
  configurations.  Thus, is suffices to show that if $c$ is superstable, then
  $\cmax-c$ is recurrent.

  Let $b$ be a burning configuration for $G$ with burning script $\sigma_b$.
  Since $c$ is superstable, there exists $u_1\in\supp(\sigma_b)$ such that
  $(c-\tD\,\sigma_b)_{u_1}<0$.  Similarly, there exists
  $u_2\in\supp(\sigma_b-u_1)$ such that $(c-\tD(\sigma_b-u_1))_{u_2}<0$.
  Continuing, we find a sequence of nonsink vertices $u_1,\dots,u_k$ such that
  $\sum_{i=1}^{k}u_i=\sigma_b$ and for $1\leq j\leq k$,
  \[
   \left(c-\tD(\sigma_b-\textstyle\sum_{i=1}^{j-1}u_i)\right)_{u_j}<0.
  \]
  It follows that $u_1,\dots,u_k$ is a legal firing sequence for $\cmax-c+b$,
  reducing $\cmax-c+b$ to $\cmax-c$.
  Hence, $\cmax-c$ is recurrent by Theorem~\ref{thm:speer}.
\end{proof}
In light of Corollary~\ref{cor:duality}, we say that the superstables are {\em
dual} to the recurrents.

\section{Zeros of the toppling ideal}
Given any ideal $I\in R=\C[x_1,\dots,x_n]$, the {\em set of zeros} of $I$ is\
\[
Z(I)=\{p\in\C^n: f(p)=0\mbox{ for all $f\in I$}\}.
\]

In this section, our goal is to describe the set of zeros of the toppling ideal.
\begin{prop}\label{thm:zeros}
  Let $G$ be a sandpile graph.  Then the set of zeros of its toppling ideal,
  $I(G)$, is finite.
\end{prop}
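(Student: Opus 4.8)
The plan is to deduce finiteness of $Z(I(G))$ directly from the general theory of lattice ideals already set up in Section~3. Recall that $I(G)$ is by definition the lattice ideal $I(\tLap)$ associated with the reduced Laplacian lattice $\tLap = \im(\tD) \subseteq \Z^n$. By the Matrix-Tree theorem the reduced Laplacian $\tD$ is invertible over $\Q$, hence $\tLap$ is a full-rank sublattice of $\Z^n$; that is, $\dim_{\Z}\tLap = n$. Then Theorem~\ref{thm:basics}~(\ref{thm:basics3}) tells us that the Krull dimension of $R/I(G)$ is $n - \dim_{\Z}\tLap = n - n = 0$. A $\C$-algebra of Krull dimension $0$ that is finitely generated has only finitely many points in its zero set, so $Z(I(G))$ is finite.

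An alternative, more self-contained route avoids invoking Theorem~\ref{thm:basics}~(\ref{thm:basics3}) and instead uses the explicit generators from Proposition~\ref{prop:generators}. For each nonsink vertex $i$, the toppling polynomial $t_i = x_i^{\outdeg(i)-\wt(i,i)} - \prod_{j\neq i} x_j^{\wt(i,j)}$ lies in $I(G)$. Multiplying these together one can produce, for each $i$, a polynomial in $I(G)$ of the form $x_i^{N_i} - (\text{lower-degree terms in } x_i)$ with leading coefficient a unit; more directly, since $\tD$ has full rank, $\tLap$ contains a vector supported only at coordinate $i$ for every $i$, say $m_i e_i \in \tLap$ with $m_i$ a positive integer (one may take $m_i$ to be a suitable entry of $m_i I = \tD\,\mathrm{adj}(\tD)/\cdots$, or simply $|\det\tD|$ times $e_i$ after clearing denominators). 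Writing such a lattice vector as $u - v$ with $u,v \in \N^n$ disjointly supported yields a binomial $x^u - x^v \in I(G)$ that, upon restriction, forces each coordinate of a zero to satisfy a nonzero univariate polynomial equation. Hence each coordinate of a point of $Z(I(G))$ takes only finitely many values, and $Z(I(G))$ is finite.

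I would present the first argument as the main proof and perhaps mention the second in a remark, since the first is essentially a one-line appeal to the dimension formula. The only genuine point requiring care is the justification that $\dim_{\Z}\tLap = n$: this is exactly the statement that $\det\tD \neq 0$, which the paper has already established via the Matrix-Tree theorem (the sum of weights of spanning trees into the globally accessible sink is positive). So there is no real obstacle here; the proof is a direct corollary of the foundational results, and the only thing to double-check is that the hypotheses of Theorem~\ref{thm:basics}~(\ref{thm:basics3}) — in particular that $\tLap$ is the kernel of the relevant map $\Z^n \to \mathcal{S}(G)$ — are exactly the setup in force for the toppling ideal, which the paper has arranged.
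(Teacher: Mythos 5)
Your main argument is correct and is exactly the first sentence of the paper's own proof: $\tD$ has nonzero determinant by the Matrix--Tree theorem, so $\tLap$ has rank $n$, Theorem~\ref{thm:basics}~(\ref{thm:basics3}) gives Krull dimension $0$ for $R/I(G)$, and a zero-dimensional affine variety is finite. The paper then supplements this with a ``direct proof'' that differs slightly from your alternative: it uses the isomorphism $R/I(G)\approx\C[\mathcal{S}(G)]$ to conclude that $R/I(G)$ is a finite-dimensional $\C$-vector space, so the powers $1,x_i,x_i^2,\dots$ become linearly dependent in the quotient and each coordinate of a zero satisfies some univariate polynomial. Your alternative reaches the same conclusion more explicitly: since $\tD\,\mathrm{adj}(\tD)=\det(\tD)\,I$, the lattice $\tLap$ contains $m\,e_i$ with $m=|\det\tD|>0$, so the binomial $x_i^{m}-1$ lies in $I(G)$ and every coordinate of a zero is an $m$-th root of unity. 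That version is arguably cleaner, since it names the univariate polynomial rather than inferring its existence, and it foreshadows the later description of $Z(I(G))$ as an orbit of characters; the intermediate sentence about ``multiplying the toppling polynomials together'' is vague and should be dropped in favor of the adjugate argument, which is all you need.
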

\begin{proof}
  Since $I(G)$ is the lattice ideal for a square matrix of full rank,
  Theorem~\ref{thm:basics}~(\ref{thm:basics3}) guarantees that the set of zeros is finite.  However,
  we will give a direct proof.
  We have seen that
  \[
  R/I(G)\approx \C[\mathcal{S}(G)],
  \]
  and thus, $R/I(G)$ is a finite-dimensional vector space over $\C$.  For
  each indeterminate $x_i\in R$, consider the powers $1,x_i,x_i^2,\dots$  By
  finite-dimensionality, the image of these powers in the quotient ring are
  linearly dependent.  This means there is a polynomial $f_i$ in one variable
  such that $f_i(x_i)\in I(G)$.  Each $f_i$ will have a finite number of zeros,
  and thus, for each $i$, we see that the there are a finite number of possible
  $i$-th coordinates for any zero of the toppling ideal.
\end{proof}

\begin{remark} In fact, the $i$-th coordinates of the zeros of the toppling ideal are the
eigenvalues of the multiplication mapping
\begin{eqnarray*}
  R/I(G)&\to& R/I(G)\\
  g&\mapsto&x_ig
\end{eqnarray*}
See~\cite{Cox}, for instance.
\end{remark}

\subsection{Orbits of representations of Abelian groups}
\subsubsection{Affine case.}\label{affine case}
Let $\{a_1,\dots,a_n\}$ be generators (not necessarily distinct) for a finite
Abelian group, $A$.  Consider the exact sequence
\begin{eqnarray}\label{exact}
0\to \Lambda\to\Z^n&\to&A\to0\\
e_i&\mapsto&a_i\nonumber
\end{eqnarray}
where $\Lambda$ is defined as the kernel of the given mapping $\Z^n\to A$.
Taking duals by applying $\mbox{Hom}_{\Z}(\ \cdot\ ,\C^{\times})$ gives the
sequence
\begin{equation}\label{exact-dual}
1\leftarrow \Lambda^*\leftarrow (\C^{\times})^n\leftarrow A^*\leftarrow 1,
\end{equation}
where $A^*$ is the character group of $A$.
\medskip

\begin{remark}~ 
  \begin{enumerate}
    \item Exactness of (\ref{exact-dual}) is not immediate.  The exactness
      at $\Lambda^*\leftarrow(\C^{\times})^n$ follows because $\C^{\times}$ is a
      {\em divisible} Abelian group.  An Abelian group $B$ is divisible if for
      all $a\in B$ and positive integers $n$ there exists $b\in B$ such that
      $nb=a$.  (For the multiplicative group $\C^{\times}$, each element has
      an $n$-th root.)  Applying $\mbox{Hom}_{\Z}(\ \cdot\ ,B)$ to an exact sequence
      of Abelian groups ($\Z$-modules) always gives an exact sequence precisely
      when $B$ is divisible.  The proof of this, in general, is not immediate.
      However, in the case in which we are most concerned, the exactness is easy
      to establish.  Suppose $A=\mathcal{S}(G)$ is the sandpile group of a
      sandpile graph, and suppose $\Lambda$ is the reduced Laplacian
      lattice, $\tLap=\im(\tD)\hookrightarrow\Z^n$.
      We would like to show that the natural map, given by composition, 
      \[
      \mbox{Hom}(\Z^n,\C^{\times})\to\mbox{Hom}(\tLap,\C^{\times})
      \]
      is surjective.  Let $\phi\colon \tLap\to\C^{\times}$ be given.  Since the
      reduced Laplacian has full rank, given $v\in\Z^n$, there exist unique
      rational numbers $\alpha_{\ell}$ such that
      $v=\sum_{\ell}\alpha_{\ell}\ell$, with the sum going over a basis for
      $\tLap$
      (say, over the columns of the reduced Laplacian).  Then define
      $\tilde{\phi}\colon \Z^n\to \C^{\times}$ by
      $\tilde{\phi}(v)=\sum_{\ell}\phi(\ell)^{\alpha_{\ell}}$.
    \item To be explicit, denote the mapping $\Z^n\to A$ by $\phi$.  Then part
      of sequence (\ref{exact-dual}) is
      \[
      \begin{array}{ccccc}
	A^*&\to&\mbox{Hom}(\Z^n,\C^{\times})&\approx&(\C^{\times})^n\\
	\chi&\mapsto&\chi\circ\phi&\mapsto&(\chi(a_1),\dots,\chi(a_n)).
      \end{array}
      \]
  \end{enumerate}
  We get an $n$-dimensional representation of $A^*$:
  \[
  \rho\colon A^*\to(\C^{\times})^n\to\mbox{GL}(\C^n)
  \]
  given by
  \[
  \rho(\chi)=\mbox{diag}(\chi(a_1),\dots,\chi(a_n)).
  \]
  In other words, the choice of generators for $A$ induces a homomorphism of
  $A^*$ into the group of invertible $n\times n$ matrices over $\C$.  (Every
  $n$-dimensional representation of $A^*$ over $\C$ is a direct sum of
  characters of $A^*$, i.e., of elements of $A^{**}\approx A$.  So this section
  can be regarded as saying something about representations of $A^*$, in
  general.)

  For each $z\in\C^n$, define the {\em orbit of $z$ under $\rho$} to be
  \[
    \mathcal{O}_{\rho}(z)= \{\rho(\chi)z:\chi\in A^*\}
    = \{(\chi(a_1)z_1,\dots,\chi(a_n)z_n):\chi\in A^*\}.
    \]
  \end{remark}
  We will assume that no coordinate of $z$ is zero, in which case by scaling
  coordinates of $\C^n$, we may assume for our purposes that $z=(1,\dots,1)$.
  Thus, we are interested in the orbit of the all-$1$s vector:
  \[
  \mathcal{O}=\{\rho(\chi):\chi\in A^*\}=
    \{(\chi(a_1),\dots,\chi(a_n)):\chi\in A^*\}.
  \]
\begin{definition}
  Let $I\subseteq R$ be an ideal.  Le $R_{\leq d}$ denote the vector space of
  polynomials in $R$ of degree at most $d$, and let $I_{\leq d}$ be the subspace
  $I\cap R_{\leq d}$.  The {\em affine Hilbert function} of $R/I$ is
  $H\colon\N\to\N$, given by
\[
H(d) := \dim_{\C}\, R_{\leq d}/I_{\leq d} = \dim_{\C}\, R_{\leq d}-\dim_{\C}\,
I_{\leq d}.
\]

\end{definition}
  \begin{thm}\label{thm:orbit ideal} Let $R=\C[x_1,\dots,x_n]$ and
    consider 
    \[
    I=\{f\in R: f(\mathcal{O})=0\},
    \]
    the ideal of polynomials
    vanishing on the orbit.  Then
    \begin{enumerate}
      \item\label{orbit ideal1} $I=I(\Lambda)=\myspan_{\C}\{x^u-x^v: u=v \bmod \Lambda\}$;
      \item\label{orbit ideal2}
	The affine Hilbert function of $R/I$ is given by
	\[
	H(d)=\#\left\{\textstyle\sum_{i=1}^nn_ia_i:n_i\geq0\mbox{ for all $i$ and $\sum_in_i\leq
	d$}\right\}.
	\]
    \end{enumerate}
  \end{thm}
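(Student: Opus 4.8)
The plan is to run everything through the ring isomorphism $\psi\colon R/I(\Lambda)\cong\C[Q]=\C[A]$ of Theorem~\ref{thm:basics}~(\ref{basics:one}) (here $Q=A$ since $A$ is finite), sending $x_i\mapsto t_{a_i}$, together with one structural fact about the group algebra of a finite abelian group over $\C$: the characters $\chi\in A^*$, extended $\C$-linearly to algebra maps $\C[A]\to\C$, are linearly independent, and there are $|A^*|=|A|=\dim_\C\C[A]$ of them, so they span $\Hom_\C(\C[A],\C)$ and hence separate the points of $\C[A]$. Equivalently, $\C[A]$ is reduced (semisimple), the Fourier map $t_a\mapsto(\chi(a))_{\chi\in A^*}$ being a ring isomorphism $\C[A]\cong\prod_{\chi}\C$.

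For part~(\ref{orbit ideal1}), the inclusion $I(\Lambda)\subseteq I$ is a direct evaluation: if $u-v\in\Lambda$ then at the orbit point $\rho(\chi)=(\chi(a_1),\dots,\chi(a_n))$ we have $x^u-x^v\mapsto\chi(\sum_iu_ia_i)-\chi(\sum_iv_ia_i)=0$, because $\sum_iu_ia_i=\sum_iv_ia_i$ in $A$. For the reverse inclusion, write $f=\sum_c\lambda_cx^c$ and note that under $\psi$ we have $\psi(f)=\sum_c\lambda_ct_{\phi(c)}$, where $\phi(c)=\sum_ic_ia_i$, so the value of the character $\chi$ on $\psi(f)$ is exactly $\sum_c\lambda_c\chi(\phi(c))=f(\rho(\chi))$. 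Thus $f\in I$ (i.e.\ $f$ vanishes on every $\rho(\chi)$) says precisely that every character annihilates $\psi(f)$, which by the separation statement forces $\psi(f)=0$, i.e.\ $f\in\ker\psi=I(\Lambda)$. Hence $I=I(\Lambda)$. (Along the way this also shows $Z(I(\Lambda))=\mathcal{O}$, so one could alternatively finish by the Nullstellensatz and the reducedness of $\C[A]$.)

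For part~(\ref{orbit ideal2}), since $I_{\le d}=R_{\le d}\cap\ker\psi$, the map $\psi$ identifies $R_{\le d}/I_{\le d}$ with the subspace $\psi(R_{\le d})\subseteq\C[A]$, so $H(d)=\dim_\C\psi(R_{\le d})$. Now $R_{\le d}$ is spanned by the monomials $x^c$ with $\deg c\le d$, hence $\psi(R_{\le d})=\myspan_\C\{t_{\phi(c)}:c\in\N^n,\ \deg c\le d\}$; since the $t_b$ for distinct $b\in A$ form a $\C$-basis of $\C[A]$, the dimension of this span equals the number of distinct elements $\phi(c)=\sum_ic_ia_i$ with $c\ge0$ and $\sum_ic_i\le d$, which is precisely the asserted count.

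The only non-formal ingredient---and the place where care is needed---is the reverse inclusion in part~(\ref{orbit ideal1}): one must know that $I(\Lambda)$ is already radical, not merely the vanishing ideal of the orbit. That is exactly the semisimplicity of $\C[A]$, resting on the linear independence of the characters of a finite abelian group (concretely, the invertibility of its character table). Granting that, both parts reduce to bookkeeping with the group algebra once Theorem~\ref{thm:basics} supplies the isomorphism $R/I(\Lambda)\cong\C[A]$.
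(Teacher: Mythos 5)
Your proof is correct. It differs from the paper's in organization, though the two rest on the same underlying fact. The paper's proof introduces the evaluation matrix $M^{(d)}$ with rows indexed by $A^*$ and columns by the monomials of $R_{\leq d}$, observes that the column indexed by $x^u$ is the character $\bar{a}^u\in A^{**}$ evaluated on $A^*$, and invokes linear independence of distinct characters to conclude that every linear relation among the columns is generated by differences of \emph{equal} columns; since kernel vectors of $M^{(d)}$ are exactly the elements of $I_{\leq d}$ and columns coincide exactly when $u-v\in\Lambda$, this yields part (1) and, via $\operatorname{rank} M^{(d)} = \#\{\text{distinct columns}\}$, part (2) simultaneously --- notably without ever invoking Theorem~\ref{thm:basics}. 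You instead take Theorem~\ref{thm:basics}~(\ref{basics:one}) ($\ker\psi = I(\Lambda)$) as a black box and supply the complementary fact that the characters of $A$, extended to functionals on $\C[A]$, are linearly independent and hence separate points (equivalently, $\C[A]\cong\prod_\chi\C$ is reduced); combined with the identity $f(\rho(\chi))=\hat{\chi}(\psi(f))$ this gives $I=\ker\psi=I(\Lambda)$, and part (2) falls out of the fact that $\{t_b:b\in A\}$ is a basis of $\C[A]$. In effect you apply linear independence of characters to the rows of the character table where the paper applies it to the columns. Your route makes explicit the radicality of $I(\Lambda)$ (the genuinely non-formal point, as you note) and gives a slicker derivation of the Hilbert function; the paper's is self-contained and exhibits the dimension count concretely as a matrix rank.
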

  \begin{proof}
    This proof is due to the first author and Donna Glassbrenner.  It appears
    in~\cite{Campbell}.  Consider the matrix $M^{(d)}$ with rows indexed by
    $A^*$ and columns indexed by the monomials of $R_{\leq d}$ (arranged in
    lexicographical order so that $M^{(d)}$ is naturally nested in $M^{(d+1)}$)
    given by
    \[
    M^{(d)}_{\chi,x^u}=\prod_{i=1}^n\chi^{u_i}(a_i).
    \]
    Using the isomorphism 
    \begin{eqnarray*}
      A&\to& A^{**}\\
      a&\mapsto&\bar{a}
    \end{eqnarray*}
  where $\bar{a}(\chi):=\chi(a)$, we can write
    \[
    M^{(d)}_{\chi,x^u}=\prod_{i=1}^n\bar{a}_i^{u_i}(\chi)=\bar{a}^u(\chi)
    \]
    where $\bar{a}^u:=\prod_{i=1}^n\bar{a}_i^{u_i}\in A^{**}$.
    The $x^u$-th column of $M^{(d)}$ has entries $\bar{a}^u(\chi)$ as
    $\chi$ varies over $A^*$.   In other words, it is the list of all values of
    the function $\bar{a}^u$.  Thus, at least as far as linear algebra is
    concerned, the $x^u$-th column {\em is} $\bar{a}^u$.  Since distinct
    characters are linearly independent, it follows that any linear dependence
    relations are the result of columns that are equal.   
   
    Now, the $x^u$-th and $x^v$-th columns of $M^{(d)}$ are equal exactly when
    $\bar{a}^u=\bar{a}^v$ are equal.  This occurs exactly when
    $\sum_iu_ia_i=\sum_iv_ia_i$, which we write as $(u-v)\cdot a=0$ where
    $a:=(a_1,\dots,a_n)$.  In light of exact sequence~(\ref{exact}), this
    condition is equivalent to $u-v\in \Lambda$. 

    A vector $(\alpha_u)\in\ker M^{(d)}$ if and only if
    \[
    \sum_u\alpha_u\prod_{i=1}^n\chi^{u_i}(a_i)=0
    \]
    for all $\chi\in A^*$.  Thus, $(\alpha_u)\in\ker M^{(d)}$ if and only if
    the polynomial $p=\sum_u\alpha_ux^u$ vanishes on $\mathcal{O}$, i.e., $p\in
    I$.  Thus, elements of $I_{\leq d}$ correspond exactly with linear
    combinations among the columns of $M^{(d)}$.  As these relations are due to
    equality among columns, as already noted, part~\ref{orbit ideal1} follows.
    For part~\ref{orbit ideal2}, note that we have just shown that
    \[
    \dim I_{\leq d}=\dim R_{\leq d}-\mbox{rank}\ M^{(d)}.
    \]
    Since distinct characters are linearly independent, 
    \[
    \mbox{rank}\,M^{(d)}=\#\left\{\textstyle\sum_{i=1}^nn_ia_i:n_i\geq0\mbox{ for all $i$ and $\sum_in_i\leq
	d$}\right\}.
    \]
  \end{proof}

  Returning to the case of the toppling ideal, the exact sequence
  \[
  0\to \Z^n\stackrel{\tD}{\longrightarrow}\Z^n\to\mathcal{S}(G)\to0
  \]
  has the form of exact sequence~(\ref{exact}).  The generators $a_i$ are the
  configurations having exactly one grain of sand. 
  \begin{cor}~
  \be
\item\label{zpart1} The toppling ideal is the set of polynomials vanishing on an
  orbit $\mathcal{O}$ of a faithful representation of $\mathcal{S}(G)^*$.
   \item\label{zpart2} The set of zeros of the toppling ideal is the finite set,
     $\mathcal{O}$. It thus has the symmetry of $\mathcal{S}(G)^*$, which is
     isomorphic to the sandpile group.
   \item\label{zpart3}If $H_{G}$ is the affine Hilbert function for the toppling
     ideal, then $H_{G}(d)$ is the number of elements of $\Z^n/\tLap$ represented by
	configurations containing at most $d$
     grains of sand.  It is thus the number of superstable configurations of
     degree at most $d$ or, equivalently, the number of recurrent 
     configurations~$c$ such that
     \[
     \deg(c)\geq\deg(\cmax)-d.
     \]
  \ee
  \end{cor}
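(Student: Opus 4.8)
The plan is to read off all three statements from Theorem~\ref{thm:orbit ideal} applied to the short exact sequence
\[
0\to\Z^n\stackrel{\tD}{\longrightarrow}\Z^n\to\mathcal{S}(G)\to0,
\]
which is of the form~(\ref{exact}) with $A=\mathcal{S}(G)$, $\Lambda=\tLap$, and $a_i=e_i$ the configuration with a single grain of sand at vertex $i$. Since $I(\tLap)=I(G)$ by the definition of the toppling ideal, Theorem~\ref{thm:orbit ideal}(\ref{orbit ideal1}) says precisely that $I(G)$ is the ideal of polynomials vanishing on the orbit $\mathcal{O}=\{(\chi(a_1),\dots,\chi(a_n)):\chi\in\mathcal{S}(G)^*\}$ of the representation $\rho(\chi)=\mathrm{diag}(\chi(a_1),\dots,\chi(a_n))$. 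To finish part~(\ref{zpart1}) I would only need to note that $\rho$ is faithful: because $a_1,\dots,a_n$ generate $\mathcal{S}(G)$, any character $\chi$ with $\chi(a_i)=1$ for all $i$ is trivial, so $\rho$ is injective.

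For part~(\ref{zpart2}), since $\mathcal{O}$ is a finite subset of $\C^n$, the ideal of polynomials vanishing on it is radical with zero set equal to $\mathcal{O}$ (strong Nullstellensatz, finite sets being Zariski closed); as that ideal is $I(G)$, we get $Z(I(G))=\mathcal{O}$. The same computation as for faithfulness shows the $\rho$-stabilizer of the point $(1,\dots,1)$ is trivial, so $\mathcal{S}(G)^*$ acts simply transitively on $\mathcal{O}$; hence $\mathcal{O}$ carries the symmetry of $\mathcal{S}(G)^*$, and $\mathcal{S}(G)^*\cong\mathcal{S}(G)$ because $\mathcal{S}(G)$ is a finite Abelian group. (In particular $|\mathcal{O}|=|\mathcal{S}(G)|$, consistent with Proposition~\ref{thm:zeros}.)

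For part~(\ref{zpart3}), Theorem~\ref{thm:orbit ideal}(\ref{orbit ideal2}) gives
\[
H_G(d)=\#\Big\{\textstyle\sum_{i=1}^n n_i a_i:\ n_i\ge 0,\ \textstyle\sum_i n_i\le d\Big\}.
\]
Since $a_i=e_i$, the element $\sum_i n_i a_i\in\Z^n/\tLap$ is exactly the class of the nonnegative configuration $(n_1,\dots,n_n)$, which has $\sum_i n_i$ grains; thus $H_G(d)$ counts the classes mod $\tLap$ having a nonnegative representative with at most $d$ grains. I would then identify this count with that of superstables: by Theorem~\ref{thm:normal basis}, each nonnegative configuration $c$ reduces through the Gr\"obner basis to the unique superstable configuration equivalent to $c$ mod $\tLap$, and every reduction step is a legal script-firing $c\mapsto c-\tD\sigma$ with $\deg(\tD\sigma)=\sum_{v\in\tV}\sigma_v\,\wt(v,s)\ge 0$, so the degree never increases along the reduction. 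Hence a class is counted by $H_G(d)$ if and only if its superstable representative has degree $\le d$. Finally, Corollary~\ref{cor:duality} gives that $c$ is superstable iff $\cmax-c$ is recurrent, and $\deg(c)\le d\iff\deg(\cmax-c)\ge\deg(\cmax)-d$, producing the stated reformulation in terms of recurrent configurations.

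The main obstacle I anticipate is the degree bookkeeping in part~(\ref{zpart3}): one must be certain that replacing an arbitrary nonnegative representative of a class by its superstable representative cannot increase the number of grains, which hinges on the fact that firing a vertex $v$ loses $\wt(v,s)\ge 0$ grains to the sink. Without this monotonicity the three descriptions of $H_G(d)$ would not coincide. Parts~(\ref{zpart1}) and~(\ref{zpart2}), by contrast, are immediate consequences of Theorem~\ref{thm:orbit ideal} together with the elementary observation that the generators $a_i$ generate $\mathcal{S}(G)$.
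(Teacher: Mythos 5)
Your proposal is correct and follows essentially the same route as the paper: parts (1) and (2) are read off from Theorem~\ref{thm:orbit ideal}(\ref{orbit ideal1}) and the fact that $I(G)=I(\mathcal{O})$ for the finite set $\mathcal{O}$, and part (3) from Theorem~\ref{thm:orbit ideal}(\ref{orbit ideal2}) together with the superstable/recurrent duality of Corollary~\ref{cor:duality}. Your explicit check that reduction to the superstable representative cannot increase degree (since each firing loses $\wt(v,s)\geq 0$ grains to the sink) fills in a step the paper dismisses as ``immediate'' and instead attributes to Theorem~\ref{thm:normal basis} in a following remark, so it is a welcome, if minor, elaboration rather than a different argument.
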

  \begin{proof}
    Part (\ref{zpart1}) follows directly from the first part of
    Theorem~\ref{thm:orbit ideal}.  For part~(\ref{zpart2}), since $\mathcal{O}$
    is a finite collection of points in $\C^n$, and $I(G)=I(\mathcal{O})$,
    it is a basic result of algebraic geometry that the set of zeros of $I(G)$
    is $\mathcal{O}$.  Part~(\ref{zpart3}) is immediate from the second part of
    Theorem~\ref{thm:orbit ideal} and the fact that $r$ is recurrent if and only if $\cmax-r$ is
    superstable.
  \end{proof}
  \begin{remark}
    Note that part~(\ref{zpart3}) also follows directly from
    Theorem~\ref{thm:normal basis}. 
  \end{remark}
  \subsubsection{Projective case.}\label{projective points}
  An ideal $J$ in $S=\C[x_1,\dots,x_{n+1}]$ is {\em homogeneous} if it has a set
  of homogeneous generators.  The set of zeros of $J$ is a subset of
  projective space:
  \[
  Z(J)=\{p\in\mathbb{P}^n:f(p)=0\mbox{ for all homogeneous $f\in J$}\}.
  \]
  The ring $S/J$ is graded by the integers: $(S/J)_{d}:=S_d/J_d$.
  \begin{definition}
  The {\em Hilbert function} of $S/J$ is $H\colon\N\to\N$, given by
  \[
  H(d):=\dim_{\C}(S/J)_d.
  \]
\end{definition}
Continuing with the notation from~\ref{affine case}, define the {\em
homogenization} of $\Lambda$ as
  \[
  \Lambda^h:=\left\{{{\ell}\choose{-\deg(\ell)}}\in\Z^{n+1}:\ell\in \Lambda\right\}.
  \]
  Consider the exact sequence
  \[
  0\to \Lambda^h\to\Z^{n+1}
  \stackrel{M}{\longrightarrow} A\oplus\Z\to0,
  \]
  where 
  \[
  M = \left(\begin{array}{cccc}a_1&\dots&a_n&0\\
  1&\dots&1&1\end{array}\right).
  \]
  Apply $\mbox{Hom}(\ \cdot\ ,\C^{\times})$ to get
  \begin{eqnarray*}
    1\to A^*\times\C^{\times}&\to&(\C^{\times})^{n+1}\to(\Lambda^h)^*\to 0\\
    (\chi,z)&\mapsto&(\chi(a_1)z,\dots,\chi(a_n)z,z)
  \end{eqnarray*}
  and the corresponding representation
  \begin{eqnarray*}
    A^*\times\C^{\times}&\to&\mbox{GL}(\C^{n+1})\\
    (\chi,z)&\mapsto&
    \mbox{diag}(\chi(a_1)z,\dots,\chi(a_n)z,z).
  \end{eqnarray*}
  The orbit of $(1,\dots,1)$ under this representation is
  \[
  \mathcal{O}^h=\{(\chi(a_1),\dots,\chi(a_n),1)\in\mathbb{P}^n:\chi\in
  A^*\}\subset\mathbb{P}^n.
  \]
  Thus, $\mathcal{O}^h$ is the {\em projective closure} of the orbit
  $\mathcal{O}$ from the previous section.
  \begin{thm}\label{thm:projective Hilbert}
    Let $a^h=(a_1,\dots,a_n,0)$.
    \begin{enumerate}
      \item The homogeneous ideal defining $\mathcal{O}^h$ is the lattice
	ideal for $\Lambda^h$, the homogenization of the lattice ideal for $\Lambda$:
	\[
	I^h=\{x^u-x^v:u=v\bmod \Lambda^h\}.
	\]
      \item The Hilbert function for $\mathcal{O}^h$ (i.e., the Hilbert function
	of $S/I^h$) is
	\[
	H(d)=\#\{b\cdot a^h\in A:\text{$b\in\N^{n+1}$ and $\deg(b)=d$}\},
	\]
      which is the same as the affine Hilbert function for $\mathcal{O}$.
    \end{enumerate}
  \end{thm}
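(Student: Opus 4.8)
The plan is to repeat, degree by degree, the linear-algebra argument from the proof of Theorem~\ref{thm:orbit ideal}. Since $\mathcal{O}^h=\{(\chi(a_1),\dots,\chi(a_n),1):\chi\in A^*\}\subset\mathbb{P}^n$, a homogeneous $f=\sum_u\alpha_ux^u$ of degree $d$ vanishes on $\mathcal{O}^h$ if and only if $\sum_u\alpha_u\prod_{i\leq n}\chi(a_i)^{u_i}=0$ for every $\chi\in A^*$; the exponent $u_{n+1}$ drops out because the last coordinate of each orbit point is $1$, and the legitimacy of evaluating on these representatives rather than on homogeneous coordinate vectors is exactly the fact that, within a fixed degree, all monomials rescale by the same factor. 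So, fixing $d$, I would form the matrix $N^{(d)}$ with rows indexed by $\chi\in A^*$ and columns by the degree-$d$ monomials $x^u$ of $S$, with entry $N^{(d)}_{\chi,x^u}=\prod_{i\leq n}\chi(a_i)^{u_i}$. Then $(I^h)_d=\ker N^{(d)}$, with $N^{(d)}$ acting on coefficient vectors $(\alpha_u)$.

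Next I would decide when two columns of $N^{(d)}$ agree. As in Theorem~\ref{thm:orbit ideal}, for linear-algebra purposes the $x^u$-column is the element $\bar{a}^u:=\prod_{i\leq n}\bar{a}_i^{u_i}\in A^{**}$, so the columns $x^u$ and $x^v$ coincide precisely when $\sum_{i\leq n}u_ia_i=\sum_{i\leq n}v_ia_i$ in $A$, i.e.\ $(u-v)\cdot a^h=0$. Since $\deg(u)=\deg(v)=d$, the difference $u-v$ has degree $0$; writing $\ell':=(u_1-v_1,\dots,u_n-v_n)$ we get $u-v=\binom{\ell'}{-\deg(\ell')}$ and $(u-v)\cdot a^h=\phi(\ell')$, where $\phi\colon\Z^n\to A$ is the map of sequence~(\ref{exact}). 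Hence the two columns agree if and only if $\ell'\in\ker\phi=\Lambda$, that is, if and only if $u-v\in\Lambda^h$. Because distinct elements of $A$ give linearly independent characters, every linear relation among the columns of $N^{(d)}$ is a consequence of equalities among columns, so
\[
(I^h)_d=\myspan_{\C}\{\,x^u-x^v:\deg(u)=\deg(v)=d,\ u-v\in\Lambda^h\,\}.
\]
Taking the union over all $d$ (both ideals being homogeneous) gives part~(1).

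For part~(2), the count is $H(d)=\dim_{\C}(S/I^h)_d=\dim_{\C}S_d-\dim_{\C}(I^h)_d=\mbox{rank}\,N^{(d)}$, and this rank equals the number of distinct columns of $N^{(d)}$, which by the criterion just established is $\#\{\,b\cdot a^h\in A:b\in\N^{n+1},\ \deg(b)=d\,\}$. To match this with the affine Hilbert function of $R/I(\mathcal{O})$ from Theorem~\ref{thm:orbit ideal}, I would note that $b\mapsto(b_1,\dots,b_n)$ carries $\{\,b\in\N^{n+1}:\deg(b)=d\,\}$ onto $\{\,(n_1,\dots,n_n)\in\N^n:\sum_{i}n_i\leq d\,\}$ (one recovers $b_{n+1}=d-\sum_in_i\geq0$), and under it $b\cdot a^h=\sum_{i\leq n}n_ia_i$; the two resulting sets of elements of $A$ are therefore literally the same, so $H(d)=\#\{\,\sum_{i}n_ia_i:n_i\geq0,\ \sum_in_i\leq d\,\}$, the affine Hilbert function.

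The whole argument is bookkeeping parallel to the earlier theorem; the one step that takes genuine care is the homogenization translation in the second paragraph — checking that, because the last entry of $a^h$ is $0$ and $\Lambda^h$ consists of degree-zero vectors, column-equality for degree-$d$ monomials is exactly membership of the difference in $\Lambda^h$. (One might instead hope to quote Theorem~\ref{thm:orbit ideal} for the group $A\oplus\Z$ with generators $(a_1,1),\dots,(a_n,1),(0,1)$, whose lattice of relations is $\Lambda^h$; but that theorem was stated only for finite $A$, so re-running the matrix argument directly is cleaner.)
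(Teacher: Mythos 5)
Your proof is correct, but it takes a genuinely different route from the paper's. The paper disposes of part (1) in one line by citing two standard facts: the homogeneous ideal of the projective closure $\mathcal{O}^h$ of $\mathcal{O}$ is the homogenization of the affine ideal of $\mathcal{O}$, and that homogenization is the lattice ideal for $\Lambda^h$; it then gets part (2) from Theorem~\ref{thm:orbit ideal}(2) via the dehomogenization isomorphism $S_d\to R_{\leq d}$, $f\mapsto f|_{x_{n+1}=1}$. You instead re-run the character-matrix argument of Theorem~\ref{thm:orbit ideal} degree by degree on the matrix $N^{(d)}$, identify column equality with membership of $u-v$ in $\Lambda^h$ (the key translation being that for $\deg(u)=\deg(v)=d$ the condition $(u_1-v_1,\dots,u_n-v_n)\in\Lambda$ is exactly $u-v\in\Lambda^h$, since the last entry of $a^h$ is $0$ and elements of $\Lambda^h$ have degree zero), and read off both the graded pieces of the ideal and the rank count. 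What your approach buys is self-containedness: it proves part (1) directly without invoking the projective-closure fact or verifying that homogenizing the lattice ideal $I(\Lambda)$ yields $I(\Lambda^h)$ (a point the paper asserts without argument, and which requires some care given the paper's own remark that homogenizing generators need not produce the homogenized ideal). What the paper's approach buys is brevity, and it makes visible the geometric reason the projective Hilbert function agrees with the affine one. Your closing parenthetical is also well taken: Theorem~\ref{thm:orbit ideal} cannot simply be quoted for $A\oplus\Z$ since it assumes the group is finite, so the direct matrix argument is the honest way to carry it out.
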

  \begin{proof}
    Since $\mathcal{O}^h$ is the projective closure of $\mathcal{O}$, its ideal
    is $I^h$, the homogenization of the ideal defining $\mathcal{O}$,  which is
    given by $\{x^u-x^v:u=v\bmod\Lambda^h\}$.  The second part of the
    theorem follows from part~\ref{orbit ideal2} of Theorem~\ref{thm:orbit ideal}
    and the isomorphism of vector spaces
    \begin{align*}
      S_d&\to R_{\leq d}\\
      f&\mapsto f|_{x_{n+1}=1},
    \end{align*}
    with inverse $g(x_1,\dots,x_n)\mapsto
    x_{n+1}^d\,g(x_1/x_{n+1},\dots,x_n/x_{n+1})$.
  \end{proof}
  \begin{cor} Suppose $\Lambda=\tLap$, the reduced Laplacian lattice of $G$,
    and that $\Delta(v_{n+1})\in\myspan_{\Z}\{\Delta(v_i):1\leq i\leq
    n\}$ so that $\Lambda^h=\Lap$, the full Laplacian lattice (see the
    comments preceding Proposition~\ref{prop:homog}). 
    \be
     \item The homogenization of the toppling ideal is the ideal generated by all
    homogeneous polynomials vanishing on an orbit $\mathcal{O}^h$ of a
    faithful representation of $(\Z^{n+1}/\Lap)^*$.
     \item The set of zeros of the homogenization of the toppling ideal is the
    finite set $\mathcal{O}^h$ having the symmetry of
    $\mathcal{S}(G)^*$.
    \ee
  \end{cor}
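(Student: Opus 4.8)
The plan is to obtain this corollary from Theorem~\ref{thm:projective Hilbert} in exactly the way the affine corollary was obtained from Theorem~\ref{thm:orbit ideal}. The first step is to unwind the hypothesis. Since $\Delta(v_{n+1})\in\myspan_{\Z}\{\Delta(v_i):1\leq i\leq n\}$, Proposition~\ref{prop:homog} tells us that the homogenization $I(G)^{h}$ of the toppling ideal equals the homogeneous toppling ideal $I_h(G)$, which by definition is the lattice ideal for $\Lap$. At the same time, because $\deg(\Delta(v_i))=0$ the sink coordinate of $\Delta(v_i)$ is $-\deg(\tD(v_i))$, so the homogenization of $\tD(v_i)$ is precisely $\Delta(v_i)$; hence the homogenized lattice $\Lambda^{h}$ of $\Lambda=\tLap$ is spanned by the $\Delta(v_i)$ and, under the hypothesis, equals $\Lap$. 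Thus the ideal $I^{h}$ appearing in Theorem~\ref{thm:projective Hilbert} is literally $I(G)^{h}$.

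Next I would identify the group whose representation has orbit $\mathcal{O}^{h}$. The map $M$ of Theorem~\ref{thm:projective Hilbert} fits into the exact sequence $0\to\Lambda^{h}\to\Z^{n+1}\stackrel{M}{\longrightarrow}\sand(G)\oplus\Z\to0$; since $\Lambda^{h}=\Lap$, this identifies $\Z^{n+1}/\Lap$ with $\sand(G)\oplus\Z$, and therefore $(\Z^{n+1}/\Lap)^{*}$ with $\sand(G)^{*}\times\C^{\times}$. Applying $\Hom(\,\cdot\,,\C^{\times})$ as in that theorem realizes this group inside $(\C^{\times})^{n+1}$ and produces the representation $(\chi,z)\mapsto\mathrm{diag}(\chi(a_1)z,\dots,\chi(a_n)z,z)$, which is faithful because the last coordinate forces $z=1$ and then $\chi(a_i)=1$ for all $i$, whence $\chi=1$ since the $a_i$ generate $\sand(G)$. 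Part~(1) of Theorem~\ref{thm:projective Hilbert} now says precisely that $I(G)^{h}$ is the ideal of homogeneous polynomials vanishing on the orbit $\mathcal{O}^{h}$ of this faithful representation of $(\Z^{n+1}/\Lap)^{*}$, which is the first assertion.

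For the second assertion, $\mathcal{O}^{h}$ is a finite subset of $\mathbb{P}^{n}$ (it is indexed by the finite group $\sand(G)^{*}$), and $I(G)^{h}$ is its homogeneous vanishing ideal, so the standard fact that the projective zero set of the homogeneous ideal of a finite point set is that point set gives $Z(I(G)^{h})=\mathcal{O}^{h}$. The $\C^{\times}$ factor in the acting group scales all $n+1$ homogeneous coordinates by the same unit and hence acts trivially on $\mathbb{P}^{n}$, so the effective symmetry of $\mathcal{O}^{h}$ is that of $\sand(G)^{*}$, itself isomorphic to $\sand(G)$.

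There is no serious obstacle here: essentially everything is a direct quotation of results already proved. The one point that takes a little care---the ``hard'' part such as it is---is the bookkeeping that reconciles the two descriptions of the ambient group, namely verifying from the hypothesis that $\Lambda^{h}=\Lap$ and then reading off $\Z^{n+1}/\Lap\cong\sand(G)\oplus\Z$ from the definition of $M$, so that $(\Z^{n+1}/\Lap)^{*}$ is exactly the group $\sand(G)^{*}\times\C^{\times}$ acting in Theorem~\ref{thm:projective Hilbert}. Once that matching is in place, parts~(1) and~(2) of that theorem yield the two claims immediately.
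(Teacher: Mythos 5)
Your proposal is correct and follows exactly the route the paper intends: the corollary is stated without proof as an immediate consequence of Theorem~\ref{thm:projective Hilbert}, in parallel with how the affine corollary is derived from Theorem~\ref{thm:orbit ideal}. Your verification that $\Lambda^h=\Lap$ under the hypothesis, the identification $(\Z^{n+1}/\Lap)^*\cong\sand(G)^*\times\C^\times$ with faithfulness of the representation, and the observation that the $\C^\times$ factor acts trivially on $\mathbb{P}^n$ are precisely the bookkeeping the paper leaves to the reader, and they all check out.
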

  
  \subsubsection{The $h$-vector.}  Let $\Delta H_G$
  denote the first differences of the affine Hilbert function of a sandpile
  graph $G$.  So $\Delta H_G(d):=H_G(d)-H_G(d-1)$.  By Theorem~\ref{thm:normal
  basis}, the value of $\Delta H_G(d)$ is the number of superstable
  configurations of degree $d$. 
  \begin{definition}\label{def:h-vector}
    Let $h_d:=\Delta H_G(d)$. The {\em postulation number} for $G$ is the
    largest integer $\ell$ such that $h_{\ell}\neq0$.  The $h$-vector for $G$ is
    $h=(h_0,\dots,h_{\ell})$.  The {\em Hilbert-Poincar\'e series} for $G$ is
    $P_G(y)=\sum_{i=0}^{\ell}h_iy^i$.
  \end{definition}
  \begin{example}\label{example:h-vector}
    Continuing Example~\ref{example:sandpile graph}, the $h$-vector for the
    sandpile graph in Figure~\ref{fig:sandpile graph} is $(1,3,6,7,4)$.
  \end{example}

  Let the vertices of $G$ be $\{v_1,\dots,v_{n+1}\}$ with $v_{n+1}$ as the sink,
  as usual.  Let $I^h\subseteq I_h\subset S=\C[x_1,\dots,x_{n+1}]$ be the
  homogenization of the the toppling ideal and the homogeneous toppling ideal
  for $G$, respectively.  These two ideals are identical when the hypothesis of
  Proposition~\ref{prop:homog} is satisfied.  In any case, their zero-sets
  satisfy $Z(I^h)\supseteq Z(I_h)$.  Pick a linear polynomial $f\in
  S$ that does not vanish at any point of $Z(I^h)$.  For instance, we could
  take $f=x_i$ for any $i$.  Multiplication by $f$ gives rise to the commutative
  diagram with exact rows
\[
\xymatrix{
0\ar[r]&S/I^h\ar[r]^{\cdot f}\ar@{>>}[d]&
S/I^h\ar[r]\ar@{>>}[d]&S/(I^h+(f))\ar[r]\ar@{>>}[d]&0\\
0\ar[r]&S/I_h\ar[r]^{\cdot f}&S/I_h\ar[r]&S/(I_h+(f))\ar[r]&0.
}
\]
By this diagram and Theorem~\ref{thm:projective Hilbert}, we have the following
relations among the first differences of Hilbert functions:
\begin{align}\label{eqn:first differences}
  h_d&=\Delta H_G(d)=\Delta H_{S/I^h}(d)=H_{S/(I^h+(f))}(d)\\
  &\geq H_{S/(I_h+(f))}(d)=\Delta H_{S/I_h}(d).\nonumber
\end{align}
\subsubsection{The Tutte polynomial.}
Now let $G=(V,E)$ be any (weighted, directed) graph, and $e\in E$.  Let
$G- e$ denote the graph obtained from $G$ by replacing $\wt(e)$ by
$\wt(e)-1$.  In other words, imagine the endpoints $e^{-}$ and $e^{+}$ attached
by $\wt(e)$ edges, and remove one of these edges to obtain $G-e$.  In
particular, if $\wt(e)=1$, this amounts to removing the edge $e$.  Let $G/e$
denote the graph obtained from $G$ by identifying the endpoints of $e$ and
lowering the weight of $e$ by one.  We refer to these two operations on $G$ as
{\em deletion} and {\em contraction}.  The edge $e$ is called a {\em bridge} if
$G-e$ has more components than $G$.
\begin{definition}
  Let $G$ be an undirected, weighted graph.
  Define the {\em Tutte polynomial}, $T_G(x,y)$ for $G$ recursively, as follows.  If $E$ consists of
  $i$ bridges, $j$ loops, and no other edges, then
  \[
   T_G(x,y):=x^iy^j.
  \]
  In particular, $T_G=1$ if $G$ has no edges.
  Otherwise, if $e\in E$ is neither a bridge nor a loop, then
  \[
  T_G:=T_{G-e}+T_{G/e}.
  \]
\end{definition}
It turns out the the Tutte polynomial is well-defined, independent of 
choices for deletions and contractions.  It is well-known that
\[
C_G(x)=(-1)^{\#V-\kappa(G)}x^{\kappa(G)}T_G(1-x,0),
\]
where $C_G$ is the
chromatic polynomial of $G$ and $\kappa(G)$ is the number of components of $G$.
The following result relates other specializations of the Tutte polynomial
to the algebraic geometry of sandpiles.
\begin{thm}[Merino \cite{Merino}]\label{thm:Merino}
  Let $G$ be an undirected sandpile graph with
  postulation number $\ell$.
  Then
  \[
  T_G(1,y)=\sum_{i=0}^{\ell}h_{\ell-i}y^i.
  \]
\end{thm}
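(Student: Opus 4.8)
The plan is to deduce the identity from Merino's original, equivalent formulation, which is phrased in terms of recurrent configurations: one translates ``level'' into ``degree'' using the superstable--recurrent duality of Corollary~\ref{cor:duality}. We may assume $G$ has no loops, since a loop changes neither the reduced Laplacian nor the $h$-vector of $G$. Recall Merino's theorem \cite{Merino} in its original shape: for a recurrent configuration $r$ of the undirected sandpile graph $G$ with sink $s$, put
\[
\mathrm{lev}(r):=\deg(r)+\deg(s)-|E|.
\]
Then $\mathrm{lev}(r)\geq 0$ for every recurrent $r$, and $T_G(1,y)=\sum_{r\ \text{recurrent}}y^{\,\mathrm{lev}(r)}$. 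It therefore suffices to prove that, for each $i\geq 0$, the number of recurrent configurations of level $i$ equals $h_{\ell-i}$.

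Next I would record the two degree computations that drive the translation. By the handshake identity $\sum_{v\in V}\deg(v)=2|E|$, the maximal stable configuration $\cmax=\sum_{v\in\tV}(\deg(v)-1)\,v$ satisfies
\[
\deg(\cmax)=\Bigl(\sum_{v\in\tV}\deg(v)\Bigr)-|\tV|=2|E|-\deg(s)-|V|+1.
\]
By Theorem~\ref{thm:normal basis} and Definition~\ref{def:h-vector}, $h_d$ counts the superstable configurations of degree $d$, so the postulation number $\ell$ is the maximal degree of a superstable configuration. Applying Theorem~\ref{thm:acyclic orientations} to a superstable attaining this maximum produces an acyclic orientation $\mathcal{O}$ with unique source $s$ whose associated maximal superstable is $\sum_{v\in\tV}(\indeg_{\mathcal{O}}(v)-1)\,v$; since $\sum_{v\in V}\indeg_{\mathcal{O}}(v)=|E|$ and $\indeg_{\mathcal{O}}(s)=0$, this forces $\ell=|E|-|V|+1$.

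Now I would invoke Corollary~\ref{cor:duality}: the assignment $c\mapsto\cmax-c$ is a bijection between superstable configurations and recurrent configurations, and clearly $\deg(c)+\deg(\cmax-c)=\deg(\cmax)$. For $r=\cmax-c$ with $c$ superstable, $\deg(c)=\deg(\cmax)-\deg(r)$ and $\deg(r)=\mathrm{lev}(r)+|E|-\deg(s)$, so
\[
\deg(c)=\deg(\cmax)-|E|+\deg(s)-\mathrm{lev}(r)=\bigl(|E|-|V|+1\bigr)-\mathrm{lev}(r)=\ell-\mathrm{lev}(r).
\]
Thus $c$ has degree $\ell-i$ exactly when $r$ has level $i$, so the number of recurrent configurations of level $i$ equals the number of superstable configurations of degree $\ell-i$, namely $h_{\ell-i}$; here $\ell$ is indeed the top index, since $h_0=1\neq 0$ (the zero configuration is superstable) and $\mathrm{lev}(r)\leq\ell$ because $r\leq\cmax$. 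Substituting into Merino's identity gives $T_G(1,y)=\sum_{i\geq 0}h_{\ell-i}\,y^i=\sum_{i=0}^{\ell}h_{\ell-i}\,y^i$.

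The substantive input is Merino's recurrent-and-level theorem; the only real work in writing this up is bookkeeping---reconciling Merino's normalization of ``level'' with the grading of $R/I(G)$ by degree, and keeping the handshake counts straight. If one wanted instead a proof internal to the framework of this paper, the natural alternative is induction on $|E|$ through the recursion $T_G=T_{G-e}+T_{G/e}$ for a non-bridge, non-loop edge $e$ (with a separate, trivial treatment of the bridge and edgeless cases), matching the recurrent configurations of $G$, level by level, with those of $G-e$ and of $G/e$. Producing these level-compatible bijections---equivalently, a correctly graded short exact sequence relating $\sand(G)$, $\sand(G-e)$ and $\sand(G/e)$---would be the main obstacle in that approach.
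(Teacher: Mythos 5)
The paper gives no proof of this statement at all---it is imported verbatim from Merino \cite{Merino}---so your derivation, which takes Merino's original level-generating-function form $T_G(1,y)=\sum_{r}y^{\deg(r)+\deg(s)-|E|}$ as the black box and translates it into the $h$-vector statement via the duality $c\mapsto\cmax-c$ of Corollary~\ref{cor:duality} together with $\ell=g=|E|-|V|+1$, is exactly the intended reading, and your bookkeeping ($\deg(c)=\ell-\mathrm{lev}(\cmax-c)$) checks out. The one slip is the opening sentence: a loop leaves the reduced Laplacian and the $h$-vector unchanged but multiplies $T_G(1,y)$ by $y$, so ``we may assume $G$ has no loops'' is not a valid reduction---looplessness must be taken as a standing hypothesis of the theorem (as the degenerate example of a single edge plus a loop shows), not derived; with that rephrasing the argument is complete.
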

\begin{cor}\label{cor:tutte}
  Let $G$ be as in Theorem~\ref{thm:Merino}.  Then
  \begin{enumerate}
    \item the Hilbert-Poincar\'e series for $G$ is $y^{\ell}\,T(1,1/y)$;
    \item\label{merino2} if $d$ is the degree of the maximal stable
      configuration on $G$, then $y^{d-\ell}\,T(1,y)$ is the generating function
      for the recurrent configurations of $G$ (by degree);
    \item $T_G(1,1)$ is the order of the sandpile group of $G$;
    \item $T_G(1,0)$ is the number of maximal superstable (or the number of
      minimal recurrent) configurations of $G$.
  \end{enumerate}
\end{cor}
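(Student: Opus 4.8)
The plan is to derive all four items directly from Merino's theorem (Theorem~\ref{thm:Merino}), which already packages the $h$-vector of $G$ into the single polynomial $T_G(1,y)$. The only real work is to translate between the two standard indexings of the $h$-vector and to invoke the superstable--recurrent duality; no deep input beyond Theorem~\ref{thm:Merino} is needed.

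First I would unwind definitions. By Theorem~\ref{thm:Merino}, $T_G(1,y)=\sum_{i=0}^{\ell}h_{\ell-i}y^i=\sum_{j=0}^{\ell}h_jy^{\ell-j}$, so $T_G(1,y)$ is, after the reindexing $j=\ell-i$, the ``reversal'' of the Hilbert--Poincar\'e series $P_G(y)=\sum_{i=0}^{\ell}h_iy^i$ from Definition~\ref{def:h-vector}. For part~(1), substituting $y\mapsto 1/y$ and multiplying by $y^{\ell}$ yields $y^{\ell}T_G(1,1/y)=\sum_{j=0}^{\ell}h_jy^j=P_G(y)$, which is the assertion. For part~(2), recall from Corollary~\ref{cor:duality} that $c\mapsto\cmax-c$ is a degree-reversing bijection between superstable and recurrent configurations; hence, writing $d=\deg(\cmax)$, the number of recurrents of degree $d-j$ equals the number $h_j$ of superstables of degree $j$ (Theorem~\ref{thm:normal basis}). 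Therefore the generating function for recurrents by degree is $\sum_{j=0}^{\ell}h_jy^{d-j}=y^{d-\ell}\sum_{j=0}^{\ell}h_jy^{\ell-j}=y^{d-\ell}T_G(1,y)$.

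For parts~(3) and~(4) I would simply specialize the identity of Theorem~\ref{thm:Merino}. Setting $y=1$ gives $T_G(1,1)=\sum_{i=0}^{\ell}h_{\ell-i}=\sum_jh_j$, the total number of superstable configurations, which equals $|\mathcal{S}(G)|$ by Theorem~\ref{thm:normal basis} together with Theorem~\ref{thm:main isomorphism}. Setting $y=0$ kills every term with $i>0$ (with the convention $0^0=1$), leaving $T_G(1,0)=h_{\ell}$, the number of superstables of top degree $\ell$; by the degree-reversing duality of Corollary~\ref{cor:duality} these are in bijection with the recurrents of least degree.

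I do not anticipate a genuine obstacle: once Theorem~\ref{thm:Merino} is in hand, everything reduces to careful index arithmetic and a single substitution into one polynomial. The one point that must be stated cleanly is that $c\mapsto\cmax-c$ reverses degree, since parts~(2) and~(4) hinge on it; the remaining steps are routine.
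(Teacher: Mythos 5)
Your proposal is correct and follows essentially the same route as the paper, whose proof simply states that all four items follow immediately from Theorem~\ref{thm:Merino} together with the duality $c\mapsto\cmax-c$ between superstables and recurrents for part~(2). Your index computations and the specializations at $y=1$ and $y=0$ are exactly the details the paper leaves to the reader.
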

\begin{proof}
  These results follow immediately from Theorem~\ref{thm:Merino}.
  Part~(\ref{merino2}) uses the fact that $c$ is superstable if and only if
  $\cmax-c$ is recurrent.
\end{proof}
\begin{example}
  Figure~\ref{fig:tutte} shows the construction of the Tutte polynomial of a
  graph~$G$.  We have $T(1,y)=4+3y+y^2$ and $T(1,1)=8$.  Fixing the
  southern-most vertex of $G$ as the sink gives a sandpile graph with $h$-vector
  $(1,3,4)$ and sandpile group of order $8$.
  \begin{figure}[ht] 
\begin{tikzpicture}[scale=0.8]
\SetVertexMath
\GraphInit[vstyle=Art]
\SetUpVertex[MinSize=3pt]
\SetVertexNoLabel
\tikzset{VertexStyle/.style = {%
shape = circle,
shading = ball,
ball color = black,
inner sep = 1.5pt
}}
\SetUpEdge[color=black]


\draw (0,10) circle(1.4);
\Vertex[x=0,y=11]{x}
\Vertex[x=-0.866,y=10]{y}
\Vertex[x=0.866,y=10]{z}
\Vertex[x=0,y=9]{s}
\Edge[](x)(y)
\Edge[](x)(z)
\Edge[style={dashed}](y)(z)
\Edge[](s)(y)
\Edge[](s)(z)

\draw [ultra thick] (-1.300,9.220) -- (-3.700,7.780);
\draw [ultra thick] (1.200,9.100) -- (3.040,7.720);
\draw (0,8) node{$G$};

\draw (-5,7) circle(1.4);
\Vertex[x=-5,y=8]{x}
\Vertex[x=-5.866,y=7]{y}
\Vertex[x=-4.134,y=7]{z}
\Vertex[x=-5,y=6]{s}
\Edge[style=dashed](x)(y)
\Edge[](x)(z)
\Edge[](s)(y)
\Edge[](s)(z)

\draw [ultra thick] (-5.840,5.740) -- (-6.16,5.26);
\draw [ultra thick] (-4.064,5.83) -- (-3.416,5.02);

\draw (-7,4) circle(1.4);
\Vertex[x=-7,y=5]{x}
\Vertex[x=-7.866,y=4]{y}
\Vertex[x=-6.134,y=4]{z}
\Vertex[x=-7,y=3]{s}
\Edge[](x)(z)
\Edge[](s)(y)
\Edge[](s)(z)

\draw (-8,2.3) node {$x^3$};
\draw [very thick] (-8.3,1.9) -- (-7.9,1.9);

\draw (-2.6,4) ellipse(1.0 and 1.4);
\Vertex[x=-2.8,y=5]{x}
\Vertex[x=-1.934,y=4]{z}
\Vertex[x=-2.8,y=3]{s}
\Edge[](x)(z)
\Edge[style=dashed](x)(s)
\Edge[](s)(z)

\draw [ultra thick] (-3.528,3.13) -- (-4.872,1.870);
\draw [ultra thick] (-2.475,2.5) -- (-2.425,1.9);

\draw (-5.8,1) ellipse(1.0 and 1.4);
\Vertex[x=-6,y=2]{x}
\Vertex[x=-5.134,y=1]{z}
\Vertex[x=-6,y=0]{s}
\Edge[](x)(z)
\Edge[](s)(z)

\draw (-7,-0.7) node {$x^2$};
\draw [very thick] (-7.3,-1.1) -- (-6.9,-1.1);

\draw (-2.35,1) ellipse(1.2 and 0.8);
\Vertex[x=-3,y=1]{s}
\Vertex[x=-1.7,y=1]{y}
\Edge[style={dashed, bend left=30}](s)(y)
\Edge[style={bend left=30}](y)(s)

\draw [ultra thick] (-2.8945,0.1750) -- (-3.571,-0.850);
\draw [ultra thick] (-2.0525,0.125) -- (-1.789,-0.650);

\draw (-4,-1.5) ellipse(1.0 and 0.6);
\Vertex[x=-4.5,y=-1.5]{s}
\Vertex[x=-3.5,y=-1.5]{y}
\Edge[](s)(y)

\draw (-5.5,-2.2) node {$x$};
\draw [very thick] (-5.7,-2.55) -- (-5.4,-2.55);

\draw (-1.5,-1.5) circle(0.8);
\Vertex[x=-1.5,y=-1.9]{s}
\Loop[dist=1.5cm,dir=NO,style={thick}](s)

\draw (-0.5,-2.4) node {$y$};
\draw [very thick] (-0.7,-2.75) -- (-0.4,-2.75);

\draw (4,7) ellipse(1.0 and 1.4);
\Vertex[x=4,y=8]{x}
\Vertex[x=4,y=7]{y}
\Vertex[x=4,y=6]{s}
\Edge[style={bend left=50}](x)(y)
\Edge[style={dashed,bend right=50}](x)(y)
\Edge[style={bend left=50}](s)(y)
\Edge[style={bend right=50}](s)(y)

\draw [ultra thick] (3.260,5.890) -- (2.740,5.110);
\draw [ultra thick] (4.740,5.890) -- (5.260,5.110);

\draw (2,4) ellipse(1.0 and 1.4);
\Vertex[x=2,y=5]{x}
\Vertex[x=2,y=4]{y}
\Vertex[x=2,y=3]{s}
\Edge[](x)(y)
\Edge[style={dashed, bend left=50}](s)(y)
\Edge[style={bend right=50}](s)(y)

\draw [ultra thick] (1.496,2.656) -- (1.280,2.080);
\draw [ultra thick] (2.444,2.592) -- (2.580,2.144);

\draw (6,4) ellipse(1.0 and 1.4);
\Vertex[x=6,y=4]{y}
\Vertex[x=6,y=3]{s}
\Loop[dist=1.5cm,dir=NO,style={thick}](y)
\Edge[style={dashed, bend left=50}](s)(y)
\Edge[style={bend right=50}](s)(y)

\draw [ultra thick] (5.570,2.624) -- (5.420,2.144);
\draw [ultra thick] (6.504,2.656) -- (6.708,2.112);

\draw (0.8,0.8) ellipse(0.8 and 1.4);
\Vertex[x=0.8,y=1.8]{x}
\Vertex[x=0.8,y=0.8]{y}
\Vertex[x=0.8,y=-0.2]{s}
\Edge[](x)(y)
\Edge[](s)(y)
\Edge[](s)(y)

\draw (0.8,-1.2) node {$x^2$};
\draw [very thick] (0.5,-1.6) -- (0.9,-1.6);

\draw (3.0,0.8) ellipse(0.8 and 1.4);
\Vertex[x=3,y=1.8]{x}
\Vertex[x=3,y=0.8]{y}
\Edge[](x)(y)
\Loop[dist=1.5cm,dir=SO,style={thick}](y)

\draw (3,-1.27) node {$xy$};
\draw [very thick] (2.7,-1.6) -- (3.3,-1.6);

\draw (5.0,0.8) ellipse(0.8 and 1.4);
\Vertex[x=5,y=0.8]{y}
\Vertex[x=5,y=-0.2]{s}
\Loop[dist=1.5cm,dir=NO,style={thick}](y)
\Edge[](s)(y)

\draw (5,-1.27) node {$xy$};
\draw [very thick] (4.7,-1.6) -- (5.3,-1.6);

\draw (7.2,0.8) ellipse(0.8 and 1.4);
\Vertex[x=7.2,y=0.8]{s}
\Loop[dist=1.5cm,dir=NO,style={thick}](s)
\Loop[dist=1.5cm,dir=SO,style={thick}](s)

\draw (7.2,-1.20) node {$y^2$};
\draw [very thick] (6.9,-1.6) -- (7.3,-1.6);

\draw (0.5,-4) node {$T_G(x,y)=x+2x^2+x^3+(1+2x)y+y^2$};
\end{tikzpicture}
\caption{The Tutte polynomial of $G$.}\label{fig:tutte}
\end{figure}
\end{example}
  \subsubsection{Cayley-Bacharach property.}
  Let $X\subset\mathbb{P}^n$ be a finite set of points in projective space, and let
  $I(X)\subset S:=\C[x_1,\dots,x_{n+1}]$ be
  the ideal generated by the homogeneous polynomials
  vanishing on $X$.  If $H_X$ is the Hilbert function of $S/I(X)$, then $H_X(d)$ is
  the number of linear conditions placed on the coefficients of a general
  homogeneous polynomial of degree $d$ in $S$ by requiring the polynomial to vanish
  on the points of $X$.  Thus, $H_X$ is a monotonically increasing function which
  is eventually constant at $|X|$.  The first value at which $H_X$ takes the
  value $|X|$ is called the {\em postulation number} for $X$.

  \begin{definition}\label{CB} A finite set of points $X\subset\mathbb{P}^n$ is {\em
    Cayley-Bacharach} if it satisfies one of the following equivalent
    conditions.
    \begin{enumerate}
      \item\label{truncate} For each $p\in X$, and for each $d\in\N$,
	\begin{eqnarray*}
	  H_{X\setminus\{p\}}(d) = \min\{H_X(d),|X|-1\}.
	\end{eqnarray*}
      \item Every homogeneous polynomial with degree less than the postulation
	number for~$X$ and vanishing on all but one point of~$X$ must vanish on
	all of~$X$.
    \end{enumerate}
  \end{definition}

  \begin{prop}\label{prop:CB-yes}
    The set of zeros of the homogeneous toppling ideal is Cayley-Bacharach.
  \end{prop}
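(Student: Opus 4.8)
The plan is to identify $X:=Z(I_h(G))$ as a single orbit of a finite group of projective linear transformations, and then to invoke the elementary principle that any finite set of points in $\mathbb{P}^n$ admitting a transitive action by such a group is Cayley--Bacharach.

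\emph{Identifying $X$.} Let $\Lap\subseteq\Z^{n+1}_0$ be the full Laplacian lattice, so that $I_h(G)=I(\Lap)$ is generated by the (automatically homogeneous) binomials $x^u-x^v$ with $u-v\in\Lap$, and by Theorem~\ref{thm:basics} one has $S/I_h(G)\cong\C[Q]$, where $Q$ is the image of $\N^{n+1}$ in $\Z^{n+1}/\Lap$, graded by degree. Under this isomorphism $x_i$ maps to $t_{\bar{e}_i}$, and since adding $\bar{e}_i$ is a translation in $\Z^{n+1}/\Lap$, the map $Q_d\to Q_{d+1}$, $q\mapsto q+\bar{e}_i$, is injective; hence each $x_i$ is a nonzerodivisor on $S/I_h(G)$. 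Now $X$ is a finite set of points (Theorem~\ref{thm:basics} gives $\dim S/I_h(G)=1$), so each point of $X$ corresponds to a prime that is minimal over $I_h(G)$, hence associated to $S/I_h(G)$; no associated prime can contain a nonzerodivisor, so no point of $X$ lies on a coordinate hyperplane. Finally, a point $[z_1:\dots:z_{n+1}]$ with all $z_i\neq0$ lies in $X$ exactly when $z^\ell=1$ for every $\ell\in\Lap$, and any two such points differ by the diagonal transformation $\mathrm{diag}(w_i/z_i)$, whose entries again satisfy all the relations $t^\ell=1$. Therefore $X$ contains $[1:\dots:1]$ and is a single orbit under the finite abelian group
\[
\mathcal{G}:=\bigl\{\mathrm{diag}(t_1,\dots,t_{n+1}):t^\ell=1\text{ for all }\ell\in\Lap\bigr\}\big/\C^{\times}
\]
acting coordinatewise on $\mathbb{P}^n$; in particular $\mathcal{G}X=X$ and $\mathcal{G}$ is transitive on $X$. (When the hypothesis of Proposition~\ref{prop:homog} holds, $X$ is precisely the orbit $\mathcal{O}^h$ of Theorem~\ref{thm:projective Hilbert}.)

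\emph{Transitivity implies Cayley--Bacharach.} I would verify condition~(\ref{truncate}) of Definition~\ref{CB}. Fix $p\in X$ and $d\in\N$. One always has $H_X(d)-1\le H_{X\setminus\{p\}}(d)\le|X|-1$ and $H_{X\setminus\{p\}}(d)\le H_X(d)$, so the only point requiring argument is that if $H_X(d)<|X|$ then $H_{X\setminus\{p\}}(d)=H_X(d)$, i.e.\ no form of degree $d$ vanishes on $X\setminus\{p\}$ without vanishing at $p$. Suppose $F$ were such a form. For each $q\in X$ choose a diagonal representative $g_q$ of an element of $\mathcal{G}$ with $g_q(p)=q$, and set $F_q:=F\circ g_q^{-1}$, a form of degree $d$. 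Since $g_q(X)=X$, the form $F_q$ vanishes on $X\setminus\{q\}$ while $F_q(q)\neq0$. Evaluating any linear combination $\sum_q c_qF_q$ at a point $q'\in X$ gives $c_{q'}F_{q'}(q')$, so the $F_q$ are linearly independent and their span meets $I(X)_d$ only in $0$; hence $H_X(d)=\dim_{\C}S_d/I(X)_d\ge|X|$, contradicting $H_X(d)<|X|$. Thus $X$ is Cayley--Bacharach.

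The main obstacle is the first step: one must be sure that $X$ carries no extraneous points on the coordinate hyperplanes, so that it really is a single transitive $\mathcal{G}$-orbit rather than an orbit together with lower-dimensional strata; this is exactly what the nonzerodivisor property of the $x_i$ on $S/I_h(G)\cong\C[Q]$ secures. Once $X$ is known to be a transitive orbit of a group acting by linear changes of coordinates, the Cayley--Bacharach property is the short linear-algebra argument above.
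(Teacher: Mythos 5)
Your proof is correct, and it reaches the same structural insight as the paper---that $Z(I_h(G))$ is a single orbit of a finite group acting by (diagonal) linear changes of coordinates, so the Cayley--Bacharach condition, once it holds anywhere, holds everywhere---but you execute the key step differently. The paper cites Proposition~1.14 of \cite{GKR}, which guarantees that \emph{some} point of any finite set satisfies condition~(1) of Definition~\ref{CB}, and then uses invariance of the Hilbert function under linear coordinate changes to propagate that condition to every point of the orbit. You instead prove the condition for all points directly: assuming a degree-$d$ form separates one point $p$ from the rest with $H_X(d)<|X|$, you translate it around the orbit to manufacture $|X|$ forms whose evaluation matrix on $X$ is diagonal with nonzero diagonal, forcing $H_X(d)\geq|X|$. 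This makes the argument self-contained (no appeal to \cite{GKR}) at the cost of a few extra lines. You are also more careful than the paper on a point it elides: the corollary identifying $Z(I_h(G))$ with the orbit $\mathcal{O}^h$ is stated under the hypothesis of Proposition~\ref{prop:homog}, whereas your nonzerodivisor argument (each $x_i$ acts injectively on $S/I_h(G)\cong\C[Q]$, so no associated prime---in particular no point of the zero set---lies on a coordinate hyperplane) establishes in full generality that the zero set is exactly one transitive orbit with no extraneous points on the coordinate hyperplanes. Both approaches are valid; yours trades brevity for self-containment and a cleaner treatment of the orbit identification.
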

  \begin{proof}
    By Proposition~1.14 of \cite{GKR}, for any finite set of points, $X$, there
    is always at least one point $p$ for which condition (\ref{truncate}) of
    Definition~\ref{CB} holds.  However,
    in our case, $X$ is the orbit of a linear representation of the sandpile
    group.  Thus, given any two points $p,q\in X$, there is a linear change of
    coordinates of $\mathbb{P}^n$ sending $p$ to $q$.  A linear change of
    coordinates does not change the Hilbert function.  Hence, condition (\ref{truncate})
    holds for all points of $X$. 
  \end{proof}

  \begin{remark}
    Let $X$ be the set of zeros of a homogeneous toppling ideal and define the first differences
    of its Hilbert function by $\Delta H_X(d) =
    H_X(d)-H_X(d-1)$ for all $d\in \Z$.  
    It follows from results in \cite{GKR} and the fact that $X$ is
    Cayley-Bacharach, that if the last nonzero value of
    $\Delta_X$ is $m$, then there is a collection of $m$ points $Y\subset X$
    such that $X\setminus Y$ is Cayley-Bacharach.  Moreover, if $m=1$, then
    every subset of $X$ of size $|X|-1$ is Cayley-Bacharach.
  \end{remark}

  \section{Resolutions}\label{section:resolutions}
  In this section, we consider the minimal free resolution of the homogeneous
  toppling ideal, summarizing some of the results in~\cite{Wilmes}.  For further
  work on resolutions of toppling ideals, see~\cite{Madhu}.  First, we recall
  the language of divisors on graphs from \cite{Baker} (extended to directed
  multigraphs).  Let $G$ be a directed multigraph as in \S\ref{sandpiles}.  The
  free Abelian group $\Z V$ on the vertices of~$G$ is denoted $\divisor(G)$,
  and its elements are called {\em divisors}.  The {\em degree} of a divisor
  $D=\sum_{v\in V}D_v\,v\in\mbox{div}(G)$, is $\deg(D):=\sum_{v\in V}D_v$.  A
  divisor is {\em principal} if it is in the Laplacian lattice $\Lap$, defined
  in \S\ref{sandpiles}.  Divisors $D$ and $D'$ are {\em linearly equivalent},
  written $D\sim D'$, if $D-D'$ is principal.  Note that linearly equivalent
  divisors must have the same degree.  The group of divisors modulo linear
  equivalence is the {\em class group} of $G$, denoted $\mbox{Cl}(G)$.  In the
  case where $G$ is an Eulerian sandpile graph, using the notation of
  Proposition~\ref{prop:sink}, there is an isomorphism
  \begin{align*}
    \mbox{Cl}(G)&\to\Z\oplus\Z V_0/\Lap\\
    D&\mapsto(\deg D,D-(\deg D)s),
  \end{align*}
  where $\Z V_0/\Lap$ is isomorphic to the sandpile group $\sand(G)$.

  We will usually denote a divisor class $[D]\in\mbox{Cl}(G)$ by just $D$,
  choosing a representative divisor, when the context is clear.  A divisor
  $D=\sum_{v\in V}D_v\,v$ is {\em effective} if $D\geq 0$.  The collection of
  all effective divisors linearly equivalent to $D$ is called the {\em
  (complete) linear system} for $D$ and denoted $|D|$;  it only depends on the
  divisor class of $D$.  The {\em support} of a divisor $D$ is
  $\mathrm{supp}(D):=\{v\in V:D_v\neq0\}$.

  One might think of a divisor as an assignment of money to each vertex, with
  negative numbers denoting debt.  Just as with configurations in the sandpile
  model, the Laplacian determines firing rules by which vertices can lend to or
  borrow from neighbors.  Two divisors are linearly equivalent if one can be
  obtained from the other by a sequence of vertex lendings and borrowings.  The
  complete linear system corresponding to a divisor is nonempty if there is a
  way for vertices to lend and borrow, resulting in no vertex being in debt.

\subsection{Riemann-Roch.}
  To recall the graph-theoretic Riemann-Roch theorem of~\cite{Baker}, let
  $G=(V,E)$ be an undirected graph.  Define the {\em genus} of $G$ to be
  \[
  g := \#E-\#V+1.
  \]
  Define the {\em dimension} of the linear system $|D|$ for a divisor $D$ on $G$ to be
  \[
  r(D):=\max\{k\in\Z: \text{$|D-E|\neq\emptyset$ for all $E\geq0$ with
  $\deg(E)=k$}\},
  \]
  with $r(D):=-1$ if $|D|=\emptyset$.  Note that $r(D)$ depends only on
  the divisor class of~$D$.  Define the {\em maximal stable divisor},
  \[
  \Dmax:=\sum_{v\in V}(\deg(v)-1)v,
  \]
  and the {\em canonical divisor},
  \[
  K:=\Dmax-\vec{1}=\sum_{v\in V}(\deg(v)-2)v.
  \]
  \begin{thm}[Riemann-Roch Theorem \cite{Baker}]\label{thm:rr}  Let $G$ be an undirected graph.  For all $D\in\divisor(G)$, 
    \[
    r(D)-r(K-D)=\deg(D)+1-g.
    \]
  \end{thm}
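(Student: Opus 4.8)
The plan is to follow the combinatorial argument of Baker and Norine, built on the theory of $q$-reduced divisors --- equivalently, superstable configurations --- developed above. I would begin with two elementary reductions. By the handshake identity $\sum_{v\in V}\deg(v)=2\#E$ one has $\deg(K)=2\#E-2\#V=2(g-1)$, hence $\deg(K-D)=2(g-1)-\deg(D)$, so that $\varepsilon(D):=r(D)-r(K-D)-\deg(D)-1+g$ satisfies $\varepsilon(K-D)=-\varepsilon(D)$; it therefore suffices to prove the one-sided inequality $r(D)-r(K-D)\geq\deg(D)+1-g$ for all $D$ and then apply it to $K-D$. I would also use the reformulation, immediate from the definition of $r$, that $r(D)+1=\min\{\deg(E):E\geq0\text{ effective and }|D-E|=\emptyset\}$.

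The heart of the proof is a duality between effective divisors and the set $\mathcal N$ of \emph{non-special divisors} (those of degree $g-1$ with empty complete linear system). The inputs are: for each vertex $q$, every divisor class has a unique representative that is superstable on $\tV=V\setminus\{q\}$, and $|D|\neq\emptyset$ exactly when this representative is effective (this is Theorem~\ref{thm:normal basis} phrased for divisors); and, by Theorem~\ref{thm:acyclic orientations}, the divisors $\nu_{\mathcal O}:=\sum_{v\in V}(\indeg_{\mathcal O}(v)-1)v$ attached to acyclic orientations $\mathcal O$ lie in $\mathcal N$ --- taking $q$ to be a source of $\mathcal O$ makes $\nu_{\mathcal O}$ $q$-reduced, with value $-1$ at $q$ and restricting to a maximal superstable on $\tV$ --- and these exhaust $\mathcal N$ up to linear equivalence. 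The key lemma I would establish is: \emph{$r(D)=-1$ if and only if $r(\nu-D)\geq0$ for some $\nu\in\mathcal N$.} The ``if'' half is a one-liner: if $D\sim D'\geq0$ and $\nu-D\sim F\geq0$ then $\nu\sim D'+F\geq0$, contradicting $|\nu|=\emptyset$. The ``only if'' half carries the weight: from $|D|=\emptyset$ one obtains, for a fixed $q$, a representative $D'$ superstable on $\tV$ with $D'_q\leq-1$, and one must dominate the superstable $D'|_{\tV}$ by a maximal superstable, thereby producing (via Theorem~\ref{thm:acyclic orientations}) a $\nu\in\mathcal N$ with $\nu\geq D'$. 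I expect this domination step --- showing every superstable configuration lies coordinatewise below a maximal one, with all the directed-multigraph bookkeeping it entails --- to be the main obstacle.

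Finally I would deduce Riemann--Roch. Riemann's inequality $r(D)\geq\deg(D)-g$ falls out of the key lemma: for $\deg(D)\geq g$ and any effective $E$ of degree $\deg(D)-g$, every $\nu\in\mathcal N$ has $\deg(\nu-(D-E))=-1<0$, so $|\nu-(D-E)|=\emptyset$ and hence $|D-E|\neq\emptyset$; for $\deg(D)<g$ it is trivial. Next, $\mathcal N$ is stable under $\nu\mapsto K-\nu$, since reversing all edges of an acyclic orientation $\mathcal O$ produces an acyclic orientation $\overline{\mathcal O}$ with
\[
\nu_{\overline{\mathcal O}}=\sum_v(\outdeg_{\mathcal O}(v)-1)v=\sum_v\bigl((\deg(v)-2)-(\indeg_{\mathcal O}(v)-1)\bigr)v=K-\nu_{\mathcal O}.
\]
Combining the min-formula for $r$ with the key lemma applied to each difference $D-E$ --- and substituting $\nu\mapsto K-\nu$ when running the parallel computation for $K-D$ --- one writes both $r(D)+1$ and $r(K-D)+1$ as minima over $\nu\in\mathcal N$ of the least degree of an effective divisor making $\nu-D$, respectively $D-\nu$, linearly equivalent to an effective divisor; since $\deg(\nu-D)=g-1-\deg(D)$ for every $\nu\in\mathcal N$, a term-by-term comparison of these two minima yields $r(D)-r(K-D)\geq\deg(D)+1-g$. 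Together with $\varepsilon(K-D)=-\varepsilon(D)$ from the first step, this upgrades to equality, completing the proof.
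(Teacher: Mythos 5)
The paper offers no proof of this theorem---it is imported verbatim from Baker and Norine \cite{Baker}---so there is no in-paper argument to compare against; what you have written is, in substance, a correct outline of the Baker--Norine proof, organized around the same objects the paper develops elsewhere ($q$-reduced divisors alias superstable configurations, acyclic orientations, non-special divisors). The preliminary reductions check out: $\deg(K)=2g-2$ gives $\varepsilon(K-D)=-\varepsilon(D)$, and the min-reformulation of $r(D)+1$ is legitimate because the defining condition on $\deg(E)$ is downward closed. Your key lemma is exactly Baker--Norine's central duality, and the step you flag as the main obstacle is lighter than you fear given what the paper already supplies: every superstable configuration is dominated by a maximal one simply by adding grains until no addition preserves superstability, and Theorem~\ref{thm:acyclic orientations} converts the resulting maximal superstable into an acyclic orientation $\mathcal{O}$ with unique source $q$, whence $\nu_{\mathcal{O}}$ has value $-1$ at $q$, is $q$-reduced, hence lies in $\mathcal{N}$ and dominates the $q$-reduced representative of $D$. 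The only quoted inputs beyond that are the divisor-level form of Theorem~\ref{thm:normal basis} (namely $|D|\neq\emptyset$ exactly when the $q$-reduced representative is effective, which needs the small observation that reducing an effective divisor never drives the value at $q$ negative) and the exhaustion of $\mathcal{N}$ by the $\nu_{\mathcal{O}}$; the latter is needed only for the involution $\nu\mapsto K-\nu$ and can be avoided by restricting all minima to the divisors $\nu_{\mathcal{O}}$ from the outset, as Baker--Norine do. The final term-by-term comparison does close: if $\nu-D+E\sim H\geq0$, then $D-\nu+H\sim E\geq0$ with $\deg(H)=\deg(E)+g-1-\deg(D)$, so for each fixed $\nu$ the two inner minima differ by exactly $\deg(D)+1-g$; this yields the one-sided inequality, and the antisymmetry of $\varepsilon$ upgrades it to the theorem.
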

  \begin{remark}
    This Riemann-Roch theorem is generalized in~\cite{Madhu} to the context of
    certain monomial ideals, relating it to Alexander duality in combinatorial
    commutative algebra.  From that point of view, the relevant monomial ideal
    for us is the ideal generated by the leading terms of a homogeneous
    toppling ideal with respect to a sandpile monomial ordering.  It is noted
    that these monomial ideals are studied by Postnikov and Shapiro
    in~\cite{Postnikov}.
  \end{remark}
\subsection{Resolutions and Betti numbers.}
  Let $G$ be an arbitrary directed multigraph.  Identify the vertices of $G$
  with the set $\{1,\dots,n+1\}$, with $n+1$ being the sink.  The polynomial
  ring $S=\C[x_1,\dots,x_{n+1}]$ is graded by the class group by letting the
  degree of a monomial $x^D$ be $D\in\mbox{Cl}(G)$.  For each
  $D\in\mbox{Cl}(G)$, let $S_D$ be the $\C$-vector space generated by the
  monomials of degree $D$, and define the {\em twist},~$S(D)$, by letting
  $S(D)_F:=S_{(D+F)}$ for each $F\in\mbox{Cl}(G)$.

  Let $I:=I_h(G)$ be the homogeneous toppling ideal.  A {\em free
  resolution} of $I$ is an exact sequence
 \[ 
 0\leftarrow I
 \stackrel{\phi_0}{\longleftarrow}
 F_1 
 \stackrel{\phi_1}{\longleftarrow}
 F_2
\leftarrow
\dots
 \stackrel{\phi_r}{\longleftarrow}
 F_r
  \leftarrow
  0, 
\]
where each $F_i$ is a free $\mathrm{Cl}(G)$-graded $S$-module, i.e.,
\[
F_i=\bigoplus_{D\in\mathrm{Cl}(G)}S(-D)^{\beta_{i,D}}
\]
for some nonnegative integers $\beta_{i,D}$, and where each $\phi$ preserves
degrees.  The {\em length} of the resolution is $r$.  A free resolution is {\em
minimal} if each of the $\beta_{i,D}$ is the minimum possible from among all
free resolutions of $I$.  In this case, the $\beta_{i,D}$ are called the {\em
Betti numbers} of $I$.  For instance, $\beta_{1,D}$ is the number of polynomials
of degree $D$ in a minimal generating set for $I$.  We also define the $i$-th
{\em coarsely graded Betti number} of $I$ by $\beta_i = \sum_{D \in
\mathrm{Cl}(D)}\beta_{i,D}$. 

The following theorem states a well-known fact about resolutions of sets of
points in projective space (the Cohen-Macaulay property).
\begin{prop}
  The length of the minimal free resolution of the homogeneous toppling ideal is
  $n$, the number of nonsink vertices.
\end{prop}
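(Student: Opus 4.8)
The plan is to show that $S/I_h(G)$ is a one-dimensional Cohen--Macaulay ring and then read off the projective dimension from the Auslander--Buchsbaum formula. Write $\Lap=\im(\Delta)$ for the Laplacian lattice and $A=\Z^{n+1}/\Lap$. By Theorem~\ref{thm:basics}(\ref{basics:one}) there is a graded isomorphism $S/I_h(G)\cong\C[Q]$, where $Q\subseteq A$ is the subsemigroup generated by the images $a_1,\dots,a_{n+1}$ of the standard basis vectors. First I would pin down the Krull dimension: Theorem~\ref{thm:basics}(\ref{thm:basics3}) gives $\dim S/I_h(G)=(n+1)-\dim_{\Z}\Lap$, and since $\ker\Delta=\myspan_{\Z}\{\tilde{\tau}\}$ by Proposition~\ref{prop:sink}(\ref{sink2}) with $\tilde{\tau}\neq 0$ (the graph has a globally accessible vertex $s$, so $\tau_s>0$ by the Matrix--Tree theorem), the rank of $\Delta$ is $n$; hence $\dim_{\Z}\Lap=n$ and $\dim S/I_h(G)=1$.

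The crux is that a variable is a nonzerodivisor on $S/I_h(G)$. Under the isomorphism with $\C[Q]$ the image of $x_1$ is $t_{a_1}$, and multiplication by $t_{a_1}$ carries the $\C$-basis $\{t_q:q\in Q\}$ bijectively onto the subset $\{t_{q+a_1}:q\in Q\}$ of itself: indeed $a_1\in Q$, and $q\mapsto q+a_1$ is injective because it becomes a bijection after passing to the group $A$. A linear endomorphism that maps a basis injectively into itself is injective, so $\bar x_1$ is a nonzerodivisor on $S/I_h(G)$. Hence $\operatorname{depth}(S/I_h(G))\geq 1$, and with the dimension count this forces $\operatorname{depth}(S/I_h(G))=1=\dim S/I_h(G)$, so $S/I_h(G)$ is Cohen--Macaulay. (Alternatively, $I_h(G)$ is a lattice ideal, hence equidimensional of dimension $(n+1)-\dim_{\Z}\Lap$, so its irrelevant maximal ideal is not an associated prime and a general linear form is a nonzerodivisor.)

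Finally, $\operatorname{depth}(S)=n+1$, so the Auslander--Buchsbaum formula gives $\operatorname{pd}_S\bigl(S/I_h(G)\bigr)=\operatorname{depth}(S)-\operatorname{depth}(S/I_h(G))=n$. Thus a minimal free resolution of $S/I_h(G)$ has the form $0\to F_n\to\cdots\to F_1\to S\to S/I_h(G)\to 0$ with $F_n\neq 0$; truncating it---dropping $S$ and $S/I_h(G)$, so that $I_h(G)$ is the image of $F_1$---produces a minimal free resolution $0\leftarrow I_h(G)\leftarrow F_1\leftarrow\cdots\leftarrow F_n\leftarrow 0$ of $I_h(G)$, which has length $n$ in the convention fixed just above the statement. (Passing from the $\operatorname{Cl}(G)$-grading to the coarser $\Z$-grading, or forgetting the grading entirely, does not alter the minimal free resolution, so computing $\operatorname{pd}_S$ is legitimate.) The only step with genuine content is the Cohen--Macaulayness of $S/I_h(G)$; the nonzerodivisor argument is the most economical route, as it sidesteps checking that $I_h(G)$ is a saturated ideal.
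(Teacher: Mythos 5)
Your proof is correct. Note that the paper does not actually prove this proposition---it simply records it as ``a well-known fact about resolutions of sets of points in projective space (the Cohen--Macaulay property),'' the implicit argument being that $Z(I_h(G))$ is a finite set of points in $\mathbb{P}^n$, so its homogeneous coordinate ring is a one-dimensional Cohen--Macaulay ring and Auslander--Buchsbaum gives projective dimension $n$. You take the same overall route (dimension one, depth one, Auslander--Buchsbaum, truncate), but your justification of the depth is different and self-contained: rather than invoking the geometry of the point set (which requires knowing that $I_h(G)$ is the full saturated ideal of its zero locus, a fact the paper establishes separately via the orbit description), you exhibit $\bar{x}_1$ as a nonzerodivisor directly from the semigroup-algebra presentation $S/I_h(G)\cong\C[Q]$, using that translation by $a_1$ is injective on $Q$ because it is injective on the ambient group $A$. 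That step is sound, as is the dimension count via $\ker\Delta=\myspan_{\Z}\{\tilde{\tau}\}$ and the remark that the $\mathrm{Cl}(G)$-grading does not affect minimality or length. The trade-off is exactly as you describe: your argument needs nothing beyond Theorem~\ref{thm:basics} and Proposition~\ref{prop:sink}, whereas the geometric argument is shorter once one has already identified $I_h(G)$ as the ideal of the orbit $\mathcal{O}^h$.
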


\begin{example}
\label{ex:resolution}
\begin{figure}[ht] 
\begin{tikzpicture}[scale=1.7]
\SetVertexMath
\GraphInit[vstyle=Art]
\SetUpVertex[MinSize=3pt]
\SetVertexLabel
\tikzset{VertexStyle/.style = {%
shape = circle,
shading = ball,
ball color = black,
inner sep = 1.5pt
}}
\SetUpEdge[color=black]
\Vertex[LabelOut,Lpos=270, Ldist=.3cm,x=0,y=0]{v_1}
\Vertex[LabelOut,Lpos=270, Ldist=.3cm,x=1,y=0]{v_2}
\Vertex[LabelOut,Lpos=270, Ldist=.3cm,x=2,y=0]{v_3}
\Vertex[LabelOut,Lpos=270, Ldist=.3cm,x=3,y=0]{v_4}
\Edge[style={-triangle 45, bend left=40}](v_1)(v_2)
\Edge[style={-triangle 45, bend left=40}](v_2)(v_1)
\fill[color=white] (0.45,0.2) circle (0.1cm);
\draw (0.45,0.2) node{{\small 5}};
\fill[color=white] (0.6,-0.2) circle (0.1cm);
\draw (0.6,-0.2) node{{\small 1}};
\Edge[style={-triangle 45, bend left=30}](v_2)(v_3)
\Edge[style={-triangle 45, bend left=40}](v_3)(v_2)
\fill[color=white] (1.4,0.15) circle (0.1cm);
\draw (1.4,0.15) node{{\small 1}};
\fill[color=white] (1.6,-0.2) circle (0.1cm);
\draw (1.6,-0.2) node{{\small 4}};
\Edge[style={-triangle 45}](v_3)(v_4)
\fill[color=white] (2.4,0.0) circle (0.1cm);
\draw (2.4,0.0) node{{\small 1}};
\Edge[style={-triangle 45, bend right=70}](v_3)(v_1)
\fill[color=white] (1,0.58) circle (0.1cm);
\draw (1,0.58) node{{\small 1}};
\end{tikzpicture}
\caption{A Gorenstein sandpile graph $G$ with sink $v_4$.}
\label{fig:gor}
\end{figure}
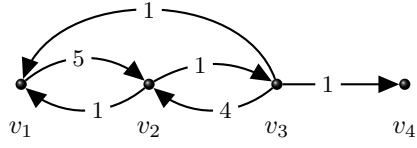

Let $G$ be as in Figure \ref{fig:gor} and let $I = I(G)^h$. Then 
\[
\xymatrix{
0 & I \ar[l] \restore
& S^5 \ar[l]_-{\phi_0} \save -<0pt,35pt>*{\begin{matrix}
\scriptstyle 0101 \\
\scriptstyle 0110 \\ \scriptstyle 1010 \\ \scriptstyle 1100 \\ \scriptstyle 0011
\end{matrix}} \restore & S^5
\ar[l]_-{\phi_1} \save -<0pt,35pt>*{\begin{matrix} \scriptstyle 1011 \\
\scriptstyle 1101 \\
\scriptstyle 1110 \\ \scriptstyle 2010 \\ \scriptstyle 0111 \end{matrix}} \restore & S \ar[l]_-{\phi_2}
\save -<0pt,10pt>*{\scriptstyle 1022} \restore & 0 \ar[l] 
}
\]
is a minimal free resolution for $I$, where the $\phi_i$ are given by
\begin{eqnarray*}
\phi_0 & = & \begin{bmatrix} x_3^2 - x_2x_4 & x_2x_3 - x_1x_4 & x_2^2 - x_1x_3
& x_1x_2 - x_4^2 & x_1^2 - x_3x_4 \end{bmatrix} \\ 
\phi_1 & = & \begin{bmatrix}  x_2 &  x_1 &  0   &  x_4 &  0   \\
                -x_3 & -x_2 &  x_1 &  0   & -x_4 \\
                 x_4 &  x_3 &  0   &  x_1 &  0   \\
                 0   &  0   & -x_3 & -x_2 &  x_1 \\
                 0   &  0   &  x_4 &  x_3 & -x_2 \end{bmatrix} \\ 
\phi_2 & = & \begin{bmatrix} x_1^2 - x_3x_4 \\ -x_1x_2+x_4^2\\ 
  -x_2^2+x_1x_3
\\ x_2x_3 - x_1x_4 \\ x_3^2 - x_2x_4 \end{bmatrix}.
\end{eqnarray*}
The grading of the $S$-modules is
indicated below each of them. For example, the last $S$-module is
$S(-(1,0,2,2))$.
\end{example}

The Betti numbers of $I$ may be understood topologically. For
$D\in\mathrm{Cl}(G)$, define the simplicial complex $\Delta_D$ on the vertices
of $G$ by $W\in\Delta_D$ if and only if $W\subseteq\mathrm{supp}(E)$ for some
$E\in|D|$. The following version of Hochster's formula appeared as Lemma 2.1 of
\cite{peeva}.

\begin{thm} \label{betti homology}
  The Betti number $\beta_{i,D}$ is the dimension of the $(i-1)$-th reduced homology
  group $\widetilde{H}_{i-1}(\Delta_D;\C)$ as a $\C$-vector space.
\end{thm}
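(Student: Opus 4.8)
\textit{Proof plan.} The plan is to compute the $\mathrm{Cl}(G)$-graded Betti numbers as the homology of the Koszul complex on the variables, and then to recognize each graded strand of that complex as the reduced simplicial chain complex of $\Delta_D$. Write $S=\C[x_1,\dots,x_{n+1}]$, $I=I_h(G)$, and let $Q\subseteq\mathrm{Cl}(G)$ be the set of divisor classes having an effective representative. The starting point is that, with respect to the class grading, $S/I$ has one-dimensional graded pieces: by the definition of the lattice ideal, $x^E\equiv x^{E'}$ modulo $I$ whenever $E,E'\geq 0$ and $E\sim E'$, so $(S/I)_D$ is spanned by the common image of the monomials $x^E$ with $E\in|D|$, and this image is nonzero (it is the image of the semigroup-algebra basis element $t_D$ under the isomorphism $S/I\cong\C[Q]$ of Theorem~\ref{thm:projective Hilbert}). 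Thus $\dim_\C(S/I)_D=1$ if $|D|\neq\emptyset$ and $0$ otherwise; moreover, since multiplication by $x_v$ carries a nonzero monomial to a nonzero monomial, the multiplication map $(S/I)_D\to(S/I)_{D+v}$ is an isomorphism whenever both sides are nonzero and zero otherwise.

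Next I would compute $\mathrm{Tor}^S_i(S/I,\C)$ from the Koszul resolution $K_\bullet$ of $\C=S/(x_1,\dots,x_{n+1})$, graded by $\mathrm{Cl}(G)$ by declaring the Koszul generator $e_W$ of $\bigwedge^{|W|}S^{n+1}$ (for $W\subseteq V$) to have degree the class of the divisor $\sum_{v\in W}v$. Then the degree-$D$ strand of $K_\bullet\otimes_S(S/I)$ in homological degree $i$ is $\bigoplus_{|W|=i}(S/I)_{D-\sum_{v\in W}v}$, and by the observation above each summand is $\C$ precisely when $(S/I)_{D-\sum_{v\in W}v}\neq0$, i.e.\ when there is an effective divisor linearly equivalent to $D-\sum_{v\in W}v$, i.e.\ when there is $E\in|D|$ with $W\subseteq\supp(E)$ --- that is, precisely when $W\in\Delta_D$. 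Hence, dimensionwise, this strand in homological degree $i$ has a basis indexed by the $(i-1)$-faces of $\Delta_D$ (the $i=0$ term, from $W=\emptyset$, being $\C$ exactly when $|D|\neq\emptyset$), so it agrees degreewise with the augmented oriented simplicial chain complex of $\Delta_D$, with Koszul homological degree $i$ placed in chain degree $i-1$.

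It then remains to match the differentials. The Koszul differential sends $e_W$ to the alternating sum $\sum_{v\in W}\pm\, x_v\, e_{W\setminus v}$; after tensoring with $S/I$ and passing to degree $D$, each term is the multiplication map $(S/I)_{D-\sum_{W}v}\to(S/I)_{D-\sum_{W\setminus v}v}$, which by the observation is the identity (after choosing in each nonzero component the canonical generator, namely the common image of the degree-$D'$ monomials) when $W\in\Delta_D$ and zero when $W\notin\Delta_D$. With these canonical generators the strand becomes literally the reduced simplicial chain complex $\widetilde{C}_{i-1}(\Delta_D;\C)$ with its usual boundary, so $\mathrm{Tor}^S_i(S/I,\C)_D\cong\widetilde{H}_{i-1}(\Delta_D;\C)$. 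Finally, the minimal free resolution of $I$ displayed in the theorem is the truncation of the minimal free resolution of $S/I$ (its $F_i$ are the syzygy modules of $S/I$), so $\beta_{i,D}=\dim_\C\mathrm{Tor}^S_i(S/I,\C)_D=\dim_\C\widetilde{H}_{i-1}(\Delta_D;\C)$, as claimed.

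The step I expect to be the real work is the last one: verifying that the signs of the Koszul differential, transported through the identifications $(S/I)_{D'}\cong\C$, reassemble into the alternating-sign simplicial boundary operator, and not merely into a complex with the right dimensions in each homological degree --- the resolution being the canonical choice of generator in each nonzero graded piece together with the fact that each $x_v$ acts as an isomorphism between any two adjacent nonzero strands. (This is Lemma~2.1 of \cite{peeva}; alternatively one may quote the general form of Hochster's formula applied to the fine-graded module $S/I$ directly.)
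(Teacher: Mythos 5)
Your argument is correct, but note that the paper does not actually prove this statement: it is quoted as Lemma~2.1 of \cite{peeva} (a class-graded instance of Hochster's formula), so you are supplying the proof that the paper outsources. What you give is the standard argument, and it is sound here: the key facts are that the $\mathrm{Cl}(G)$-grading is positive (every element of $\Lap$ has degree zero, so $\Lap\cap\N^{n+1}=\{0\}$ and $S_0=\C$), that $S/I\cong\C[Q]$ forces $\dim_\C (S/I)_{D}\le 1$ with equality exactly when $|D|\neq\emptyset$, and that multiplication by $x_v$ sends the canonical generator $t_{D'}$ to $t_{D'+v}$, hence is an isomorphism whenever the source is nonzero. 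Your identification of the degree-$D$ strand of $K_\bullet\otimes_S(S/I)$ with the augmented chain complex of $\Delta_D$ (including the $W=\emptyset$ term matching $\widetilde{C}_{-1}$) is then exactly right, and you correctly flag the only real verification left --- that the Koszul signs transport to the simplicial boundary under the canonical generators --- which is immediate once each $x_v$ acts as $+1$ on nonzero components. The shift bookkeeping also matches the paper's indexing, since its resolution of $I$ begins at $F_1$, so $\beta_{i,D}=\dim_\C\mathrm{Tor}^S_i(S/I,\C)_D$. One trivial slip: the isomorphism $S/I\cong\C[Q]$ you invoke is Theorem~\ref{thm:basics}~(\ref{basics:one}), not Theorem~\ref{thm:projective Hilbert}.
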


\begin{example}\label{example:hochster1}
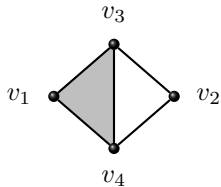
\begin{figure}[ht] 
\begin{tikzpicture}[scale=0.8]
\SetVertexMath
\GraphInit[vstyle=Art]
\SetUpVertex[MinSize=3pt]
\SetVertexLabel
\tikzset{VertexStyle/.style = {%
shape = circle,
shading = ball,
ball color = black,
inner sep = 1.5pt
}}
\SetUpEdge[color=black]
\draw[fill=gray!50] (-1,0) -- (0,0.866) -- (0,-0.866);
\Vertex[LabelOut,Lpos=180, Ldist=.1cm,x=-1,y=0]{v_1}
\Vertex[LabelOut,Lpos=90, Ldist=.1cm,x=0,y=0.866]{v_3}
\Vertex[LabelOut,Lpos=270, Ldist=.1cm,x=0,y=-0.866]{v_4}
\Vertex[LabelOut,Lpos=0, Ldist=.1cm,x=1,y=0]{v_2}
\Edges(v_1,v_3,v_2,v_4,v_1)
\Edge[](v_3)(v_4)
\end{tikzpicture}
\caption{The simplicial complex $\Delta_D$ for Example~\ref{example:hochster1}.}\label{fig:hochster1}
\end{figure}
Let $G$ again be as in Figure~\ref{fig:gor}. For $D = v_1 + v_3 + v_4$, we saw
in Example \ref{ex:resolution} that $\beta_{2,D} = 1$. We have
\[
|D| = \{D, v_2 + 2 v_3, 3 v_1, 2 v_2 + v_4\},
\]
so the simplicial complex $\Delta_D$ is as pictured in
Figure~\ref{fig:hochster1}.  Note
$\dim_\C\widetilde{H}_{1}(\Delta_D;\C)~=~1$, as asserted by Hochster's
formula.
\end{example}

\subsection{Minimal recurrents.}
Again specialize to the case of an undirected graph $G$.  As part of the
Riemann-Roch theory, one defines the {\em non-special divisors} on $G$ to be
\[
\mathcal{N}:=\{D\in\divisor(G): \text{$\deg(D)=g-1$ and $|D|=\emptyset$}\}.
\]
By the Riemann-Roch theorem, if $\deg(D)>g-1$, then $|D|\neq\emptyset$.  So the
nonspecial divisors are the divisors of maximal degree having empty linear
system. 

Fix $s\in V$ and consider the sandpile graph $G=(V,E,s)$.  A recurrent
configuration $c$ on $G$ is {\em minimal} if $c-v$ is not recurrent for any
nonsink vertex $v$.  It is well-known that (since $G$ is undirected) the minimal
recurrent configurations are exactly the recurrent configurations of minimal
degree, namely of degree $\#E-\deg(s)$.  (This result follows from Dhar's
algorithm (cf.~\S\ref{subsect:burning} and the proof of
Theorem~\ref{thm:loopy}).)  Similarly, one says that a superstable configuration
$c$ is a {\em maximal} if $c+v$ is not superstable for any nonsink vertex $v$.
By Corollary~\ref{cor:duality}, it follows that the maximal superstable
configurations are exactly those of degree $g$.  

We say that a divisor $D$ on $G$ is {\em unstable} if $D_v\geq\deg(v)$ for some
$v\in V$ and that~$D$ is {\em alive} if there is no stable divisor in
$|D|$.  Further, $D$ is {\em minimally alive} if for all $v\in V$, we have that
$D-v$ is not alive.  It is shown in~\cite{Wilmes} that a divisor~$D$ is alive if
and only if $D\sim c+k\,s$ for some recurrent configuration $c$ and some
$k\geq\deg(s)$, and $D$ is minimally alive if and only if $D\sim c+\deg(s)\,s$
for some minimal recurrent configuration $c$.

It is shown in~\cite{Baker} that a set of representatives for the
distinct divisor classes of the non-special divisors is 
\[
\{c-s: \text{$c$ a maximal superstable configuration}\}.
\]
Thus, the non-special divisor classes are given, essentially, by the maximal
$G$-parking functions.

Suppose that $\nu$ is a nonspecial divisor.  We may assume $\nu=c-s$ for some maximal
superstable configuration $c$.  Then
\[
\Dmax-\nu=(\cmax-c)+\deg(s)\,s.
\]
Since $\cmax-c$ is a minimal recurrent configuration, $\Dmax-\nu$ is
minimally alive.  Similarly, one may show that if $D'$ is a minimally alive
divisor, then $\Dmax-D'$ is nonspecial.  Thus, on an undirected graph there is a
bijective correspondence between: minimal recurrent configurations, maximal
superstable configurations, maximal $G$-parking functions, acyclic orientations
with $s$ as the unique source vertex (cf.~Theorem~\ref{thm:acyclic
orientations}), minimally alive divisors, and non-special divisors. In
particular, the cardinality of these sets does not depend on the choice of sink.

The following is Theorem 3.10 of \cite{Wilmes}.  The proof is included here for
the sake of completeness.

\begin{thm}\label{thm:last betti number} 
  Let $G$ be an undirected graph and $D\in\divisor(G)$.   Let $r=\#V-1$, the
  length of a minimal free resolution for $G$.  Then the highest
  nonzero Betti number,~$\beta_{r}$, is the number of minimal recurrent
  configurations on $G$.  We have
  \[
  \beta_{r,D}\neq0
  \]
  if and only if $D$ is minimally alive (in which case $\deg(D)=\#E$).
\end{thm}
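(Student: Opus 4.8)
The plan is to evaluate $\beta_{r,D}$ with Hochster's formula (Theorem~\ref{betti homology}), reduce the resulting homological condition to a purely combinatorial one about the complex $\Delta_D$, and then recognize that combinatorial condition as a statement about non-special divisors. Since $r=\#V-1$, the complex $\Delta_D$ has vertex set $V$ with $\#V=r+1$ elements, and $\beta_{r,D}=\dim_{\C}\widetilde{H}_{r-1}(\Delta_D;\C)$ is the top possible reduced homology of such a complex.

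The first step is an elementary lemma in combinatorial topology: for a simplicial complex $\Delta$ on a set $V$ of $r+1$ vertices, $\widetilde{H}_{r-1}(\Delta;\C)=0$ unless $\Delta$ is the boundary of the full simplex on $V$ --- that is, unless every proper subset of $V$ is a face but $V$ itself is not --- in which case $\widetilde{H}_{r-1}(\Delta;\C)\cong\C$. This is seen by inspecting the top of the simplicial chain complex of $\Delta$: if $V\in\Delta$ then $\Delta$ is a contractible simplex; otherwise there are no $r$-chains, so $\widetilde{H}_{r-1}(\Delta;\C)$ is the space of $(r-1)$-cycles, which is a subspace of the one-dimensional cycle space of the full boundary complex, spanned by the fundamental cycle $\sum_i(-1)^i\,(V\setminus\{v_i\})$; since every coefficient of this cycle is nonzero, it belongs to the chains of $\Delta$ exactly when each $V\setminus\{v_i\}$ is a face of $\Delta$. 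In particular $\beta_{r,D}\in\{0,1\}$, and $\beta_{r,D}=1$ if and only if every proper subset of $V$ is a face of $\Delta_D$ while $V$ is not. Unwinding the definition of $\Delta_D$ --- using that $\supp(E)\supseteq W$ exactly when $E\ge\sum_{u\in W}u$, and that $\sum_{u\in V\setminus\{v\}}u=\vec{1}-v$ --- this last condition says precisely that $|D-\vec{1}|=\emptyset$ while $|D-\vec{1}+v|\ne\emptyset$ for every $v\in V$.

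The heart of the proof is then the claim that, for any divisor $F$ on $G$, one has $|F|=\emptyset$ and $|F+v|\ne\emptyset$ for all $v\in V$ if and only if $F$ is non-special, i.e. $\deg F=g-1$ and $|F|=\emptyset$. For the easy direction, if $F$ is non-special then Riemann--Roch (Theorem~\ref{thm:rr}) gives $r(K-F)=r(F)=-1$, hence $r(K-F-v)=-1$, hence $r(F+v)=0$ for each $v$. Conversely, assuming $|F|=\emptyset$ and $|F+v|\ne\emptyset$ for all $v$, Riemann--Roch applied to $F$ and to each $F+v$ yields $r(K-F-v)\ge r(K-F)$; combined with the automatic inequality $r(K-F-v)\le r(K-F)$ and the elementary fact that $r(E)\ge 0$ forces $r(E-v)<r(E)$ for some $v\in V$, we get $r(K-F)=-1$, and then Riemann--Roch gives $\deg F=g-1$. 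Applying this with $F=D-\vec{1}$ shows $\beta_{r,D}\ne 0$ if and only if $D-\vec{1}$ is non-special; since $K-(D-\vec{1})=\Dmax-D$ and Riemann--Roch makes non-specialness symmetric under $F\mapsto K-F$, this is equivalent to $\Dmax-D$ being non-special, which by the results of \cite{Wilmes} and \cite{Baker} recalled before the theorem is exactly the statement that $D$ is minimally alive; and then $\deg(\Dmax-D)=g-1$ together with $\deg\Dmax=2\#E-\#V$ forces $\deg D=\#E$.

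Finally, since each $\beta_{r,D}\in\{0,1\}$, summing over divisor classes gives $\beta_r=\#\{[D]\in\mathrm{Cl}(G):D\text{ is minimally alive}\}$, and by the bijection recalled just before the statement this count equals the number of minimal recurrent configurations on $G$ (and, as noted there, is independent of the choice of sink). I expect the main obstacle to be the crux claim of the third paragraph: translating the purely combinatorial ``boundary-of-the-simplex'' condition into non-specialness --- in particular, extracting the exact degree $g-1$ --- is where Riemann--Roch and the monotonicity of $r(\cdot)$ under subtraction of a vertex must be used with some care. Everything else is bookkeeping with the definitions and appeals to results already established in the excerpt.
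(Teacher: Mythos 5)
Your proof is correct and follows the same overall architecture as the paper's: Hochster's formula reduces $\beta_{r,D}\neq 0$ to $\Delta_D$ being the boundary of the full simplex on $V$; this translates into the two conditions $|D-\vec{1}|=\emptyset$ and $|D-\vec{1}+v|\neq\emptyset$ for all $v\in V$; and these are then identified with non-specialness of $D-\vec{1}$, equivalently of $K-(D-\vec{1})=\Dmax-D$, hence with $D$ being minimally alive via the dictionary recalled before the theorem. The one place you genuinely diverge is the hard direction (``boundary of the simplex $\Rightarrow$ minimally alive''): the paper routes this through the alive/minimally-alive characterization from \cite{Wilmes} (arguing that $|\nu|=\emptyset$ forces $\Dmax-\nu$ to be alive and $|\nu+v|\neq\emptyset$ forces $\Dmax-\nu-v$ not to be alive), whereas you prove a self-contained Riemann--Roch lemma: the two conditions force $r(K-F-v)\geq r(K-F)$ for every $v$, which combined with the monotonicity $r(K-F-v)\leq r(K-F)$ and the strict-drop fact (if $r(E)\geq 0$ then $r(E-v)<r(E)$ for some $v$, obtained by writing a witnessing effective divisor of degree $r(E)+1$ as $E''+v$) yields $r(K-F)=-1$ and hence $\deg F=g-1$. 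Your version has the advantage of extracting the degree $g-1$ directly from Riemann--Roch without leaning on the aliveness facts that the paper cites but does not prove; the paper's version makes the combinatorial meaning of the two conditions more immediately visible. Both arguments are valid, and your degree computation $\deg(D)=\#E$ and the final count $\beta_r=\#\{\text{minimally alive classes}\}=\#\{\text{minimal recurrents}\}$ are handled correctly.
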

\begin{proof}
First note that by Theorem \ref{thm:rr}, a divisor $\nu$ is nonspecial if and
only if $K - \nu$ is nonspecial. Indeed, if $\deg(\nu) = g - 1$, then
\[
\deg(K - \nu) = (2g - 2) - g - 1 = g - 1
\]
so that Theorem $\ref{thm:rr}$ gives $r(K - \nu) = r(\nu)$.

By Theorem \ref{betti homology}, we have $\beta_{r,D} =
\dim_\C\widetilde{H}_{r-1}(\Delta_D;\C)$. Since for any $D\in\divisor(G)$ the
simplicial complex $\Delta_D$ has $\#V$ vertices, $\beta_{r,D} \ne 0$ if and
only if $\Delta_D$ is the boundary of an $r$-simplex. Thus, $\beta_{r,D} \ne 0$
if and only if $\beta_{r,D} = 1$, or equivalently: (i) no $E \in |D|$ has full
support, and (ii) for every $v \in V$ there is some $E \in |D|$ with
$V\setminus\{v\}\subseteq\supp(E)$.

Suppose $D$ is minimally alive. Then $\Dmax - D$ is nonspecial by the discussion
preceding the statement of the theorem. Let $\nu = K - (\Dmax - D)$, so that
$\nu$ is also nonspecial.  In particular, $|D - \vec{1}|= |\nu| = \emptyset$, so
no divisor $E \in |D|$ has full support. Now fix $v \in V$ and let $F = D -
\vec{1} + v$.  Note that a divisor $E \in |D|$ satisfies $V\setminus\{v\}
\subseteq \supp(E)$ if and only if $E - \vec{1} + v \in |F|$.  So to complete the
proof that $\beta_{r,D} \ne 0$ it suffices to show that $|F| \ne \emptyset$.
Note that $\deg(F) = g$. Since $K - F + v = \Dmax - D$, we have $K - F + v$
nonspecial, and it follows that $r(K - F) = -1$.  Thus, by Theorem \ref{thm:rr},
we have $r(F) = 0$ as desired.  Hence, $D$ satisfies (i) and (ii).

On the other hand, suppose $D$ satisfies (i) and (ii) above, and let $\nu = D -
\vec{1}$. Then $|\nu| = \emptyset$ follows from (i), and therefore $\Dmax -
\nu$ is alive. On the other hand, for every $v \in V$ we have from (ii) that
$|\nu + v| \ne \emptyset$, whence $(\Dmax - \nu) - v$ is not alive. Thus,
$\Dmax - \nu$ is minimally alive, so that $\nu$ is nonspecial. But then $K -
\nu = \Dmax - D$ is also nonspecial, implying $D$ is minimally alive.
\end{proof}

\begin{example}\label{example:genus2}
  We summarize many of the results of this paper using the graph~$G$ of genus
  $g=2$ in Figure~\ref{fig:genus2}.  The mathematical software Sage~\cite{sage}
  was used for some of the calculations.
\begin{figure}[ht] 
\begin{tikzpicture}[scale=0.8]
\SetVertexMath
\GraphInit[vstyle=Art]
\SetUpVertex[MinSize=3pt]
\SetVertexLabel
\tikzset{VertexStyle/.style = {%
shape = circle,
shading = ball,
ball color = black,
inner sep = 1.5pt
}}
\SetUpEdge[color=black]
\Vertex[LabelOut,Lpos=90, Ldist=.1cm,x=0,y=1]{x}
\Vertex[LabelOut,Lpos=180, Ldist=.1cm,x=-0.866,y=0]{y}
\Vertex[LabelOut,Lpos=0, Ldist=.1cm,x=0.866,y=0]{z}
\Vertex[LabelOut,Lpos=270, Ldist=.1cm,x=0,y=-1]{s}
\Edges(x,y,s,z,x)
\Edge[](y)(z)
\end{tikzpicture}
\caption{Genus two graph $G$.}\label{fig:genus2}
\end{figure}
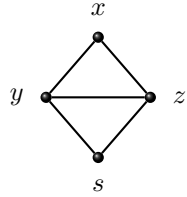
The sandpile group for $G$ is cyclic of order $8$.
Its toppling ideal is $I=(x^2-yz,y^3-xz,z^3-xy,yz-1)$, and its homogeneous
toppling ideal is 
\[
I_h=I^h=(x^2-yz,y^3-xzs,z^3-xys,yz-s^2,xz^2-y^2s,xy^2-z^2s).
\]
Letting $\omega=\exp(2\pi i/8)$, the zeros set of $I$ is
\[
Z(I)=\{( (-1)^j,\omega^{-j},\omega^{j}):0\leq j\leq 7\}\subset\C^3,
\]
which forms a cyclic group of order $8$ under component-wise multiplication.
With respect to the sandpile monomial ordering (\verb+grevlex+) for which
$x>y>z>s$, the normal basis for the coordinate ring of $G$ is the spanned by $8$
monomials:
\[
R/I=\C[x,y,z]/I=\myspan\{1,x,y,z,xy,xz,y^2,z^2\}.
\]
The exponent vectors of the normal basis give the superstable configurations:
\[
(0,0,0),(1,0,0),(0,1,0),(0,0,1),(1,1,0),(1,0,1),(0,2,0),(0,0,2),
\]
and dualizing, $c\to\cmax-c$, gives the recurrent configurations:
\[
(1,2,2),(0,2,2),(1,1,2),(1,2,1),(0,1,2),(0,2,1),(1,0,2),(1,2,0).
\]
(We use the notation $(c_1,c_2,c_3):=c_1\,x+c_2\,y+c_3\,z$.)

From the degrees of the monomials in the normal basis, one sees that the affine
Hilbert function for $G$ is
\[
H_G(0)=1,\quad H_G(1)=3,\quad H_G(2)=4
\]
with postulation number $2$ (equal to $g$, the degree of the maximal
superstables).  The Tutte polynomial for $G$ was calculated in
Figure~\ref{fig:tutte}, and in accordance with Corollary~\ref{cor:tutte}, the
Hilbert series for $G$ is
\[
y^2\,T_G(1,1/y)=1+3y+4y^2.
\]
The minimal free resolution for $G$ is
\[
\xymatrix{
0 & I \ar[l] \restore
& S^6 \ar[l]_-{\phi_0} \save -<0pt,44pt>*{\begin{array}{l}
\scriptstyle 0110 \\
\scriptstyle 2000 \\ 
\scriptstyle 0030 \\ 
\scriptstyle 1020 \\ 
\scriptstyle 0300 \\
\scriptstyle 1200 
\end{array}} \restore & S^9
\ar[l]_-{\phi_1} \save -<-3pt,49pt>*{\begin{array}{l} 
\scriptstyle 0121 \\
\scriptstyle 0211 \\
\scriptstyle 1201\times2 \\ 
\scriptstyle 1120 \\ 
\scriptstyle 1021\times 2\\
\scriptstyle 1210 \\
\scriptstyle 0220 
\end{array}}
\restore & S^4 \ar[l]_-{\phi_2}
\save -<0pt,32pt>*{\begin{array}{l} 
\scriptstyle 0122 \\
\scriptstyle 0212 \\
\scriptstyle 1022 \\
\scriptstyle 1202  
\end{array}}
\restore & 0 \ar[l] 
}
\]
The $\mbox{Cl}(G)$-degrees are listed in $x,y,z,s$ order.  The degrees of the
highest nonzero Betti numbers correspond to the minimal recurrent configurations
as prescribed by Theorem~\ref{thm:last betti number}.  For instance, the degree
$0122$ corresponds to the minimal alive divisor $y+2z+2s$ and to the minimal
recurrent configuration $(0,1,2)$.  Thus, $\beta_3=H_G(2)$, and the degrees of
each of these divisors is $5=\#E$.  

As an example of Hochster's formula (Theorem~\ref{betti homology}), let $D=1021=x+2z+s$.  The complete linear
system for $D$ is
\[
|D|=\{1021,2200,0202,0310\},
\]
and $\Delta_D$ is the simplical complex pictured in Figure~\ref{fig:hochster2}.
We have
\[
\beta_{2,1021}=\dim_{\C}\widetilde{H}_{1}(1021;\C)=2.
\]
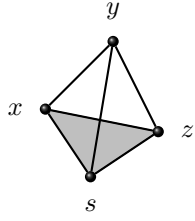
\begin{figure}[ht] 
\begin{tikzpicture}[scale=0.3]
\SetVertexMath
\GraphInit[vstyle=Art]
\SetUpVertex[MinSize=3pt]
\SetVertexLabel
\tikzset{VertexStyle/.style = {%
shape = circle,
shading = ball,
ball color = black,
inner sep = 1.5pt
}}
\SetUpEdge[color=black]
\draw[fill=gray!50] (0,0) -- (5,-1) -- (2,-3);
\Vertex[LabelOut,Lpos=180, Ldist=.1cm,x=0,y=0]{x}
\Vertex[LabelOut,Lpos=270, Ldist=.1cm,x=2,y=-3]{s}
\Vertex[LabelOut,Lpos=90, Ldist=.1cm,x=3,y=3]{y}
\Vertex[LabelOut,Lpos=0, Ldist=.1cm,x=5,y=-1]{z}
\Edges(x,z,s,x,y,s)
\Edge[](y)(z)
\end{tikzpicture}
\caption{The simplicial complex $\Delta_D$ for
Example~\ref{example:genus2}.}\label{fig:hochster2}
\end{figure}
\end{example}

\subsection{Conjecture.}
Let $G=(V,E,s)$ be an undirected sandpile graph.  For
$U\subseteq V$, let $G|_U$ denote the subgraph of $G$ induced by $U$, i.e.,
the graph with vertices $U$ and edges $e\in E$ such that both endpoints of $e$
are in $U$.  A {\em connected $k$-partition} or {\em $k$-bond} of $G$ is a partition
$\Pi=\sqcup_{i=1}^kV_i$ of $V$ such that $G|_{V_i}$ is connected for all~$i$.
The corresponding {\em $k$-partition graph}, $G_{\Pi}$, is the graph with
vertices $\{V_1,\dots,V_k\}$ and with edge weights
\[
\wt(V_i,V_j)=\#\{e\in E:\text{one endpoint of $e$ is in $V_i$ and the other is
in $V_j$}\}.
\]
We consider $G_{\Pi}$
to be a sandpile graph with sink vertex $V_i$, where $i$ is chosen so that $s\in
V_i$.

The following conjecture appears as Corollary~3.29 in~\cite{Wilmes}.  Using the
mathematical software Sage, it has been verified for all undirected, unweighted
graphs with fewer than $7$ vertices.
\begin{conj}\label{conj:wilmes} Let $\mathcal{P}_k$ denote the set of connected $k$-partitions of
  $G$.  Then
\[
\beta_k=\sum_{\Pi\in\mathcal{P}_{k+1}}\#\{c:\text{$c$ a minimal recurrent
configuration on $G_{\Pi}$}\}.
\]
\end{conj}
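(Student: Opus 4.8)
The plan is to derive the conjecture from Hochster's formula (Theorem~\ref{betti homology}), which gives
\[
\beta_k=\sum_{D\in\mathrm{Cl}(G)}\dim_{\C}\widetilde{H}_{k-1}(\Delta_D;\C),
\]
so that it suffices to produce, for each $k$, a bijection between the disjoint union over $D$ of a basis of $\widetilde{H}_{k-1}(\Delta_D;\C)$ and the set of pairs $(\Pi,c)$ with $\Pi$ a connected $(k+1)$-partition of $G$ and $c$ a minimal recurrent configuration on $G_\Pi$. The conjecture should be seen as the refinement of Theorem~\ref{thm:last betti number}: the case $k=\#V-1$ is that theorem, since the only connected $\#V$-partition is the partition into singletons, for which $G_\Pi=G$.

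For the forward map, fix $(\Pi,c)$ with $\Pi=V_1\sqcup\cdots\sqcup V_{k+1}$. Since $G_\Pi$ is a sandpile graph on $k+1$ vertices, Theorem~\ref{thm:last betti number} applied to $G_\Pi$ identifies $c$ with a minimally alive divisor $D'$ on $G_\Pi$ whose complex $\Delta^{G_\Pi}_{D'}$ is the boundary of a $k$-simplex. I would then lift $D'$ to a divisor class $D=D(\Pi,D')\in\mathrm{Cl}(G)$, distributing each value $D'_{V_i}$ over the vertices of $V_i$ and correcting within each block by a suitably chosen divisor of degree $g(G|_{V_i})$; a degree count then forces $\deg D=\#E-\#V+k+1$, exactly the degree occurring in homological position $k$. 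The expected picture is that the block-collapsing map $W\mapsto\{V_i:W\cap V_i\neq\emptyset\}$ induces a map $\Delta_D\to\Delta^{G_\Pi}_{D'}$ exhibiting $\Delta_D$ as a ``thickening'' of the $(k-1)$-sphere $\Delta^{G_\Pi}_{D'}$---the corrections being chosen precisely so that each block is blown up without changing the homotopy type---whence $\Delta_D\simeq S^{k-1}$ and $(\Pi,c)$ determines a distinguished generator of $\widetilde{H}_{k-1}(\Delta_D;\C)$.

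The substance of the argument is the converse: that every $\Delta_D$ is homotopy equivalent to a wedge of spheres, with the summands of dimension $k-1$ indexed precisely by the connected $(k+1)$-partitions $\Pi$ for which $D$ arises as above, and that the classes built in the previous step then form a basis. I see two plausible routes. The first is to give a combinatorial model for $\widetilde{H}_*(\Delta_D)$ directly---a shelling of $\Delta_D$, or a discrete Morse matching on it with critical cells indexed by connected partitions---using Riemann--Roch (Theorem~\ref{thm:rr}) to decide which faces lie in $\Delta_D$, much as in the proof of Theorem~\ref{thm:last betti number}. The second is an induction on $\#E$: choose an edge $e$ that is neither a loop nor a bridge, establish a short exact sequence of $\mathrm{Cl}$-graded $S$-modules relating $I_h(G)$ to the homogeneous toppling ideals of the deletion $G-e$ and the contraction $G/e$, and compare the resulting long exact sequence in $\mathrm{Tor}$ with the deletion--contraction recursion for connected partitions in the bond lattice (those keeping the endpoints of $e$ in one block are the connected partitions of $G/e$; the others contribute through $G-e$). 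The base cases---trees, and the top level $k=\#V-1$---are covered by Theorem~\ref{thm:last betti number}.

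The hard part, and the reason this is still a conjecture, is exactly the control of $\widetilde{H}_{k-1}(\Delta_D;\C)$ in \emph{intermediate} homological degree: Riemann--Roch directly pins down only the extreme cases (empty linear system of maximal degree, and achievability of full support), so the middle requires either the shellability or discrete-Morse statement above, which is itself open, or a genuinely new deletion--contraction short exact sequence for toppling ideals, which is not currently known. As a built-in consistency check, summing the conjectured identity over $k$ with alternating signs must recover the $h$-vector of $G$, and hence Merino's theorem (Theorem~\ref{thm:Merino}); any proof should specialize to it.
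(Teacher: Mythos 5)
The statement you are asked to prove is Conjecture~\ref{conj:wilmes}; the paper itself offers no proof, only verification by computer for all undirected, unweighted graphs on fewer than $7$ vertices, so there is no argument of the authors to compare yours against. What you have written is, by your own admission, a research outline rather than a proof: the reduction via Hochster's formula (Theorem~\ref{betti homology}) to computing $\sum_D\dim_{\C}\widetilde{H}_{k-1}(\Delta_D;\C)$ is correct, and the observation that the case $k=\#V-1$ recovers Theorem~\ref{thm:last betti number} is a sound consistency check, but the entire content of the conjecture lies in the step you label ``the substance of the argument''---showing that each $\Delta_D$ has the homology of a wedge of spheres with the $(k-1)$-dimensional summands indexed by connected $(k+1)$-partitions---and neither of your two proposed routes (a shelling or discrete Morse matching on $\Delta_D$, or a deletion--contraction exact sequence for $I_h(G)$) is carried out or even reduced to a precise lemma. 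So the central gap is not closed; it is named and deferred.

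Beyond the acknowledged incompleteness, one concrete assertion in your forward construction is false and would derail the bookkeeping. You claim that lifting a minimally alive divisor $D'$ on $G_{\Pi}$ by adding a degree-$g(G|_{V_i})$ correction on each block forces $\deg D=\#E-\#V+k+1$ in homological position $k$, so that the degree is constant across each column of the Betti table. Example~\ref{example:genus2} already contradicts this: for the genus-two graph, the six minimal generators (position $k=1$) occur in $\mathrm{Cl}(G)$-degrees $0110$, $2000$ (total degree $2$) and $0030$, $1020$, $0300$, $1200$ (total degree $3$), whereas your formula predicts $\#E-\#V+2=3$ for all of them. The degree of the class attached to a connected $(k+1)$-partition $\Pi$ is in fact the number of edges of $G$ joining distinct blocks of $\Pi$ (e.g.\ the generator $x^2-yz$ of degree $2$ corresponds to the $2$-partition $\{x\}\sqcup\{y,z,s\}$ with two crossing edges), which varies with $\Pi$ and only coincides with $\#E$ at the top of the resolution, where all blocks are singletons. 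Any correct indexing of the intermediate Betti numbers must therefore track the cut sizes $\#E_{\Pi}$ (and, as it turns out, acyclic orientations of $G_{\Pi}$ with unique source) rather than a single degree per homological position; with the lift repaired accordingly, your outline is a reasonable plan of attack, but as it stands it proves nothing the paper does not already assert.
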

\begin{example}
  Figure~\ref{fig:conjecture} displays the $5$ connected $3$-partitions of $G$
  along with their corresponding $3$-partition graphs and $h$-vectors.  The top
  value of each $h$-vector is the number of minimal recurrent configurations (or
  maximal superstable configurations) on the partition graph.  Summing these top
  values gives $\beta_2$ for $G$.
\begin{figure}[ht] 
\begin{tikzpicture}[scale=0.7]
\SetVertexMath
\GraphInit[vstyle=Art]
\SetUpVertex[MinSize=3pt]
\SetVertexNoLabel
\tikzset{VertexStyle/.style = {%
shape = circle,
shading = ball,
ball color = black,
inner sep = 1.5pt
}}
\SetUpEdge[color=black]

\draw[shift={(-6.433,8.5)}, rotate=40.89] (0,0) ellipse(0.3 and 1.0);
\draw[shift={(-6,10)}] (0,0) circle(0.2);
\draw[shift={(-5.134,9)}] (0,0) circle(0.2);
\Vertex[x=-6,y=10]{x}
\Vertex[x=-6.866,y=9]{y}
\Vertex[x=-5.134,y=9]{z}
\Vertex[x=-6,y=8]{s}
\Edges(x,y,s,z,x)
\Edge[](y)(z)

\draw [ultra thick,style={->}] (-6,7.) -- (-6,6.0);

\Vertex[x=-6,y=4.4]{x}
\Vertex[x=-6.866,y=3.4]{y}
\Vertex[x=-5.134,y=3.4]{z}
\Edge[](x)(y)
\Edge[](x)(z)
\Edge[](y)(z)
\fill[color=white] (-6,3.4) circle(0.2cm);
\draw (-6,3.4) node{$2$};
\draw (-6,1.5) node{$1\ \ 2\ \ \mathbf{2}$};

\draw[shift={(-2.5673,8.5)}, rotate=-40.89] (0,0) ellipse(0.3 and 1.0);
\draw[shift={(-3,10)}] (0,0) circle(0.2);
\draw[shift={(-3.866,9)}] (0,0) circle(0.2);
\Vertex[x=-3,y=10]{x}
\Vertex[x=-3.866,y=9]{y}
\Vertex[x=-2.134,y=9]{z}
\Vertex[x=-3,y=8]{s}
\Edges(x,y,s,z,x)
\Edge[](y)(z)

\draw [ultra thick,style={->}] (-3,7.) -- (-3,6.0);

\Vertex[x=-3,y=4.4]{x}
\Vertex[x=-3.866,y=3.4]{y}
\Vertex[x=-2.134,y=3.4]{z}
\Edge[](x)(y)
\Edge[](x)(z)
\Edge[](y)(z)
\fill[color=white] (-3,3.4) circle(0.2cm);
\draw (-3,3.4) node{$2$};
\draw (-3,1.5) node{$1\ \ 2\ \ \mathbf{2}$};

\draw[shift={(0,9)}, rotate=90] (0,0) ellipse(0.3 and 1.2);
\draw[shift={(0,10)}] (0,0) circle(0.2);
\draw[shift={(0,8)}] (0,0) circle(0.2);
\Vertex[x=0,y=10]{x}
\Vertex[x=-0.866,y=9]{y}
\Vertex[x=0.866,y=9]{z}
\Vertex[x=0,y=8]{s}
\Edges(x,y,s,z,x)
\Edge[](y)(z)

\draw [ultra thick,style={->}] (0,7.0) -- (0,6.0);

\Vertex[x=0,y=5.2]{x}
\Vertex[x=0,y=4]{y}
\Vertex[x=0,y=2.8]{z}
\Edges(x,y,z)
\fill[color=white] (0,4.6) circle(0.25cm);
\draw (0,4.6) node{$2$};
\fill[color=white] (0,3.4) circle(0.25cm);
\draw (0,3.4) node{$2$};
\draw (0,1.5) node{$1\ \ 2\ \ \mathbf{1}$};

\draw[shift={(2.567,9.5)}, rotate=-40.89] (0,0) ellipse(0.3 and 1.0);
\draw[shift={(3,8)}] (0,0) circle(0.2);
\draw[shift={(3.866,9)}] (0,0) circle(0.2);
\Vertex[x=3,y=10]{x}
\Vertex[x=2.134,y=9]{y}
\Vertex[x=3.866,y=9]{z}
\Vertex[x=3,y=8]{s}
\Edges(x,y,s,z,x)
\Edge[](y)(z)

\draw [ultra thick,style={->}] (3,7.0) -- (3,6.0);

\Vertex[x=3,y=3.4]{x}
\Vertex[x=2.134,y=4.4]{y}
\Vertex[x=3.866,y=4.4]{z}
\Edge[](x)(y)
\Edge[](x)(z)
\Edge[](y)(z)
\fill[color=white] (3,4.4) circle(0.2cm);
\draw (3,4.4) node{$2$};
\draw (3,1.5) node{$1\ \ 2\ \ \mathbf{2}$};

\draw[shift={(6.433,9.5)}, rotate=40.89] (0,0) ellipse(0.3 and 1.0);
\draw[shift={(5.134,9)}] (0,0) circle(0.2);
\draw[shift={(6,8)}] (0,0) circle(0.2);
\Vertex[x=6,y=10]{x}
\Vertex[x=5.134,y=9]{y}
\Vertex[x=6.866,y=9]{z}
\Vertex[x=6,y=8]{s}
\Edges(x,y,s,z,x)
\Edge[](y)(z)

\draw [ultra thick,style={->}] (6,7.0) -- (6,6.0);

\Vertex[x=6,y=3.4]{x}
\Vertex[x=5.134,y=4.4]{y}
\Vertex[x=6.866,y=4.4]{z}
\Edge[](x)(y)
\Edge[](x)(z)
\Edge[](y)(z)
\fill[color=white] (6,4.4) circle(0.2cm);
\draw (6,4.4) node{$2$};
\draw (6,1.5) node{$1\ \ 2\ \ \mathbf{2}$};

\draw (-9,9.3) node{connected};
\draw (-9,8.6) node{$3$-partitions};
\draw (-9,4) node{$3$-partition};
\draw (-9,3.5) node{graphs};
\draw (-9,1.5) node{$h$-vector};
\end{tikzpicture}
\caption{Second Betti number: $\beta_2=2+2+1+2+2=9$.}\label{fig:conjecture}
\end{figure}
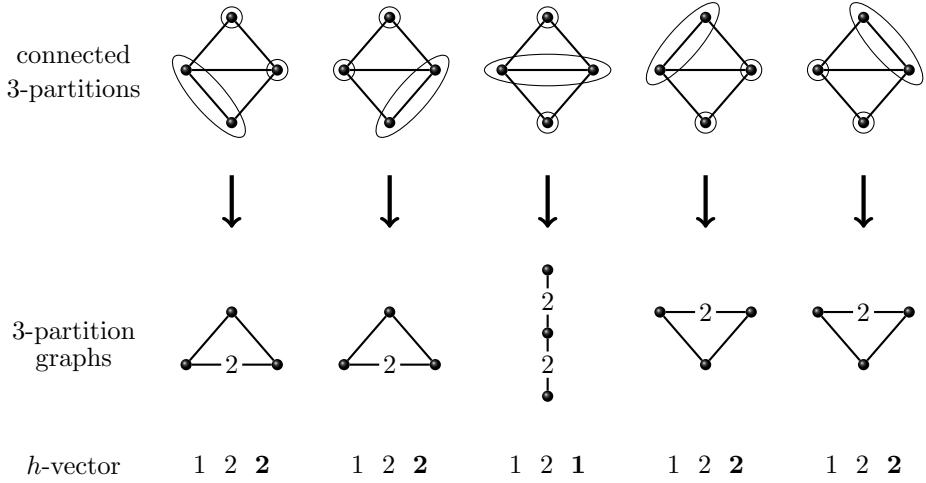
\end{example}
As a corollary to Conjecture~\ref{conj:wilmes}, it is shown in~\cite{Wilmes}
that
\begin{cor}\label{cor:wilmes} If Conjecture~\ref{conj:wilmes} is true, then the following five
  statements are also true.
  \begin{enumerate}
    \item The number of polynomials, $\beta_1$, in a minimal generating set for the
      homogeneous toppling ideal of $G$ is equal to the number of {\em cuts}
      (i.e., the number of connected $2$-partitions) of $G$.
    \item For a tree on $n$ vertices, $\beta_k={{n-1}\choose{k}}$.
    \item If the weight of an edge of $G$ is changed from one nonzero value to
      another, the $\beta_k$ do not change.
    \item If $G'$ is obtained from $G$ by adding an edge to $G$ (between two
      vertices of $G$), then $\beta_k(G)\leq\beta_k(G')$ for all $k$.
    \item\label{cor:wilmes5} For the complete graph on $n$ vertices, $K_n$, we
      have that $\beta_k$ is the number of strictly ascending chains of length
      $k$ of nonempty subsets of $[n-1]:=\{1,\dots,n-1\}$.
  \end{enumerate}
  \begin{remark}
    Corollary~\ref{cor:wilmes}~(\ref{cor:wilmes5}) is
    proved, independently, in~\cite{Madhu}.
  \end{remark}
\end{cor}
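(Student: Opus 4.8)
The plan is to deduce all five statements directly from Conjecture~\ref{conj:wilmes}, via a single structural reduction followed by a short count in each case. The reduction rests on the observation, already contained in Theorem~\ref{thm:acyclic orientations} and the discussion preceding Theorem~\ref{thm:last betti number}, that for an undirected (possibly weighted) sandpile graph $H$ with sink $t$ the number of minimal recurrent configurations of $H$ equals the number of acyclic orientations of the \emph{underlying simple graph} of $H$ having $t$ as unique source. Write $\alpha(H)$ for this number, the sink being understood. I would record two properties of $\alpha$: (a) it is unchanged if the weight of an edge of $H$ is replaced by another positive value, since this alters neither the underlying simple graph nor the sink; and (b) it does not decrease when an edge $e=\{u,v\}$ is added to $H$. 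For (b) I would note that deleting $e$ sends an acyclic orientation of $H+e$ with unique source $t$ to one of $H$ with unique source $t$, and that this map is surjective: given such an orientation of $H$, orient $e$ from whichever of $u,v$ comes first in a topological ordering that begins at $t$; the result is acyclic, still has $t$ as a source, and every other vertex still has an in-edge, so $t$ remains the unique source. I also use the elementary fact that for a connected $(k+1)$-partition $\Pi$ of $G$ the partition graph $G_\Pi$ is a weighted graph whose underlying simple graph has the blocks as vertices and an edge between two blocks precisely when $G$ has one, with sink the block containing $s$; hence $\alpha(G_\Pi)$ depends only on this simple graph.

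Given this, items (1)--(3) are immediate. For (1), take $k=1$: a $2$-partition graph consists of two vertices joined by at least one edge, its reduced Laplacian is $1\times1$, and the all-zero configuration is its unique minimal recurrent configuration, so each summand in the conjecture equals $1$ and $\beta_1=\#\mathcal{P}_2$, the number of cuts. For (2), if $G$ is a tree on $n$ vertices then every block of a connected $(k+1)$-partition induces a subtree, so the partition uses exactly $\sum_i(|V_i|-1)=n-k-1$ edges internally and hence cuts exactly $k$ of the $n-1$ edges; conversely deleting any $k$ edges yields a connected $(k+1)$-partition. The partition graph $G_\Pi$ is then connected on $k+1$ vertices with $k$ edges, so it is a tree, and a tree has trivial sandpile group, hence a unique (minimal) recurrent configuration; therefore $\beta_k=\#\mathcal{P}_{k+1}=\binom{n-1}{k}$. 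For (3), changing a positive edge weight changes neither the adjacency relation on $V$---so $\mathcal{P}_{k+1}$ is unchanged---nor the underlying simple graph of any $G_\Pi$, so by (a) every summand is unchanged.

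For (4), passing from $G$ to $G'=G+e$ can only enlarge $\mathcal{P}_{k+1}$, since adding an edge cannot disconnect an induced subgraph; and for each $\Pi\in\mathcal{P}_{k+1}(G)$ the simple graph underlying $G'_\Pi$ is that of $G_\Pi$ with at most one edge added---literally unchanged when the endpoints of $e$ lie in the same block---so by (b) the corresponding summand does not decrease, while the extra partitions in $\mathcal{P}_{k+1}(G')$ contribute nonnegatively; hence $\beta_k(G)\le\beta_k(G')$. For (5), in $K_n$ every set partition is connected, so $\mathcal{P}_{k+1}$ is the set of partitions of $[n]$ into $k+1$ blocks and each $(K_n)_\Pi$ has underlying simple graph $K_{k+1}$; the acyclic orientations of $K_{k+1}$ are the linear orders of its vertices, and exactly $k!$ of them have the prescribed sink as unique (first) vertex, so by (a) each summand is $k!$ and $\beta_k=k!\,S(n,k+1)$. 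To match this with the stated count I would give the bijection sending an ordered partition $(B_1,\dots,B_k;B_*)$ of $[n]$ with $n\in B_*$ to the chain $A_i:=B_1\cup\cdots\cup B_i$, which lies in $[n-1]$ (as $n\in B_*$) and is a strictly ascending chain of length $k$ of nonempty subsets of $[n-1]$, with inverse $B_1=A_1$, $B_i=A_i\setminus A_{i-1}$, $B_*=([n-1]\setminus A_k)\cup\{n\}$.

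The main obstacle I expect is item (4): one must argue the surjectivity of the edge-deletion map on acyclic orientations while keeping the source fixed, and be careful that reducing to partition graphs really does reduce to adding at most one edge (in particular that $G'_\Pi=G_\Pi$ when the endpoints of $e$ share a block). The remaining deductions are routine once the reduction to ``acyclic orientations with a unique fixed source'' is in place.
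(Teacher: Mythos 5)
The paper itself gives no proof of this corollary (it simply defers to~\cite{Wilmes}), so there is nothing internal to compare against; judged on its own terms, your derivation is essentially correct and complete. Your key reduction---that the number of minimal recurrent configurations of an undirected weighted sandpile graph $H$ with sink $t$ equals the number of acyclic orientations of the underlying simple graph of $H$ with $t$ as unique source---is exactly what the paper's Theorem~\ref{thm:acyclic orientations}, Corollary~\ref{cor:duality}, and the discussion preceding Theorem~\ref{thm:last betti number} provide, and it cleanly handles the fact that the partition graphs $G_\Pi$ are weighted even when $G$ is not. The counts in (1), (2), (3), and (5), including the chain bijection $(B_1,\dots,B_k;B_*)\mapsto(A_i)$ giving $k!\,S(n,k+1)$, all check out.

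The one thing you should repair is the packaging of property (b), which you correctly identify as the delicate point for item (4). The deletion map is \emph{not} well defined from $\{$acyclic orientations of $H+e$ with unique source $t\}$ to $\{$acyclic orientations of $H$ with unique source $t\}$: if a vertex $w\neq t$ has $e$ as its only in-edge, deleting $e$ creates a second source. (Take $H$ with vertices $t,a,b$ and edges $\{t,a\},\{a,b\}$, let $e=\{t,b\}$, and orient $t\to a$, $t\to b$, $b\to a$; deleting $e$ makes $b$ a source.) So you cannot argue ``well-defined and surjective, hence the target is no larger.'' Fortunately the construction you actually give---extend an orientation of $H$ by orienting $e$ according to a topological order rooted at $t$---is a well-defined map in the other direction, and it is injective because restriction to $H$ recovers the original orientation. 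State it that way and the inequality $\alpha(H)\leq\alpha(H+e)$ follows immediately; the rest of item (4) (monotonicity of $\mathcal{P}_{k+1}$ under adding an edge, and the fact that $G'_\Pi$ and $G_\Pi$ have the same underlying simple graph up to one added edge) is fine as written.
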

\section{Gorenstein toppling ideals}\label{section:gor}
This section characterizes toppling ideals that are complete intersection
ideals and gives a method for constructing Gorenstein toppling ideals.  
\subsection{Complete intersections}
If $V\subset\mathbb{P}^n$ is the solution set to a system of homogeneous
polynomials, then $V$ is a {\em complete intersection} if the ideal generated by
all homogeneous polynomials vanishing on $V$ can be generated by a set of
polynomials with cardinality equal to the codimension of $V$ in $\mathbb{P}^n$.
Specializing to the case of sandpiles, we get the following definition.
\begin{definition}
  Let $G=(V,E,s)$ be a sandpile graph with homogeneous toppling ideal $I$.
  Then $G$ is a {\em complete intersection} sandpile graph if $I$ is generated by
  $|V|-1$ homogeneous polynomials.  (We also say that $I$ or the set of zeros of $I$ is a
  complete intersection.)

  Let $\Lap$ be a submodule of $\Z^{n+1}$ of rank $n$ whose lattice ideal
  $I(\Lap)$ is homogeneous.  Then $I(\Lap)$ is a {\em complete intersection} if
  it is generated by $n$ homogeneous polynomials.
\end{definition}
\begin{remark}\label{homogeneous lattice}
  The lattice ideal $I(\Lap)$ is generated by homogeneous polynomials if and
  only if $\deg(w):=\sum_iw_i=0$ for all $w\in\Lap$.
\end{remark}

For the following, recall from \S\ref{sandpiles} that sandpile has an {\em
absolute} sink if its sink has outdegree $0$.
\begin{definition}
For $i=1,2$, let $G_i=(V_i,E_i,s_i)$ be a sandpile graph with 
edge-weight function $\mathrm{wt}_i$ and absolute sink $s_i$.  Suppose that
the two graphs are vertex-disjoint.  Let $G$ be any graph with vertex set
$V = V_1 \sqcup V_2$, and edge-weight function,~$\wt$, satisfying the
following
\begin{enumerate}
  \item $\wt(e)=\wt_1(e)$ if $e\in E_1$,
  \item $\wt(e)=\wt_2(e)$ if $e\in E_2$,
  \item $\wt(u,v)=0$ if $(u,v)\in (\tV_1\times V_2)\cup(V_2\times V_1)$,
  \item $\wt(s_1,v)>0$ for some $v\in V_2$.
\end{enumerate}
We consider $G$ to be a sandpile graph with $s_2$ as its absolute sink.
Let $\Delta:=\Delta_G$ be the Laplacian of $G$, and define
\[
D:=\Delta(s_1)|_{V_1}=\sum_{v\in V_1}\Delta(s_1)_{v}\,v,
\]
a divisor on $G_1$.  Then $G$ is a {\em wiring of $G_1$ into $G_2$} with {\em
wiring divisor $D$} if $|D|\neq\emptyset$, i.e., if the complete linear system
for $D$ as a divisor on $G_1$ is nonempty (cf.~\S\ref{section:resolutions}).
\end{definition}
Thus, to form a wiring of $G_1$ into $G_2$, one connects $s_1$ into $G_2$ with
at least one edge and then adds edges from $s_1$ back into~$G_1$ as
determined by a divisor,~$D$, on~$G_1$ having a nonempty complete linear system.
There always exists some wiring of $G_1$ into $G_2$.  For instance, we could
take $D=k\,s_1$ for any $k>0$ by connecting~$G_1$ to $G_2$ with $k$ edges
from $s_1$ into $G_2$ (and no edges from $s_1$ back into $G_1$).
\begin{figure}[ht] 
  \begin{center}
  \includegraphics{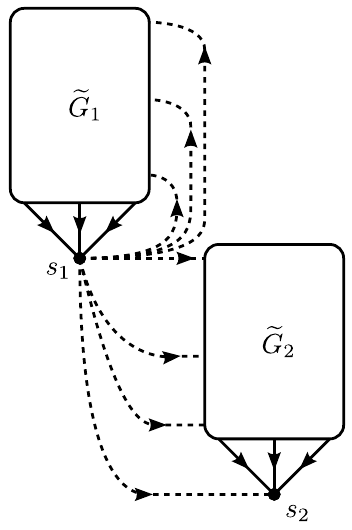}
  \end{center}
\caption{A wiring of $G_1$ into $G_2$.}\label{fig:wiring}
\end{figure}
\begin{notation}
  For any sandpile graph $G=(V,E,s)$,  with Laplacian $\Delta_G$, we let
  $\Delta^{\circ}_G=\Delta_G|_{\tV}$.  Thus, $\Delta^{\circ}_{G}:\Z\tV\to\Z V$,
  and in terms of matrices, $\Delta^{\circ}_{G}$ is formed from~$\Delta_{G}$ by
  removing the column corresponding to the sink---a column of zeros if~$G$ has
  an absolute sink.  We will call $\Delta^{\circ}_G$ the {\em restricted
  Laplacian} of $G$.
\end{notation}
With this notation, if $G$ is a wiring of $G_1$ into $G_2$, then
\[
\Delta^{\circ}_G=
\left(
\begin{array}{ccc}
  \Delta^{\circ}_{G_1}&0&\alpha\\
  0&\Delta^{\circ}_{G_2}&\beta
\end{array}
\right)
\]
where exactly one entry of $\alpha$ is positive (corresponding to $s_1$) and
$\beta\le0$.  The last column corresponds to $s_1$, and the wiring
divisor is $D=\alpha$.

If $G_1$ is a single point with no edges, then we
regard $\Delta^{\circ}_{G_1}$ as the $1\times 0$ empty matrix, and $\alpha$ will
be a single integer, as in the following example.

\begin{example}\label{example:wiring}
  Let $G_1$ be the graph with a single vertex $s_1$ and no edges.  Let $G_2$
  have vertex set $\{v_2,v_3,s_2\}$ and edge set $\{(v_2,s_2),(v_3,s_2)\}$.
  Figure~\ref{fig:dag} illustrates a wiring, $G$, of $G_1$ into $G_2$.
\begin{figure}[ht] 
\begin{tikzpicture}[scale=1.0]
\SetVertexMath
\GraphInit[vstyle=Art]
\SetUpVertex[MinSize=3pt]
\SetVertexLabel
\tikzset{VertexStyle/.style = {%
shape = circle,
shading = ball,
ball color = black,
inner sep = 1.5pt
}}
\SetUpEdge[color=black]
\Vertex[LabelOut,Lpos=90, Ldist=.1cm,x=0,y=1]{s_1}
\Vertex[LabelOut,Lpos=180, Ldist=.1cm,x=-0.866,y=0]{v_2}
\Vertex[LabelOut,Lpos=0, Ldist=.1cm,x=0.866,y=0]{v_3}
\Vertex[LabelOut,Lpos=270, Ldist=.1cm,x=0,y=-1]{s_2}
\Edge[style={-triangle 45}](s_1)(v_2)
\Edge[style={-triangle 45}](s_1)(v_3)
\Edge[style={-triangle 45}](v_2)(s_2)
\Edge[style={-triangle 45}](v_3)(s_2)
\end{tikzpicture}
\caption{The wiring $G$ for Example~\ref{example:wiring}.}
\label{fig:dag}
\end{figure}
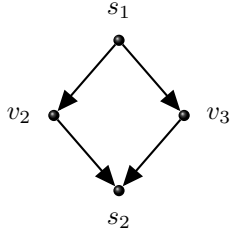
The wiring divisor is $D=2s_1$.  The restricted
Laplacian of $G$ is, with respect to the indicated vertex ordering,
\[
\Delta^{\circ}_G=
\bordermatrix{
&\hfill v_2&\hfill v_3&\hfill s_1\cr
s_1&\hfill 0&\hfill 0&\hfill 2\cr
v_2&\hfill 1&\hfill 0&-1\cr
v_3&\hfill 0&\hfill 1&-1\cr
s_2&-1&-1&\hfill 0
}.
\]
\end{example}

\begin{definition}
A directed multigraph $G$ is \emph{completely wired} if it is a single
vertex with no edges or
if it is the wiring of one completely wired graph into another.
\end{definition}

\begin{example} Every directed acyclic graph is completely wired.
\end{example}

\begin{definition}
An integral matrix is \emph{mixed} if each column contains both positive and
negative entries.
An integral matrix is \emph{mixed dominating} if it does not contain a mixed square
submatrix.
\end{definition}

Empty $d\times 0$ matrices are mixed dominating by convention.
The following two theorems are established in \cite{MT} and \cite{FMS}.
\begin{thm}\label{theorem:ci-mixeddom}
Let $\mathcal{L}$ be a submodule of $\Z^{n+1}$ of rank $n$ such that the
associated lattice ideal $I(\mathcal{L})$ is homogeneous. Then $I(\mathcal{L})$
is a complete intersection if and only if there exists a basis $u_1, \ldots,
u_n$ for $\mathcal{L}$ such that the matrix whose columns are the~$u_i$ is mixed
dominating.
\end{thm}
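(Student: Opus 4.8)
The plan is to reduce the theorem to the combinatorial characterization of mixed dominating matrices cited here as \cite{MT} and \cite{FMS}, after some bookkeeping with lattice ideals. For any integer matrix $M$ with columns $u_1,\dots,u_m\in\Z^{n+1}$, write $f_i:=x^{u_i^+}-x^{u_i^-}$, $J(M):=(f_1,\dots,f_m)$, and $\mathcal{L}_M:=\sum_i\Z u_i$; then $J(M)\subseteq I(\mathcal{L}_M)$, and by Theorem~\ref{thm:basics}~(\ref{basics:two}) the ideal $I(\mathcal{L}_M)$ is the saturation of $J(M)$ with respect to $\prod_i x_i$. Also, by Theorem~\ref{thm:basics}~(\ref{thm:basics3}), $\dim S/I(\mathcal{L})=(n+1)-\operatorname{rank}\mathcal{L}=1$, so $I(\mathcal{L})$ has codimension $n$ and hence is a complete intersection exactly when it is minimally generated by $n$ elements. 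The external input I will invoke is the following form of the result of \cite{MT} and \cite{FMS}: \emph{for an integer matrix $M$ of full column rank, $J(M)=I(\mathcal{L}_M)$---equivalently, $J(M)$ is already saturated with respect to $\prod_i x_i$, equivalently $J(M)$ is a complete intersection of the expected codimension---if and only if $M$ is mixed dominating.}

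For the ``if'' direction, suppose $u_1,\dots,u_n$ is a basis of $\mathcal{L}$ whose matrix $M=[u_1\mid\cdots\mid u_n]$ is mixed dominating; then $\mathcal{L}_M=\mathcal{L}$ and $M$ has full column rank $n$. By the cited result, $J(M)=I(\mathcal{L}_M)=I(\mathcal{L})$, so $I(\mathcal{L})$ is generated by the $n$ binomials $f_1,\dots,f_n$, i.e.\ it is a complete intersection.

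For the ``only if'' direction, suppose $I:=I(\mathcal{L})$ is a complete intersection. Since $\mathcal{L}$ lies in the degree-zero sublattice (Remark~\ref{homogeneous lattice}), $I$ is homogeneous and is spanned as a $\C$-vector space by the homogeneous binomials $x^u-x^v$ with $u-v\in\mathcal{L}$; as the images of these binomials span $I/\mathfrak{m}I$ with $\mathfrak{m}=(x_1,\dots,x_{n+1})$, a graded Nakayama argument lets me pick a minimal generating set of $I$ consisting of homogeneous binomials $g_i=x^{a_i}-x^{b_i}$, $1\le i\le n$ (there are $n$ of them because $I$ is a complete intersection of codimension $n$). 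Set $w_i:=a_i-b_i\in\mathcal{L}$ and $M:=[w_1\mid\cdots\mid w_n]$. Writing $m_i:=a_i\wedge b_i$ for the componentwise minimum, one has $g_i=x^{m_i}(x^{w_i^+}-x^{w_i^-})=x^{m_i}f_i$, so $I=(g_1,\dots,g_n)\subseteq J(M)\subseteq I(\mathcal{L}_M)\subseteq I(\mathcal{L})=I$, and all four ideals coincide. In particular $I(\mathcal{L}_M)=I(\mathcal{L})$, which forces $\mathcal{L}_M=\mathcal{L}$ (a lattice is read off from its lattice ideal: $x^{w^+}-x^{w^-}\in I(\mathcal{L}')$ iff $w\in\mathcal{L}'$); hence the $w_i$ generate the rank-$n$ lattice $\mathcal{L}$, so they form a basis and $M$ has full column rank. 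Now $J(M)=I(\mathcal{L}_M)$ with $M$ of full column rank gives, by the cited result, that $M$ is mixed dominating.

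The real content, and the main obstacle to a self-contained argument, is the input of \cite{MT} and \cite{FMS} itself. Its proof runs by induction on the number of columns, using the structural fact that a nonzero mixed dominating matrix with no zero columns has either a non-mixed column---which, after negating it, replaces a generator by one of the form $x^{\alpha}-1$ and permits localization inverting the variables occurring in $\alpha$---or a row with a single nonzero entry---which permits eliminating a variable. In each case one must check that the passage to a smaller mixed dominating matrix preserves both the lattice spanned by the columns and the saturation property, and for the converse one tracks the primary decomposition of $J(M)$ to locate the extraneous component supported on a coordinate hyperplane that appears precisely when some square submatrix is mixed. The homogeneity hypothesis enters throughout to guarantee that every binomial in sight is an honest difference of two monomials (no unit term), so that the saturation statements of Theorem~\ref{thm:basics} apply verbatim.
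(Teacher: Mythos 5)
The paper gives no proof of this statement---it is quoted directly as a result established in \cite{MT} and \cite{FMS}---so your proposal, which likewise delegates the essential content to those references, is taking the same route. Your bookkeeping reducing the paper's formulation to the saturation form of the cited theorem (extracting $n$ homogeneous binomial minimal generators by graded Nakayama, recovering $\mathcal{L}$ from $I(\mathcal{L})$ so that the $w_i$ form a basis and $M$ has full column rank) is correct, but the theorem remains, exactly as in the paper, an import from the literature rather than something proved here.
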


\begin{thm}\label{theorem:mixeddom}
If $M$ is a mixed dominating matrix, then by reordering its columns and rows we
may obtain
\[
M' = \left( \begin{tabular}{cc|c} $M_1$ & $0$ & $\alpha$ \\ $0$ & $M_2$ &
$\beta$ \end{tabular} \right),
\]
where the $M_i$ are mixed dominating, $\alpha\ge 0$, and $\beta\le 0$.
\end{thm}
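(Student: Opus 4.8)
The plan is to reduce the statement to a single claim about the bipartite \emph{support graph} $B(M)$ of $M$---the graph whose vertices are the rows and the columns of $M$, with an edge joining a row $i$ to a column $c$ exactly when $M$ has a nonzero entry in row $i$ and column $c$---and then to prove that claim. If $M$ has no columns the statement is trivial, so assume it has at least one. For a column $c$, write $c(i)$ for its entry in row $i$, and let $\supp^{+}(c)$ (resp.\ $\supp^{-}(c)$) be the set of rows $i$ with $c(i)>0$ (resp.\ $c(i)<0$). Call $c$ a \emph{separating column} if no path in $B(M)$ from a vertex of $\supp^{+}(c)$ to a vertex of $\supp^{-}(c)$ avoids the vertex $c$; a column all of whose nonzero entries have one sign (in particular a zero column) is vacuously separating. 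I claim it suffices to exhibit one separating column $c$: set $R_1$ to be the union of $\supp^{+}(c)$ with all rows lying in connected components of $B(M)-c$ that meet $\supp^{+}(c)$, and set $R_2$ to be the remaining rows. Because $c$ is separating, no component of $B(M)-c$ meets both $\supp^{+}(c)$ and $\supp^{-}(c)$; hence each component lies entirely in $R_1$ or entirely in $R_2$, $\supp^{-}(c)\subseteq R_2$, the column $c$ is $\geq 0$ on $R_1$ and $\leq 0$ on $R_2$, and every column other than $c$ has its support inside a single component, hence inside $R_1$ or inside $R_2$. Let $M_1$ be the submatrix of $M$ on the rows $R_1$ and on the columns other than $c$ supported in $R_1$, and let $M_2$ be the analogous submatrix on $R_2$. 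Listing the rows of $R_1$ before those of $R_2$, and the columns of $M_1$, then those of $M_2$, then $c$, puts $M$ into the block form displayed in the statement, with $\alpha=c|_{R_1}$ and $\beta=c|_{R_2}$. Finally $M_1$ and $M_2$ are submatrices of $M$, so a mixed square submatrix of either would be a mixed square submatrix of $M$; hence $M_1$ and $M_2$ are mixed dominating.

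It remains to produce a separating column, and here I would argue by contradiction. If some column has all of its nonzero entries of one sign we are done, so assume every column of $M$ is \emph{mixed} (has both a positive and a negative entry) and that no column separates. For each column $c$ pick a shortest path $\pi_c$ in $B(M)-c$ from a row of $\supp^{+}(c)$ to a row of $\supp^{-}(c)$ (such a path exists by the assumption that $c$ does not separate), and choose a column $c$ for which $\pi_c$ is shortest overall. Write $\pi_c=(i_0,c_1,i_1,\dots,c_m,i_m)$, so that $i_0,\dots,i_m$ are distinct rows, $c_1,\dots,c_m$ are distinct columns different from $c$, $i_0\in\supp^{+}(c)$, $i_m\in\supp^{-}(c)$, and $m\geq 1$ (a row cannot lie in both $\supp^{+}(c)$ and $\supp^{-}(c)$). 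Consider the $(m+1)\times(m+1)$ submatrix $N$ of $M$ on the rows $i_0,\dots,i_m$ and the columns $c,c_1,\dots,c_m$. Restricted to those rows, $c$ is mixed because $c(i_0)>0>c(i_m)$; so if every $c_\ell$ were likewise mixed on $\{i_0,\dots,i_m\}$, then $N$ would be a mixed square submatrix of $M$, contradicting mixed dominance and completing the proof.

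The step I expect to be the main obstacle is exactly this: showing that the extremal choice forces each $c_\ell$ to change sign among the rows $i_0,\dots,i_m$, i.e.\ that $c_\ell(i_{\ell-1})$ and $c_\ell(i_\ell)$ have opposite signs. The intended route is that if, say, $c_\ell$ is nonnegative on both $i_{\ell-1}$ and $i_\ell$ while (being mixed) having a negative entry in some other row, then combining this with the hypothesis that $c_\ell$ is not separating should yield a path witnessing non-separation for $c_\ell$, or a rerouting of $\pi_c$, that is strictly shorter than $\pi_c$---contradicting the minimal choice. Making the bookkeeping of signs along the spliced path precise, and checking that the new path genuinely avoids the offending column and is strictly shorter, is the delicate point. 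As an aid one has the elementary observation (from the absence of mixed $2\times2$ submatrices) that for a mixed column $c$ no other column can simultaneously have a positive entry in $\supp^{+}(c)$ and a negative entry in $\supp^{-}(c)$, or vice versa, which already restricts the sign patterns available to the $c_\ell$. The whole argument may also be organized as an induction on the number of columns: when $B(M)$ is disconnected, $M$ is block diagonal after reordering and the inductive hypothesis applied to one block (carrying its special column and the associated zero rows into the other block) reassembles the claim, leaving precisely the connected, all-mixed case treated above.
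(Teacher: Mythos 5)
A preliminary remark: the paper gives no proof of this theorem---it is quoted from \cite{MT} and \cite{FMS}---so your proposal is being measured against the literature rather than against an argument in the text. Your first paragraph, reducing the theorem to the existence of a ``separating'' column of the bipartite support graph, is correct and cleanly executed: since $c$ is separating, each component of $B(M)-c$ meets at most one of $\supp^{+}(c)$, $\supp^{-}(c)$, the rows split into $R_1$ and $R_2$ as you say, every other column is supported inside a single component, and $M_1$, $M_2$ inherit mixed dominance as submatrices of $M$. The degenerate cases (a one-signed or zero column, an empty block) are consistent with the paper's conventions.

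The gap is the one you flag yourself, and it is fatal to the route you propose rather than merely delicate. You want the extremal path $\pi_c=(i_0,c_1,i_1,\dots,c_m,i_m)$ to produce a mixed $(m+1)\times(m+1)$ submatrix by showing that each $c_\ell$ changes sign between $i_{\ell-1}$ and $i_\ell$. But your own ``elementary observation'' already rules this out in the crucial base case $m=1$: since $i_0\in\supp^{+}(c)$, $i_1\in\supp^{-}(c)$, and $c_1$ is nonzero at both rows, the absence of mixed $2\times 2$ submatrices forces $c_1(i_0)$ and $c_1(i_1)$ to have the \emph{same} sign. So the sign alternation you need is provably false exactly where the descent must bottom out, and the fallback of ``a strictly shorter rerouting'' is unavailable there, because a witness path of length two is already as short as a witness path can be. Concretely, with $c=(+,-,0)$ and $c_1=(+,+,-)$ on rows $i_0,i_1,j$, mixed dominance is not yet violated, and minimality of the witness length for $c$ gives no information about the witness path for $c_1$, so the induction on path length has nowhere to go. Closing the argument requires a genuinely different idea---for instance the induction on the number of columns carried out in \cite{FMS} and \cite{MT}, where one deletes a column, decomposes the remaining matrix, and analyzes how the deleted column sits relative to the resulting blocks---and that existence statement is the entire content of the theorem, not a bookkeeping detail. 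Your closing sentence gestures at an induction on columns, but only to reduce to the connected case, which does not supply the missing step.
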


It is allowable for the matrix $M_1$ in Theorem~\ref{theorem:mixeddom} to be the
empty $d \times 0$ matrix, in which case we would have
\[
M' = \left( \begin{array}{c|c} 0 & \alpha \\ 
  M_2 & \beta \end{array} \right),
\]
where the upper-left block is a zero matrix with $d$ rows.  A similar statement
holds if $M_2$ is the $d\times0$ matrix, in which case we would have a
lower-left zero matrix block.

We now characterize complete intersection sandpile graphs.

\begin{thm}\label{ci implies wired}
Let $\mathcal{L}$ be a submodule of $\Z^{n+1}$ of rank $n$ such that the
associated lattice ideal $I(\mathcal{L})$ is a complete intersection. Then there
exists a completely wired graph $G$ whose Laplacian lattice is
$\mathcal{L}$, and hence, $I(\mathcal{L}) = I(G)^h$, where $I(G)^h$ is the
homogeneous toppling ideal of~$G$.
\end{thm}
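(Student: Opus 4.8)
The plan is to argue by induction on $n$, translating the hypothesis into matrices via Theorem~\ref{theorem:ci-mixeddom} and peeling off one wiring per step via Theorem~\ref{theorem:mixeddom}. The base case $n=0$ is immediate: then $\mathcal{L}=0\subseteq\Z$, the single vertex with no edges is completely wired with Laplacian lattice $0$, and its homogeneous toppling ideal is $(0)=I(\mathcal{L})$.

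For the inductive step, Theorem~\ref{theorem:ci-mixeddom} provides a basis of $\mathcal{L}$ whose $(n+1)\times n$ matrix $M$ is mixed dominating, and since $I(\mathcal{L})$ is homogeneous every column of $M$ has entry-sum $0$ (Remark~\ref{homogeneous lattice}). By Theorem~\ref{theorem:mixeddom}, after a permutation of the rows and columns of $M$ — which merely relabels the vertices and the nonsink vertices and is therefore harmless — we may assume
\[
M=\left(\begin{array}{cc|c}M_1&0&\alpha\\0&M_2&\beta\end{array}\right),\qquad M_1,M_2\text{ mixed dominating},\ \alpha\ge0,\ \beta\le0,
\]
with $M_1$ of size $p\times q$, $M_2$ of size $r\times t$, so $p+r=n+1$ and $q+t=n-1$. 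Since the columns of $M$ are independent, $M_i$ has full column rank; a short argument (a nonempty square mixed dominating matrix with all column sums $0$ has a zero column, hence is singular) then forces $q=p-1$ and $t=r-1$, and as the last column is nonzero, $\deg\alpha=-\deg\beta\ge1$. The blocks $M_1,M_2$ are mixed dominating matrices whose columns form bases of $\mathcal{L}_1:=\im M_1\subseteq\Z^p$ and $\mathcal{L}_2:=\im M_2\subseteq\Z^r$ with zero column sums, so by Theorem~\ref{theorem:ci-mixeddom} the ideals $I(\mathcal{L}_1)$, $I(\mathcal{L}_2)$ are homogeneous complete intersections; as $p,r\le n$, the induction hypothesis yields completely wired graphs $G_1,G_2$ with absolute sinks $s_1,s_2$ and $\im\Delta^{\circ}_{G_1}=\mathcal{L}_1$, $\im\Delta^{\circ}_{G_2}=\mathcal{L}_2$.

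To finish, I wire $G_1$ into $G_2$. The one subtlety is that every wiring divisor $D=\Delta_G(s_1)|_{V_1}$ has $D_v\le0$ at each nonsink vertex $v$ of $G_1$, whereas $\alpha$ is only nonnegative; so I first replace the last column of $M$ by an equivalent one of the right shape. Viewing $-\alpha$ on the nonsink vertices of $G_1$ as a configuration (sink $s_1$), Theorem~\ref{thm:main isomorphism} shows it is congruent modulo $\im\tD_{G_1}$ to a nonnegative recurrent configuration $c$; choosing a script $\sigma$ realizing this congruence, the divisor $D:=\alpha+\Delta^{\circ}_{G_1}\sigma$ on $G_1$ has $D_v=-c_v\le0$ for $v\ne s_1$, has $\deg D=\deg\alpha\ge1$, is linearly equivalent on $G_1$ to the effective divisor $\alpha$, and therefore has $|D|\ne\emptyset$. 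Now let $G$ be the graph on $V_1\sqcup V_2$ carrying all edges of $G_1$ and of $G_2$, plus $-D_v$ edges from $s_1$ to each nonsink vertex $v$ of $G_1$ and $-\beta_v$ edges from $s_1$ to each $v\in V_2$ (these total $\deg\alpha\ge1$ edges), with sink $s_2$. This is exactly a wiring of $G_1$ into $G_2$ with wiring divisor $D$, so $G$ is completely wired and
\[
\Delta^{\circ}_G=\left(\begin{array}{cc|c}\Delta^{\circ}_{G_1}&0&D\\0&\Delta^{\circ}_{G_2}&\beta\end{array}\right).
\]
Its columns span $\mathcal{L}_1\oplus\mathcal{L}_2$ together with $\binom{D}{\beta}=\binom{\alpha}{\beta}+\binom{\Delta^{\circ}_{G_1}\sigma}{0}$; since $\im\Delta^{\circ}_{G_1}=\mathcal{L}_1$ this span is the column span of $M$, namely $\mathcal{L}$. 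Because $s_2$ is an absolute sink, $\Delta_G(s_2)=0$, so the Laplacian lattice of $G$ is $\im\Delta^{\circ}_G=\mathcal{L}$, and Proposition~\ref{prop:homog} gives $I(\mathcal{L})=I_h(G)=I(G)^h$, completing the induction.

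I expect the main obstacle to be exactly this last maneuver: turning the nonnegative block $\alpha$ from Theorem~\ref{theorem:mixeddom} into a genuine wiring divisor, and checking that the required chip-firing facts — existence of a nonnegative representative of a divisor class, and nonemptiness of the relevant linear system — really do hold for the directed graphs $G_1$ that occur, not only in the undirected case.
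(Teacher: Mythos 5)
Your proposal is correct and follows essentially the same route as the paper: induction on $n$, Theorems~\ref{theorem:ci-mixeddom} and~\ref{theorem:mixeddom} to put a basis of $\mathcal{L}$ in block form, recursive construction of $G_1$ and $G_2$, and then adjustment of the column $\alpha$ by an element of $\im\Delta^{\circ}_{G_1}$ to obtain a legal wiring divisor linearly equivalent to the effective divisor $\alpha$. The only (harmless) variation is in that last adjustment --- you take the recurrent representative of $-\alpha|_{\tV_1}$ via Theorem~\ref{thm:main isomorphism}, whereas the paper subtracts a large multiple of an explicit full-support principal divisor $c-\deg(c)\,s_1$ with $c\in\im(\tD_{G_1})$ --- and your added justification that the blocks $M_i$ have exactly one more row than column is a correct filling-in of a step the paper leaves implicit.
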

\begin{proof}
We proceed by induction, the case $n=0$ being trivial.  Let $u_1, \ldots,
u_n$ be a basis for $\mathcal{L}$, and let $M$ be the matrix whose columns are
the $u_i$. By Remark~\ref{homogeneous lattice}, we have $\deg(u_i)=0$ for all
$i$. (Here, $\deg(u_i)$ denotes the degree of $u_i$ as a divisor, i.e., the sum
of the components of $u_i$.)  By Theorems
\ref{theorem:ci-mixeddom} and \ref{theorem:mixeddom}, we may assume that
\[
M = \left( \begin{tabular}{cc|c} $M_1$ & $0$ & $\alpha$ \\ $0$ & $M_2$ &
$\beta$ \end{tabular} \right)
\]
where the $M_i$ are mixed dominating, $\alpha\ge0$, and $\beta\le0$.  Each
column of $M_1$ and~$M_2$ has entries that sum to zero.  By our rank assumption,
it follows that $M_1$ and $M_2$ are matrices of full rank, each with one more
row than column.  By induction, there exist completely wired graphs $G_1$ and
$G_2$ such that $\im(\Delta_{G_i}) = \im(M_i)$ for $i=1,2$. Let~$s_1$ be the sink
of $G_1$.  Let $c$ be any nonnegative configuration on~$G_1$ with full support
and contained in $\im(\tD_{G_1})$, the reduced Laplacian lattice for $G_1$.  For instance,
we could take $c=\delta-\delta^{\circ}$ where
$\delta=\sum_{v\in\tV_1}(\outdeg(v)+1)\,v$.  Define the divisor 
$D=c-\deg(c)\,s_1\in\im(\Delta_{G_1})=\im(\Delta^{\circ}_{G_1})$.  Take
$k\in\N$ such that $k\cdot c +\deg(\alpha)\,s_1
\ge \alpha$. Now
\[
M'=
\left(
\begin{tabular}{cc|c} 
  $\Delta^{\circ}_{G_1}$ & $0$ & $\alpha - kD$\\
  $0$ & $\Delta^{\circ}_{G_2}$ & $\beta$
\end{tabular} \right)
\]
has the same column span as $M$, and $M'=\Delta^{\circ}_{G}$ where $G$ is
the wiring of $G_1$ into~$G_2$ with wiring divisor $\alpha - kD$.
Then $G$ is completely wired and, up to an ordering of its vertices, its
full Laplacian lattice is $\Lap$.
\end{proof}
\begin{example}
  The graph of Example~\ref{example:bad sink} is a complete intersection
  sandpile graph.  It is not completely wired, but its Laplacian lattice is the
  same as that for the completely wired graph consisting of a single directed edge
  connecting $v_1$ to $v_2$.
\end{example}
\begin{thm}\label{wired implies ci}
If the graph $G$ is completely wired, then $I(G)^h$ is a complete
intersection.
\end{thm}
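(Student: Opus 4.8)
The plan is to reduce everything to Theorem~\ref{theorem:ci-mixeddom}. A completely wired graph $G$ has an absolute sink, so $\Delta_G$ has a zero column, $\im(\Delta_G)=\im(\Delta^{\circ}_G)=:\mathcal{L}$ is a homogeneous rank-$n$ submodule of $\Z^{n+1}$ with $n=|V|-1$, and $I(G)^h=I(\mathcal{L})$ (the hypothesis of Proposition~\ref{prop:homog} holds trivially). Hence it suffices to produce a basis of $\mathcal{L}$ whose matrix is mixed dominating. I would argue by induction on $|V|$; when $|V|=1$ the lattice is $\{0\}$ and its basis matrix is the empty $1\times 0$ matrix, which is mixed dominating by convention.

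For the inductive step, write $G$ as a wiring of completely wired graphs $G_1$ into $G_2$ (so $|V|=|V_1|+|V_2|$ with both summands $\ge1$), so that, after ordering the vertices,
\[
\Delta^{\circ}_G=\begin{pmatrix}\Delta^{\circ}_{G_1}&0&\alpha\\0&\Delta^{\circ}_{G_2}&\beta\end{pmatrix},
\]
where $\alpha$ is the wiring divisor $D$ on $G_1$ (so $|D|\neq\emptyset$) and $\beta\le0$. By the induction hypothesis applied to $G_1$ and $G_2$ (both completely wired, both with absolute sinks, so $I(G_i)^h=I(\mathcal{L}_i)$ with $\mathcal{L}_i:=\im(\Delta_{G_i})$) and by Theorem~\ref{theorem:ci-mixeddom}, there are mixed dominating matrices $M_1,M_2$ whose columns are bases of $\mathcal{L}_1$ and $\mathcal{L}_2$. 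Since $|D|\neq\emptyset$, choose an effective divisor $D'\sim D$ on $G_1$; then $D'-\alpha\in\mathcal{L}_1$ is an integer combination of the columns of $M_1$, so subtracting the corresponding multiples of the first $|V_1|-1$ columns from the last column carries
\[
\begin{pmatrix}M_1&0&\alpha\\0&M_2&\beta\end{pmatrix}\quad\text{into}\quad \widetilde M=\begin{pmatrix}M_1&0&D'\\0&M_2&\beta\end{pmatrix},\qquad D'\ge0,\ \beta\le0,
\]
without changing the column span. That span is $\mathcal{L}$, and $\widetilde M$ has $(|V_1|-1)+(|V_2|-1)+1=|V|-1=\mathrm{rank}\,\mathcal{L}$ columns, so those columns form a basis of $\mathcal{L}$.

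The heart of the argument is to check that $\widetilde M$ is mixed dominating — in effect, a converse to Theorem~\ref{theorem:mixeddom}. I would take a square submatrix $N$ of $\widetilde M$ on rows $R=R_1\sqcup R_2$ (split according to $V_1,V_2$) and columns $C=C_1\sqcup C_2\sqcup C_3$ (coming from $M_1$, from $M_2$, and the last column), and suppose toward a contradiction that $N$ is mixed. The $C_1$-columns of $N$ vanish outside $R_1$, the $C_2$-columns vanish outside $R_2$, and the last column is $\ge0$ on $R_1$ and $\le0$ on $R_2$ — this is precisely where effectiveness of $D'$ enters, and it is exactly the point that fails if one keeps $\alpha$. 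If $R_1=\emptyset$, the last column of $N$ is $\le0$, hence not sign-mixed, a contradiction (symmetrically if $R_2=\emptyset$); so $R_1,R_2\neq\emptyset$. Then $|C_1|+|C_2|+|C_3|=|R_1|+|R_2|$ forces $|C_1|\ge|R_1|$ or $|C_2|\ge|R_2|$. If $|C_2|\ge|R_2|$, choose $|R_2|$ of the $C_2$-columns: they restrict on $R_2$ to a square submatrix of $M_2$, which is not mixed since $M_2$ is mixed dominating, so one of those columns is sign-definite on $R_2$ and therefore sign-definite as a column of $N$, contradicting that $N$ is mixed; the case $|C_1|\ge|R_1|$ is symmetric, using $M_1$. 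Thus $\widetilde M$ has no mixed square submatrix, and Theorem~\ref{theorem:ci-mixeddom} gives that $I(G)^h=I(\mathcal{L})$ is a complete intersection.

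The one genuinely delicate step is the mixed-dominance verification for $\widetilde M$: beyond getting the counting case-analysis right, one must not shortcut the replacement of the wiring column $\alpha$ by an effective representative $D'$, since with $\alpha$ (which may carry both a positive and a negative entry) the block matrix can fail to be mixed dominating. Everything else — the identifications $I(G_i)^h=I(\mathcal{L}_i)$, the rank/span bookkeeping, and the block shape of $\Delta^{\circ}_G$ for a wiring — is routine.
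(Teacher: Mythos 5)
Your proposal is correct and takes essentially the same route as the paper: induct on $|V|$, write $\Delta^{\circ}_G$ in the block form of a wiring, obtain mixed dominating bases $M_1,M_2$ for the two sublattices by induction together with Theorem~\ref{theorem:ci-mixeddom}, and replace the wiring column by an effective representative $E\in|D|$ so that the assembled block matrix is mixed dominating. The only difference is that you spell out the verification that the resulting matrix has no mixed square submatrix (which the paper asserts without detail), and that check is sound apart from a trivially patched edge case (when the chosen square submatrix omits the last column and meets no $V_1$-rows, one observes directly that any $C_1$-column restricts to zero or that the submatrix sits inside $M_2$).
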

\begin{proof}
If $G$ has only one vertex, then $I(G) = \{0\}$ is a complete
intersection, so we will again proceed by induction, now on $|V(G)|$.
Assume $|V(G)|>1$ and that~$G$ is the wiring of some graph
$G_1$ with sink $s$ into another graph $G_2$ with wiring divisor~$D$.
Let $\beta = \Delta_{G}(s)|_{V_2}$. Then 
\[
\Delta^{\circ}_{G}= 
\left(
\begin{tabular}{cc|c} 
  $\Delta^{\circ}_{G_1}$ & $0$ & $D$\\
  $0$ & $\Delta^{\circ}_{G_2}$ & $\beta$
\end{tabular} \right).
\]
By Theorem \ref{theorem:ci-mixeddom} and induction, there exist
$M_1$ and $M_2$ with $\im(M_i) = \im(\Delta_{G_i})$ for $i=1,2$, and $E \in
|D|$, such that
\[
M = \left( \begin{tabular}{cc|c} $M_1$ & $0$ & $E$ \\ $0$ & $M_2$ &
$\beta$ \end{tabular} \right)
\]
has the same column span as $\Delta^{\circ}_{G}$ and is mixed dominating. So
$I(G)^h$ is a complete intersection by Theorem \ref{theorem:ci-mixeddom}.
\end{proof}

\subsection{Gorenstein sandpile graphs}
Having characterized complete intersection sandpile graphs, we proceed to give
a method for constructing sandpile graphs with Gorenstein toppling ideals.
Our basic reference for Gorenstein ideals is~\cite{Geramita}.

\begin{notation}
  Let $S=\C[x_1,\dots,x_{n+1}]$, and let $I$ be a homogeneous ideal in $S$.  Let $S_d$
be the $\C$-vector space generated by all homogeneous polynomials of degree~$d$,
and let $I_d:=I\cap S_d$. Define $A=S/I$, and let $A_d:=(S/I)_d:=S_d/I_d$.  
Let 
\[
\mathfrak{m}=(x_1,\dots,x_{n+1})
\]
denote the unique maximal homogeneous ideal in either $S$ or in $A$.
\end{notation}

\begin{definition}
  The {\em socle} of $A$ is
  \[
  \mathrm{Soc}(A):=(0:\mathfrak{m}):=\{f\in A:f\,\mathfrak{m}=0\}.
  \]
\end{definition}

\begin{definition}
  The ring $A$ is {Artinian} if $\dim_{\C}A<\infty$.  In that case, we write
  \[
  A = \C\oplus A_1\oplus\dots\oplus A_{\ell},
  \]
  with $A_{\ell}\neq0$.  The number $\ell$ is the {\em socle degree} of
  $A$.  It is the least number $\ell$ such that $\mathfrak{m}^{\ell+1}\subseteq
  I$.
\end{definition}

\begin{definition}
  The ring $A$ is {\em Gorenstein} if it is Artinian and
  $\dim_{\C}\mathrm{Soc}(A)=1$ (so $\mathrm{Soc}(A)=A_{\ell}$ and
  $\dim_{\C}A_{\ell}=1$).
\end{definition}

\begin{prop} \label{gor-pairing} Suppose $A$ is Artinian with socle degree
$\ell$.  Then $A$ is Gorenstein if and only if $\dim_{\C}A_{\ell}=1$ and the
pairing given by multiplication
\[
A_d\times A_{\ell-d}\to A_{\ell}\approx \C
\]
is a perfect pairing.
\end{prop}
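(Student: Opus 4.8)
The plan is to exploit the graded structure of the socle. Because $\mathfrak m$ is a homogeneous ideal generated by $A_1$ (recall that $S$ is generated in degree one, so $A_d = A_1\cdot A_{d-1}$ for $d\ge 1$, hence $\mathfrak m f = 0$ iff $A_1 f = 0$), the socle is a graded submodule: $\mathrm{Soc}(A)=\bigoplus_{d}\mathrm{Soc}(A)_d$ with $\mathrm{Soc}(A)_d=\{f\in A_d: A_1 f=0\}$. First I would record two easy observations. Since $A_{\ell+1}=0$ we have $A_1 A_\ell=0$, so $A_\ell\subseteq\mathrm{Soc}(A)$ and in fact $\mathrm{Soc}(A)_\ell=A_\ell$. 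Second, the bilinear pairing $A_d\times A_{\ell-d}\to A_\ell$ is perfect precisely when for every nonzero $f\in A_d$ there is a $g\in A_{\ell-d}$ with $fg\neq0$, and symmetrically with the roles of $d$ and $\ell-d$ exchanged; non-degeneracy on both sides together with finite-dimensionality forces the induced maps $A_d\to A_{\ell-d}^{*}$ and $A_{\ell-d}\to A_d^{*}$ to be isomorphisms.

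For the forward direction, assume $A$ is Gorenstein, i.e. $\dim_\C\mathrm{Soc}(A)=1$. Since $A_\ell\neq0$ and $A_\ell\subseteq\mathrm{Soc}(A)$, this forces $\mathrm{Soc}(A)=A_\ell$, $\dim_\C A_\ell=1$, and $\mathrm{Soc}(A)_d=0$ for $d\neq\ell$. To see the pairing is perfect, suppose for contradiction that some nonzero $f\in A_d$ satisfies $fA_{\ell-d}=0$. Let $e$ be the largest integer with $A_e f\neq0$ in $A_{d+e}$; this exists, with $e\ge0$ (as $1\cdot f=f\neq 0$) and $e<\ell-d$ by assumption, so $d+e<\ell$. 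Pick $h\in A_e$ with $hf\neq0$; then $A_1(hf)\subseteq A_{e+1}f=0$ by maximality of $e$, so $0\neq hf\in\mathrm{Soc}(A)_{d+e}$, contradicting $d+e<\ell$. Running the same argument with $d$ replaced by $\ell-d$ gives non-degeneracy on the other side, so the pairing is perfect.

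For the converse, assume $\dim_\C A_\ell=1$ and every pairing $A_d\times A_{\ell-d}\to A_\ell\cong\C$ is perfect; since $A$ is Artinian by hypothesis it remains only to show $\dim_\C\mathrm{Soc}(A)=1$. We already have $A_\ell\subseteq\mathrm{Soc}(A)$, so it suffices to prove $\mathrm{Soc}(A)_d=0$ for every $d<\ell$. If $0\neq f\in\mathrm{Soc}(A)_d$ with $d<\ell$, then (since $\ell-d\ge1$) $fA_{\ell-d}\subseteq\mathfrak m f=0$, so $f$ pairs trivially with all of $A_{\ell-d}$, contradicting non-degeneracy of the pairing $A_d\times A_{\ell-d}\to A_\ell$. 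Hence $\mathrm{Soc}(A)=A_\ell$ has dimension one, and $A$ is Gorenstein.

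The only real subtlety, and the step I expect to require the most care, is the choice of the maximal index $e$ in the forward direction: this is the device that converts "$f$ lies in the kernel of the top pairing" into "$f$ generates a nonzero socle element below the top degree," and one should check that the small edge cases ($d=0$, $d=\ell$, and $A=\C$ with $\ell=0$) cause no trouble — they do not, since $\mathrm{Soc}(A)_\ell=A_\ell$ disposes of $d=\ell$ and the argument is vacuous when $A=\C$. Everything else (the graded structure of the socle, the symmetry of the two-sided non-degeneracy argument) is routine.
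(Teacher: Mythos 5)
Your proof is correct. The paper itself offers no argument for this proposition --- it simply points to Proposition 8.6 of the Geramita reference --- so there is no in-text proof to compare against; what you have written is a complete, self-contained version of the standard argument, and it is the natural one: the socle is a graded submodule, $\mathrm{Soc}(A)_\ell=A_\ell$, and the equivalence comes down to showing that a nonzero socle element in degree $d<\ell$ is exactly an element of $A_d$ annihilated by $A_{\ell-d}$ (one direction trivially, the other via your maximal-$e$ device). The only spot worth one extra clause is the assertion ``$e<\ell-d$ by assumption'': the hypothesis $fA_{\ell-d}=0$ only rules out $e=\ell-d$ directly, and you should also note that $e>\ell-d$ is impossible because $A_ef\subseteq A_{d+e}=0$ once $d+e>\ell$; this is implicit in your choice of $e$ as the largest index with $A_ef\neq 0$, but stating it removes any ambiguity. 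Your closing remark that one-sided nondegeneracy for both $d$ and $\ell-d$, together with $\dim_{\C}A_\ell=1$ and finite-dimensionality, upgrades to a perfect pairing is also right and is the piece most often left unsaid.
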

\begin{proof}
See the proof of, and remarks following, Proposition 8.6, \cite{Geramita}.
\end{proof}

As an easy corollary, we have 
\begin{cor}
  The Hilbert function of an Artinian Gorenstein ring $A$ is symmetric.   That is,
  if the socle degree of $A$ is $\ell$, then
\[
H_A(d)=H_A(\ell-d)
\]
for all $d$.
\end{cor}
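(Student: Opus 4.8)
The plan is to read this off directly from Proposition~\ref{gor-pairing}. Since $A$ is Artinian and Gorenstein with socle degree $\ell$, that proposition provides, for each integer $d$, a perfect pairing
\[
A_d\times A_{\ell-d}\to A_{\ell}\approx\C .
\]
A perfect pairing of vector spaces induces an isomorphism $A_d\cong\Hom_{\C}(A_{\ell-d},\C)$, and because $A$ is Artinian every graded piece $A_m$ is finite-dimensional, so $\Hom_{\C}(A_{\ell-d},\C)$ has the same dimension as $A_{\ell-d}$. Hence
\[
H_A(d)=\dim_{\C}A_d=\dim_{\C}A_{\ell-d}=H_A(\ell-d)
\]
for all $d$ with $0\le d\le\ell$.

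Next I would clear up the remaining values of $d$. If $d<0$, then $A_d=0$, while $\ell-d>\ell$ forces $A_{\ell-d}=0$ as well, since $\mathfrak{m}^{\ell+1}\subseteq I$ by definition of the socle degree; thus both sides of the claimed identity vanish. The case $d>\ell$ is symmetric: then $\ell-d<0$, so again $A_d=A_{\ell-d}=0$. Therefore $H_A(d)=H_A(\ell-d)$ for every $d\in\Z$.

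There is no real obstacle in this corollary: all the work has already been done in Proposition~\ref{gor-pairing}. The only points requiring a word of care are that passing from a perfect pairing to an equality of dimensions uses finite-dimensionality (guaranteed by the Artinian hypothesis), and that one should separately note the identity is trivially true for $d$ outside $[0,\ell]$, where both graded pieces are zero.
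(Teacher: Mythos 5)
Your proof is correct and follows exactly the route the paper intends: the corollary is stated as an immediate consequence of Proposition~\ref{gor-pairing}, and reading off $\dim_{\C}A_d=\dim_{\C}A_{\ell-d}$ from the perfect pairing (using finite-dimensionality of the graded pieces) is precisely the omitted "easy" argument. Your extra remark about $d$ outside $[0,\ell]$ is a harmless bit of added care.
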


Now let $S'=\C[y_1,\dots,y_{n+1}]$, and let $S$ act on $S'$ by treating
each $x_i$ as the differential operator $\partial/\partial y_i$.
\begin{thm} (Macaulay, cf.~Theorem 8.7~\cite{Geramita})
  The ring $A=S/I$ is Gorenstein with socle degree $\ell$ if and only if there
  exists a nonzero $g\in S'_{\ell}$ such that
  \[
  I=\mathrm{ann}(g):=\{f\in S: f(\partial/\partial y_1,\dots,\partial/\partial
  y_{n+1})\,g=0\}.
  \]
\end{thm}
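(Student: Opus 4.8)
The plan is to prove this via Macaulay's theory of inverse systems (apolarity). The central object is the perfect $\C$-bilinear pairing $\langle\ ,\ \rangle\colon S_d\times S'_d\to\C$ induced by the action, which on monomials is $\langle x^a,y^b\rangle = a!\,\delta_{a,b}$ (writing $a!=\prod_i a_i!$); this is nondegenerate because we are in characteristic zero. For a homogeneous ideal $I\subseteq S$, set $I^{\perp}:=\{h\in S':f\circ h=0\text{ for all }f\in I\}$, a graded $S$-submodule of $S'$. The preliminary facts to record are: (i) since $I$ is an ideal, its graded piece $(I^{\perp})_d$ coincides with the orthogonal complement $(I_d)^{\perp}\subseteq S'_d$ under the pairing above, whence $\dim_{\C}(I^{\perp})_d=\dim_{\C}S_d-\dim_{\C}I_d=H_A(d)$; and (ii) when $A$ is Artinian, applying $(\ \cdot\ )^{\perp}$ twice in each degree recovers $I$, i.e.\ $(I^{\perp})^{\perp}=I$.

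First I would treat the ``if'' direction. Suppose $I=\mathrm{ann}(g)$ with $0\neq g\in S'_{\ell}$. Because $x^a\circ y^b=0$ whenever $|a|>|b|$, we get $S_d\circ g=0$ for $d>\ell$, so $\mathfrak{m}^{\ell+1}\subseteq I$ and $A$ is Artinian with $A_d=0$ for $d>\ell$. The linear functional $S_{\ell}\to\C$, $f\mapsto f\circ g$, is nonzero (test on a monomial appearing in $g$) with kernel $I_{\ell}$, so $\dim_{\C}A_{\ell}=1$ and the socle degree is exactly $\ell$. For the socle itself, take $\bar f\in\mathrm{Soc}(A)$ homogeneous of degree $d<\ell$; then $x_i\bar f=0$ for all $i$ says every partial derivative $\partial(f\circ g)/\partial y_i$ vanishes, so the positive-degree polynomial $f\circ g\in S'_{\ell-d}$ is constant, hence zero, forcing $\bar f=0$ in $A$. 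Thus $\mathrm{Soc}(A)=A_{\ell}$ is one-dimensional and $A$ is Gorenstein.

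For the ``only if'' direction, let $M:=I^{\perp}$; this is a finitely generated graded $S$-module, in fact finite-dimensional since $\dim_{\C}M=\sum_d H_A(d)=\dim_{\C}A<\infty$. The key computation is a natural isomorphism $(M/\mathfrak{m}M)_d\cong \mathrm{Soc}(A)_d^{*}$: one shows $(\mathfrak{m}M)_d=S_1\circ M_{d+1}$, and then using the perfect pairing $A_d\times M_d\to\C$ checks that $\bar f\in A_d$ kills $S_1\circ M_{d+1}$ exactly when $x_i\bar f=0$ in $A$ for every $i$. Hence $\dim_{\C}M/\mathfrak{m}M=\dim_{\C}\mathrm{Soc}(A)=1$, so graded Nakayama gives $M=S\circ g$ for a single homogeneous $g$; since $M$ is then generated in one degree and $\dim M_d=H_A(d)$, that degree is the socle degree $\ell$, so $0\neq g\in S'_{\ell}$. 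Finally, commutativity of the operators $\partial/\partial y_i$ gives $\mathrm{ann}(g)=(S\circ g)^{\perp}=M^{\perp}=(I^{\perp})^{\perp}=I$, completing the proof. The step I expect to be the main obstacle is the careful bookkeeping around the two dualities: verifying $(I^{\perp})_d=(I_d)^{\perp}$ (where the ideal property of $I$ is essential) and the identification $(M/\mathfrak{m}M)_d\cong\mathrm{Soc}(A)_d^{*}$, including getting the degree shift in $(\mathfrak{m}M)_d=S_1\circ M_{d+1}$ right so that Nakayama applies cleanly; everything else is routine once the pairing is set up.
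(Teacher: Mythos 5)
The paper does not prove this statement; it is quoted as Macaulay's theorem with a citation to Theorem~8.7 of the Geramita reference, so there is no internal proof to compare against. Your argument is the standard Macaulay inverse-systems (apolarity) proof and is correct: the perfect pairing $S_d\times S'_d\to\C$ is set up properly, the identification $(I^{\perp})_d=(I_d)^{\perp}$ correctly uses that $I$ is an ideal, the double-perp identity and the isomorphism $(M/\mathfrak{m}M)_d\cong\mathrm{Soc}(A)_d^{*}$ are justified, and the graded Nakayama step legitimately applies to the finite-dimensional module $M=I^{\perp}$ (noting only that the $S$-action lowers degree, which you account for in writing $(\mathfrak{m}M)_d=S_1\circ M_{d+1}$). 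This matches the proof in the cited source in substance.
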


Now consider the case where $I$ is the homogeneous toppling ideal for a sandpile
graph $G$ with vertices $\{v_1,\dots,v_{n+1}\}$.  Let $X=Z(I)$ be the zero set of $I$ as discussed in
section~\ref{projective points}. Let $a\in S$ be a linear polynomial that does not vanish at any
point of $X$.  For instance, $a$ may be any of the indeterminates, $x_i$.  Restricting the exact
sequence given by multiplication by $a$,
\[
0\to A\stackrel{\cdot a}{\longrightarrow} A\to A/(a)\to 0,
\]
to each degree $d$, we find that the Hilbert function for $A/(a)$ is the first
differences of the Hilbert function for $A$, i.e., $H_{A/(a)}(d)=\Delta H_A(d)$.
It then follows from~(\ref{eqn:first differences}) that $A/(a)$ is Artinian.
\begin{definition}\label{reduction}
  Continuing the notation from above, 
  the ring $A/(a)$ is called an {\em Artinian reduction} of $A$.
  Let $\ell$ be the socle degree of an Artinian reduction of~$A$,
  and let $h_d:=\Delta H_A(d)$ for $d=0,\dots,\ell$.  Then
  $(h_0,\dots,h_{\ell})$ is the {\em homogeneous $h$-vector} of $G$ (or $I$ or $X$).
\end{definition}
\begin{remark}
The homogeneous $h$-vector and the $h$-vector appearing in
Definition~\ref{def:h-vector} are identical in the case the $\Delta(v_{n+1})$
is in the span of $\{\Delta(v_i):1\leq i\leq n\}$ (see the discusion after
Example~\ref{example:h-vector}).
\end{remark}
\begin{definition}
We say $G$ is a {\em Gorenstein sandpile graph} if its homogeneous coordinate
ring has a Gorenstein Artinian reduction.  We also say that $I$  and $X$ are
(arithmetically) Gorenstein.  
\end{definition}

\begin{remark} \ 
 \begin{enumerate}
   \item Using the notation preceding Definition~\ref{reduction}, it turns out that if $A$ has a Gorenstein
     Artinian reduction, then every Artinian reduction of $A$ is Gorenstein.
   \item The notion of a Gorenstein ideal is much more general, but requires a
     discussion of the Cohen-Macaulay property, which our toppling ideals
     (defining a finite set of projective points) satisfy automatically
     (cf.~\cite{Eisenbud}).
 \end{enumerate}
\end{remark}
It is well-known that complete intersection ideals are Gorenstein (cf.~\S
21.8\cite{Eisenbud}).  In particular, we have the following.
\begin{thm}\label{thm:ci implies gor}
  Let $G$ be a sandpile graph.  If $G$ is a complete intersection, then~$G$ is
  Gorenstein.
\end{thm}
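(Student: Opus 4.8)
The plan is to exhibit an Artinian reduction of $A := S/I_h(G)$ as a complete intersection Artinian ring, and then to invoke the classical fact quoted just above that complete intersections are Gorenstein.

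First I would record two facts about $I := I_h(G)$. Since $I$ is the lattice ideal for the Laplacian lattice $\Lap$, which has rank $n$, Theorem~\ref{thm:basics}~(\ref{thm:basics3}) gives $\dim S/I = (n+1)-n = 1$; equivalently, $I$ has height $n$. Since $G$ is a complete intersection sandpile graph, $I$ is generated by $n = |V|-1$ homogeneous polynomials $f_1,\dots,f_n$, and $n$ generators of a height-$n$ ideal in the Cohen--Macaulay ring $S$ automatically form a regular sequence; hence $f_1,\dots,f_n$ is $S$-regular and $S/I$ is Cohen--Macaulay.

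Next I would take a linear form $a \in S$ vanishing at no point of the finite set $X = Z(I) \subset \mathbb{P}^n$ (as in the discussion preceding Definition~\ref{reduction}, any indeterminate $x_i$ works). The minimal primes of $I$ are exactly the homogeneous primes of the points of $X$, so $a$ lies in none of them; as $S/I$ is Cohen--Macaulay, hence unmixed, $a$ is a nonzerodivisor modulo $I$. Then $f_1,\dots,f_n,a$ is an $S$-regular sequence of length $n+1$ in the $(n+1)$-dimensional ring $S$, so
\[
A/(a) = S/(f_1,\dots,f_n,a)
\]
is Artinian --- this is the Artinian reduction already noted before Definition~\ref{reduction} --- and it is a complete intersection, being generated by $n+1$ elements of codimension $n+1$. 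An Artinian complete intersection is Gorenstein (cf.~\S21.8 of~\cite{Eisenbud}; concretely, the Koszul complex on $f_1,\dots,f_n,a$ is a self-dual minimal free resolution, forcing a one-dimensional socle). Hence $A/(a)$ is Gorenstein, and therefore $G$ is a Gorenstein sandpile graph.

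The argument is largely bookkeeping; the one step deserving care is the claim that the chosen linear form $a$ is a nonzerodivisor modulo $I$, which is exactly where I use that $S/I$ is Cohen--Macaulay --- i.e., that the homogeneous toppling ideal of a finite set of projective points is arithmetically Cohen--Macaulay. Everything else is either the definition of an Artinian reduction or the cited fact that complete intersections are Gorenstein.
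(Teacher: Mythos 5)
Your proof is correct and takes essentially the same route as the paper, which simply invokes the standard fact that complete intersection ideals are Gorenstein (citing \S 21.8 of \cite{Eisenbud}) without spelling out the details. Your writeup merely unpacks that citation --- height count via Theorem~\ref{thm:basics}~(\ref{thm:basics3}), the regular sequence, the nonzerodivisor linear form, and the self-dual Koszul complex --- and all the steps check out.
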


\begin{thm}\label{thm:gor}
  Let $I$ be the homogeneous toppling ideal of the sandpile graph $G$
  having $n+1$ vertices.  The following are equivalent:
  
 \begin{enumerate}
   \item\label{gor-one} $G$ is Gorenstein;
   \item\label{gor-two} if the minimal free resolution for $I$ is
  \[
 0\leftarrow I
 \stackrel{\phi_0}{\longleftarrow}
 F_0 
 \stackrel{\phi_1}{\longleftarrow}
 F_1
\leftarrow
\dots
 \stackrel{\phi_n}{\longleftarrow}
 F_n
  \leftarrow
  0, 
  \]
  then $F_n\approx S$ as an $S$-module;
\item\label{gor-three} the homogeneous $h$-vector for $G$ is
    symmetric.
\end{enumerate}
\end{thm}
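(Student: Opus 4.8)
The plan is to pass to an Artinian reduction and reduce all three conditions to standard properties of a graded Artinian algebra, using two facts already established: that $S/I$ is Cohen--Macaulay of dimension~$1$, and that $X:=Z(I)\subset\mathbb{P}^n$ is Cayley--Bacharach (Proposition~\ref{prop:CB-yes}). Set $A=S/I$, pick a linear form $a\in S$ vanishing at no point of $X$ (say $a=x_1$), and put $B:=A/(a)$, a graded Artinian algebra over $\bar S:=S/(a)\cong\C[x_1,\dots,x_n]$. Since $X$ is finite and $A$ is Cohen--Macaulay of dimension~$1$, $a$ is a nonzerodivisor on $S$ and on $A$ (it lies in none of the $n+1$ associated primes of $A$, namely the primes of the points of $X$). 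By Definition~\ref{reduction}, $H_B(d)=\Delta H_A(d)=h_d$, so condition~(\ref{gor-three}) is precisely the symmetry of $H_B$, and by definition (together with the remark that one Gorenstein Artinian reduction forces all of them to be Gorenstein) $G$ is Gorenstein if and only if $B$ is.

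For (\ref{gor-one})$\Leftrightarrow$(\ref{gor-two}): because $a$ is $A$-regular, $\mathrm{Tor}^S_i(A,\bar S)=0$ for $i>0$, so reducing the minimal graded free resolution of $A$ over $S$ modulo $a$ yields the minimal graded free resolution of $B$ over $\bar S$ (minimality is preserved since the differentials have entries of positive degree); in particular $\operatorname{rank}F_n$, the rank of the last free module --- the same in the resolutions of $I$, of $A$, and of $B$ --- equals the top Betti number $\beta^{\bar S}_n(B)$. Computing $\mathrm{Tor}^{\bar S}_n(B,\C)$ with the Koszul complex on $x_1,\dots,x_n$ identifies it, up to a degree twist, with $\mathrm{Soc}(B)=\{b\in B:\mathfrak m\, b=0\}$, so $\operatorname{rank}F_n=\dim_\C\mathrm{Soc}(B)$. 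Since $B$ is Artinian, $\dim_\C\mathrm{Soc}(B)=1$ is by definition the statement that $B$ is Gorenstein, which proves the equivalence $F_n\cong S\iff(\ref{gor-one})$.

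For (\ref{gor-one})$\Rightarrow$(\ref{gor-three}): if $B$ is Artinian Gorenstein, its Hilbert function is symmetric by the corollary following Proposition~\ref{gor-pairing}, and $H_B=h$.

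The implication (\ref{gor-three})$\Rightarrow$(\ref{gor-one}) is where the Cayley--Bacharach property is used, and is the step I expect to need the most care. The external input is the characterization (see~\cite{GKR}, \cite{Geramita}): $X$ is Cayley--Bacharach if and only if $\mathrm{Soc}(B)$ is concentrated in the top degree $\ell$ of $B$, equivalently the graded canonical module $\omega_B=\Hom_\C(B,\C)$ is minimally generated in the single degree $-\ell$. By Proposition~\ref{prop:CB-yes}, $X$ is Cayley--Bacharach, so $\omega_B$ is generated by its degree $-\ell$ component, which has dimension $\dim_\C(\omega_B)_{-\ell}=\dim_\C B_\ell=h_\ell$; symmetry of the $h$-vector forces $h_\ell=h_0=1$, so $\omega_B$ is cyclic. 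A cyclic canonical module of the Artinian ring $B$ is faithful of the same finite length as $B$, hence isomorphic to $B$, so $B$ is Gorenstein. The only delicate point is quoting the $\omega_B$-theoretic form of the Cayley--Bacharach property precisely; everything else is bookkeeping with the Artinian reduction and the Koszul complex.
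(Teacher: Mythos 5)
Your proof is correct and takes essentially the same route as the paper, whose own proof consists of two citations: for (\ref{gor-one})$\Leftrightarrow$(\ref{gor-two}) the standard fact that the rank of the last free module in the minimal resolution equals the Cohen--Macaulay type $\dim_{\C}\mathrm{Soc}(B)$ of an Artinian reduction (which you derive via the Koszul complex), and for (\ref{gor-one})$\Leftrightarrow$(\ref{gor-three}) the Geramita--Kreuzer--Robbiano theorem that a finite set of points is arithmetically Gorenstein iff it is Cayley--Bacharach with symmetric $h$-vector, applied via Proposition~\ref{prop:CB-yes} exactly as you do. The only slip is immaterial: $A=S/I$ has $|\mathcal{S}(G)|$ associated primes (one per point of $X$), not $n+1$, but all that matters is that the linear form $a$ avoids every one of them.
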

\begin{proof}
  The equivalence of items (\ref{gor-one}) and (\ref{gor-two}) is a standard
  result (cf.~\cite{Eisenbud}). The equivalence of items (\ref{gor-one}) and (\ref{gor-three}) follows
  by \cite{Geramita-Cayley} since $I$ is a Cayley-Bacharach ideal by
  Proposition~\ref{prop:CB-yes}.
\end{proof}
\begin{example}
Let $G$ be as in example \ref{ex:resolution}. We saw that the last nonzero
module in the free
resolution for $I(G)^h$ is $S(-(1,0,2,2))$, which is isomorphic to $S$ as
an $S$-module. Thus, the caption for Figure \ref{fig:gor}, stating that $G$
is Gorenstein, is justified by (\ref{gor-two}) above.
\end{example}

Define a {\em loopy tree} to be a (finite) graph that is formed from a weighted,
undirected tree by adding weighted loops at some (maybe none) of the vertices.
\begin{thm}\label{thm:loopy}
  For an undirected sandpile graph $G$, the following are equivalent:
  \begin{enumerate}
    \item $G$ is a loopy tree;
    \item $G$ is a complete intersection;
    \item $G$ is Gorenstein.
  \end{enumerate}
\end{thm}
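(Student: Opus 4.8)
The plan is to prove the three implications $(2)\Rightarrow(3)$, $(1)\Rightarrow(2)$, and $(3)\Rightarrow(1)$. The first is immediate from Theorem~\ref{thm:ci implies gor}. For the other two I begin with a normalization: a loop at a vertex $v$ of weight $w$ adds $w$ to $\outdeg(v)$ and also adds $w$ to the coefficient of $v$ in $\sum_u\wt(v,u)\,u$, so it changes neither $\Delta$, nor the Laplacian lattice $\Lap$, nor the homogeneous toppling ideal. Letting $G_0$ be $G$ with all loops deleted, we thus have $I_h(G)=I_h(G_0)$, and $G$ is a loopy tree exactly when the simple graph underlying $G_0$ is a tree. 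So in proving $(1)\Rightarrow(2)$ and $(3)\Rightarrow(1)$ we may and do assume $G$ is loopless.

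For $(1)\Rightarrow(2)$, suppose $G$ is a loopless undirected tree and root it at the sink $s$. For each edge $e$ let $c_e$ and $p_e$ be its child and parent endpoints, and put $u_e:=\wt(e)\,(c_e-p_e)\in\Z V$. Numbering the nonsink vertices so that each vertex precedes its parent, one checks that $\Delta(v_i)=u_{e_i}-\sum_c u_{e_c}$, the sum over the edges $e_c$ joining $v_i$ to its children $c$; an easy induction from the leaves then shows $\{u_e\}_{e\in E}$ spans $\Lap$, and since the matrix $M$ with these columns is triangular with nonzero diagonal after deleting the sink row, the $u_e$ form a $\Z$-basis of $\Lap$. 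Every column of $M$ has exactly two nonzero entries, of opposite signs, so $M$ is mixed; and a square submatrix of $M$ can be mixed only if the edges indexing its columns have all their endpoints among the chosen rows, producing a subgraph with as many edges as vertices, which is impossible in a tree. Hence $M$ is mixed dominating, and $I_h(G)=I(\Lap)$ is a complete intersection by Theorem~\ref{theorem:ci-mixeddom}.

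For $(3)\Rightarrow(1)$, suppose $G$ (loopless) is Gorenstein. By Theorem~\ref{thm:gor} its homogeneous $h$-vector is symmetric; for an undirected graph this is the $h$-vector of Definition~\ref{def:h-vector}, whose initial entry is $1$, so its top entry is $h_\ell=1$. By Corollary~\ref{cor:tutte}(4) this says $T_G(1,0)=1$. I then claim that a connected loopless graph with $T_G(1,0)=1$ has a tree as its underlying simple graph, which finishes the argument. This I prove by induction on $\sum_e\wt(e)$. If the underlying simple graph is not a tree it contains a cycle, so some edge $e$ is neither a loop nor a bridge, and the Tutte recursion gives $T_G(1,0)=T_{G-e}(1,0)+T_{G/e}(1,0)$. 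If $\wt(e)\ge 2$, then $G-e$ is still connected, loopless, and not a tree, so $T_{G-e}(1,0)\ge 2$ by induction. If $\wt(e)=1$, then $G-e$ and $G/e$ are both connected and loopless (contracting $e$ creates no loop, since $e$ has no parallel edge), and any connected loopless graph has $T(1,0)\ge 1$, being the number of its maximal superstable configurations, which is at least one (the zero configuration is always superstable). Either way $T_G(1,0)\ge 2$, a contradiction, so the underlying simple graph is a tree; thus $G_0$, and with its loops the original graph, is a loopy tree.

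The crux is the final graph-theoretic step of $(3)\Rightarrow(1)$: one must show that \emph{every} connected graph with a cycle has more than one maximal superstable configuration. The deletion--contraction induction above is the cleanest route I see; its only delicate points are that contracting an edge of weight $\ge 2$ can create a loop (so that case must be kept separate) and the small auxiliary fact that $T_H(1,0)\ge 1$ for connected loopless $H$.
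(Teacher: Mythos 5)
Your proof is correct, but both nontrivial implications travel a genuinely different road from the paper's. For $(1)\Rightarrow(2)$ the paper deletes the out-edges of the sink, observes that what remains of a loopy tree is completely wired, and invokes Theorem~\ref{wired implies ci}; you instead exhibit the explicit basis $u_e=\wt(e)(c_e-p_e)$ of $\Lap$ indexed by the tree's edges and verify mixed dominance by hand, appealing directly to Theorem~\ref{theorem:ci-mixeddom}. Since Theorem~\ref{wired implies ci} is itself proved via mixed dominance, your argument is an unrolled, tree-specific version of the paper's: shorter and self-contained, at the cost of not displaying $G$ as completely wired. For $\neg(1)\Rightarrow\neg(3)$ the divergence is more substantial. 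The paper uses criterion~(\ref{gor-two}) of Theorem~\ref{thm:gor} together with Theorem~\ref{thm:last betti number} to reduce to showing that a non-tree has at least two minimal recurrent configurations, and produces them by running Dhar's burning algorithm along two vertex orderings that traverse a cycle in opposite senses (in effect, two acyclic orientations with unique source $s$). You use criterion~(\ref{gor-three}) instead: symmetry of the $h$-vector forces $h_\ell=h_0=1$, hence $T_G(1,0)=1$ by Merino's theorem, which you rule out for non-trees by deletion--contraction. Both routes hinge on the same count (minimal recurrents $=$ maximal superstables $=h_\ell=T_G(1,0)=\beta_n$); the paper's is the more hands-on combinatorial one and exhibits the two witnesses explicitly, while yours leans on Theorem~\ref{thm:Merino} and on the Cayley--Bacharach input behind the equivalence (\ref{gor-one})$\Leftrightarrow$(\ref{gor-three}). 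Your treatment of the delicate points is sound: restricting to loopless graphs so that Merino's formula applies, keeping the $\wt(e)\ge2$ case separate so contraction never creates a loop, and noting implicitly that the weight-$2$ ``banana'' never enters the induction because its underlying simple graph is already a tree (consistent with the fact that weighted trees are loopy trees and are indeed Gorenstein).
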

\begin{proof}
Let $G=(V,E,s)$ be a undirected sandpile graph.  First suppose
that $G$ is a loopy tree.  Removing any outgoing edges from $s$ leaves a
completely wired graph having the same homogeneous toppling ideal as $G$.
Hence, $G$ is a complete intersection by Theorem~\ref{wired implies ci}, and
hence $G$ is Gorenstein by Theorem~\ref{thm:ci implies gor}.

We now assume that $G$ is not a loopy tree.  Since the lattice ideal of $G$ is
not affected by loops, for ease of exposition we assume that $G$ has no loops.
By Theorem~\ref{thm:last betti number} and
Theorem~\ref{thm:gor}~(\ref{gor-two}), we have that $G$ is Gorenstein if and
only if it has a unique minimal recurrent configuration.  

To characterize the minimal recurrent configurations, let $\prec$ be a total
ordering of the vertices such that for all nonsink vertices $v$, (i) $s\prec v$,
and (ii) there exists $u\prec v$ such that $\{u,v\}\in E$.  Define the
configuration $c_{\prec}$ by
\[
c_{\prec,v}:=\deg(v)-\#\{v\in V:\text{$\{u,v\}\in E$ and $u\prec v$}\}.
\]
We now invoke Dhar's burning algorithm.  Let $b$ be the minimal burning
configuration for $G$.  By Theorem~\ref{thm:speer existence} it has script
$\vec{1}$, and by Theorem~\ref{thm:speer}, a configuration $c$ is recurrent if
and only if each nonsink vertex fires in the stabilization of $b+c$.  Note that
$b+c$ is obtained by starting with $c$ and firing the sink vertex.  It follows
that $c_{\prec}$ is a minimal recurrent configuration and that all minimal
recurrent configurations arise as $c_{\prec}$ for some ordering
$\prec$ satisfying (i) and (ii), above.

Let $C$ be a (undirected) cycle in $G$.  Choose a path $P$ in $G$ starting at
$s$ and going to a vertex of $C$, then traveling around $C$.  To be precise, let
$u_1,\dots, u_i$ be distinct vertices forming a path in $G$ (so
$\{u_{\ell},u_{\ell+1}\}\in V$ for all $\ell$) with $u_1=s$ and~$u_i$ a vertex in
$C$.  Assume that $u_i$ is the first vertex in the path to be in $C$.  (If~$s$
is in~$C$, then $i=1$.)  Next, let $u_{i},\dots,u_{i+j}$ be the vertices in the
cycle $C$, in order.  Then $P$ is the path $u_1,\dots,u_{i+j}$.  Let $\prec_1$ be
any total ordering satisfying (i) and (ii), above, with
\[
u_1\prec_1\cdots\prec_1 u_{i+j},
\]
and such that $u_k\prec_1v$ for all $u_k$ and all vertices $v$ not in $P$.  Let $\prec_2$ be
any total order satisfying (i) and (ii) with 
\[
u_1\prec_2\cdots\prec_2u_i\prec_2 u_{i+j}\prec_2u_{i+1}\prec_2u_{i+2}\prec_2\dots\prec_2
u_{i+j-1},
\]
and such that $u_k\prec_2v$ for all $u_k$ and all vertices $v$ not in $P$.  It follows that~$c_{\prec_1}$
and~$c_{\prec_2}$ are distinct minimal recurrent configurations on
$G$.  Hence, $G$ is not Gorenstein.
\end{proof}

By Theorem \ref{thm:normal basis}, an Artinian reduction of $A$ for a sandpile
graph with absolute sink has the set
\[
\{ x^c : c \text{ is a superstable configuration of $G$}\}
\]
as a normal basis. It follows that the socle degree $\ell$ of $A$ is the maximum
of the degrees of the superstable configurations of $G$. Hence, by
Theorem~\ref{thm:gor}~(\ref{gor-three}), a sandpile graph with absolute sink is
Gorenstein if and only if there exists a bijection between the superstable
configurations of degree $k$ and those of degree $\ell - k$.

\begin{lemma} \label{lemma:wirefromtrivial} Let $G_1$ be the graph on a
single vertex $v$ and let $G_2$ be a Gorenstein sandpile graph. Let $G$
be a wiring of $G_1$ into $G_2$. Then $G$ is Gorenstein.
\end{lemma}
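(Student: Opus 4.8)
The plan is to reduce the lemma to the combinatorial criterion recorded just before it: since $G$ has the absolute sink $s_2$, Theorem~\ref{thm:gor}~(\ref{gor-three}) together with Theorem~\ref{thm:normal basis} says that $G$ is Gorenstein exactly when its homogeneous $h$-vector---equivalently, the generating function $P_G(y)=\sum_d h_d\,y^d$ in which $h_d$ counts the superstable configurations of $G$ of degree $d$---is palindromic. So it suffices to express $P_G$ in terms of $P_{G_2}$.

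First I would fix notation. Write $v=s_1$ for the single vertex of $G_1$; it is a nonsink vertex of $G$, whose absolute sink is $s_2$. Since loops do not affect the toppling ideal (as used in the proof of Theorem~\ref{thm:loopy}), assume $G$ has no loop at $v$ and set $\alpha:=\outdeg_G(v)$; then $\alpha\ge1$ because the wiring forces $\wt(s_1,u)>0$ for some $u\in V_2$, and $v$ is a \emph{source} of $G$ because condition~(3) in the definition of a wiring gives $\wt(u,v)=0$ for every $u\in V_2$. (The requirement $|D|\ne\emptyset$ is vacuous here, as $D=\alpha\,v$ is effective.) Writing a configuration of $G$ as a pair $(c_v,c')$ with $c_v\in\N$ the sand at $v$ and $c'\in\N\tV_2$ the sand on the nonsink vertices of $G_2$, the block form of wirings shows that the row of $v$ in $\tD_G$ is $(\alpha,0,\dots,0)$, so firing a script $\sigma=(\sigma_v,\sigma')$ changes the $v$-coordinate by $-\alpha\sigma_v$ and leaves the $\tV_2$-coordinates governed exactly by $\tD_{G_2}$ when $\sigma_v=0$.

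The key step is to show that $(c_v,c')$ is superstable on $G$ if and only if $0\le c_v\le\alpha-1$ and $c'$ is superstable on $G_2$. For the forward implication: if $c_v\ge\alpha$ then the single-vertex script $\sigma=v$ is a legal script-firing (nonzero, with $(c-\tD_G\sigma)_v=c_v-\alpha\ge0$ and the other coordinates only growing), so $(c_v,c')$ is not superstable; and if $c'$ admits a legal script-firing $\sigma'$ on $G_2$, then $(0,\sigma')$ is a legal script-firing of $(c_v,c')$ on $G$, since the defining inequalities involve only $\tV_2$-coordinates and coincide with those for $G_2$. Conversely, assume $0\le c_v\le\alpha-1$ and $c'$ superstable on $G_2$, and let $(\sigma_v,\sigma')\gneq0$ be a candidate legal script-firing of $(c_v,c')$. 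If $\sigma_v>0$ then $v\in\supp(\sigma)$ and, since $v$ is a source, $(c-\tD_G\sigma)_v=c_v-\alpha\sigma_v\le(\alpha-1)-\alpha<0$, contradicting legality; hence $\sigma_v=0$, forcing $\sigma'\gneq0$, and then legality of $(0,\sigma')$ on $G$ is precisely legality of $\sigma'$ from $c'$ on $G_2$, contradicting superstability of $c'$. This proves the claim.

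Given the claim, the superstable configurations of $G$ are exactly the pairs $(j,c')$ with $0\le j\le\alpha-1$ and $c'$ superstable on $G_2$; since $\deg(j,c')=j+\deg(c')$, the generating functions factor:
\[
P_G(y)=(1+y+\cdots+y^{\alpha-1})\,P_{G_2}(y).
\]
Because $G_2$ is a Gorenstein sandpile graph with absolute sink, $P_{G_2}(y)$ is palindromic by Theorem~\ref{thm:gor}~(\ref{gor-three}); the factor $1+y+\cdots+y^{\alpha-1}$ is palindromic; and a product of palindromic polynomials is palindromic. Hence the homogeneous $h$-vector of $G$ is symmetric, and $G$ is Gorenstein by Theorem~\ref{thm:gor}~(\ref{gor-three}). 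The only real obstacle is the verification of the key claim, and within it the one step that genuinely uses the hypotheses---ruling out a legal script-firing that involves $v$---which rests on $v$ being a source, i.e.\ on condition~(3) in the definition of a wiring (after discarding the inert loop at $v$); everything else is routine bookkeeping with generating functions.
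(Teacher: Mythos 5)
Your proof is correct and follows essentially the same route as the paper: both arguments rest on the factorization of the superstable configurations of $G$ as $\{0,\dots,\outdeg(v)-1\}$ at $v$ times the superstables of $G_2$, and then deduce symmetry of the $h$-vector (the paper via an explicit degree-complementing bijection $kv+a\mapsto(d-1-k)v+f(a)$, you via palindromicity of the product of generating functions, which is the same statement). If anything, you supply a careful verification of the factorization of superstables that the paper asserts in a single sentence.
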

\begin{proof}
  Let $\mathcal{A}$ be the set of superstable configurations on $G_2$ and define
  the integer $\ell:=\max\{\deg(a) : a \in \mathcal{A}\}$. Let $f\colon
  \mathcal{A}\to \mathcal{A}$ be a bijection such that $\deg(f(a)) = \ell -
  \deg(a)$ for all $a\in \mathcal{A}$. Let $d:=\outdeg(v)$.  Since there are no
  edges from vertices of $G_2$ to $v$ in $G$, the set of superstable
  configurations on $G$ is
\[
\mathcal{B}:=\{kv+a:\text{$a\in \mathcal{A}$ and $0\leq k<d$}\}.
\]
Let $m:=\max\{\deg(b):b\in \mathcal{B}\}=\ell+d-1$.
Define $g\colon \mathcal{B}\to\mathcal{B}$ by $g(kv + a) = (d-1-k)v+f(a)$ where $a\in
\mathcal{A}$. Then $g$ is a bijection and 
\[
\deg(g(kv + a)) =
\deg((d-1-k)v+f(a)) = \ell + d-1 - \deg(kv + a).
\]
It follows that $G$ is Gorenstein.
\end{proof}

\begin{lemma} \label{lemma:wireintotrivial} Let $G_1$ be a Gorenstein
sandpile graph with absolute sink $s$ and let~$G$ be a wiring of $G_1$ into
the graph on a single vertex $v$ with no edges. Then $G$ is Gorenstein. 
\end{lemma}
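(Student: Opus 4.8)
The plan is to reduce, via Theorem~\ref{thm:gor}~(\ref{gor-three}), to showing that the homogeneous $h$-vector of $G$ is symmetric --- equivalently, that its Hilbert--Poincar\'e series $P_G(y)$ (which here coincides with the ordinary one, since $v$ is an absolute sink of $G$) is palindromic --- and to obtain this from the factorization
\[
P_G(y)=(1+y+\cdots+y^{\,w_0-1})\,P_{G_1}(y),\qquad w_0:=\wt(s_1,v).
\]
Granting this, the conclusion is immediate: $G_1$ is Gorenstein, so $P_{G_1}$ is palindromic by Theorem~\ref{thm:gor}~(\ref{gor-three}); the factor $1+y+\cdots+y^{w_0-1}$ is palindromic (and an honest polynomial, since $w_0\ge1$ by condition~(4) in the definition of a wiring); and a product of palindromic polynomials is palindromic. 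Hence $P_G$ is palindromic and $G$ is Gorenstein. Observe also at the outset that $G$, with absolute sink $v$, really is a sandpile graph: every vertex of $\tV_1$ reaches $s_1$ inside $G_1$, and $s_1$ reaches $v$ along a wiring edge, so $v$ is globally accessible.

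First I would set up the lattice bookkeeping. Since $s_1$ is an absolute sink of $G_1$ we have $\Delta_{G_1}(s_1)=0$, so the columns of $\tD_G$ indexed by $\tV_1$ span $\Lap_{G_1}\subseteq\Z V_1$, while the column indexed by $s_1$ is the wiring divisor $D=\tD_G(s_1)$; therefore $\tLap_G=\Lap_{G_1}+\Z D$ and $\sand(G)=\mathrm{Cl}(G_1)/\langle[D]\rangle$, where $[D]$ has degree $\deg D=-\beta=w_0>0$ and hence infinite order. Write $\pi\colon\mathrm{Cl}(G_1)\to\sand(G)$ for the quotient map. The affine Hilbert function $H_G(d)$ counts the cosets of $\sand(G)$ admitting a nonnegative representative of degree $\le d$ (cf.\ part~(\ref{zpart3}) of the corollary to Theorem~\ref{thm:orbit ideal}); a coset $[c]$ admits one of minimal degree $m$ exactly when some effective class $\gamma\in\mathrm{Cl}(G_1)$ with $\pi(\gamma)=[c]$ has $\deg\gamma=m$, the minimum over the fibre. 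The key use of the wiring hypothesis $|D|\ne\emptyset$ is: if $E\in|\gamma|$ and $E'\in|D|$ are effective then $E+E'\in|\gamma+[D]|$ is effective, so along each fibre $\pi^{-1}([c])=\{\gamma_0+n[D]:n\in\Z\}$ the effective classes form an upward-closed set, i.e.\ a ray $\gamma_\ast,\gamma_\ast+[D],\gamma_\ast+2[D],\dots$, and $\deg\gamma_\ast$ is exactly the minimal degree above. Letting $\mathcal R=\{\gamma\in\mathrm{Cl}(G_1):|\gamma|\ne\emptyset,\ |\gamma-[D]|=\emptyset\}$ be the set of these ray-minima, we get $H_G(d)=\#\{\gamma\in\mathcal R:\deg\gamma\le d\}$ and, summing the ray counts,
\[
\widehat H(d):=\#\{\gamma\in\mathrm{Cl}(G_1):|\gamma|\ne\emptyset,\ \deg\gamma\le d\}=\sum_{j\ge0}H_G(d-jw_0).
\]

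Next I would identify $\widehat H$ with the affine Hilbert function $H_{G_1}$ of $G_1$. Using the standard theory of $s_1$-reduced divisors (\cite{Baker}) together with Theorem~\ref{thm:normal basis}: every class of $\mathrm{Cl}(G_1)$ has a unique representative $c'+a\,s_1$ with $c'$ a superstable configuration of $G_1$ and $a\in\Z$; such a class is effective iff $a\ge0$; and $\deg(c'+a\,s_1)=\deg(c')+a$. Hence the number of effective classes of degree exactly $d'$ equals $\#\{c'\ \text{superstable on }G_1:\deg c'\le d'\}=H_{G_1}(d')$, so $\widehat H(d)=\sum_{d'\le d}H_{G_1}(d')$. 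Passing to generating functions in $\Q[[y]]$: since $P_{G_1}$ is the $h$-polynomial of $G_1$ and $H_{G_1}$ the partial sums of its coefficients, $\sum_{d'}H_{G_1}(d')y^{d'}=P_{G_1}(y)/(1-y)$, whence $\sum_d\widehat H(d)y^d=P_{G_1}(y)/(1-y)^2$; while the ray identity gives $\sum_d\widehat H(d)y^d=(1-y^{w_0})^{-1}\sum_dH_G(d)y^d$. Equating, multiplying by $1-y$, and using $P_G(y)=(1-y)\sum_dH_G(d)y^d$, yields $P_G(y)=\tfrac{1-y^{w_0}}{1-y}P_{G_1}(y)=(1+y+\cdots+y^{w_0-1})P_{G_1}(y)$, as needed; note $G_1$ being Gorenstein is not used until the final palindromicity step.

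I expect the principal obstacle to be in the middle step: pinning down precisely the fibre/ray structure of the effective classes under $\pi$ and verifying that upward-closure is exactly what $|D|\ne\emptyset$ buys, together with making the reduced-divisor input clean in the possibly-directed graph $G_1$ (uniqueness of the representative $c'+a\,s_1$ with $c'$ superstable, and effectivity $\Leftrightarrow a\ge0$). Once these are established the generating-function manipulation is purely formal. One could instead try a hands-on route, characterizing the superstables of $G$ directly --- $c=c'+ks_1$ is superstable on $G$ iff $c'$ is superstable on $G_1$ and $k<\min_{j\ge1}\bigl(j\,\outdeg_G(s_1)-\iota_j(c')\bigr)$, where $\iota_j(c')$ is the sand entering $s_1$ under the maximal multiset-firing of $c'+j\delta$ on $G_1$ and $\delta$ records the edges $s_1\to\tV_1$ --- but the resulting superstable set is not a product over $c'$, so extracting symmetry that way is messier than the Hilbert-series argument above.
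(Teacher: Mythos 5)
Your proof is correct, but it takes a genuinely different route from the paper's. The paper's proof is a short lattice manipulation: writing $\Delta^{\circ}_{G}$ in block form with last column $(D,-d)^t$, it replaces $D$ by an effective $E\in|D|$ (possible precisely because $|D|\neq\emptyset$), negates that column and permutes rows, and observes that the resulting matrix is the restricted Laplacian of a wiring of the single vertex $v$ \emph{into} $G_1$; Lemma~\ref{lemma:wirefromtrivial} then finishes via its explicit degree-reversing bijection on superstables. You instead prove the Hilbert-series factorization $P_G(y)=(1+y+\cdots+y^{w_0-1})\,P_{G_1}(y)$ directly, using $\sand(G)\cong\mathrm{Cl}(G_1)/\langle[D]\rangle$, the upward-closure of effective classes along each $[D]$-ray (again the only place $|D|\neq\emptyset$ enters), and the identification of effective classes of fixed degree with superstables of $G_1$ via the unique representative $c'+a\,s_1$; palindromicity then follows from Theorem~\ref{thm:gor}~(\ref{gor-three}). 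Your reduced-divisor step does go through for directed $G_1$: uniqueness of $c'+a\,s_1$ is Theorem~\ref{thm:normal basis}, and effectivity $\Leftrightarrow a\geq 0$ follows because the superstable is the minimal-degree nonnegative representative of its class (legal script-firings only lose sand to the sink, or compare the two counts in part~(\ref{zpart3}) of the corollary to Theorem~\ref{thm:orbit ideal}). What the paper's argument buys is brevity and reuse of the previous lemma; what yours buys is independence from Lemma~\ref{lemma:wirefromtrivial}, a transparent accounting of where the hypothesis $|D|\neq\emptyset$ is used, and the explicit factorization of $P_G$ (which is only implicit in the paper's reduction, via the product structure of superstables in Lemma~\ref{lemma:wirefromtrivial}).
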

\begin{proof}
Let $\Delta = \Delta_G$ be the Laplacian matrix for $G$, and let $D$
be the wiring divisor of $G$. If $d$ is the weight of the edge from $s$ to
$v$, then
\[
\Delta^{\circ}_{G}= 
\left(\begin{array}{cr}
  \Delta^{\circ}_{G_1} & D\\
  0&-d 
\end{array}\right).
\]
Since $|D|\neq\emptyset$ by the definition of a wiring, there exists some
effective divisor $E \sim_{G_1} D$.
Thus, we can replace the last column of $\Delta^{\circ}_{G}$ with
\[
\left(\begin{array}{r} 
  E\\
  -d
\end{array}\right)
\]
without changing the column span, and hence without changing the associated
lattice ideal. Negating this column and swapping rows, the matrix
$\Delta^{\circ}_{G}$ becomes 
\[
\Delta^{\circ} := 
\left(\begin{array}{cc} 
  0&d\\ 
  \Delta^{\circ}_{G_1}&-E
\end{array}\right),
\]
which is the restricted Laplacian for a wiring of vertex
$v$ into $G_1$.  This graph is Gorenstein by Lemma \ref{lemma:wirefromtrivial}.
\end{proof}

\begin{thm}\label{theorem:wiregorenstein} Let $G_1$ and $G_2$ be Gorenstein
  sandpile graphs with absolute sinks. If $G$
  is a wiring of $G_1$ into $G_2$, then $G$ is Gorenstein.
\end{thm}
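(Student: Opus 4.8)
The plan is to exhibit the superstable configurations of $G$ as a product of the superstable configurations of $G_2$ with those of an auxiliary graph to which Lemma~\ref{lemma:wireintotrivial} applies, and then to paste together the degree-reversing bijections supplied by the two Gorenstein hypotheses. To set this up, write $\tV_1,\tV_2$ for the nonsink vertices of $G_1,G_2$, so the nonsink vertices of $G$ are $\tV_1\sqcup\{s_1\}\sqcup\tV_2$ with $s_2$ the absolute sink. Because a wiring has no edges from $\tV_1$ to $V_2$ and none from $V_2$ to $V_1$, the reduced Laplacian $\tD_G$ is block lower triangular in the order $(\tV_1,s_1,\tV_2)$: firing a vertex of $\tV_1$ affects only $V_1$, firing a vertex of $\tV_2$ affects only $\tV_2$, and firing $s_1$ affects $V_1$ (through the wiring divisor $D$) while sending $\deg D$ grains into $V_2$. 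Let $H$ be the graph on $V_1\cup\{t\}$ obtained from $G$ by collapsing all of $V_2$ to a single sink vertex $t$; equivalently, $H$ is the wiring of $G_1$ into the one-vertex graph $(\{t\},\emptyset,t)$ with wiring divisor $D$, which is a legal wiring since $\deg D>0$ (at least one edge leaves $s_1$ toward $V_2$) and $|D|\neq\emptyset$ is inherited from $G$. By Lemma~\ref{lemma:wireintotrivial}, $H$ is Gorenstein.

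Let $\mathcal{A}_G,\mathcal{A}_H,\mathcal{A}_{G_2}$ be the sets of superstable configurations of $G,H,G_2$. Identifying a configuration on $\tV_1\sqcup\{s_1\}\sqcup\tV_2$ with a pair (configuration on $\tV_H=V_1$, configuration on $\tV_2$), I would prove $\mathcal{A}_G=\mathcal{A}_H\times\mathcal{A}_{G_2}$, an identification that is additive on degree. The key point is that a script-firing $\sigma=(\sigma^{(1)},\sigma_{s_1},\sigma^{(2)})$ legal for a configuration $c=(c^{(1)},k,c^{(2)})$ on $G$ splits: since no edge runs from $\tV_1$ to $V_2$, the $V_1$-entries of $c-\tD_G\sigma$ do not depend on $\sigma^{(2)}$, so if $(\sigma^{(1)},\sigma_{s_1})\neq0$ then $(\sigma^{(1)},\sigma_{s_1},0)$ is legal for $c$ on $G$ and corresponds to a legal script-firing of $(c^{(1)},k)$ on $H$; and if $(\sigma^{(1)},\sigma_{s_1})=0$ then $\sigma^{(2)}$ is legal for $c^{(2)}$ on $G_2$, because $G$ restricts to $G_2$ on $V_2$. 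Conversely a legal script-firing on $H$ lifts to one on $G$ (its support lies in $V_1$), and a legal script-firing on $G_2$ is one on $G$. Hence $c\in\mathcal{A}_G$ if and only if $(c^{(1)},k)\in\mathcal{A}_H$ and $c^{(2)}\in\mathcal{A}_{G_2}$.

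To finish, use that $H$ and $G_2$ are Gorenstein with absolute sinks: Theorem~\ref{thm:gor}~(\ref{gor-three}) together with the criterion recorded just before Lemma~\ref{lemma:wirefromtrivial} (for a sandpile graph with absolute sink, Gorenstein is equivalent to the superstables of degree $k$ being equinumerous with those of degree $\ell-k$) yields degree-reversing bijections $f_H$ on $\mathcal{A}_H$ and $f_2$ on $\mathcal{A}_{G_2}$, with $\deg f_H(x)=\ell_H-\deg x$ and $\deg f_2(y)=\ell_2-\deg y$, where $\ell_H,\ell_2$ are the respective maximal superstable degrees. Then $(x,y)\mapsto(f_H(x),f_2(y))$ is a bijection of $\mathcal{A}_G$ reversing degree about $\ell_H+\ell_2$, which (degrees being additive across the product) is the maximal superstable degree of $G$. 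So the superstables of $G$ of degree $k$ are equinumerous with those of degree $\ell_H+\ell_2-k$ for every $k$, and since $G$ has an absolute sink, the same criterion shows $G$ is Gorenstein.

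The step I expect to be the main obstacle is the decomposition $\mathcal{A}_G=\mathcal{A}_H\times\mathcal{A}_{G_2}$: one must carefully track the simultaneous effect of firing $s_1$ (it pushes sand both back into $G_1$ and out into $G_2$) and confirm that the $\tV_2$-part of a script can never make an otherwise illegal firing legal, which is precisely where the absence of edges from $\tV_1$ into $V_2$, i.e.\ the block-triangular shape of $\tD_G$, is essential. Everything else is bookkeeping with the restricted Laplacian and an appeal to the two lemmas and to Theorem~\ref{thm:gor}.
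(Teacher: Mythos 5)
Your proposal is correct and follows essentially the same route as the paper: the auxiliary graph $H$ you build is exactly the paper's $G'$ (the wiring of $G_1$ into a single vertex with the same wiring divisor $D$, handled by Lemma~\ref{lemma:wireintotrivial}), the superstables of $G$ are identified with $\mathcal{A}_H\times\mathcal{A}_{G_2}$, and the two degree-reversing bijections are pasted together. The only difference is that you spell out the script-splitting argument behind the product decomposition, which the paper dismisses with ``clearly''; your justification of that step is sound.
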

\begin{proof}
Let $D$ be the wiring divisor of $G$. Let $G'$ be the wiring of $G_1$ into the
graph on a single vertex $s$, disjoint from the vertices of $G_1$ or of $G_2$,
with wiring divisor~$D$. Let $\mathcal{A}'$ be the set of superstable
configurations on $G'$ and define the integer $\ell':=\max\{\deg(c):c\in
\mathcal{A}'\}$.  Since $G'$ is Gorenstein by Lemma \ref{lemma:wireintotrivial},
there exists a bijection $f'\colon \mathcal{A}'\to\mathcal{A}'$ such that
$\deg(f'(c))=\ell'-\deg(c)$. Let $\mathcal{A}_2$ be the set of superstables on
$G_2$, let $\ell_2=\max\{\deg(c):c\in \mathcal{A}_2\}$, and let $f_2\colon
\mathcal{A}_2\to \mathcal{A}_2$ be a bijection such that
$\deg(f_2(c))=\ell_2-\deg(c)$.

Clearly, if $c$ is superstable on $G$, then $c|_{\tV_2} \in \mathcal{A}_2$, and
$c|_{V_1} \in \mathcal{A}'$. Conversely, if~$c'\in \mathcal{A}'$ and $c_2 \in
\mathcal{A}_2$, then the configuration $c'+c_2$ is superstable on $G$.  Let
$\mathcal{A}=\{c' + c_2 : c' \in \mathcal{A}', c_2\in \mathcal{A}_2\}$, so that
$\mathcal{A}$ is the set of superstable configurations on $G$, and
$\max\{\deg(c) : c \in \mathcal{A}\} = \ell' + \ell_2=: \ell$. Define the
function $f\colon \mathcal{A}\to \mathcal{A}$ by $f(c' + c_2) = f'(c') +
f_2(c_2)$, where $c'\in \mathcal{A}'$ and $c_2\in \mathcal{A}_2$. Then $f$ is a
bijection, and $\deg(f(c' + c_2)) = \ell - \deg(c' + c_2)$.  Hence, $G$ is
Gorenstein.
\end{proof}

\bibliography{primer}{}
\bibliographystyle{amsplain}

\end{document}